\documentclass[11pt,reqno]{amsart}

\usepackage[T1]{fontenc}
\usepackage{lmodern}
\usepackage{microtype}
\usepackage{amsmath,amssymb,mathtools,mathrsfs}
\usepackage{enumitem}
\usepackage{booktabs}
\usepackage{aliascnt}
\usepackage{xcolor}
\usepackage[a4paper,margin=29mm]{geometry}
\usepackage[colorlinks=true,linkcolor=blue!45!black,citecolor=blue!45!black,urlcolor=blue!45!black]{hyperref}
\usepackage[nameinlink,capitalize,noabbrev]{cleveref}

\theoremstyle{plain}
\newtheorem{theorem}{Theorem}[section]
\newaliascnt{proposition}{theorem}
\newtheorem{proposition}[proposition]{Proposition}
\aliascntresetthe{proposition}
\newaliascnt{lemma}{theorem}
\newtheorem{lemma}[lemma]{Lemma}
\aliascntresetthe{lemma}
\newaliascnt{corollary}{theorem}
\newtheorem{corollary}[corollary]{Corollary}
\aliascntresetthe{corollary}

\theoremstyle{definition}
\newaliascnt{definition}{theorem}
\newtheorem{definition}[definition]{Definition}
\aliascntresetthe{definition}
\newaliascnt{example}{theorem}
\newtheorem{example}[example]{Example}
\aliascntresetthe{example}
\theoremstyle{remark}
\newaliascnt{remark}{theorem}
\newtheorem{remark}[remark]{Remark}
\aliascntresetthe{remark}
\crefname{theorem}{Theorem}{Theorems}
\crefname{proposition}{Proposition}{Propositions}
\crefname{lemma}{Lemma}{Lemmas}
\crefname{corollary}{Corollary}{Corollaries}
\crefname{definition}{Definition}{Definitions}
\crefname{example}{Example}{Examples}
\crefname{remark}{Remark}{Remarks}
\numberwithin{equation}{section}

\DeclareMathOperator{\Var}{Var}
\DeclareMathOperator{\Cov}{Cov}
\DeclareMathOperator{\HS}{HS}
\newcommand{\R}{\mathbb R}
\newcommand{\C}{\mathbb C}
\newcommand{\T}{\mathbb T}
\newcommand{\N}{\mathbb N}
\newcommand{\Nzero}{\mathbb N_0}
\newcommand{\Z}{\mathbb Z}
\newcommand{\E}{\mathbb E}
\newcommand{\Pp}{\mathbb P}
\newcommand{\eps}{\varepsilon}
\newcommand{\dd}{\mathrm d}
\newcommand{\id}{\mathrm{id}}
\newcommand{\Ran}{\operatorname{Ran}}
\newcommand{\Ker}{\operatorname{Ker}}
\newcommand{\BCH}{\operatorname{BCH}}
\newcommand{\ad}{\operatorname{ad}}
\newcommand{\supp}{\operatorname{supp}}
\newcommand{\co}{\operatorname{co}}
\newcommand{\Res}{\operatorname{Res}}
\newcommand{\cA}{\mathcal A}
\newcommand{\cB}{\mathcal B}
\newcommand{\cD}{\mathcal D}
\newcommand{\cI}{\mathcal I}

\newcommand{\cQ}{\mathcal Q}
\newcommand{\cR}{\mathcal R}
\newcommand{\cS}{\mathcal S}

\newcommand{\F}{\mathcal F}
\newcommand{\Hh}{\mathfrak H}
\newcommand{\G}{\mathcal G}
\newcommand{\one}{\mathbf 1}
\newcommand{\A}{\mathbb A}

\newcommand{\Rect}{\mathsf R}
\newcommand{\norm}[1]{\lVert #1\rVert}
\newcommand{\abs}[1]{\lvert #1\rvert}

\title[Rough Paths Below the Young Threshold]{Rough Paths Below the Young Threshold:\\An Exact Scale Calculus and the Locality Phase Transition at One Quarter}
\author{Zongjian Han}
\address{School of Mathematical Sciences, Tongji University, Shanghai 200092, China}
\email{dbln@tongji.edu.cn}
\date{}
\subjclass[2020]{Primary 60L20; Secondary 60G22, 42B25, 60H10, 46H25}
\keywords{rough path, fractional Brownian motion, measurable locality, one-quarter threshold, exact scale calculus, corona derivation, Fourier normal ordering, Heisenberg equation}
\hypersetup{
  pdftitle={Rough Paths Below the Young Threshold: An Exact Scale Calculus and the Locality Phase Transition at One Quarter},
  pdfauthor={Zongjian Han},
  pdfsubject={Exact scale calculus and the one-quarter measurable-locality threshold for fractional Brownian rough paths},
  pdfkeywords={rough paths, fractional Brownian motion, measurable locality, one-quarter threshold, exact scale calculus, corona derivation, Fourier normal ordering, Heisenberg equation}
}

\begin{document}

\begin{abstract}
For a continuous path $x:[0,T]\to\R^d$, a second-level coordinate $\A^{ab}_{s,t}$ is interval-local when it is measurable from the increments of components $a,b$ on $[s,t]$.  Compatibility with subdivision is expressed by Chen's identity
\[
 \A^{ab}_{s,t}=\A^{ab}_{s,u}+\A^{ab}_{u,t}+x^a_{s,u}x^b_{u,t},
 \qquad x^a_{s,t}:=x^a(t)-x^a(s).
\]
We determine their compatibility for fractional Brownian motion and develop exact scale coordinates for deterministic rough enhancement.

Let $B^H$ be fractional Brownian motion of Hurst index $H$, with independent components in dimension $d\ge2$.  For $0<H\le1/4$, no Borel domain of positive fractional-Brownian probability supports even one finite off-diagonal local Chen coordinate.  No moment, H\"older, or geometricity assumption is required.  For $1/4<H\le1/2$, put $m=\lfloor1/H\rfloor$ and choose $1/(m+1)<\eta<H$.  Every $\eta$-H\"older rough-path lift on a full-measure Borel domain, with each tensor word measurable from its corresponding interval increments, has square-integrable coordinates through level $m$.  Its difference from the canonical lift consists of deterministic higher-level increments and their forced Gaussian cross terms.  For $H>1/2$, graded H\"older bounds with exponent greater than $1/2$ determine the Young signature uniquely.

For a bounded function $f$ with increasing concave modulus of continuity $\omega$, let $\Delta_j f$ be its smooth frequency block at wavelength $r_j=2^{-j}$ and set $\lambda_j=\omega(r_j)$.  Centered block differences at separation $\varepsilon r_j$, divided by $\varepsilon\lambda_j$, converge uniformly in $j$, with sharp error $O(\varepsilon^2)$, to
\[
 d_j^\omega f=(r_j/\lambda_j)\partial_x\Delta_jf.
\]
The low-frequency block together with the full derivative tower reconstructs $f$ exactly.  Passing instead to bounded towers modulo towers converging uniformly to zero turns the asymptotic product and chain rules into exact identities; normalized higher product defects yield multiderivations.

A fixed Fourier-normal-ordering scheme yields Borel geometric lifts at every positive regularity, with explicit cutoff rates at lower rough exponents.  Thus one quarter separates measurable locality from lift existence.  The obstruction already excludes interval-local multiplicative flows for a single linear Heisenberg equation on positive-probability Borel domains when $H\le1/4$.
\end{abstract}

\maketitle

\section{Introduction}\label{sec:introduction}

\subsection{From the Young threshold to an information threshold}

Fix $T>0$ and a finite-dimensional real normed space $E$.  For a path $x:[0,T]\to E$, write
\[
 x_{s,t}:=x(t)-x(s),
 \qquad
 [x]_\eta:=\sup_{0\le s<t\le T}\frac{\|x_{s,t}\|}{|t-s|^\eta}.
\]
The space $C^\eta([0,T];E)$ consists of continuous paths with $[x]_\eta<\infty$, with norm $\|x\|_\infty+[x]_\eta$; the subscript $0$ means that $x(0)=0$.  For smooth paths in $\R^d$, integration produces the second-level coordinates
\begin{equation}\label{eq:intro-smooth-area}
 \A^{ab}_{s,t}(x):=\int_s^t x^a_{s,u}\,\dd x^b(u).
\end{equation}
Splitting this integral at an intermediate time gives
\begin{equation}\label{eq:intro-chen}
 \A^{ab}_{s,t}
 =\A^{ab}_{s,u}+\A^{ab}_{u,t}+x^a_{s,u}x^b_{u,t}.
\end{equation}
The integral depends only on the increments of the two displayed components within $[s,t]$.

Young's theorem extends \eqref{eq:intro-smooth-area} to paths with complementary H\"older exponents whose sum exceeds one \cite{Young1936}.  In particular, a single $\eta$-H\"older path determines these integrals when $2\eta>1$.  For general paths of exponent at most $1/2$, this regularity condition no longer supplies the iterated integrals.  Rough-path theory retains their algebraic identities and graded regularity as additional input rather than assuming that an ordinary integral exists \cite{Chen1957,Lyons1998}.

We recall the precise meaning of that input.  For an integer $m\ge1$, let
\[
 T^{(m)}(E):=\bigoplus_{r=0}^m E^{\otimes r},
 \qquad E^{\otimes0}:=\R,
\]
with tensor concatenation truncated above degree $m$.  Denote the projection onto degree $r$ by $\pi_r$.  Let $L_r(E)$ be the span of the degree-$r$ iterated commutators of elements of $E$, where $[u,v]=u\otimes v-v\otimes u$, and put
\[
 G^{(m)}(E):=\exp\!\left(\bigoplus_{r=1}^mL_r(E)\right).
\]
The exponential is computed in the truncated tensor algebra.  These are the group-like tensors used below.  A step-$m$ weakly geometric $\eta$-H\"older rough path above $x$ is a continuous family $\mathbf X_{s,t}\in G^{(m)}(E)$ with
\begin{equation}\label{eq:intro-rough-path}
 \begin{gathered}
 \mathbf X_{s,s}=1,
 \qquad \pi_1\mathbf X_{s,t}=x_{s,t},
 \qquad \mathbf X_{s,t}=\mathbf X_{s,u}\mathbf X_{u,t},\\
 \|\pi_r\mathbf X_{s,t}\|\le C|t-s|^{r\eta},
 \qquad 1\le r\le m.
 \end{gathered}
\end{equation}
Here one fixes norms on the finite-dimensional tensor levels, and $C<\infty$ may depend on the path.  The usual roughness depth is $m=\lfloor1/\eta\rfloor$, so $(m+1)\eta>1$.  The second-level part of multiplicativity is exactly \eqref{eq:intro-chen}.  A lift is \emph{strong geometric} at exponent $\eta$ if it is a limit, in the $\eta$-H\"older rough-path topology, of signatures of bounded-variation paths; convergence means that the tensor-coordinate differences, divided by $|t-s|^{r\eta}$ at degree $r$, tend uniformly to zero.  A signature is the tensor family of iterated integrals, with degree $r$ obtained by integrating over $s<u_1<\cdots<u_r<t$.

Extension and Fourier-normal-ordering constructions provide geometric lifts over irregular paths, with the appropriate choice of roughness exponent \cite{LyonsVictoir2007,UnterbergerFNO2010}.  They do not by themselves require that the tensor assigned to $[s,t]$ use only observations within that interval.  The distinction is the central stochastic question of this paper: can multiplicative rough geometry be selected measurably from local increments, or does a lift necessarily use additional information?

For fractional Brownian motion, the canonical Gaussian lift exists above Hurst index $1/4$ \cite{CoutinQian2002}, whereas Fourier normal ordering gives geometric lifts at every positive Hurst index \cite{UnterbergerFNO2010,UnterbergerFBM2010}.  Chevyrev and Ferrucci classified interval-local square-integrable lifts and asked whether the moment assumption is automatic, and whether any positive-measure Borel domain can support a local lift at or below one quarter \cite{ChevyrevFerrucci2026}.  We answer these questions in the wordwise-local sense defined next.  The two thresholds therefore concern different requirements:
\begin{equation}\label{eq:intro-construction-obstruction}
 \begin{aligned}
  \eta>\tfrac12&:\quad\text{Young integration from the first-level path},\\
  H>\tfrac14&:\quad\text{measurable interval-local fractional-Brownian enhancement}.
 \end{aligned}
\end{equation}
The second line is an existence statement for local lifts; the negative result below applies even without the analytic bounds in \eqref{eq:intro-rough-path}.

\subsection{Locality and the fractional-Brownian phase transition}

Let
\[
 \Omega_d:=C_0([0,T];\R^d)
\]
with its uniform topology and Borel sigma-algebra.  The law $\mu_H^{(d)}$ of $d$ independent fractional Brownian components is the centered Gaussian law with covariance
\[
 \E[B^{H,a}_sB^{H,b}_t]
 =\frac{\delta_{ab}}2\bigl(s^{2H}+t^{2H}-|t-s|^{2H}\bigr),
 \qquad 0<H<1,
\]
where $\delta_{ab}$ is one for $a=b$ and zero otherwise.  We identify the canonical process with evaluation on $\Omega_d$.

For $J\subseteq\{1,\ldots,d\}$, define the increment restriction and its sigma-field by
\begin{equation}\label{eq:intro-local-sigma-field}
 r_{s,t}^J(x):=(x^a(u)-x^a(s))_{u\in[s,t],\,a\in J},
 \qquad
 \mathcal F_{s,t}^J:=\sigma(r_{s,t}^J).
\end{equation}
If $B\subseteq\Omega_d$ is Borel, the corresponding trace sigma-field is
\[
 \mathcal F_{s,t}^J\vert_B
 :=\{B\cap A:A\in\mathcal F_{s,t}^J\}.
\]
Thus a map on $B$ is local for the coordinates $J$ if it is measurable from the observed increment restriction, even when membership in $B$ depends on the whole path.  Such a map takes the same value on any two paths in $B$ with the same restriction $r_{s,t}^J$.

In particular, a \emph{local level-two Chen coordinate on $B$} is a family of finite real-valued Borel maps $\A^{ab}_{s,t}:B\to\R$, for one fixed pair $a\ne b$, which satisfies \eqref{eq:intro-chen} for every $s\le u\le t$ and every $x\in B$, and is $\mathcal F_{s,t}^{\{a,b\}}\vert_B$-measurable.  This is the definition used in the negative theorem; continuity and moment bounds are not part of it.  The full formulation is repeated in \cref{def:positive-domain-area}.

For a tensor word $w=(i_1,\ldots,i_r)$, let $\mathbf X^w_{s,t}$ be the coefficient of $e_{i_1}\otimes\cdots\otimes e_{i_r}$ in $\pi_r\mathbf X_{s,t}$, and let $J(w)$ be the set of indices occurring in $w$.  A rough lift is \emph{wordwise-local} if $\mathbf X^w_{s,t}$ is measurable for $\mathcal F_{s,t}^{J(w)}$; on a Borel domain we use the trace sigma-field.  Throughout the paper, ``local lift'' means this wordwise condition.  It is stronger than merely being adapted to the past: the allowed information consists only of the increments on the specified interval and in the specified components.  ``Full-law'' means that the lift is defined as a random rough path under the entire law $\mu_H^{(d)}$, with identities and regularity holding almost surely.

\begin{theorem}[Fractional-Brownian locality phase transition]\label{thm:phase-transition}
Let $d\ge2$ and $0<H<1$.
\begin{enumerate}[label=(\roman*),leftmargin=2.4em]
\item If $0<H\le1/4$ and a Borel set $B\subseteq\Omega_d$ carries a local level-two Chen coordinate, then
\[
 \mu_H^{(d)}(B)=0.
\]
One finite off-diagonal coordinate suffices.  No moment bound, H\"older control, geometricity, stationarity, or scaling assumption is imposed.
\item Suppose $1/4<H\le1/2$, put $m=\lfloor1/H\rfloor\in\{2,3\}$, and choose
\[
 \frac1{m+1}<\eta<H.
\]
Let $B$ be Borel with $\mu_H^{(d)}(B)=1$, and let $\mathbf X$ be a Borel $\eta$-H\"older rough-path lift on $B$ whose rational-interval word coordinates are trace-local.  For every $s\le t$ and $1\le r\le m$,
\[
 \E_{\mu_H^{(d)}}\!\left[\|\pi_r\mathbf X_{s,t}\|^2\right]<\infty.
\]
Here a coordinate on $B$ is regarded as an almost-everywhere equivalence class.  On a common full-measure Borel subset, the lift agrees at all times with a full-law local lift of the deterministic-correction form described below.  In particular, the Chevyrev--Ferrucci classification applies without an assumed $L^2$ condition.
\item Suppose $1/2<H<1$ and $1/2<\eta<H$.  At every finite tensor step $M$, the first level $B^H$ has a unique multiplicative group-like extension satisfying
\[
 \|\pi_r\mathbf B^{H,(M)}_{s,t}\|
 \le C(x)|t-s|^{r\eta},
 \qquad 1\le r\le M.
\]
It is the truncated Young signature.  For fractional Brownian sample paths it is strong geometric at exponent $\eta$, and each of its finitely many levels has moments of every finite order.
\end{enumerate}
\end{theorem}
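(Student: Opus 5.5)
The plan is to prove the three parts separately, since they use different mechanisms. The core of the argument is part (i).

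For part (i), I would work with a fixed pair $a\ne b$ and reduce to a zero--one statement for a Gaussian tail-type event. Chen's identity \eqref{eq:intro-chen} on a dyadic partition of $[s,t]$ gives
\[
\A^{ab}_{s,t}=\sum_k\A^{ab}_{t_k,t_{k+1}}+\sum_{k<l}x^a_{t_k,t_{k+1}}x^b_{t_l,t_{l+1}}.
\]
Each summand in the first sum is local to its own small interval. The second sum is the discrete (Riemann) area $S_n(x)$. The first step is to show that locality forces $\A_{s,t}-S_n$ to be measurable, on $B$, with respect to the increments on the union of the level-$n$ dyadic intervals. These carry all the information. The key observation is therefore a different one. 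Swapping the increments of component $b$ between two paths that agree in $a$ changes the dyadic-piece data only locally, so I would instead exploit independence across \emph{coarse} versus \emph{fine} scales.

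Concretely, I would decompose $B^{H,b}$ on $[s,t]$ as a smooth low-frequency part plus a high-frequency part. The high-frequency part is taken independent of $B^{H,a}$ and of the coarse data; for instance, use the Gaussian conditional decomposition given the values at dyadic points. Then I would compute the variance of $S_n$ conditionally on the dyadic skeleton. For $H\le1/4$, the conditional variance of the discrete Lévy area contributed at scale $2^{-n}$ is of order $2^{n}\cdot2^{-4nH}$, which does not tend to zero; at $H=1/4$ the contributions sum to infinity logarithmically. This is precisely the Coutin--Qian divergence. The resulting statement is that, conditionally on any local observable at scales coarser than $2^{-n}$, the area keeps an independent Gaussian fluctuation whose variance diverges as $n\to\infty$.

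A finite local coordinate would then be a finite random variable agreeing, up to local terms, with a quantity whose conditional law spreads out indefinitely. To make this rigorous on a positive-measure $B$, I would use Lebesgue density and a Cameron--Martin shift argument. Perturbing $x^b$ by Cameron--Martin elements supported in disjoint fine subintervals keeps $\mu(B\cap(B-h))>0$. Chen's identity on those subintervals fixes how $\A$ must change, namely by the explicit bilinear cross terms $\int x^a\,\dd h$. Summing infinitely many independent such cross terms, whose Cameron--Martin norms are controlled because $H\le1/4$ makes the relevant functional discontinuous on the Cameron--Martin space, should contradict finiteness. This is the main obstacle: turning the divergence of the Gaussian area variance into a pathwise contradiction without any moments. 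I would first try to show that $\A_{s,t}-S_n$ converges in probability along a subsequence on $B$, using locality and the independent-scale structure, and then contradict this with the non-convergence in law of $S_n$ when $H\le1/4$.

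For part (ii), I would use a similar Cameron--Martin and locality argument to show that any two local lifts differ by a local, Chen-additive, $\eta$-Hölder correction at level $2$. Additivity plus locality plus Gaussian independence of increments on disjoint intervals forces this correction to be deterministic and of the form $g(t)-g(s)$. The case $i=j$ is fixed by geometricity. Square integrability then follows from that of the canonical Coutin--Qian lift. At level $3$ (when $m=3$) the same argument gives deterministic corrections plus the cross terms forced by Chen's identity. Those cross terms are polynomial in Gaussian increments and hence lie in $L^2$. Combined with the Chevyrev--Ferrucci classification, this yields the claim.

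For part (iii), uniqueness follows from the standard sewing argument. Two multiplicative extensions whose graded bounds have exponent $r\eta>1/2$ can differ at the first level $r$ where they disagree only by an additive functional bounded by $|t-s|^{r\eta}$ with $r\eta\ge2\eta>1$, so that functional vanishes. Existence comes from Young integration. Strong geometricity follows from approximating by piecewise-linear interpolations together with Young continuity. Moments of all orders follow from Fernique's theorem applied to the Hölder norm, since the levels are polynomially bounded by $[x]_\eta^r$.
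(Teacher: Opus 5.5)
Part (i) has a genuine gap. The one concrete mechanism you propose fails, and the step you yourself flag as the main obstacle is left open.

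Shifting only $x^b$ by a Cameron--Martin element $h$ does not change $\A^{ab}$ by an amount fixed by Chen's identity. After iterating Chen over a fine partition, the local terms $\A^{ab}_{t_r,t_{r+1}}$ on the cells where $h$ varies change by unknown amounts, since locality only says they are functions of the altered increments there. The remaining cross term $\sum_{r<q}x^a_{t_r,t_{r+1}}h_{t_q,t_{q+1}}$ is random. For the same reason nothing makes $\A_{s,t}-S_n$ converge in probability: it is a sum of arbitrary moment-free Borel local terms, and those terms are exactly what could absorb the divergent fluctuation of $S_n$. In addition, positivity of $\mu(B\cap(B-h))$ for individual shifts gives no control uniform in the dimension of the shift space, and that uniformity is what the divergence argument needs.

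The missing idea is a \emph{mixed second difference}. Shift component $a$ by $h$ and component $b$ by $k$ with $\supp\dot h\cap\supp\dot k=\varnothing$. On every cell of a fine partition one of the two increment paths is unchanged, so by fibrewise locality every local term cancels in the four-corner difference. The rectangle therefore equals the \emph{deterministic} Young form $\int h\,\dd k$.

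To use only finiteness of a Borel function on a set of mass $\beta>0$, proceed as follows.
\begin{enumerate}
\item Resample finite-dimensional Gaussian coordinates along Cameron--Martin subspaces $U,V$, keeping the residuals.
\item Two applications of Cauchy--Schwarz put all four corners in $B$ with probability at least $\beta^4$, independently of $\dim U,\dim V$.
\item Finiteness then bounds the rectangle by $4M$ with probability at least $\beta^4/2$.
\item On that event the rectangle is the degree-two chaos $2g^{\mathsf T}Aq$, so Carbery--Wright anti-concentration forces $\norm A_{\mathrm F}\le CM\beta^{-8}$ uniformly in dimension.
\end{enumerate}
Oscillatory Cameron--Martin packets with uniformly bounded synthesis maps then give $\norm{b}_{\HS}^2\gtrsim\sum_{k\le K}k^{-4H}$, the rigorous form of your Coutin--Qian variance heuristic. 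What $H\le1/4$ destroys is this Hilbert--Schmidt property of the Young form on $\Hh_H$, not its continuity.

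Part (ii) misidentifies the rigidity mechanism. For $H<1/2$, increments over disjoint intervals are not independent. More importantly, additivity and locality alone cannot force determinism: the first level $B^{H,a}_{s,t}$ is itself local, additive and random.

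The essential input is that the level-two difference is $2\eta$-H\"older (not $\eta$-H\"older) with $2\eta>1/2$. Clip the $N$ cell values at $K(L/N)^{2\eta}$. Bound same-parity covariances by Gebelein's inequality together with the summable maximal-correlation decay $(\tau/g)^{2-2H}$. This gives a variance of order $K^2N^{1-4\eta}\to0$, and Borel--Cantelli along $N=2^n$ concludes.

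At level three the difference is not additive. You must first subtract the local first-chaos Wiener correction with Chen defect $\varphi^{(2)}_{s,u}\otimes B^H_{u,t}+B^H_{s,u}\otimes\varphi^{(2)}_{u,t}$. It is defined because $2\eta>1/2-H$, and it is H\"older of order above $3\eta$ because $\eta<H$. Rigidity then applies with exponent $3\eta>1/2$.

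Before any Gaussian argument you also need to replace the trace-local coordinates on $B$ by genuinely local full-law versions.

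Your part (iii) is correct and is essentially the paper's argument.
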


The correction in part~(ii) can be stated explicitly.  Let $\overline{\mathbf B}^{H}$ denote the canonical local Gaussian lift, used as the reference enhancement for $H>1/4$.  Write $\mathbf X^{(r)}:=\pi_r\mathbf X$ and $\overline{\mathbf B}^{H,(r)}:=\pi_r\overline{\mathbf B}^{H}$.  There are deterministic paths $\varphi^{(r)}:[0,T]\to L_r(\R^d)$ for $2\le r\le m$, based at zero, with increments $\varphi^{(r)}_{s,t}:=\varphi^{(r)}(t)-\varphi^{(r)}(s)$ satisfying
\[
 \|\varphi^{(r)}_{s,t}\|\le C_r|t-s|^{r\eta}.
\]
At level two,
\begin{equation}\label{eq:intro-level-two-classification}
 \mathbf X^{(2)}_{s,t}
 =\overline{\mathbf B}^{H,(2)}_{s,t}+\varphi^{(2)}_{s,t}.
\end{equation}
When $m=3$, the third level is
\begin{equation}\label{eq:intro-level-three-classification}
 \begin{aligned}
 \mathbf X^{(3)}_{s,t}
 ={}&\overline{\mathbf B}^{H,(3)}_{s,t}+\varphi^{(3)}_{s,t}\\
 &+\int_s^t\varphi^{(2)}_{s,u}\otimes\dd B^H_u
   +\int_s^t\dd B^H_u\otimes\varphi^{(2)}_{u,t}.
 \end{aligned}
\end{equation}
The two integrals are deterministic-integrand Wiener integrals: they are defined as $L^2$ Gaussian limits of linear combinations of the local increments, not as Young integrals.  The condition needed for them is available because $2\eta>1/2-H$.  The Lie-valued increments $\varphi^{(2)}$ and $\varphi^{(3)}$ form a deterministic rough path above the zero first level.  Conversely, every such deterministic correction gives a full-law local lift.  These formulas, proved in \cref{thm:auto-L2,lem:deterministic-correction-lie,cor:classification}, specify what is meant here by classification and by automatic square-integrability.

Part~(i) excludes a measurable local rule itself, not merely a preferred approximation to area.  Part~(ii) requires a full-measure domain and the displayed rough regularity; these hypotheses are not imposed in part~(i).  Combining the theorem with the Borel construction below gives
\begin{equation}\label{eq:intro-dichotomy}
 \begin{array}{c|l}
 H>1/2 & \text{unique Young enhancement under the graded bounds},\\[1mm]
 1/4<H\le1/2 & \text{local enhancements with deterministic correction freedom},\\[1mm]
 0<H\le1/4 & \text{Borel enhancements, but no positive-domain local enhancement}.
 \end{array}
\end{equation}
In the last line, enhancement exists at every rough exponent strictly below $H$.  Its unavoidable nonlocality is the distinction between lift existence and sufficiency of interval observations.

\subsection{Why the question reaches differential equations}

Let $E_{ij}$ be the elementary $3\times3$ matrix with its only nonzero entry, equal to one, at $(i,j)$, and let $I$ be the identity matrix.  Consider the right-driven equation
\begin{equation}\label{eq:heisenberg-rde}
 \dd Y_t=Y_t\bigl(E_{12}\,\dd X^1_t+E_{23}\,\dd X^2_t\bigr),
 \qquad Y_s=I.
\end{equation}
For a smooth driver, its solution is
\[
 Y_{s,t}=
 \begin{pmatrix}
 1&X^1_{s,t}&C_{s,t}\\
 0&1&X^2_{s,t}\\
 0&0&1
 \end{pmatrix},
 \qquad
 C_{s,t}=\int_s^tX^1_{s,u}\,\dd X^2_u.
\]
For a rough driver, a finite multiplicative flow of this form is specified by its central coordinate $C$: matrix multiplication gives
\[
 Y_{s,t}=Y_{s,u}Y_{u,t}
 \quad\Longleftrightarrow\quad
 C_{s,t}=C_{s,u}+C_{u,t}+X^1_{s,u}X^2_{u,t}.
\]
A \emph{local flow on $B$} means that $C_{s,t}$ is Borel and measurable for $\mathcal F_{s,t}^{\{1,2\}}\vert_B$.  It is $\eta$-H\"older if $|C_{s,t}|\le R(x)|t-s|^{2\eta}$ for an almost surely finite $R$.  Thus its central coordinate is precisely the object excluded by \cref{thm:phase-transition}(i).

\begin{theorem}[Heisenberg equation at the quarter threshold]\label{thm:heisenberg-main}
Let $d\ge2$ and let the first two components of $B^H$ drive \eqref{eq:heisenberg-rde}.
\begin{enumerate}[label=(\roman*),leftmargin=2.4em]
\item If $0<H\le1/4$, no positive-$\mu_H^{(d)}$ Borel domain supports a finite local multiplicative flow of the displayed form.
\item If $1/4<H\le1/2$ and $1/(\lfloor1/H\rfloor+1)<\eta<H$, the canonical lift gives a local $\eta$-H\"older flow.  Every such local flow on a full-measure Borel domain has, on a common full-measure Borel subset and simultaneously for all times,
\[
 C_{s,t}=\overline{\A}^{12}_{s,t}+a(t)-a(s),
\]
where $\overline{\A}^{12}$ is the $(1,2)$ word coordinate of $\overline{\mathbf B}^{H,(2)}$, and $a$ is deterministic and $2\eta$-H\"older.
\item If $1/2<H<1$ and $1/2<\eta<H$, the $\eta$-H\"older flow is unique and
\[
 C_{s,t}=\int_s^t B^{H,1}_{s,u}\,\dd B^{H,2}_u
\]
in the Young sense.
\end{enumerate}
\end{theorem}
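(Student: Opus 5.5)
The plan is to reduce each part of \cref{thm:heisenberg-main} to the corresponding part of \cref{thm:phase-transition}. The bridge is the matrix identity displayed before the statement. A flow of the displayed upper-triangular form is multiplicative exactly when its central coordinate $C$ satisfies Chen's identity \eqref{eq:intro-chen} for the pair $(a,b)=(1,2)$. Locality of the flow means that $C_{s,t}$ is $\mathcal F^{\{1,2\}}_{s,t}\vert_B$-measurable. The two first-level entries $X^1_{s,t},X^2_{s,t}$ are automatically local.

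\emph{Part (i).} Suppose a Borel set $B$ with $\mu_H^{(d)}(B)>0$ carries a finite local multiplicative flow of the displayed form. Then $\A^{12}_{s,t}:=C_{s,t}$ is a family of finite Borel maps on $B$. It satisfies \eqref{eq:intro-chen} for every $s\le u\le t$ and every $x\in B$, and it is trace-local. This is exactly a local level-two Chen coordinate on $B$, so \cref{thm:phase-transition}(i) forces $\mu_H^{(d)}(B)=0$, a contradiction. No regularity is needed, matching the absence of Hölder hypotheses in (i).

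\emph{Part (ii), existence.} I take $C:=\overline{\A}^{12}$, the $(1,2)$ word of the canonical lift $\overline{\mathbf B}^{H}$. Its Chen identity is the degree-two component of multiplicativity. It is local by the wordwise locality of the canonical lift, and it is $2\eta$-Hölder with an almost surely finite constant.

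\emph{Part (ii), classification.} Given a local $\eta$-Hölder flow $C$ on a full-measure domain $B$, I will complete it to a full wordwise-local rough-path lift so that \cref{thm:phase-transition}(ii) and the correction formula \eqref{eq:intro-level-two-classification} apply.
\begin{itemize}[label=--]
\item The diagonal words $(a,a)$ are set to $\tfrac12 (x^a_{s,t})^2$.
\item The word $(2,1)$ is set to $x^1_{s,t}x^2_{s,t}-C_{s,t}$, which forces the symmetric part of the level-two tensor to equal $\tfrac12 x_{s,t}\otimes x_{s,t}$, as geometricity requires.
\item All other pairs of components, and the level-three words when $m=3$, are taken from $\overline{\mathbf B}^H$.
\end{itemize}

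This completion is group-like: the symmetric part is correct, and the antisymmetric part is a Lie element. It is multiplicative, local and $\eta$-Hölder through level two. The main obstacle is level three when $m=3$. There, multiplicativity couples the third level to the modified area, so the canonical level three cannot simply be retained. Two options:
\begin{itemize}[label=--]
\item Work directly at level two. Restate the level-two part of the argument behind \cref{thm:auto-L2,cor:classification}, which only uses the step-two structure, and apply it to the single coordinate $C$.
\item Define the third level by \eqref{eq:intro-level-three-classification}, with $\varphi^{(2)}:=C-\overline{\A}^{12}$ once that difference is known to be deterministic. This is circular, so I expect to follow the first option.
\end{itemize}

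Under the first option, the difference $D:=C-\overline{\A}^{12}$ is additive, $D_{s,t}=D_{s,u}+D_{u,t}$, because both terms satisfy the same Chen identity. It is local and $2\eta$-Hölder. On dyadic intervals, locality and the independence structure then force $D$ to be deterministic almost surely; this is where the earlier lemmas are invoked. Additivity and continuity then give $D_{s,t}=a(t)-a(s)$ for all times on a common full-measure subset, with $a$ deterministic and $2\eta$-Hölder.

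\emph{Part (iii).} Since $\eta>1/2$, the Young integral $\int_s^t B^{H,1}_{s,u}\,\dd B^{H,2}_u$ exists and satisfies Chen's identity and the $2\eta$ bound. For uniqueness, take two such flows $C$ and $C'$. Their difference is additive and bounded by $R|t-s|^{2\eta}$ with $2\eta>1$, so it vanishes. This is the level-two case of \cref{thm:phase-transition}(iii); note that this uniqueness argument uses no locality.
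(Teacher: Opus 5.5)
Your proposal is correct and, once you commit to the first option in (ii), it is essentially the paper's proof: (i) reduces the central coordinate directly to \cref{thm:phase-transition}(i), (ii) subtracts the canonical central coordinate and applies local additive rigidity (\cref{thm:additive-rigidity}, after passing to genuinely local versions via \cref{cor:rational-trace-extension} and a Borel core), and (iii) is the same subdivision argument with $2\eta>1$. You should still state explicitly that rigidity needs $\theta=2\eta>1/2$, which holds because $\eta>1/(\lfloor1/H\rfloor+1)\ge1/4$, and that for $H<1/2$ the mechanism is the maximal-correlation decay of separated increments, since those increments are not independent.
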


The negative conclusion concerns a solver which uses only the specified local increments and composes over adjacent intervals.  A solution driven by a chosen nonlocal enhancement is a different object.  The Heisenberg example makes this difference visible in a single matrix entry, without a nonlinear equation or an infinite hierarchy of levels.

\subsection{A reversible differential calculus below differentiability}

The deterministic part begins before a rough enhancement is chosen.  It asks how much differential information can be retained from a function whose ordinary difference quotient does not converge.  The construction separates the wavelength of a frequency block from the displacement used to test that block.

An \emph{admissible modulus} is a continuous increasing concave function $\omega:[0,1]\to[0,\infty)$ with $\omega(0)=0$ and $\omega(r)>0$ for $r>0$; it is extended constantly for $r\ge1$.  Define
\[
 A_\omega:=C_b^\omega(\R),
 \qquad
 \|f\|_{C^\omega}:=\|f\|_\infty+
 \sup_{0<|x-y|\le1}\frac{|f(x)-f(y)|}{\omega(|x-y|)}.
\]
Here $C_b$ denotes bounded continuous real-valued functions.  The choice $\omega(r)=r^\alpha$, $0<\alpha<1$, gives the usual bounded H\"older algebra.

Fix a smooth inhomogeneous Littlewood--Paley resolution.  More explicitly, choose a low-frequency cutoff $\chi$ and an annular cutoff $\varphi$, supported away from zero, such that
\[
 \chi(\xi)+\sum_{j\ge0}\varphi(2^{-j}\xi)=1,
\]
and define the Fourier multipliers
\[
 \widehat{\Delta_{-1}f}(\xi)=\chi(\xi)\widehat f(\xi),
 \qquad
 \widehat{\Delta_jf}(\xi)=\varphi(2^{-j}\xi)\widehat f(\xi).
\]
The transforms may be read in the tempered-distribution sense.  Thus $\Delta_{-1}f$ is the low-frequency block, while $\Delta_jf$ has wavelength comparable to $r_j:=2^{-j}$.  Standard dyadic estimates give
\[
 \lambda_j:=\omega(r_j),
 \qquad
 \|\partial_x^k\Delta_jf\|_\infty
 \le C_k r_j^{-k}\lambda_j\|f\|_{C^\omega}
 \quad(k\ge0).
\]
The resolution and these estimates are developed in \cref{sec:modulus-lp}; the underlying frequency-analysis tools are classical \cite{BahouriCheminDanchin2011,Triebel1983}.

A fixed displacement $h$ probes the $j$th block at dimensionless size $h/r_j$.  To take the same small-displacement limit at every frequency, we instead use $h=\varepsilon r_j$ and define
\[
 d_{j,\varepsilon}^\omega f(x):=
 \frac{\Delta_jf(x+\varepsilon r_j/2)-\Delta_jf(x-\varepsilon r_j/2)}
      {\varepsilon\lambda_j},
 \qquad \varepsilon>0.
\]
The candidate limit and the complete coordinate are
\begin{equation}\label{eq:intro-infinitesimal}
 d_j^\omega f:=\frac{r_j}{\lambda_j}\partial_x\Delta_jf,
 \qquad d_{j,0}^\omega f:=d_j^\omega f,
\end{equation}
\[
 \cD_{\omega,\varepsilon}f
 :=\bigl(\Delta_{-1}f,(d_{j,\varepsilon}^\omega f)_{j\ge0}\bigr),
 \qquad
 \cA_{\omega,\varepsilon}:=\Ran\cD_{\omega,\varepsilon}.
\]
The low block is retained because differentiation of the annular blocks cannot recover zero frequency.

\begin{theorem}[Exact matched scale calculus]\label{thm:intro-construction}
For the preceding modulus and fixed frequency resolution, there is $\varepsilon_0>0$ such that the following statements hold for $0\le\varepsilon\le\varepsilon_0$.
\begin{enumerate}[label=(\roman*),leftmargin=2.4em]
\item For every $f\in A_\omega$,
\[
 \sup_{j\ge0}\|d_{j,\varepsilon}^\omega f-d_j^\omega f\|_\infty
 \le C\varepsilon^2\|f\|_{C^\omega}.
\]
The exponent $2$ is sharp in operator norm.  The normalization is critical in the following precise sense: for a scalar sequence $(a_j)$, the map $f\mapsto(a_j\partial_x\Delta_jf)_j$ is bounded into $\ell^\infty(\Nzero;C_b(\R))$ if and only if
\[
 \sup_j |a_j|\lambda_j/r_j<\infty.
\]
\item The map $\cD_{\omega,\varepsilon}$ is a bijection onto its realized range.  Its inverse $\cI_{\omega,\varepsilon}$ is given by
\begin{equation}\label{eq:intro-explicit-reconstruction}
 \cI_{\omega,\varepsilon}(u_{-1},z)
 =u_{-1}+\sum_{j\ge0}I_{j,\varepsilon}^\omega z_j,
\end{equation}
with uniform convergence for $(u_{-1},z)\in\cA_{\omega,\varepsilon}$.  The operator $I_{j,\varepsilon}^\omega$ has, on the analyzed annulus, Fourier symbol
\[
 \lambda_j\frac{\varepsilon}{2i\sin(\varepsilon r_j\xi/2)}
 \quad(\varepsilon>0),
 \qquad
 \frac{\lambda_j}{ir_j\xi}
 \quad(\varepsilon=0),
\]
multiplied by a rescaled smooth cutoff equal to one on that annulus.  The choice of $\varepsilon_0$ keeps the sine nonzero on the cutoff support.  In particular,
\[
 \cI_{\omega,\varepsilon}\cD_{\omega,\varepsilon}=\id_{A_\omega},
 \qquad
 \cD_{\omega,\varepsilon}\cI_{\omega,\varepsilon}
 =\id_{\cA_{\omega,\varepsilon}}.
\]
\item Put $S_Nf:=\Delta_{-1}f+\sum_{j=0}^N\Delta_jf$.  The scalar collapse
\[
 \partial_xS_Nf
 =\partial_x\Delta_{-1}f+
   \sum_{j=0}^N\frac{\lambda_j}{r_j}d_j^\omega f
\]
converges uniformly precisely when $f\in C_b^1(\R)$ and $f'$ is bounded and uniformly continuous.  Its limit is then $f'$.
\end{enumerate}
\end{theorem}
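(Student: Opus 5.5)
The plan is to treat every assertion as a statement about Fourier multipliers acting on a single dyadic block. The key tool is a rescaling lemma. If $m$ is a smooth symbol supported on the annulus $\{|\xi|\sim1\}$, then the operator with symbol $m(r_j\xi)$ has an $L^1$ kernel whose norm does not depend on $j$, namely $\|\check m\|_{L^1}$. Combined with the dyadic bounds $\|\partial_x^k\Delta_jf\|_\infty\le C_kr_j^{-k}\lambda_j\|f\|_{C^\omega}$ from \cref{sec:modulus-lp}, this turns each estimate into a uniform-in-$j$ symbol estimate on a fixed annulus.

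For (i), I write the centered difference as the multiplier $2i\sin(\varepsilon r_j\xi/2)$. The error $d^\omega_{j,\varepsilon}f-d^\omega_jf$ then has symbol
\[
 \frac{1}{\varepsilon\lambda_j}\Bigl(2i\sin(\varepsilon r_j\xi/2)-i\varepsilon r_j\xi\Bigr)
\]
acting on $\Delta_jf$. I insert a fattened cutoff $\tilde\varphi(r_j\xi)$ that equals one on the support of $\varphi$. In the rescaled variable $\eta=r_j\xi$, the symbol is $\lambda_j^{-1}\varepsilon^{-1}g(\varepsilon\eta)\tilde\varphi(\eta)$ with $g(t)=2i\sin(t/2)-it=O(t^3)$. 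Taylor's theorem with smooth remainder gives $\varepsilon^{-1}g(\varepsilon\eta)=\varepsilon^2\eta^3h(\varepsilon\eta)$, where $h$ is smooth. This yields a uniformly $C^\infty$ symbol on a fixed annulus with all seminorms $O(\varepsilon^2)$, and hence an $L^1$ kernel norm $O(\varepsilon^2)$. Since $\|\Delta_jf\|_\infty\lesssim\lambda_j\|f\|$, the $\lambda_j$ factors cancel.

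Sharpness of the exponent $2$ is tested on $f=\lambda_j\cos(2^j\xi_0x)$ with $\varphi(\xi_0)=1$. This function has $C^\omega$ norm $O(1)$ by concavity of $\omega$, and the error equals $-\varepsilon^2\xi_0^3/24+O(\varepsilon^4)$ times a unimodular function. The same test family gives the criticality claim. If $\sup_j|a_j|\lambda_j/r_j=\infty$, the outputs $a_j\partial_x\Delta_jf_j$ have sup norm $|a_j|\lambda_jr_j^{-1}\xi_0$, which is unbounded. Conversely, the dyadic estimate with $k=1$ gives boundedness.

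For (ii), injectivity and the reconstruction formula reduce to the identity $\Delta_jf=I^\omega_{j,\varepsilon}d^\omega_{j,\varepsilon}f$. This identity holds because the symbol of $I$ times that of $d$ equals $\tilde\varphi(r_j\xi)$, which is one on the support of $\varphi(r_j\xi)$. The condition on $\varepsilon_0$ ensures $\varepsilon\eta\in(0,2\pi)$ on the support of $\tilde\varphi$, so the sine does not vanish there. Summing over $j$ gives $\cI\cD f=\sum_j\Delta_jf$ together with the low block. The remaining analytic point, and the main obstacle, is that $f=\Delta_{-1}f+\sum_j\Delta_jf$ converges uniformly for a merely bounded modulus-continuous $f$ rather than only in $\mathcal S'$. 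I would bound $\|f-S_Nf\|_\infty$ using the low-pass kernel of $S_N$, which has integral one. This gives
\[
 \|f-S_Nf\|_\infty\le\int|K_N(y)|\,|f(x)-f(x-y)|\,dy\lesssim\int|K(z)|\,\omega(r_N|z|)\,dz.
\]
Concavity gives $\omega(r_N|z|)\le(1+|z|)\omega(r_N)$, so the right-hand side is at most a constant times $\omega(r_N)$, which tends to $0$. This presumably is the content of \cref{sec:modulus-lp}. Given that, $\cI\cD=\mathrm{id}$ follows. The identity $\cD\cI=\mathrm{id}$ on the range is then formal, since $\cD$ is injective and every element of the range equals $\cD f$ for some $f$. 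Uniform convergence of the series in \eqref{eq:intro-explicit-reconstruction} is exactly the $S_N$ convergence just established.

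For (iii), the scalar collapse is an identity, since $(\lambda_j/r_j)d^\omega_jf=\partial_x\Delta_jf$, so its left side equals $\partial_xS_Nf$. If $f'$ is bounded and uniformly continuous, then $\partial_xS_Nf=S_N(f')$ converges uniformly to $f'$ by the same kernel argument with the modulus of continuity of $f'$. Conversely, suppose $\partial_xS_Nf\to g$ uniformly. The functions $\partial_xS_Nf$ are bounded and uniformly continuous, since they are smooth with bounded derivative for each $N$. A uniform limit of such functions is again bounded and uniformly continuous, so $g$ is. Since $S_Nf\to f$ uniformly, differentiating the limit (uniform convergence of the derivatives plus pointwise convergence) gives $f\in C^1_b$ with $f'=g$.
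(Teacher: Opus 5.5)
Your proposal is correct and follows the paper's proof essentially step by step. It uses the same pure-frequency family for sharpness and criticality, the same band inversion $I_{j,\varepsilon}^\omega d_{j,\varepsilon}^\omega f=\Delta_jf$ plus uniform Littlewood--Paley reconstruction via the concave scale inequality for (ii), and the same uniform-derivative argument for (iii). The only variation is the $O(\varepsilon^2)$ bound in (i): you factor $\varepsilon^2\eta^3$ out of the rescaled symbol and use an $L^1$ kernel estimate, whereas the paper applies the centered Taylor remainder $\frac{h^2}{24}\|g'''\|_\infty$ to $g=\Delta_jf$, $h=\varepsilon r_j$, followed by the $k=3$ dyadic bound, which is shorter and valid for every $\varepsilon>0$.
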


The inverse in \eqref{eq:intro-explicit-reconstruction} acts on the realized range, not on every bounded sequence of candidate blocks.  Exact reconstruction there needs no additional summability condition on $\omega$; synthesis of arbitrary towers is a separate question treated in \cref{app:free-synthesis}.  The proofs of the theorem are \cref{thm:critical-normalization,thm:matched-limit,prop:matched-sharpness,thm:analysis-reconstruction,cor:classical-collapse}.

Multiplication and smooth composition transfer to these coordinates by the explicit rules
\[
 a\star_{\omega,\varepsilon}b
 :=\cD_{\omega,\varepsilon}
       \bigl(\cI_{\omega,\varepsilon}a\,\cI_{\omega,\varepsilon}b\bigr),
 \qquad
 F_*(a):=\cD_{\omega,\varepsilon}
       \bigl(F(\cI_{\omega,\varepsilon}a)\bigr),
\]
where $a,b\in\cA_{\omega,\varepsilon}$ and $F$ is smooth on an interval containing the relevant range.  Thus the complete scale differential is an invertible coordinate for the original function.  Convergence of a classical derivative is an additional property of its weighted sum, not a condition for constructing the coordinate.

\subsection{Exact quotient rules and higher product defects}

The complete tower is reversible, but its individual operators $d_j^\omega$ are not derivations.  Their product defect satisfies
\begin{equation}\label{eq:intro-product-defect}
 \|d_j^\omega(fg)-f\,d_j^\omega g-g\,d_j^\omega f\|_\infty
 \le C\lambda_j\|f\|_{C^\omega}\|g\|_{C^\omega}.
\end{equation}
Since $\lambda_j\to0$, an exact Leibniz rule appears after discarding vanishing towers.  Define
\[
 \cB:=\ell^\infty(\Nzero;C_b(\R)),
 \qquad
 \cB_0:=\{(u_j)\in\cB:\|u_j\|_\infty\to0\},
 \qquad
 \cQ:=\cB/\cB_0.
\]
The product on $\cB$ is coordinatewise and descends to $\cQ$.  This quotient is called the \emph{scale corona}.  If $q:\cB\to\cQ$ is its quotient map, set
\[
 \delta_\omega f:=q\bigl((d_j^\omega f)_j\bigr),
 \qquad
 \iota(f):=q((f,f,\ldots)).
\]
The constant sequence $\iota(f)$ specifies the action of $A_\omega$ on the quotient.

\begin{theorem}[Corona differential]\label{thm:intro-corona}
The map $\delta_\omega:A_\omega\to\cQ$ is continuous and satisfies
\[
 \delta_\omega(fg)
 =\iota(f)\delta_\omega g+\iota(g)\delta_\omega f,
 \qquad
 \delta_\omega(F(f))=\iota(F'(f))\delta_\omega f,
\]
where $F\in C^2(I)$, $I$ is an open interval, and the closure of $f(\R)$ is a compact subset of $I$.  Its kernel is the explicitly defined little scale space
\[
 c_\Delta^\omega
 :=\{f\in A_\omega:\lambda_j^{-1}\|\Delta_jf\|_\infty\to0\},
\]
and
\[
 \|\delta_\omega f\|_\cQ
 \asymp\limsup_{j\to\infty}\lambda_j^{-1}\|\Delta_jf\|_\infty.
\]
If $M$ is a Banach symmetric $A_\omega$-module and $U:\cB\to M$ is a bounded module map with $\cB_0\subseteq\Ker U$, then there is a unique bounded module map $\overline U:\cQ\to M$ such that $U=\overline Uq$.  Consequently $f\mapsto U((d_j^\omega f)_j)$ factors through $\delta_\omega$ and satisfies the same exact product and chain rules.
\end{theorem}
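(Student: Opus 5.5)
The plan is to derive everything from two analytic inputs: the dyadic bounds $\|\partial_x^k\Delta_jf\|_\infty\le C_kr_j^{-k}\lambda_j\|f\|_{C^\omega}$, and the product defect estimate \eqref{eq:intro-product-defect}. The algebraic identities then follow by passing to the quotient $\cQ$.

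\emph{Continuity.} The bound with $k=1$ gives $\|d_j^\omega f\|_\infty\le C_1\|f\|_{C^\omega}$ uniformly in $j$. Hence $f\mapsto(d_j^\omega f)_j$ is a bounded linear map $A_\omega\to\cB$, and $\delta_\omega=q\circ(\cdot)$ is continuous.

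\emph{Leibniz rule.} By \eqref{eq:intro-product-defect}, the defect tower $d_j^\omega(fg)-f\,d_j^\omega g-g\,d_j^\omega f$ has sup norm $O(\lambda_j)$. Since $\lambda_j=\omega(2^{-j})\to0$, this tower lies in $\cB_0$. Multiplication by the constant sequence $(f,f,\dots)$ is coordinatewise, so the identity $\delta_\omega(fg)=\iota(f)\delta_\omega g+\iota(g)\delta_\omega f$ is exact in $\cQ$.

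\emph{Chain rule.} This is the step I expect to be the main obstacle, because $F\in C^2$ only, so I cannot expand $F(f)$ in a power series or use a paraproduct. I would prove an estimate of the form
\[
\bigl\|d_j^\omega(F(f))-F'(f)\,d_j^\omega f\bigr\|_\infty\le C\bigl(\lambda_j+\text{tail}\bigr)\to0.
\]
The first step is to write $\Delta_j$ as convolution with the rescaled kernel $\psi_j=2^j\psi(2^j\cdot)$. Because $\widehat\psi$ vanishes near zero, $\psi$ has vanishing moments of all orders. This gives
\[
r_j\partial_x\Delta_j(F\circ f)(x)=\int r_j\psi_j'(x-y)\bigl[F(f(y))-F(f(x))-F'(f(x))(f(y)-f(x))\bigr]\,dy+F'(f(x))\,r_j\partial_x\Delta_jf(x).
\]
The bracket is bounded by $\tfrac12\|F''\|\,|f(y)-f(x)|^2\le C\,\omega(|x-y|)^2$. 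Concavity of $\omega$ gives $\omega(sr)\le\max(1,s)\,\omega(r)$, and $r_j\psi_j'$ has rapidly decaying profile at scale $r_j$. Together these bound the integral by $C\lambda_j^2$. Dividing by $\lambda_j$ leaves an error of order $\lambda_j\to0$. The remaining step is to check that the compactness of $\overline{f(\R)}\subset I$ makes $F'$ and $F''$ bounded on the relevant range. (The same convolution argument reproves \eqref{eq:intro-product-defect} with $F(u)=u^2$ and polarization.)

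\emph{Kernel and norm.} For the upper bound, Bernstein's inequality for the annular block $\Delta_j$ gives $\|r_j\partial_x\Delta_jf\|_\infty\le C\|\Delta_jf\|_\infty$. For the lower bound, I would write $\Delta_jf=\tilde I_j(r_j\partial_x\Delta_jf)$, where $\tilde I_j$ is the Fourier multiplier with symbol $(ir_j\xi)^{-1}$ times a cutoff on a slightly larger annulus. Its kernel is an $L^1$-bounded rescaling independent of $j$, so $\|\Delta_jf\|_\infty\le C\|r_j\partial_x\Delta_jf\|_\infty$. Combining the two bounds,
\[
\|\delta_\omega f\|_\cQ=\limsup_j\|d_j^\omega f\|_\infty\asymp\limsup_j\lambda_j^{-1}\|\Delta_jf\|_\infty.
\]
Here I use that the quotient norm of $q(u)$ equals $\limsup_j\|u_j\|_\infty$: altering finitely many coordinates costs nothing, and subtracting a $\cB_0$ tower cannot lower the $\limsup$. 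The kernel identification $\Ker\delta_\omega=c_\Delta^\omega$ follows immediately from this equivalence.

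\emph{Universal property.} I would first check that $\cB_0$ is a closed ideal of $\cB$ and is stable under the $A_\omega$-action by constants. Then $\cQ$ is a Banach $A_\omega$-module, and the quotient norm is compatible with the action. For a bounded module map $U$ with $\cB_0\subseteq\Ker U$, set $\overline U(q(u)):=U(u)$. This is well defined because $\cB_0\subseteq\Ker U$, bounded because $\|U(u)\|\le\|U\|\inf_{v\in\cB_0}\|u+v\|$, and a module map because $q$ is one. Uniqueness holds because $q$ is surjective. Finally, $f\mapsto U((d_j^\omega f)_j)=\overline U\delta_\omega f$ inherits the product and chain rules, since $\overline U$ commutes with multiplication by $\iota(\cdot)$.
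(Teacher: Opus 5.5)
Your proposal is correct and takes essentially the same route as the paper. Both use the zero-mean kernel representation of $d_j^\omega$ with a two-increment (Taylor) remainder, which together with the concave scale inequality gives $O(\lambda_j)$ Leibniz and chain defects. Both then pass to the quotient, use Bernstein together with the band inverse $I_{j,0}^\omega$ for the kernel and norm identification, and use the Banach quotient factorization for the universal property. Your derivation of the product defect from the $F(u)=u^2$ chain estimate by polarization is only a harmless variant of the paper's direct increment identity.
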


Here symmetric means that the left and right module actions coincide.  The last statement gives the precise universal property: it applies to bounded module-valued operations on the raw tower that annihilate vanishing sequences.  The quotient differential retains the nonvanishing critical part, whereas the complete coordinate also retains the low block and subcritical information.  The two objects therefore serve different purposes.  The theorem follows from \cref{thm:exact-corona-calculus,thm:universal-exactification,thm:corona-kernel}; nonsmooth derivations on Lipschitz algebras provide a complementary context \cite{Weaver2000}.

Higher nonlinear information is retained by normalizing the defects before taking the quotient.  For $[n]:=\{1,\ldots,n\}$ and the empty-product convention $\prod_\varnothing f_i=1$, define
\[
 \Phi_{j,n}^\omega(f_1,\ldots,f_n)
 :=\sum_{S\subseteq[n]}(-1)^{n-|S|}
 d_j^\omega\!\left(\prod_{i\in S}f_i\right)\prod_{i\notin S}f_i.
\]
For $n=1$ this is $d_j^\omega f_1$; for $n=2$ it is the defect in \eqref{eq:intro-product-defect}.  The exact kernel contains one increment of each argument, giving
\[
 \|\Phi_{j,n}^\omega(f_1,\ldots,f_n)\|_\infty
 \le C_n\lambda_j^{n-1}\prod_{i=1}^n\|f_i\|_{C^\omega}.
\]
Hence
\[
 \kappa_n^\omega(f_1,\ldots,f_n)
 :=q\bigl((\lambda_j^{1-n}\Phi_{j,n}^\omega(f_1,\ldots,f_n))_j\bigr)
\]
is defined.  It is symmetric and is a derivation in each argument, with $\kappa_1^\omega=\delta_\omega$.  In particular these maps are Hochschild cocycles with coefficients in $\cQ$; their role here is to record the leading terms in exact product expansions and finite Taylor--Fa\`a di Bruno chain expansions.  The formulas and remainder estimates are given in \cref{sec:higher-defects}.

\subsection{From scale coordinates to quantitative rough enhancement}

For the H\"older modulus $\omega(r)=r^\alpha$, amplitude reinsertion gives
\[
 \lambda_j=2^{-\alpha j},
 \qquad
 \lambda_jd_j^\alpha f=r_j\partial_x\Delta_jf.
\]
On the circle $\T:=\R/(2\pi\Z)$, with periodic frequency blocks, the two-channel coordinate
\[
 Z_j(f):=\bigl(\Delta_jf,r_j\partial_x\Delta_jf\bigr)
         =\bigl(\Delta_jf,\lambda_jd_j^\alpha f\bigr)
\]
therefore has both value and derivative channels at the same amplitude.  The value-channel projection recovers the original path.  More general uniformly bounded linear combinations of the two channels produce finite-dimensional dyadic paths, as specified in \cref{thm:amplitude-reinsertion}.

The common input space is $\cS^\alpha(E)$, consisting of smooth periodic blocks $x=(x_j)_{j\ge-1}$ with fixed low-frequency support for $x_{-1}$, annular support at frequency $2^j$ for $x_j$, and finite norm
\[
 \|x\|_{\cS^\alpha}
 :=\|x_{-1}\|_{C^1}
 +\sup_{j\ge0}\left(2^{\alpha j}\|x_j\|_\infty
                   +2^{(\alpha-1)j}\|\partial_xx_j\|_\infty\right).
\]
Its synthesized path and frequency cutoffs are
\[
 X:=\sum_{j\ge-1}x_j,
 \qquad X_N:=\sum_{-1\le j\le N}x_j.
\]
For $0<\gamma<\alpha<1$, these satisfy
\[
 \|X-X_N\|_{C^\gamma}\le C2^{-(\alpha-\gamma)N}\|x\|_{\cS^\alpha}.
\]
The term ``ultraviolet cutoff'' below refers exactly to retaining the blocks with $j\le N$.

We use one fixed \emph{Fourier-normal-ordering} (FNO) scheme.  This is Unterberger's construction of iterated-integral coordinates by ordering frequency interactions, regularizing the integration-tree formulas, and preserving Chen and shuffle identities \cite{UnterbergerFNO2010}.  Its existence and lower-exponent geometric approximation conclusions are imported in \cref{thm:unterberger}.  The estimates proved here concern this fixed scheme, not a scheme-independent canonical enhancement.

To state the estimates, for two step-$m$ rough paths on the same interval define
\begin{align*}
 d_{\beta,m}^{\mathrm{lay}}(\mathbf X,\mathbf Y)
 &:=\max_{1\le r\le m}\sup_{s<t}
   \frac{\|\pi_r(\mathbf X_{s,t}-\mathbf Y_{s,t})\|}{|t-s|^{r\beta}},\\
 d_{\beta,m}^{\mathrm{hom}}(\mathbf X,\mathbf Y)
 &:=\max_{1\le r\le m}\sup_{s<t}
   \frac{\|\pi_r(\mathbf X_{s,t}-\mathbf Y_{s,t})\|^{1/r}}{|t-s|^\beta}.
\end{align*}
These are the layerwise and homogeneous coordinate metrics used throughout.  The $r$th root in the second metric is the source of the different stability exponents.

For a periodic path $Y$, fix a smooth compactly supported cutoff $\chi_0$ equal to one near $[0,2\pi]$, let $\widetilde Y$ be the periodic extension to $\R$, and set
\[
 LY:=\chi_0(\widetilde Y-Y(0)).
\]
Thus $LY$ is based at zero and has the same increments as $Y$ on the cut interval.  Write $\cR_\gamma$ for the selected FNO map on compactly supported $\gamma$-H\"older paths, and set
\[
 \mathbf X^N:=\cR_\gamma(LX_N)\vert_{[0,2\pi]},
 \qquad
 \mathbf X:=\cR_\gamma(LX)\vert_{[0,2\pi]}.
\]

\begin{theorem}[Quantitative scale--FNO enhancement]\label{thm:intro-rough-enhancement}
Let $E$ be finite-dimensional and choose
\[
 \frac1{m+1}<\beta<\gamma<\alpha\le\frac1m,
 \qquad 0<\alpha<1.
\]
For $x\in\cS^\alpha(E)$, the preceding construction yields a step-$m$ strong geometric $\beta$-H\"older lift above $X$.  On every ball $\|x\|_{\cS^\alpha}\le R$,
\[
 \begin{aligned}
 d_{\beta,m}^{\mathrm{lay}}(\mathbf X^N,\mathbf X)
 &\le C_R2^{-(\alpha-\gamma)N},\\
 d_{\beta,m}^{\mathrm{hom}}(\mathbf X^N,\mathbf X)
 &\le C_R2^{-(\alpha-\gamma)N/m}.
 \end{aligned}
\]
For two inputs in that ball, put $\delta:=\|x-y\|_{\cS^\alpha}\le1$.  Then
\[
 d_{\beta,m}^{\mathrm{lay}}(\mathbf X[x],\mathbf X[y])\le C_R\delta,
 \qquad
 d_{\beta,m}^{\mathrm{hom}}(\mathbf X[x],\mathbf X[y])\le C_R\delta^{1/m}.
\]
The constants may depend on the exponents, the fixed localization and FNO scheme, and the radius, but not on $N$.
\end{theorem}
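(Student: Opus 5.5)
The plan is to reduce the theorem to stability and continuity properties of the fixed FNO map $\cR_\gamma$ at exponent $\gamma$, followed by an interpolation step that lowers the exponent from $\gamma$ to $\beta$. First, the synthesis estimate stated before the theorem gives
\[
 \|X-X_N\|_{C^\gamma}\le C2^{-(\alpha-\gamma)N}\|x\|_{\cS^\alpha}.
\]
Since $\|x-y\|_{\cS^\alpha}=\delta$ and synthesis is linear, the same estimate also gives $\|X[x]-X[y]\|_{C^\gamma}\le C\delta$. The localization $L$ is a bounded linear map from periodic $C^\gamma$ paths to compactly supported, zero-based $C^\gamma$ paths, because $\chi_0$ is smooth with compact support. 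It preserves increments on $[0,2\pi]$, so after restriction the lifts lie above $X_N$ and $X$. Existence of the strong geometric $\beta$-H\"older lift above $X$ then follows directly from \cref{thm:unterberger}, applied to $LX\in C^\gamma$ with $\beta<\gamma$.

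The core step is a local Lipschitz estimate for the fixed scheme:
\[
 d^{\mathrm{lay}}_{\gamma',m}\bigl(\cR_\gamma(Y),\cR_\gamma(Y')\bigr)\le C_R\|Y-Y'\|_{C^\gamma}
\]
for some $\gamma'\in(\beta,\gamma]$ (or directly with $\gamma'=\beta$), on $C^\gamma$-balls of radius $R$. I would derive it from multilinearity. Each level-$r$ FNO coordinate is a finite sum over ordered trees of regularized $r$-linear integrals in the Fourier blocks of the driver. The difference $\pi_r\cR(Y)-\pi_r\cR(Y')$ telescopes into $r$ terms, each containing exactly one factor $Y-Y'$ and the remaining factors drawn from $Y$ or $Y'$. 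The estimates behind \cref{thm:unterberger} bound each multilinear tree functional by $C\prod\|Y_i\|_{C^\gamma}\,|t-s|^{r\gamma'}$. They must be applied to mixed arguments, not just the diagonal, which requires that those estimates be genuinely multilinear. This is the main obstacle: the imported theorem is stated for a single path. I would first check that Unterberger's bounds on the regularized tree integrals, namely the dyadic sums over frequency-ordered tuples of the form $\sum 2^{-\gamma(j_1+\dots+j_r)}$ against the $|t-s|$ scale, use only the $C^\gamma$ norms of the individual entries. If so, polarization gives the mixed bound with constant $C_R=C(1+R)^{m-1}$.

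With the Lipschitz bound in hand, take $Y=LX_N$, $Y'=LX$ (respectively $LX[x]$, $LX[y]$), and restrict to $[0,2\pi]$. Any loss from $\gamma$ to $\gamma'\ge\beta$ is harmless, since on a bounded interval $|t-s|^{r\gamma'}\le C|t-s|^{r\beta}$. This yields the layerwise bounds $C_R2^{-(\alpha-\gamma)N}$ and $C_R\delta$, with constants independent of $N$ because $\|x_N\|_{\cS^\alpha}\le\|x\|_{\cS^\alpha}$.

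For the homogeneous metric, take $r$th roots level by level. For $\rho\le1$ one has $\rho^{1/r}\le\rho^{1/m}$, so every level is bounded by $C_R^{1/r}$ times $\epsilon^{1/m}$, where $\epsilon$ is the layerwise rate. The constant is absorbed by enlarging $C_R$. The remaining case, where the layerwise distance exceeds one, only occurs for finitely many $N$ or for $\delta$ bounded below, and there both sides are bounded by a constant depending on $R$. Strong geometricity of $\mathbf X^N$ is immediate because $X_N$ is smooth and FNO reproduces its signature. The limit $\mathbf X$ inherits strong geometricity from the layerwise convergence $\mathbf X^N\to\mathbf X$ in the $\beta$-topology.
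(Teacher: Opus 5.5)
Your plan follows the paper's proof of \cref{thm:rough-enhancement}. The mixed multilinear FNO estimate (\cref{prop:mixed-fno}) is telescoped into the local Lipschitz bound (\cref{cor:fno-local-lipschitz}) and applied to $LX-LX_N$ and to $LX[x]-LX[y]$. The exponent is then lowered from $r\gamma$ to $r\beta$ on the compact interval, and $r$th roots give the homogeneous rates. The check you defer is exactly what \cref{lem:mixed-regularized-tree} carries out. Unterberger's dyadic weights and constraints are attached to vertices and do not depend on which path sits at a vertex. Placing $Y_{a(v)}$ at vertex $v$ therefore changes only the vertex factor, bounded by $\norm{\psi_{k_v}(D)Y_{a(v)}}_\infty\le C2^{-\abs{k_v}\gamma}\norm{Y_{a(v)}}_{C^\gamma}$. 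The paper adds two steps you do not mention. It uses output-shell projections and \cref{lem:two-time-besov-reconstruction} to reach the endpoint exponent $r\gamma$ rather than some $\gamma'<\gamma$. It then uses diagonal recovery and the polarization identity to identify the mixed map with $P_{\gamma,r}$ and to return to real tensors.

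One statement in your proposal is false. The fixed FNO scheme does not reproduce the signature of the smooth cutoff $X_N$, because the regularization discards frequency regions. The paper stresses that $\mathbf X^N$ is not the unmodified signature of $X_N$. The driver $G_N^{-1}\dot G_N$ of $\mathbf X^N$ generally has nonzero higher-degree components $A_{r,N}=-\Gamma_{r,N}$ (\cref{def:typed-objects}), whereas a signature's driver is purely degree one. So your final argument, which derives strong geometricity of $\mathbf X$ as a limit of strongly geometric $\mathbf X^N$, rests on a wrong premise as written.

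The argument is not needed, however. As in your first paragraph and in the paper, strong geometricity of $\mathbf X$ at exponent $\beta$ follows directly from the approximation conclusion of \cref{thm:unterberger} applied to $LX$, and restriction to $[0,2\pi]$ preserves it. If you also want each $\mathbf X^N$ to be strong geometric, apply the same theorem to $LX_N$.
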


The proof assigns separate inputs to the integration vertices in the regularized FNO formulas, proves the mixed multilinear estimate, and applies it to the cutoff tail; see \cref{prop:mixed-fno,thm:rough-enhancement}.  The object $\mathbf X^N$ is the selected FNO lift of the smooth cutoff $X_N$.  It is not identified with the unmodified signature of $X_N$.  Strong geometricity uses the bounded-variation approximation supplied by the FNO theorem at a strictly lower exponent.  This distinction is essential when ordinary smooth approximations do not select the desired area.

For a based path $x$ on $[0,T]$, a fixed loop extension records $x$ on the first half-circle and returns along the same path on the second:
\[
 (\mathscr L_Tx)(\theta):=
 \begin{cases}
 x(T\theta/\pi),&0\le\theta\le\pi,\\
 x(T(2\pi-\theta)/\pi),&\pi\le\theta\le2\pi.
 \end{cases}
\]
Apply periodic scale analysis, the same localization and FNO scheme, and then restrict to the first half-circle with the inverse time change.  Let $\mathfrak R_T^{\alpha,\beta}$ denote the resulting map, and let $\mathscr{RP}_{\beta,m}([0,T];E)$ be the space of step-$m$ strong geometric $\beta$-H\"older rough paths with the preceding rough-path topology.

\begin{theorem}[Borel scale--FNO enhancement on a finite interval]\label{thm:intro-borel-lift}
Under the exponent assumptions of \cref{thm:intro-rough-enhancement}, the fixed construction gives
\[
 \mathfrak R_T^{\alpha,\beta}:
 C_0^\alpha([0,T];E)\longrightarrow
 \mathscr{RP}_{\beta,m}([0,T];E),
 \qquad
 \pi_1\mathfrak R_T^{\alpha,\beta}(x)_{s,t}=x_{s,t}.
\]
It has the same cutoff rates and local stability estimates, with constants also depending on $T$.  It admits a Borel realization on the Borel set
\[
 \Omega_{\alpha,T}(E):=\{x\in C_0([0,T];E):[x]_\alpha<\infty\}
\]
for the uniform-path Borel sigma-algebra.

Consequently, for every $0<H<1$ and $0<\beta<H$, a full-$\mu_H^{(d)}$ Borel set carries a Borel strong geometric $\beta$-H\"older lift of $B^H$, through step $\lfloor1/\beta\rfloor$.
\end{theorem}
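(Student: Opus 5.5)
The plan is to reduce everything to \cref{thm:intro-rough-enhancement} by composing three explicit maps: the loop extension $\mathscr L_T$, the periodic scale analysis $X\mapsto x=(\Delta_jX)_{j\ge-1}$, and the fixed FNO lift followed by restriction and inverse time change.

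\emph{Step 1: the loop extension.} First check that $\mathscr L_T$ maps $C_0^\alpha([0,T];E)$ boundedly and linearly into $\alpha$-H\"older periodic paths on $\T$, with norm depending on $T$. This holds because the two halves agree at $\theta=\pi$, and $\mathscr L_Tx(2\pi)=x(0)=0=\mathscr L_Tx(0)$, so the loop closes continuously. The H\"older estimate across $\theta=\pi$ and across $2\pi\equiv0$ follows from the triangle inequality through the junction point.

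\emph{Step 2: scale analysis into $\cS^\alpha(E)$.} Next, the periodic Littlewood--Paley blocks of a periodic $\alpha$-H\"older path give an element of $\cS^\alpha(E)$ with
\[
\|x\|_{\cS^\alpha}\le C\|\mathscr L_Tx\|_{C^\alpha}.
\]
This is the periodic analogue of the dyadic estimates $\|\partial_x^k\Delta_jf\|_\infty\le C_kr_j^{-k}\lambda_j\|f\|_{C^\omega}$ with $k=0,1$, together with the low-block $C^1$ bound. The synthesized path $X=\sum_jx_j$ equals $\mathscr L_Tx$, since $\alpha>0$ gives uniform convergence of the blocks.

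\emph{Step 3: lift, restrict, and transfer estimates.} Applying \cref{thm:intro-rough-enhancement} yields a step-$m$ strong geometric $\beta$-H\"older lift on $[0,2\pi]$ above $X$. We then restrict to $[0,\pi]$ and reparametrize by $\theta=\pi t/T$. Restriction and affine reparametrization preserve Chen's identity, group-likeness, and strong geometricity: bounded-variation approximants restrict and reparametrize to bounded-variation approximants. They also change H\"older constants only by factors $(\pi/T)^{r\beta}$. Hence $\pi_1\mathfrak R_T^{\alpha,\beta}(x)_{s,t}=x_{s,t}$. The cutoff rates and stability estimates transfer because both steps above are bounded linear maps, which turn balls into balls, and because $d^{\mathrm{lay}}$ and $d^{\mathrm{hom}}$ are only rescaled.

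\emph{Main obstacle: Borel measurability.} The hard step is producing a Borel realization on $\Omega_{\alpha,T}(E)$ for the uniform topology, since $C_0^\alpha$ is nonseparable and the map is continuous only in its own topology. I would proceed as follows.
\begin{enumerate}[label=(\alph*),leftmargin=2.4em]
\item Show that $\Omega_{\alpha,T}(E)$ is Borel. Write $[x]_\alpha$ as a supremum over rational $s,t$ of continuous functionals, so $[x]_\alpha$ is a lower semicontinuous function of the uniform topology.
\item Note that each block $\Delta_j\mathscr L_Tx$ is a convolution of $\mathscr L_Tx$ with a smooth kernel. It is therefore uniformly continuous in $x$, so each $x\mapsto x_j$ is continuous.
\item For fixed $N$, the map $x\mapsto\mathbf X^N$ is the FNO lift of a smooth path depending continuously on finitely many blocks. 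Its evaluation at each rational pair $(s,t)$ should be continuous on each $C^\alpha$ ball, using the stability estimate, or at least Borel measurable.
\item By the cutoff rate, $\mathbf X^N_{s,t}\to\mathbf X_{s,t}$ pointwise on $\Omega_{\alpha,T}(E)$, so each coordinate at rational times is Borel as a pointwise limit of Borel maps.
\item Continuity in $(s,t)$ extends this to all times, so the whole lift is a Borel map into $\mathscr{RP}_{\beta,m}$.
\end{enumerate}
The delicate point is (c): it requires that the FNO scheme applied to smooth finite-block inputs be measurable. If continuity on balls is unclear, I would instead use the stability estimate on $\{[x]_\alpha\le R\}$, showing the map is continuous there for the $C^{\alpha'}$ norm with $\alpha'<\alpha$. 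On that set this norm is Borel for the uniform topology, and a countable union over $R$ then completes the argument.

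\emph{Consequence for $B^H$.} Given $0<\beta<H$, choose $\alpha,\gamma$ with $\beta<\gamma<\alpha<H$ and $\alpha\le1/m$, where $m=\lfloor1/\beta\rfloor$. This is possible because $\beta>1/(m+1)$: take $\alpha<\min(H,1/m)$ but above $\beta$. By Kolmogorov continuity, $\mu_H^{(d)}(\Omega_{\alpha,T}(\R^d))=1$, and applying the Borel map $\mathfrak R_T^{\alpha,\beta}$ gives the claimed full-measure Borel lift through step $\lfloor1/\beta\rfloor$.
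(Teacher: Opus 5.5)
Your argument for the first part matches the paper's route: loop extension, periodic blocks in $\cS^\alpha(E)$, \cref{thm:intro-rough-enhancement}, restriction and affine time change, then Borel cutoff lifts and a pointwise limit.

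Step~(c) does not need to be hedged. For fixed $N$, the cutoff path is a trigonometric polynomial with a fixed finite set of frequencies. Hence $x\mapsto LX_N$ is continuous from the uniform topology into $C^\gamma$ on all of $C_0([0,T];E)$, and \cref{cor:fno-local-lipschitz} makes each cutoff lift continuous, hence Borel. The cutoffs converge in the rough-path metric at every point of $\Omega_{\alpha,T}(E)$, so the limit is Borel directly. This avoids the detour through rational evaluations in (d)--(e).

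The genuine gap is in the fractional-Brownian consequence. You need $\beta<\gamma<\alpha\le 1/m$ with $m=\lfloor1/\beta\rfloor$, and you justify it by $\beta>1/(m+1)$. The binding constraint is actually $\beta<1/m$, which fails exactly when $1/\beta=n\in\N$ (for instance $\beta=1/3<H=0.4$, or $\beta=1/2<H$). Then $1/m=\beta$ and the window $(\beta,\min(H,1/m))$ is empty. In fact no integer $m$ satisfies $\frac1{m+1}<\beta<\gamma<\alpha\le\frac1m$, so \cref{thm:intro-rough-enhancement} cannot be applied at exponent $\beta$ at all. The statement nonetheless claims a step-$\lfloor1/\beta\rfloor$ lift for every $0<\beta<H$.

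The missing idea is to construct the lift at a higher exponent and then extend it, as the paper does:
\begin{enumerate}[label=(\arabic*),leftmargin=2.4em]
\item Choose $\alpha\in(\beta,H)$ with no reciprocal $1/k$ strictly between $\beta$ and $\alpha$.
\item Let $m$ be given by $\frac1{m+1}<\alpha\le\frac1m$; this gives $m=n-1$ when $\beta=1/n$.
\item Pick $\beta<\beta_0<\gamma<\alpha$ and build the Borel step-$m$ strong geometric $\beta_0$-H\"older lift.
\item Apply the Lyons extension of \cref{thm:controlled-extension} to reach step $\lfloor1/\beta\rfloor$.
\end{enumerate}
That extension map is continuous, so Borelness survives, and it preserves strong geometricity. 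Moreover, $\beta_0$-H\"older bounds imply $\beta$-H\"older bounds on $[0,T]$. Without this step your argument covers only the case $1/\beta\notin\N$.
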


The Borel statement follows from the smooth frequency cutoffs and their pointwise convergence in the rough-path metric, not just from continuity in the stronger H\"older norm.  At reciprocal roughness exponents the construction first uses an intermediate exponent and then the controlled finite-step extension; see \cref{sec:interval-lift,thm:controlled-extension}.  The loop extension and the frequency regularization may use information outside a requested subinterval.  Below one quarter, \cref{thm:phase-transition}(i) proves that no measurable construction can remove this dependence while retaining wordwise locality on a positive-probability Borel domain.

\subsection{The mechanism behind the exponent one quarter}

The negative theorem starts from a finite Borel coordinate, not an $L^2$ random variable.  Its proof extracts a uniform Hilbert-space estimate from that weak assumption and then contradicts it with explicit high-frequency shifts.

Let $\Hh_H$ be the Cameron--Martin Hilbert space of the scalar fractional-Brownian law: its elements are the deterministic shifts under which the Gaussian measure class is preserved.  On suitable smooth shifts the relevant deterministic bilinear form is
\[
 b(h,k):=\int_0^T h(u)\,\dd k(u).
\]
For finite-dimensional subspaces with orthonormal bases $(e_p)$ and $(f_q)$, the squared Hilbert--Schmidt norm of the restricted form is
\[
 \|b\|_{\HS}^2:=\sum_{p,q}|b(e_p,f_q)|^2.
\]
The proof shows that a positive-domain local Chen coordinate would force these finite-dimensional norms to remain uniformly bounded.

There are three steps to this implication.  First, finite-dimensional Gaussian resampling retains the common residual and independently replaces two coordinate blocks.  If the Borel domain has mass $\beta_0>0$, the four resampled corners needed for a mixed difference lie in the domain with probability at least $\beta_0^4$.  Second, locality and Chen's identity identify the mixed rectangle, for shifts with separated derivative supports, with the Young bilinear form.  Third, Borel finiteness supplies a bounded-value event of positive probability, and Gaussian polynomial anti-concentration converts that event into a dimension-free Hilbert--Schmidt bound \cite{CarberyWright2001}.  No moment estimate for the proposed coordinate is assumed at the start.

Oscillatory Cameron--Martin packets contradict the resulting bound.  On the packet subspaces of size $K$, the same forced form satisfies
\[
 \|b\|_{\HS}^2\gtrsim\sum_{k=1}^K k^{-4H}.
\]
This grows as $K^{1-4H}$ when $H<1/4$ and as $\log K$ when $H=1/4$.  The obstruction therefore includes the critical endpoint and does not depend on a chosen lift or regularization scheme.

Above one quarter, the complementary argument starts with a local lift and subtracts the canonical one.  The second-level difference is local, additive, and $2\eta$-H\"older.  The local additive rigidity theorem proves that such a term is deterministic when its order exceeds $1/2$; here $2\eta>1/2$.  If a third level is required, subtracting the two Wiener terms in \eqref{eq:intro-level-three-classification} leaves a local additive remainder of order $3\eta>1/2$, which is again deterministic.  The tensor identities force the corrections to lie in the corresponding free-Lie spaces.  This proves the displayed classification formulas and their integrability.  Above $H=1/2$, the graded exponent of an additive difference at every level $r\ge2$ is greater than one, so subdivision forces it to vanish and Young uniqueness follows.

\subsection{Relation to previous work and organization}

The paper separates three objects that should not be identified: a complete scale coordinate, its asymptotic quotient differential, and a selected rough enhancement.  The complete coordinate reconstructs the function; the corona quotient records its critical high-frequency differential class; a rough enhancement supplies multiplicative iterated-integral data.  On the stochastic side, existence of such data is distinct from the requirement that each word coordinate be measurable from the corresponding interval observations.

The analytic background consists of Young integration, classical Littlewood--Paley estimates, rough-path extension, and Unterberger's FNO existence theorem \cite{Young1936,BahouriCheminDanchin2011,Triebel1983,LyonsVictoir2007,FrizVictoir2010,UnterbergerFNO2010}.  The prior local classification under a square-integrability assumption is due to Chevyrev and Ferrucci \cite{ChevyrevFerrucci2026}.  The contributions developed here are the matched scale calculus and its exact reconstruction and quotient rules; the mixed-input, quantitative scale-to-FNO interface; positive-domain nullity without moments below and at one quarter; automatic integrability and deterministic correction formulas above one quarter; and the resulting Heisenberg flow theorem.

Part~I proves the deterministic statements in dependency order.  Sections~\ref{sec:modulus-lp}--\ref{sec:matched-reconstruction} establish the modulus estimates, sharp normalization, matched limit, and reconstruction.  Sections~\ref{sec:corona}--\ref{sec:higher-defects} treat the corona differential and higher defects.  Sections~\ref{sec:periodic-interface}--\ref{sec:fno} connect the complete coordinates to quantitative rough enhancement.  Sections~\ref{sec:typed-reconstruction}--\ref{sec:extension} develop the smooth-cutoff logarithmic and differential reconstruction, scheme dependence, and controlled extension to higher tensor order.

Part~II begins with the finite-interval Borel construction in \cref{sec:interval-lift}.  It then proves positive-domain nullity by Gaussian rectangles and Cameron--Martin packets, establishes local additive rigidity and the passage from relative domains to local full-law versions, derives automatic integrability and the classification formulas, and concludes with the Young regime and the Heisenberg equation.  The appendices discuss free synthesis outside the realized range, changes of frequency coordinates, and low-degree logarithmic formulas.

\part{Exact scale coordinates and deterministic rough enhancement}

\section{The modulus algebra and dyadic estimates}\label{sec:modulus-lp}

The normalization in \eqref{eq:intro-infinitesimal} is meaningful only after the critical size of a block at wavelength $r_j$ has been fixed.  We therefore begin with a modulus $\omega$ and derive every scale estimate from its increment geometry.  The only frequency-analysis input is a fixed smooth inhomogeneous Littlewood--Paley pair.

The modulus must compare amplitudes at different wavelengths without introducing constants depending on $j$.  Concavity supplies exactly this comparison.

\begin{definition}[Admissible modulus]\label{def:admissible-modulus}
An \emph{admissible modulus} is a continuous function
\[
 \omega:[0,1]\longrightarrow[0,\infty)
\]
such that
\[
 \omega(0)=0,
 \qquad
 \omega(r)>0\quad(0<r\leq1),
\]
\[
 \omega\ \text{is increasing},
 \qquad
 \omega\ \text{is concave}.
\]
We extend $\omega$ to $[0,\infty)$ by $\omega(r):=\omega(1)$ for $r\geq1$.  For
\[
 r_j:=2^{-j},
 \qquad
 \lambda_j:=\omega(r_j),
\]
the number $\lambda_j$ is the amplitude declared critical at frequency $2^j$.  In particular,
\[
 \lambda_j\longrightarrow0
 \qquad (j\to\infty).
\]
\end{definition}

The next inequality is the scale-comparison mechanism used in every kernel estimate below.

\begin{lemma}[Concave scale inequality]\label{lem:concave-scale}
For every admissible modulus $\omega$, every $0<r\leq1$, and every $z\in\R$,
\begin{equation}\label{eq:concave-scale}
 \omega(r\abs z)\leq (1+\abs z)\omega(r).
\end{equation}
\end{lemma}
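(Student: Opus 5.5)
The inequality follows from two standard consequences of concavity together with $\omega(0)=0$: subadditivity and monotonicity. The constant extension beyond $1$ has to be handled, so I will first check that the extended function on $[0,\infty)$ is still increasing, concave, and subadditive. It is increasing because $\omega$ is increasing on $[0,1]$ and constant afterwards. For subadditivity, concavity on $[0,1]$ with $\omega(0)=0$ gives $\omega(\theta s)\ge\theta\omega(s)$ for $0\le\theta\le1$. For $a,b\ge0$ with $a+b\le1$ and $a+b>0$, this yields
\[
\omega(a)+\omega(b)\ge \tfrac{a}{a+b}\,\omega(a+b)+\tfrac{b}{a+b}\,\omega(a+b)=\omega(a+b).
\]
If $a+b>1$, I use $\omega(a+b)=\omega(1)$ and split into cases:
\begin{itemize}
\item if $a\ge1$ or $b\ge1$, the bound is immediate from monotonicity;
\item otherwise, $\omega(1)=\omega(a+(1-a))\le\omega(a)+\omega(1-a)\le\omega(a)+\omega(b)$, since $1-a<b$.
\end{itemize}
So the extended $\omega$ is subadditive on $[0,\infty)$.

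Next comes the main step. Let $n:=\lceil |z|\rceil$, with $n=0$ when $z=0$, in which case the claim is trivial. Since $r|z|\le nr$, monotonicity and $n$-fold subadditivity give
\[
\omega(r|z|)\le\omega(nr)\le n\,\omega(r)\le(1+|z|)\,\omega(r),
\]
because $n<1+|z|$. Here $\omega(r)$ is the genuine value on $[0,1]$ since $0<r\le1$, and the only place the extension enters is through $\omega(nr)$, which is covered by the extended subadditivity.

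The only point needing care is the extension. The extended function is not linear past $1$, but it is constant there, so subadditivity survives by the case analysis above. Nothing else is an obstacle.
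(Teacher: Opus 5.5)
Your argument is correct, but it takes a different route from the paper. The paper never uses subadditivity. It splits into the three regimes $|z|\le1$, $1<|z|\le r^{-1}$ and $|z|>r^{-1}$:
\begin{itemize}
\item in the first it uses monotonicity;
\item in the second it uses that $t\mapsto\omega(t)/t$ is non-increasing, which follows from concavity and $\omega(0)=0$;
\item in the third it uses the constant extension together with $\omega(r)\ge r\,\omega(1)$.
\end{itemize}

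You instead first show that the constantly extended modulus is subadditive on $[0,\infty)$. Your case analysis for $a+b>1$ is the only place the extension needs care, and you handle it correctly. After that, the single chain $\omega(r|z|)\le\omega(\lceil|z|\rceil r)\le\lceil|z|\rceil\,\omega(r)$ covers every $z$ at once.

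The paper's route is a little shorter and gives the slightly sharper factor $\max\{1,|z|\}$ in place of $\lceil|z|\rceil$. Your route front-loads the work into a reusable subadditivity fact and never has to separate $r|z|\le1$ from $r|z|>1$. It is in the same spirit as the subdivision argument the paper uses later in the scale-matched increment estimate.

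One small expository point: you announce that you will check the extended function is concave, but you never do, and you never need it. Only monotonicity and subadditivity of the extension are used, so that word can be dropped.
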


\begin{proof}
If $\abs z\leq1$, monotonicity gives
\[
 \omega(r\abs z)\leq\omega(r).
\]
If $1<\abs z\leq r^{-1}$, concavity and $\omega(0)=0$ imply that $t\mapsto\omega(t)/t$ is decreasing.  Hence
\[
 \frac{\omega(r\abs z)}{r\abs z}
 \leq
 \frac{\omega(r)}r,
\]
and therefore
\[
 \omega(r\abs z)\leq\abs z\,\omega(r).
\]
Finally, if $\abs z>r^{-1}$, then the constant extension gives
\[
 \omega(r\abs z)=\omega(1).
\]
Concavity also gives $\omega(r)\geq r\omega(1)$, while $r\abs z>1$; consequently
\[
 \omega(1)\leq\abs z\,\omega(r).
\]
The three cases imply \eqref{eq:concave-scale}.
\end{proof}

The ambient algebra encodes exactly the increment control supplied by $\omega$.

\begin{definition}[Modulus algebra]\label{def:modulus-algebra}
For an admissible modulus $\omega$, define
\[
 A_\omega:=C_b^\omega(\R)
\]
to be the real vector space of bounded continuous functions $f:\R\to\R$ for which
\[
 [f]_\omega
 :=
 \sup_{0<\abs{x-y}\leq1}
 \frac{\abs{f(x)-f(y)}}{\omega(\abs{x-y})}
 <\infty.
\]
We use the norm
\[
 \norm f_{C^\omega}:=\norm f_\infty+[f]_\omega.
\]
The space $A_\omega$ is the domain of every commutative construction in Sections \ref{sec:modulus-lp}--\ref{sec:higher-defects}.
\end{definition}

Pointwise multiplication must remain inside the domain because the corona differential will be a derivation on this algebra.

\begin{proposition}[Banach-algebra structure]\label{prop:modulus-banach-algebra}
The pointwise product makes $A_\omega$ a commutative unital Banach algebra, and
\begin{equation}\label{eq:modulus-product}
 \norm{fg}_{C^\omega}
 \leq
 \norm f_{C^\omega}\norm g_{C^\omega}
 \qquad(f,g\in A_\omega).
\end{equation}
If $I\subset\R$ is open, $F\in C^1(I)$, and $f(\R)\Subset I$, then $F\circ f\in A_\omega$.
\end{proposition}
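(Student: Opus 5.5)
We verify the algebraic, normed and complete structure directly from the definition of $[\cdot]_\omega$, and treat composition separately by a mean-value argument.

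\textbf{Step 1: closure under products and the norm inequality.} For $f,g\in A_\omega$ and $0<\abs{x-y}\le1$, we split the increment as
\[
 f(x)g(x)-f(y)g(y)=f(x)\bigl(g(x)-g(y)\bigr)+g(y)\bigl(f(x)-f(y)\bigr).
\]
This gives $[fg]_\omega\le\norm f_\infty[g]_\omega+\norm g_\infty[f]_\omega$, and trivially $\norm{fg}_\infty\le\norm f_\infty\norm g_\infty$. Adding these bounds, the right-hand side is dominated by the expansion of $(\norm f_\infty+[f]_\omega)(\norm g_\infty+[g]_\omega)$, because the omitted term $[f]_\omega[g]_\omega$ is nonnegative. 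This proves \eqref{eq:modulus-product}. Continuity of $fg$ is clear, and the constant $1$ lies in $A_\omega$ with norm $1$. Commutativity, associativity and bilinearity are inherited from pointwise multiplication.

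\textbf{Step 2: completeness.} Let $(f_n)$ be Cauchy in $\norm\cdot_{C^\omega}$. It is then uniformly Cauchy, so it converges uniformly to a bounded continuous $f$. For fixed $x,y$ with $0<\abs{x-y}\le1$, we pass to the limit $m\to\infty$ in
\[
 \abs{(f_n-f_m)(x)-(f_n-f_m)(y)}\le\eps\,\omega(\abs{x-y}),
 \qquad n,m\ge N(\eps).
\]
This yields $[f_n-f]_\omega\le\eps$ for $n\ge N(\eps)$. Hence $f\in A_\omega$ and $f_n\to f$ in norm. This is the standard lower-semicontinuity argument, and nothing in it depends on $\omega$ beyond positivity on $(0,1]$.

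\textbf{Step 3: composition.} Since $f(\R)\Subset I$, the set $K:=\overline{f(\R)}$ is compact. I would first handle the case in which $I$ is an interval. Then the convex hull of $K$ is a compact subinterval of $I$, so $L:=\sup_{\co K}\abs{F'}<\infty$. The mean value theorem on the segment $[f(y),f(x)]\subset\co K$ gives $\abs{F(f(x))-F(f(y))}\le L\abs{f(x)-f(y)}\le L[f]_\omega\,\omega(\abs{x-y})$. Boundedness follows from $\abs{F\circ f}\le\sup_K\abs F$, and continuity is clear.

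The main obstacle is a general open $I$ that is not convex, where the segment $[f(y),f(x)]$ may leave $I$. This difficulty disappears because $\R$ is connected: $f(\R)$ is an interval contained in the single connected component $I_0$ of $I$ that contains it, and $\overline{f(\R)}\subset I_0$ as well, since $\overline{f(\R)}$ is a compact interval in $I$. We may therefore replace $I$ by the interval $I_0$ and apply the convex case. This gives the bound $\norm{F\circ f}_{C^\omega}\le\sup_K\abs F+L[f]_\omega$, which will also be used later for the chain rule.
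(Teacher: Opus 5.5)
Your proof is correct and follows the paper's argument essentially line by line: the same increment splitting for $[fg]_\omega$, the same uniform-limit and lower-semicontinuity argument for completeness, and the same mean-value bound for $F\circ f$. Your connectedness remark is a useful addition. It is exactly what justifies the paper's unstated claim that $\co(f(\R))\Subset I$ when $I$ is an open set that need not be an interval.
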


\begin{proof}
For $0<\abs{x-y}\leq1$,
\[
 \abs{f(x)g(x)-f(y)g(y)}
 \leq
 \norm f_\infty\abs{g(x)-g(y)}
 +
 \norm g_\infty\abs{f(x)-f(y)}.
\]
After division by $\omega(\abs{x-y})$,
\[
 [fg]_\omega
 \leq
 \norm f_\infty[g]_\omega
 +
 \norm g_\infty[f]_\omega.
\]
Together with $\norm{fg}_\infty\leq\norm f_\infty\norm g_\infty$, this yields \eqref{eq:modulus-product}.

Let $(f_n)$ be Cauchy in $A_\omega$.  It converges uniformly to a bounded continuous function $f$.  For fixed $n$ and $0<\abs{x-y}\leq1$,
\[
 \frac{\abs{(f_n-f)(x)-(f_n-f)(y)}}{\omega(\abs{x-y})}
 \leq
 \liminf_{m\to\infty}[f_n-f_m]_\omega.
\]
Taking the supremum in $x,y$ and then $n\to\infty$ proves $f_n\to f$ in $A_\omega$.

For the functional calculus, put $K_f:=\co(f(\R))\Subset I$.  The mean-value theorem gives
\[
 \abs{F(f(x))-F(f(y))}
 \leq
 \norm{F'}_{L^\infty(K_f)}\abs{f(x)-f(y)},
\]
so
\[
 [F\circ f]_\omega
 \leq
 \norm{F'}_{L^\infty(K_f)}[f]_\omega.
\]
\end{proof}

Kernel estimates below involve increments larger than one because the convolution kernels are not compactly supported.  The constant extension of $\omega$ gives the required global form.

\begin{lemma}[Global increment bound]\label{lem:global-increment}
For every $f\in A_\omega$ and every $x,y\in\R$,
\begin{equation}\label{eq:global-increment}
 \abs{f(x-y)-f(x)}
 \leq
 C_\omega\norm f_{C^\omega}\omega(\abs y),
 \qquad
 C_\omega:=\max\left\{1,\frac2{\omega(1)}\right\}.
\end{equation}
\end{lemma}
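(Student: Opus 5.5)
The plan is a split into two cases according to whether the displacement is at most one.

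\emph{Case $\abs y\le1$.} Since the increment is at distance $\abs y\le1$, the definition of $[f]_\omega$ applies directly. For $y\ne0$ this gives
\[
 \abs{f(x-y)-f(x)}\le[f]_\omega\,\omega(\abs y)\le\norm f_{C^\omega}\,\omega(\abs y).
\]
For $y=0$ the left side vanishes. In this case the constant $1\le C_\omega$ suffices.

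\emph{Case $\abs y>1$.} Here the seminorm gives nothing, so I would use only the sup norm:
\[
 \abs{f(x-y)-f(x)}\le2\norm f_\infty.
\]
The constant extension of $\omega$ gives $\omega(\abs y)=\omega(1)>0$. We may therefore write
\[
 2\norm f_\infty=\frac{2}{\omega(1)}\,\omega(\abs y)\,\norm f_\infty\le\frac2{\omega(1)}\,\norm f_{C^\omega}\,\omega(\abs y).
\]

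Taking the maximum of the two constants yields $C_\omega=\max\{1,2/\omega(1)\}$ and hence \eqref{eq:global-increment}. There is no real obstacle here. The only points to check are that $\omega(1)>0$, which holds by admissibility, and that the constant extension is exactly what makes the large-increment case reduce to boundedness. No concavity is needed for this lemma.
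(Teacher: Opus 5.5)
Your proof is correct and follows the same route as the paper: the case $\abs y\le1$ comes directly from the definition of $[f]_\omega$, and the case $\abs y>1$ uses $2\norm f_\infty$ together with the constant extension $\omega(\abs y)=\omega(1)>0$. Your separate remark about $y=0$ is a small extra detail that the paper leaves implicit.
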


\begin{proof}
For $\abs y\leq1$, \eqref{eq:global-increment} follows from the definition of $[f]_\omega$.  If $\abs y>1$, then $\omega(\abs y)=\omega(1)$ and
\[
 \abs{f(x-y)-f(x)}
 \leq2\norm f_\infty
 \leq
 \frac2{\omega(1)}\norm f_{C^\omega}\omega(\abs y).
\]
\end{proof}

We now fix the frequency coordinates.  Their smoothness and annular support are classical; the enlarged cutoff is retained because exact inversion in Section \ref{sec:matched-reconstruction} requires it \cite{BahouriCheminDanchin2011,Triebel1983}.

\begin{definition}[Littlewood--Paley analysis pair]\label{def:lp-pair}
Use the Fourier convention
\[
 \widehat f(\xi)=\int_\R e^{-ix\xi}f(x)\,\dd x,
 \qquad
 D:=-i\partial_x.
\]
Choose an even function $\chi\in C_c^\infty(\R)$ such that
\[
 0\leq\chi\leq1,
 \qquad
 \chi=1\ \text{on }\{\abs\xi\leq3/4\},
 \qquad
 \supp\chi\subset\{\abs\xi\leq4/3\}.
\]
Set
\[
 \varphi(\xi):=\chi(\xi/2)-\chi(\xi),
\]
and choose an even $\widetilde\varphi\in C_c^\infty(\R\setminus\{0\})$ satisfying
\[
 \widetilde\varphi=1
 \quad\text{on }\supp\varphi.
\]
Define
\[
 \Delta_{-1}:=\chi(D),
 \qquad
 \Delta_j:=\varphi(r_jD),
 \qquad
 \widetilde\Delta_j:=\widetilde\varphi(r_jD)
 \quad(j\geq0).
\]
Then
\[
 \chi(\xi)+\sum_{j\geq0}\varphi(2^{-j}\xi)=1.
\]
If $K:=\mathcal F^{-1}\varphi$, then
\[
 \Delta_jf=K_j*f,
 \qquad
 K_j(x):=r_j^{-1}K(x/r_j),
 \qquad
 \int_\R K(x)\,\dd x=0.
\]
\end{definition}

The cancellation $\int K=0$ converts the modulus increment of $f$ into a sharp amplitude bound for every block.

\begin{lemma}[Scale-matched increment estimate]\label{lem:scale-matched-increment}
For every $f\in A_\omega$, $j\geq0$, and $x,z\in\R$,
\begin{equation}\label{eq:scale-matched-increment}
 \abs{f(x-r_jz)-f(x)}
 \leq
 2(1+\abs z)\lambda_j[f]_\omega.
\end{equation}
The numerical constant is independent of the admissible modulus.
\end{lemma}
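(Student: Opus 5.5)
The plan is to reduce \eqref{eq:scale-matched-increment} to \cref{lem:concave-scale}. The subtlety is that $[f]_\omega$ only controls increments of length at most one. We therefore split according to the size of $r_j\abs z$.

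First, suppose $r_j\abs z\le1$. The definition of $[f]_\omega$ gives
\[
 \abs{f(x-r_jz)-f(x)}\le[f]_\omega\,\omega(r_j\abs z).
\]
Since $0<r_j\le1$, \cref{lem:concave-scale} with $r=r_j$ yields $\omega(r_j\abs z)\le(1+\abs z)\lambda_j$. This already gives the bound with constant one.

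Second, suppose $r_j\abs z>1$. Here we cannot bound the increment directly by $\omega(1)$ times $[f]_\omega$, and we also cannot use $\norm f_\infty$, because only $[f]_\omega$ may appear on the right. Instead we telescope. Put $n:=\lceil r_j\abs z\rceil$, so that $n\ge2$ and $n\le r_j\abs z+1\le 2r_j\abs z$. Split the segment from $x$ to $x-r_jz$ into $n$ equal steps, each of length $r_j\abs z/n\le1$.

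Each step contributes at most $[f]_\omega\,\omega(r_j\abs z/n)\le[f]_\omega\,\omega(1)$. Summing the $n$ steps gives
\[
 \abs{f(x-r_jz)-f(x)}\le n\,\omega(1)[f]_\omega\le 2r_j\abs z\,\omega(1)[f]_\omega.
\]
Concavity with $\omega(0)=0$ gives $\omega(r_j)\ge r_j\omega(1)$, that is, $r_j\omega(1)\le\lambda_j$, exactly as in the proof of \cref{lem:concave-scale}. Hence the right side is at most $2\abs z\lambda_j[f]_\omega\le2(1+\abs z)\lambda_j[f]_\omega$.

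Combining the two cases gives \eqref{eq:scale-matched-increment} with the constant $2$. That constant comes only from the rounding $n\le 2r_j\abs z$, so it is independent of $\omega$.

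The one point needing care is the large-displacement case. Using \cref{lem:global-increment} there would bring in $\norm f_\infty$ and a constant depending on $\omega(1)$, which the statement excludes. Telescoping into unit-length pieces avoids this and keeps the estimate in terms of the seminorm $[f]_\omega$ alone.
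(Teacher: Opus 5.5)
Your proof is correct and follows the paper's argument essentially step for step. The paper makes the same split at $r_j\abs z\le1$, uses \cref{lem:concave-scale} in the small case, and in the large case telescopes into $N=\lceil r_j\abs z\rceil$ steps of length at most one, closing with $N\le2r_j\abs z$ and $\lambda_j\ge r_j\omega(1)$.
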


\begin{proof}
If $r_j\abs z\leq1$, Definition \ref{def:modulus-algebra} and \cref{lem:concave-scale} give
\[
 \abs{f(x-r_jz)-f(x)}
 \leq
 [f]_\omega\omega(r_j\abs z)
 \leq
 (1+\abs z)\lambda_j[f]_\omega.
\]
Assume $r_j\abs z>1$ and put $N:=\lceil r_j\abs z\rceil$.  With
\[
 x_k:=x-\frac{k}{N}r_jz
 \qquad(0\leq k\leq N),
\]
each increment has length at most one.  Hence
\[
 \abs{f(x-r_jz)-f(x)}
 \leq
 N[f]_\omega\omega\!\left(\frac{r_j\abs z}{N}\right)
 \leq
 N[f]_\omega\omega(1).
\]
Since $N\leq2r_j\abs z$ and concavity gives $\lambda_j=\omega(r_j)\geq r_j\omega(1)$,
\[
 \abs{f(x-r_jz)-f(x)}
 \leq
 2\abs z\lambda_j[f]_\omega.
\]
Combining the two cases proves \eqref{eq:scale-matched-increment}.
\end{proof}

The next estimate is the analytic input for the matched limit, the corona defects, and the periodic interface.

\begin{proposition}[Uniform dyadic bounds]\label{prop:dyadic-bounds}
For every integer $k\geq0$ there is $C_k>0$, depending only on $k$ and the fixed Littlewood--Paley pair, such that
\begin{equation}\label{eq:dyadic-bounds}
 \norm{\partial_x^k\Delta_jf}_\infty
 \leq
 C_kr_j^{-k}\lambda_j\norm f_{C^\omega}
 \qquad(f\in A_\omega,\ j\geq0).
\end{equation}
Moreover,
\begin{equation}\label{eq:lp-uniform-reconstruction}
 \Delta_{-1}f+\sum_{j=0}^N\Delta_jf
 \longrightarrow f
 \quad\text{uniformly on }\R.
\end{equation}
\end{proposition}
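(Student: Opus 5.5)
For \eqref{eq:dyadic-bounds} we use the kernel representation from \cref{def:lp-pair} together with the cancellation $\int K=0$. Differentiating the convolution gives
\[
 \partial_x^k\Delta_jf(x)=\int_\R \partial_x^kK_j(y)\,f(x-y)\,\dd y,
 \qquad
 \partial_x^kK_j(y)=r_j^{-1-k}(\partial^kK)(y/r_j).
\]
Since $K$ is Schwartz, $\int\partial^kK=0$ for every $k\ge0$: for $k=0$ this is the stated cancellation, and for $k\ge1$ the integral of a derivative of a Schwartz function vanishes. We may therefore subtract $f(x)$ inside the integral and substitute $y=r_jz$:
\[
 \partial_x^k\Delta_jf(x)
 =r_j^{-k}\int_\R(\partial^kK)(z)\bigl(f(x-r_jz)-f(x)\bigr)\dd z .
\]
\cref{lem:scale-matched-increment} bounds the bracket by $2(1+|z|)\lambda_j[f]_\omega$. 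Hence
\[
 \|\partial_x^k\Delta_jf\|_\infty\le 2\Bigl(\int_\R(1+|z|)\,|\partial^kK(z)|\,\dd z\Bigr)\,r_j^{-k}\lambda_j\,[f]_\omega .
\]
The integral is finite because $K$ is Schwartz, and it depends only on $k$ and $\varphi$. This gives $C_k$, and the bound with $[f]_\omega\le\|f\|_{C^\omega}$ follows. The lemma is already uniform in $j$ and $\omega$, so nothing further is needed here.

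For \eqref{eq:lp-uniform-reconstruction} we use telescoping. Since $\chi(\xi)+\sum_{j=0}^N\varphi(2^{-j}\xi)=\chi(2^{-N-1}\xi)$, we have
\[
 S_Nf=\chi(2^{-N-1}D)f=\Psi_{N}*f,
 \qquad
 \Psi_N(x)=2^{N+1}\Psi(2^{N+1}x),
 \quad
 \Psi:=\mathcal F^{-1}\chi .
\]
Because $\chi(0)=1$, we get $\int\Psi=1$. Therefore
\[
 S_Nf(x)-f(x)=\int_\R\Psi(z)\bigl(f(x-2^{-N-1}z)-f(x)\bigr)\dd z .
\]
We bound this step by the same argument as in \cref{lem:scale-matched-increment}, applied at scale $r_{N+1}$. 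For $N+1\ge0$ the bracket is at most $2(1+|z|)\lambda_{N+1}[f]_\omega$, so
\[
 \|S_Nf-f\|_\infty\le 2\|(1+|z|)\Psi\|_{L^1}\,\lambda_{N+1}\,[f]_\omega .
\]
The right side tends to $0$ because $\lambda_j\to0$ by \cref{def:admissible-modulus}.

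The main point needing care is the use of the Fourier identities on $A_\omega$, since elements of $A_\omega$ are bounded functions and not $L^2$ functions. Read in the tempered-distribution sense, $\varphi(r_jD)f=K_j*f$ and $\chi(2^{-N-1}D)f=\Psi_N*f$ hold for bounded continuous $f$, because convolution with a Schwartz kernel agrees with the multiplier on $\mathcal S'$. The telescoping identity is then an exact identity of multipliers. With this justification the argument above is complete, and it involves no interchange of limits beyond the finite sums.
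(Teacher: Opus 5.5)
Your proof is correct and follows the paper's argument: the block estimate uses the mean-zero kernel representation together with \cref{lem:scale-matched-increment}, and the reconstruction telescopes the partial sums to the approximate identity $\chi(2^{-N-1}D)$ and bounds its increments at scale $2^{-N-1}$. The only difference is for $k\ge1$: you differentiate the kernel and use $\int\partial^kK=0$ directly, whereas the paper bounds $\Delta_jf$ first and then applies Bernstein's inequality on the annulus; both give constants depending only on $k$ and the Littlewood--Paley pair.
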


\begin{proof}
The cancellation of $K$ and the change of variables $y=r_jz$ give
\[
 \Delta_jf(x)
 =
 \int_\R K(z)\bigl(f(x-r_jz)-f(x)\bigr)\,\dd z.
\]
By \cref{lem:scale-matched-increment},
\[
 \norm{\Delta_jf}_\infty
 \leq
 2\lambda_j[f]_\omega
 \int_\R\abs{K(z)}(1+\abs z)\,\dd z
 \leq
 C\lambda_j\norm f_{C^\omega}.
\]
The Fourier support of $\Delta_jf$ lies in a fixed annulus of radius $r_j^{-1}$.  Bernstein's multiplier estimate therefore yields
\[
 \norm{\partial_x^k\Delta_jf}_\infty
 \leq
 C_kr_j^{-k}\norm{\Delta_jf}_\infty
 \leq
 C_kr_j^{-k}\lambda_j\norm f_{C^\omega}.
\]
For \eqref{eq:lp-uniform-reconstruction}, the partial sum has multiplier $\chi(2^{-N-1}D)$ and is a smooth approximate identity at scale $2^{-N}$.  The global increment estimate gives
\[
 \norm{f-\chi(2^{-N-1}D)f}_\infty
 \leq
 C\omega(2^{-N})\norm f_{C^\omega}
 \longrightarrow0.
\]
\end{proof}

The amplitude factor in \eqref{eq:intro-infinitesimal} is not an arbitrary choice.  The following theorem characterizes every scalar normalization that is uniformly bounded on the full modulus algebra.

\begin{theorem}[Operator-norm critical normalization]\label{thm:critical-normalization}
Let $a=(a_j)_{j\geq0}$ be a real scalar sequence and define
\[
 D_af:=(a_j\partial_x\Delta_jf)_{j\geq0}.
\]
Then
\[
 D_a:A_\omega\longrightarrow\ell^\infty(\Nzero;C_b(\R))
\]
is bounded if and only if
\begin{equation}\label{eq:critical-normalization-condition}
 M_a:=\sup_{j\geq0}\frac{\abs{a_j}\lambda_j}{r_j}<\infty.
\end{equation}
There are constants $0<c\leq C<\infty$, depending only on $\omega$ and the fixed Littlewood--Paley pair, such that
\begin{equation}\label{eq:critical-normalization-norm}
 cM_a
 \leq
 \norm{D_a}_{A_\omega\to\ell^\infty(C_b)}
 \leq
 CM_a.
\end{equation}
Consequently,
\[
 a_j^{\mathrm{crit}}:=\frac{r_j}{\lambda_j}
\]
is critical up to multiplication by a bounded scalar sequence.
\end{theorem}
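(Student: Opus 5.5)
The upper bound follows directly from \cref{prop:dyadic-bounds}, and the lower bound comes from one test function per scale.

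\emph{Upper bound.} The case $k=1$ of \eqref{eq:dyadic-bounds} gives
\[
 \norm{a_j\partial_x\Delta_jf}_\infty\le C_1\abs{a_j}r_j^{-1}\lambda_j\norm f_{C^\omega}\le C_1M_a\norm f_{C^\omega}
\]
for every $j$. Each block $\partial_x\Delta_jf$ is a bounded smooth function, so $D_af\in\ell^\infty(\Nzero;C_b(\R))$ and $\norm{D_a}\le C_1M_a$.

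\emph{Lower bound.} For each $j$ I will construct $f_j\in A_\omega$ with $\norm{f_j}_{C^\omega}\le C'$ and $\norm{\partial_x\Delta_jf_j}_\infty\ge c'\lambda_j/r_j$, where $c',C'$ do not depend on $j$. Then
\[
 \norm{D_a}\ge\norm{a_j\partial_x\Delta_jf_j}_\infty/C'\ge(c'/C')\abs{a_j}\lambda_j/r_j,
\]
and taking the supremum over $j$ gives $\norm{D_a}\ge cM_a$. The same inequalities show that boundedness forces $M_a<\infty$.

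The natural candidate is the scaled bump $f_j(x):=\lambda_j\psi(x/r_j)$. Here $\psi$ is a fixed Schwartz function whose Fourier transform is supported in $\{\varphi=1\}$ and is not identically zero. Such a set exists with nonempty interior, since $\varphi(\xi)=1$ near $\abs\xi=3/2$ (there $\chi(\xi/2)=1$ and $\chi(\xi)=0$). With this choice $\Delta_jf_j=f_j$, so
\[
 \partial_x\Delta_jf_j=\lambda_jr_j^{-1}\psi'(\cdot/r_j),
\]
and $\norm{\partial_x\Delta_jf_j}_\infty=\lambda_j r_j^{-1}\norm{\psi'}_\infty$ with $\norm{\psi'}_\infty>0$.

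\emph{Main obstacle: the uniform bound on $\norm{f_j}_{C^\omega}$.} The sup norm is $\lambda_j\norm\psi_\infty\le\omega(1)\norm\psi_\infty$. For the seminorm, take $0<h=\abs{x-y}\le1$ and split into two cases.

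If $h\le r_j$, the mean value theorem gives $\abs{f_j(x)-f_j(y)}\le\lambda_j\norm{\psi'}_\infty h/r_j$. Since $t\mapsto\omega(t)/t$ is decreasing (as in the proof of \cref{lem:concave-scale}), $\lambda_j/r_j\le\omega(h)/h$, so the increment is at most $\norm{\psi'}_\infty\omega(h)$.

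If $h>r_j$, the increment is at most $2\lambda_j\norm\psi_\infty\le2\norm\psi_\infty\omega(h)$ by monotonicity of $\omega$.

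Hence $[f_j]_\omega\le\max(\norm{\psi'}_\infty,2\norm\psi_\infty)$ uniformly in $j$. The constants $c,C$ therefore depend only on $\omega(1)$ and the fixed pair, through $\psi$.

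The final criticality statement follows immediately: $(a_j)$ gives a bounded map exactly when $a_j=b_j\,r_j/\lambda_j$ with $(b_j)$ bounded, since $\abs{b_j}=\abs{a_j}\lambda_j/r_j\le M_a$.
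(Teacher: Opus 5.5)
Your proof is correct and follows the paper's argument: the upper bound is the $k=1$ dyadic estimate, and the lower bound tests $D_a$ on amplitude-$\lambda_j$, frequency-$r_j^{-1}$ functions, with the uniform $C^\omega$ bound obtained from the same two-case concavity/monotonicity estimate. The only difference is the test function. The paper uses the pure wave $\lambda_j\cos(c_*x/r_j)$ with $\varphi(c_*)\neq0$, which is an eigenfunction of $\Delta_j$ because $\varphi$ is even. Your wave packet instead uses the plateau $\{4/3<\abs\xi<3/2\}\subset\{\varphi=1\}$; there you should choose $\widehat\psi$ real and even, so that $\psi$, and hence $f_j$, is real-valued as membership in $A_\omega$ requires.
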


\begin{proof}
If \eqref{eq:critical-normalization-condition} holds, \cref{prop:dyadic-bounds} gives
\[
 \norm{a_j\partial_x\Delta_jf}_\infty
 \leq
 C\frac{\abs{a_j}\lambda_j}{r_j}\norm f_{C^\omega}
 \leq
 CM_a\norm f_{C^\omega}.
\]
Taking the supremum in $j$ proves the upper bound in \eqref{eq:critical-normalization-norm}.

For the reverse bound, choose $c_*>0$ with $\varphi(c_*)\neq0$ and define
\[
 f_j(x):=\lambda_j\cos(c_*x/r_j).
\]
We first show that
\begin{equation}\label{eq:pure-frequency-uniform-modulus}
 \sup_{j\geq0}\norm{f_j}_{C^\omega}<\infty.
\end{equation}
For $0<\abs h\leq1$,
\[
 \abs{f_j(x+h)-f_j(x)}
 \leq
 \min\left\{2\lambda_j,\frac{c_*\lambda_j}{r_j}\abs h\right\}.
\]
If $\abs h\leq r_j$, concavity gives
\[
 \omega(\abs h)
 \geq
 \frac{\abs h}{r_j}\omega(r_j)
 =
 \frac{\abs h}{r_j}\lambda_j.
\]
If $\abs h\geq r_j$, monotonicity gives $\omega(\abs h)\geq\lambda_j$.  These two inequalities prove \eqref{eq:pure-frequency-uniform-modulus}.

Because $\varphi$ is even,
\[
 \Delta_jf_j=\varphi(c_*)f_j.
\]
Therefore
\[
 \norm{a_j\partial_x\Delta_jf_j}_\infty
 =
 \abs{a_j}\abs{\varphi(c_*)}c_*\frac{\lambda_j}{r_j}.
\]
Testing $D_a$ on $f_j$ and using \eqref{eq:pure-frequency-uniform-modulus} gives
\[
 \frac{\abs{a_j}\lambda_j}{r_j}
 \leq
 C\norm{D_a}_{A_\omega\to\ell^\infty(C_b)}.
\]
Taking the supremum over $j$ proves the lower bound and the necessity of \eqref{eq:critical-normalization-condition}.
\end{proof}

A normalization asymptotically smaller than $r_j/\lambda_j$ loses every fine-scale corona class.  This observation will identify the scale used in \cref{sec:corona} as genuinely critical.

\begin{corollary}[Subcritical normalizations vanish]\label{cor:subcritical-vanish}
If
\[
 \frac{\abs{a_j}\lambda_j}{r_j}\longrightarrow0,
\]
then for every $f\in A_\omega$,
\[
 \norm{a_j\partial_x\Delta_jf}_\infty\longrightarrow0.
\]
\end{corollary}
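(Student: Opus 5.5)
The corollary follows from the uniform dyadic bound of \cref{prop:dyadic-bounds} with $k=1$. That bound holds with a constant independent of $j$, so for every $f\in A_\omega$ and every $j\ge0$,
\[
 \norm{a_j\partial_x\Delta_jf}_\infty
 \le
 \abs{a_j}C_1r_j^{-1}\lambda_j\norm f_{C^\omega}
 =
 C_1\frac{\abs{a_j}\lambda_j}{r_j}\norm f_{C^\omega}.
\]

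The plan is to write the estimate in exactly this form. The point is that the factor $\abs{a_j}\lambda_j/r_j$ separates completely from the $f$-dependent factor $\norm f_{C^\omega}$. That second factor is finite because $f\in A_\omega$, and it does not depend on $j$. By hypothesis, $\abs{a_j}\lambda_j/r_j\to0$, so the right-hand side tends to zero as $j\to\infty$, and the left-hand side does too.

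No step is a genuine obstacle. The only thing to check is that $C_1$ depends only on the fixed Littlewood--Paley pair, and not on $j$ or $f$, as \cref{prop:dyadic-bounds} states. I would also note explicitly that the convergence is in fact uniform over the unit ball of $A_\omega$: the sup norms are bounded by $C_1\bigl(\abs{a_j}\lambda_j/r_j\bigr)\norm f_{C^\omega}$. This is the form in which the corollary will be used for the corona quotient in \cref{sec:corona}, where it shows that a subcritical tower lies in $\cB_0$.
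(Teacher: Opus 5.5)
Your proof is correct and matches the paper's argument: apply \cref{prop:dyadic-bounds} with $k=1$ to get $\norm{a_j\partial_x\Delta_jf}_\infty\le C\,(\abs{a_j}\lambda_j/r_j)\norm f_{C^\omega}$, then let $j\to\infty$. Your extra remark, that the convergence is uniform on bounded subsets of $A_\omega$, is also valid and comes from the same estimate.
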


\begin{proof}
By \cref{prop:dyadic-bounds},
\[
 \norm{a_j\partial_x\Delta_jf}_\infty
 \leq
 C\frac{\abs{a_j}\lambda_j}{r_j}\norm f_{C^\omega}
 \longrightarrow0.
\]
\end{proof}
 \section{Matched increments and exact reconstruction}\label{sec:matched-reconstruction}

The dyadic estimates determine the correct amplitude, but a differential coordinate also requires a single collision parameter that has the same dimensionless meaning at every frequency.  We therefore test the $j$th block at the physical separation $\eps r_j$.

\begin{definition}[Infinitesimal and matched scale derivatives]\label{def:matched-derivatives}
For $f\in A_\omega$ and $j\geq0$, define
\[
 d_j^\omega f
 :=
 \frac{r_j}{\lambda_j}\partial_x\Delta_jf.
\]
For $\eps>0$, define
\begin{equation}\label{eq:matched-difference}
 d_{j,\eps}^\omega f(x)
 :=
 \frac{\Delta_jf(x+\eps r_j/2)-\Delta_jf(x-\eps r_j/2)}{\eps\lambda_j}.
\end{equation}
At $\eps=0$, set $d_{j,0}^\omega:=d_j^\omega$.  The sequence $(d_j^\omega f)_{j\geq0}$ is the infinitesimal scale derivative tower.
\end{definition}

The same algebraic map converts average--derivative coordinates into the two samples at every scale.  This scale independence is the transport mechanism behind \eqref{eq:matched-difference}.

\begin{definition}[Two-point transport]\label{def:two-point-transport}
For $\eps\neq0$, define
\[
 H_\eps:\R^2\longrightarrow\R^2,
 \qquad
 H_\eps(m,p)
 :=
 \left(m-\frac\eps2p,m+\frac\eps2p\right).
\]
Its inverse is
\[
 H_\eps^{-1}(u_-,u_+)
 =
 \left(\frac{u_-+u_+}{2},\frac{u_+-u_-}{\eps}\right).
\]
For $u_j:=\lambda_j^{-1}\Delta_jf$, the second component of
\[
 H_\eps^{-1}\bigl(u_j(x-\eps r_j/2),u_j(x+\eps r_j/2)\bigr)
\]
is $d_{j,\eps}^\omega f(x)$.
\end{definition}

The centered sampling symmetry removes the first-order collision error.  The dyadic third-derivative estimate then makes the resulting second-order remainder uniform in $j$.

\begin{theorem}[Frequency-uniform matched limit]\label{thm:matched-limit}
There are constants $\eps_0>0$ and $C>0$, depending only on the fixed Littlewood--Paley pair, such that
\begin{equation}\label{eq:matched-limit}
 \sup_{j\geq0}
 \norm{d_{j,\eps}^\omega f-d_j^\omega f}_\infty
 \leq
 C\eps^2\norm f_{C^\omega}
\end{equation}
for every $f\in A_\omega$ and every $0\leq\eps\leq\eps_0$.
\end{theorem}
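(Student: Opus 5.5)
The plan is to Taylor expand the smooth block $g_j:=\Delta_jf$ about $x$, using the centered symmetry to cancel the even-order terms and the third-derivative case of \cref{prop:dyadic-bounds} to bound what remains uniformly in $j$. The case $\eps=0$ is trivial by definition, so fix $0<\eps\le\eps_0$ with, say, $\eps_0=1$; no smallness of $\eps$ is actually needed for this estimate. Each $g_j$ is smooth with all derivatives bounded, since its Fourier support is compact. Taylor's theorem with integral remainder to third order at $x\pm h$, $h:=\eps r_j/2$, gives
\[
 g_j(x+h)-g_j(x-h)=2h\,g_j'(x)+R_j(x),
\]
and the $g_j''$ terms cancel by symmetry. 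The remainder satisfies
\[
 R_j(x)=\frac{h^3}{2}\int_0^1(1-\theta)^2\bigl(g_j'''(x+\theta h)+g_j'''(x-\theta h)\bigr)\,\dd\theta,
 \qquad
 \abs{R_j(x)}\le\frac{h^3}{3}\norm{g_j'''}_\infty.
\]

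Dividing by $\eps\lambda_j$ and using $2h/(\eps\lambda_j)=r_j/\lambda_j$, the first term is exactly $d_j^\omega f(x)$. Hence
\[
 \norm{d_{j,\eps}^\omega f-d_j^\omega f}_\infty
 \le\frac{\eps^3r_j^3}{24\,\eps\lambda_j}\norm{\partial_x^3\Delta_jf}_\infty .
\]
Now \cref{prop:dyadic-bounds} with $k=3$ gives $\norm{\partial_x^3\Delta_jf}_\infty\le C_3r_j^{-3}\lambda_j\norm f_{C^\omega}$. The factors $r_j^3$ and $\lambda_j$ cancel, and the bound becomes $(C_3/24)\eps^2\norm f_{C^\omega}$ for every $j$. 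Taking the supremum over $j\ge0$ proves \eqref{eq:matched-limit}, with $C=C_3/24$ depending only on the Littlewood--Paley pair through $C_3$.

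There is no real obstacle here. The one point needing care is that the uniformity in $j$ comes entirely from the matched choice of separation $\eps r_j$: the scaling $h^3$ must exactly absorb the $r_j^{-3}$ from Bernstein's estimate, with the same $\lambda_j$ in numerator and denominator. I would also note that $\eps_0$ plays no role in this estimate. It is retained only for consistency with the later reconstruction statement, where the sine symbol must not vanish.
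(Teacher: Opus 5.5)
Your proof is correct and follows the paper's argument. Both apply the centered Taylor remainder bound $\frac{h^2}{24}\norm{g'''}_\infty$ to $g=\Delta_jf$ at separation $\eps r_j$ and then use the $k=3$ case of \cref{prop:dyadic-bounds}, so that the factors $r_j^3$ and $\lambda_j$ cancel uniformly in $j$. Your remark that $\eps_0$ plays no role here, and matters only for the later sine-nonvanishing condition, is also accurate.
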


\begin{proof}
For $g\in C^3(\R)$ and $h\neq0$, the centered Taylor remainder gives
\begin{equation}\label{eq:centered-taylor}
 \norm{\frac{g(\cdot+h/2)-g(\cdot-h/2)}h-g'}_\infty
 \leq
 \frac{h^2}{24}\norm{g^{(3)}}_\infty.
\end{equation}
Fix $j\geq0$ and set
\[
 g:=\Delta_jf,
 \qquad
 h:=\eps r_j.
\]
By \cref{def:matched-derivatives,eq:centered-taylor},
\[
 \norm{d_{j,\eps}^\omega f-d_j^\omega f}_\infty
 \leq
 \frac{r_j}{\lambda_j}\frac{\eps^2r_j^2}{24}
 \norm{\partial_x^3\Delta_jf}_\infty.
\]
The estimate \eqref{eq:dyadic-bounds} with $k=3$ gives
\[
 \norm{\partial_x^3\Delta_jf}_\infty
 \leq
 C r_j^{-3}\lambda_j\norm f_{C^\omega}.
\]
Substitution yields
\[
 \norm{d_{j,\eps}^\omega f-d_j^\omega f}_\infty
 \leq
 C\eps^2\norm f_{C^\omega}.
\]
The right-hand side is independent of $j$, and taking the supremum proves \eqref{eq:matched-limit}.
\end{proof}

The exponent in \eqref{eq:matched-limit} cannot be improved as an operator-norm rate.  The same pure-frequency family used in \cref{thm:critical-normalization} detects the leading cubic term of the sine multiplier.

\begin{proposition}[Sharpness of the collision rate]\label{prop:matched-sharpness}
Choose $c_*>0$ with $\varphi(c_*)\neq0$ and set
\[
 f_j(x):=\lambda_j\cos(c_*x/r_j).
\]
Then $\sup_j\norm{f_j}_{C^\omega}<\infty$ and
\begin{equation}\label{eq:sharp-collision}
 \norm{d_{j,\eps}^\omega f_j-d_j^\omega f_j}_\infty
 =
 \abs{\varphi(c_*)}
 \left|\frac{2\sin(c_*\eps/2)}\eps-c_*\right|
 =
 \frac{\abs{\varphi(c_*)}c_*^3}{24}\eps^2+O(\eps^4),
\end{equation}
uniformly in $j$ as $\eps\downarrow0$.
\end{proposition}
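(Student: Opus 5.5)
The plan is to compute everything explicitly on the pure frequency $f_j(x)=\lambda_j\cos(c_*x/r_j)$, since $\Delta_j$ acts on it as a scalar.

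\emph{Step 1: uniform modulus bound.} The bound $\sup_j\norm{f_j}_{C^\omega}<\infty$ is exactly \eqref{eq:pure-frequency-uniform-modulus}, established in the proof of \cref{thm:critical-normalization}. There one splits into $\abs h\le r_j$, where concavity gives $\omega(\abs h)\ge(\abs h/r_j)\lambda_j$, and $\abs h\ge r_j$, where monotonicity gives $\omega(\abs h)\ge\lambda_j$. We quote it directly.

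\emph{Step 2: exact computation of the two derivatives.} Since $\varphi$ is even, $\Delta_jf_j=\varphi(c_*)f_j$. The infinitesimal derivative is
\[
 d_j^\omega f_j=\frac{r_j}{\lambda_j}\partial_x\Delta_jf_j=-\varphi(c_*)c_*\sin(c_*x/r_j).
\]
For the matched difference, the identity $\cos(A+B)-\cos(A-B)=-2\sin A\sin B$ with $A=c_*x/r_j$ and $B=c_*\eps/2$ gives
\[
 d_{j,\eps}^\omega f_j(x)=-\varphi(c_*)\frac{2\sin(c_*\eps/2)}{\eps}\sin(c_*x/r_j).
\]
The wavelength $r_j$ cancels in the argument $B$, which is why the answer is independent of $j$.

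\emph{Step 3: the sup norm and the expansion.} Subtracting the two expressions leaves $\varphi(c_*)\bigl(2\sin(c_*\eps/2)/\eps-c_*\bigr)\sin(c_*x/r_j)$. Its sup norm over $x$ is the stated absolute value, because $\sup_x\abs{\sin(c_*x/r_j)}=1$. The Taylor expansion $\sin y=y-y^3/6+O(y^5)$ at $y=c_*\eps/2$ gives
\[
 \frac{2\sin(c_*\eps/2)}{\eps}-c_*=-\frac{c_*^3\eps^2}{24}+O(\eps^4),
\]
which is the second equality in \eqref{eq:sharp-collision}. Uniformity in $j$ is automatic, since no expression depends on $j$.

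\emph{Main obstacle.} There is essentially none. The only point needing care is that $\Delta_j$, a Fourier multiplier defined in the tempered sense, acts on the bounded function $\cos(c_*x/r_j)$ by multiplication by $\varphi(r_j\cdot c_*/r_j)=\varphi(\pm c_*)$. This holds because the distributional Fourier transform of the cosine is supported on $\{\pm c_*/r_j\}$, and the evenness of $\varphi$ makes both deltas acquire the same factor.
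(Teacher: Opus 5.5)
Your proposal is correct and matches the paper's proof step for step: the uniform bound is quoted from \eqref{eq:pure-frequency-uniform-modulus}, evenness of $\varphi$ gives $\Delta_jf_j=\varphi(c_*)f_j$, and the explicit sine formulas for $d_j^\omega f_j$ and $d_{j,\eps}^\omega f_j$, followed by the Taylor expansion of $\sin(c_*\eps/2)$, give \eqref{eq:sharp-collision}. Your remark on why the multiplier acts on the cosine by the scalar $\varphi(c_*)$ justifies a step the paper states without comment.
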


\begin{proof}
The uniform $A_\omega$ bound is \eqref{eq:pure-frequency-uniform-modulus}.  Since
\[
 \Delta_jf_j(x)=\varphi(c_*)\lambda_j\cos(c_*x/r_j),
\]
we obtain
\[
 d_j^\omega f_j(x)
 =
 -c_*\varphi(c_*)\sin(c_*x/r_j)
\]
and
\[
 d_{j,\eps}^\omega f_j(x)
 =
 -\varphi(c_*)\frac{2\sin(c_*\eps/2)}\eps\sin(c_*x/r_j).
\]
Taking the uniform norm and expanding $\sin(c_*\eps/2)$ proves \eqref{eq:sharp-collision}.
\end{proof}

The limit alone would produce only a bounded scale observable.  To make it an exact coordinate, the difference multiplier must be inverted on each nonzero frequency band.  The absence of zero frequency from $\supp\varphi$ makes this possible.

For $\eps\geq0$, define
\begin{equation}\label{eq:q-epsilon}
 q_\eps(\zeta)
 :=
 \begin{cases}
 \displaystyle \frac{\eps}{2i\sin(\eps\zeta/2)},&\eps>0,\\[1.1ex]
 \displaystyle \frac1{i\zeta},&\eps=0.
 \end{cases}
\end{equation}
Choose $\eps_0>0$ so small that
\begin{equation}\label{eq:sine-nonzero}
 \sin(\eps\zeta/2)\neq0
 \qquad
 (0<\eps\leq\eps_0,\ \zeta\in\supp\widetilde\varphi).
\end{equation}

The inverse multiplier must be bounded uniformly in both the band index and the collision parameter.

\begin{definition}[Frequency-band integration operator]\label{def:band-integrator}
For $j\geq0$ and $0\leq\eps\leq\eps_0$, define the multiplier $I_{j,\eps}^\omega$ by
\begin{equation}\label{eq:band-integrator}
 \widehat{I_{j,\eps}^\omega z}(\xi)
 :=
 \lambda_j\widetilde\varphi(r_j\xi)q_\eps(r_j\xi)\widehat z(\xi).
\end{equation}
The operator $I_{j,\eps}^\omega$ is the constructed inverse of the matched derivative on the $j$th annulus.
\end{definition}

\begin{lemma}[Uniform inverse-multiplier estimates]\label{lem:inverse-multiplier}
There is $C>0$, independent of $j$ and $\eps\in[0,\eps_0]$, such that
\begin{equation}\label{eq:inverse-multiplier-bound}
 \norm{I_{j,\eps}^\omega z}_\infty
 \leq
 C\lambda_j\norm z_\infty.
\end{equation}
Moreover,
\begin{equation}\label{eq:inverse-collision-stability}
 \norm{(I_{j,\eps}^\omega-I_{j,0}^\omega)z}_\infty
 \leq
 C\eps^2\lambda_j\norm z_\infty.
\end{equation}
\end{lemma}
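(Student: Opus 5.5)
The plan is to reduce both estimates to a single rescaled multiplier independent of $j$, and then bound its $L^1$ kernel. By dilation, $I_{j,\eps}^\omega z=\lambda_j\,m_\eps(r_jD)z$ with
\[
 m_\eps(\zeta):=\widetilde\varphi(\zeta)q_\eps(\zeta).
\]
If $k_\eps:=\mathcal F^{-1}m_\eps$, then $m_\eps(r_jD)$ is convolution with $r_j^{-1}k_\eps(\cdot/r_j)$. This kernel has the same $L^1$ norm as $k_\eps$. Hence \eqref{eq:inverse-multiplier-bound} follows from
\[
 \sup_{0\le\eps\le\eps_0}\norm{k_\eps}_{L^1}<\infty,
\]
and \eqref{eq:inverse-collision-stability} follows from
\[
 \norm{k_\eps-k_0}_{L^1}\le C\eps^2.
\]
Both bounds are then uniform in $j$ automatically, and Young's inequality transfers them to $\norm{\cdot}_\infty$.

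To bound the $L^1$ norms, I will use the standard estimate
\[
 \norm{\mathcal F^{-1}m}_{L^1}\le C\bigl(\norm m_{L^2}+\norm{m''}_{L^2}\bigr).
\]
This comes from Cauchy--Schwarz with weight $(1+x^2)^{-1}$ and Plancherel. For compactly supported $m$ it is controlled by $\norm{m}_{C^2}$ on a fixed compact set. So I need $m_\eps$ to lie in $C_c^\infty$ with support in $\supp\widetilde\varphi$, which is a compact set $K\subset\{a\le|\zeta|\le b\}$ with $a>0$, and I need $C^2(K)$ bounds uniform in $\eps$.

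For this, write the function $s(y):=\sin y/y$, which is smooth and even, with $s(0)=1$. Then
\[
 q_\eps(\zeta)=\frac{1}{i\zeta\,s(\eps\zeta/2)}
\]
for every $\eps\ge0$; the case $\eps=0$ agrees with \eqref{eq:q-epsilon}. After possibly shrinking $\eps_0$ so that $\eps_0b/2\le1$, we get $s(\eps\zeta/2)\ge s(1)>0$ on $K$. Then $(\eps,\zeta)\mapsto q_\eps(\zeta)$ is smooth on $[0,\eps_0]\times K$, and all its $\zeta$-derivatives are bounded uniformly. This gives the first bound.

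For the second bound, note that $s$ is even, so $s(y)=1+y^2g(y)$ with $g$ smooth. Therefore
\[
 \frac{1}{s(\eps\zeta/2)}-1=\eps^2\,\zeta^2\,G(\eps\zeta)
\]
with $G$ smooth on the relevant compact range. Consequently
\[
 m_\eps-m_0=\eps^2\,\widetilde\varphi(\zeta)\,\frac{\zeta}{4i}\,\widetilde G(\eps,\zeta),
\]
where $\widetilde G$ is smooth, and its $C^2(K)$ norm is $O(\eps^2)$ uniformly. The same $L^1$ kernel bound then yields \eqref{eq:inverse-collision-stability}.

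The only step needing care is the uniform nonvanishing of the sine factor. This is why I rewrite $\sin$ through $s(y)$ rather than relying on \eqref{eq:sine-nonzero} alone: the rewriting makes the $\eps\to0$ limit smooth and gives the quadratic gain. If the paper's $\eps_0$ is only required to satisfy \eqref{eq:sine-nonzero}, the argument still works. On $[0,\eps_0]\times K$ the smooth function $\zeta\, s(\eps\zeta/2)$ is nonvanishing, and by compactness it is bounded away from zero, which is all the argument uses.
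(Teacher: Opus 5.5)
Your proposal is correct and follows essentially the paper's route: reduce by dilation to the single symbol $\widetilde\varphi\,q_\eps$, use uniform $C^k$ bounds on this compactly supported symbol to obtain an $\eps$-uniform $L^1$ kernel bound and apply Young's inequality, then get the $O(\eps^2)$ difference from the expansion of $(\eps\zeta/2)/\sin(\eps\zeta/2)$ with its derivatives. Your factorization through $\sin y/y$ and the explicit weighted-Plancherel kernel estimate spell out details that the paper compresses into ``compactly supported multiplier estimates.''
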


\begin{proof}
By \eqref{eq:sine-nonzero}, the family
\[
 m_\eps(\zeta):=\widetilde\varphi(\zeta)q_\eps(\zeta)
 \qquad(0\leq\eps\leq\eps_0)
\]
is smooth, compactly supported away from zero, and bounded in every fixed $C^k$ norm.  If $L_\eps:=\mathcal F^{-1}m_\eps$, compactly supported multiplier estimates give
\[
 \sup_{0\leq\eps\leq\eps_0}\norm{L_\eps}_{L^1}<\infty.
\]
The kernel of $I_{j,\eps}^\omega$ is
\[
 \lambda_jr_j^{-1}L_\eps(\cdot/r_j),
\]
whose $L^1$ norm is $\lambda_j\norm{L_\eps}_{L^1}$.  Young's inequality gives \eqref{eq:inverse-multiplier-bound}.

On $\supp\widetilde\varphi$,
\[
 q_\eps(\zeta)
 =
 \frac1{i\zeta}\frac{\eps\zeta/2}{\sin(\eps\zeta/2)}
 =
 q_0(\zeta)+O(\eps^2),
\]
and the same $O(\eps^2)$ bound holds for every fixed $\zeta$-derivative.  Hence the inverse Fourier transform of
\[
 \widetilde\varphi(\zeta)\bigl(q_\eps(\zeta)-q_0(\zeta)\bigr)
\]
has $L^1$ norm $O(\eps^2)$.  Scaling the kernel and multiplying by $\lambda_j$ proves \eqref{eq:inverse-collision-stability}.
\end{proof}

The multiplier product is exactly one on the analyzed annulus; no limiting argument is involved in the next result.

\begin{theorem}[Exact inversion on one band]\label{thm:band-inversion}
Let $0\leq\eps\leq\eps_0$, and let $g_j\in C_b(\R)$ satisfy
\[
 \supp\widehat g_j
 \subset
 \{\xi:r_j\xi\in\supp\varphi\}.
\]
Define
\[
 D_{j,\eps}^\omega g_j
 :=
 \begin{cases}
 \displaystyle
 \frac{g_j(\cdot+\eps r_j/2)-g_j(\cdot-\eps r_j/2)}{\eps\lambda_j},&\eps>0,\\[1.2ex]
 \displaystyle
 \frac{r_j}{\lambda_j}\partial_xg_j,&\eps=0.
 \end{cases}
\]
Then
\begin{equation}\label{eq:exact-band-inversion}
 I_{j,\eps}^\omega D_{j,\eps}^\omega g_j=g_j.
\end{equation}
\end{theorem}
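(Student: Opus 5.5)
The plan is a direct Fourier-side computation in the tempered-distribution sense. Since $g_j$ is bounded and continuous, it is a tempered distribution, and $\widehat g_j$ has compact support in the annulus $\{r_j\xi\in\supp\varphi\}$, which stays away from zero. Hence $g_j$ is smooth and bounded together with all its derivatives; this is Bernstein/Paley--Wiener, as used in \cref{prop:dyadic-bounds}. All operations below may therefore be read as multiplications of a compactly supported distribution $\widehat g_j$ by smooth functions.

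\textbf{Step 1: identify the symbol of $D_{j,\eps}^\omega$.} With the convention $\widehat f(\xi)=\int e^{-ix\xi}f$, translation by $\pm\eps r_j/2$ multiplies $\widehat g_j$ by $e^{\pm i\eps r_j\xi/2}$. For $\eps>0$ this gives the symbol
\[
 \frac{2i\sin(\eps r_j\xi/2)}{\eps\lambda_j}.
\]
For $\eps=0$, the operator $\partial_x$ has symbol $i\xi$, so $(r_j/\lambda_j)\partial_x$ has symbol $i r_j\xi/\lambda_j$. In both cases the symbol equals $\lambda_j^{-1}/q_\eps(r_j\xi)$ wherever $q_\eps(r_j\xi)$ is defined. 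This product of symbols is to be taken against the distribution $\widehat g_j$; the symbols are smooth, since they are trigonometric or polynomial.

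\textbf{Step 2: multiply by the symbol of $I_{j,\eps}^\omega$.} By \eqref{eq:band-integrator}, this symbol is $\lambda_j\widetilde\varphi(r_j\xi)q_\eps(r_j\xi)$, and it is a smooth compactly supported function because of \eqref{eq:sine-nonzero} together with $0\notin\supp\widetilde\varphi$. The composite symbol is
\[
 \widetilde\varphi(r_j\xi)\,q_\eps(r_j\xi)\cdot q_\eps(r_j\xi)^{-1}=\widetilde\varphi(r_j\xi)
\]
on $\{r_j\xi\in\supp\widetilde\varphi\}$. This set contains $\supp\widehat g_j$, because $\widetilde\varphi=1$ on $\supp\varphi$. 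Hence
\[
 \widehat{I_{j,\eps}^\omega D_{j,\eps}^\omega g_j}=\widetilde\varphi(r_j\cdot)\widehat g_j.
\]

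\textbf{Step 3: remove the cutoff, which is the only delicate point.} We need $\widetilde\varphi(r_j\cdot)\widehat g_j=\widehat g_j$ as distributions. The support of $\widehat g_j$ lies inside $\{r_j\xi\in\supp\varphi\}$, but a distribution supported there need not be annihilated by $1-\widetilde\varphi(r_j\cdot)$ unless this smooth function vanishes on a \emph{neighbourhood} of the support. I would secure this in one of two ways. If the definition of $\widetilde\varphi$ is read as $\widetilde\varphi=1$ on an open neighbourhood of $\supp\varphi$, this is immediate. If not, I would enlarge $\widetilde\varphi$ harmlessly: it is fixed, so one may assume it equals one near the compact set $\supp\varphi\subset\R\setminus\{0\}$, and shrink $\eps_0$ if necessary to keep \eqref{eq:sine-nonzero}. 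Alternatively, one can use that $1-\widetilde\varphi(r_j\cdot)$ vanishes to infinite order on the support, which suffices for distributions of finite order.

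Finally, Fourier inversion gives \eqref{eq:exact-band-inversion}. I would also note that the composition $I\circ D$ is legitimate as a composition of Fourier multipliers, since $D_{j,\eps}^\omega g_j$ is again bounded with the same Fourier support.
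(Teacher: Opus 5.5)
Your proposal is correct and follows the paper's proof. Both compute the symbol of $D_{j,\eps}^\omega$, multiply it by the symbol of $I_{j,\eps}^\omega$ to obtain $\widetilde\varphi(r_j\xi)$, and conclude from $\widetilde\varphi=1$ on $\supp\varphi$. The paper treats your Step~3 as immediate, and your infinite-order option is the right justification under the stated hypothesis: $\supp\varphi$ is the closure of the open set $\{\varphi\neq0\}$, so every derivative of $1-\widetilde\varphi$ vanishes on it, and the compactly supported $\widehat g_j$ has finite order. Enlarging $\widetilde\varphi$ is therefore unnecessary, and it would change the fixed operator $I_{j,\eps}^\omega$, so it would not prove the statement as given.
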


\begin{proof}
For $\eps>0$, the multiplier of $D_{j,\eps}^\omega$ is
\[
 \frac{2i\sin(\eps r_j\xi/2)}{\eps\lambda_j}.
\]
Multiplication with the symbol in \eqref{eq:band-integrator} gives
\[
 \lambda_j\widetilde\varphi(r_j\xi)
 \frac{\eps}{2i\sin(\eps r_j\xi/2)}
 \frac{2i\sin(\eps r_j\xi/2)}{\eps\lambda_j}
 =
 \widetilde\varphi(r_j\xi).
\]
For $\eps=0$, the corresponding product is
\[
 \lambda_j\widetilde\varphi(r_j\xi)
 \frac1{ir_j\xi}
 \frac{ir_j\xi}{\lambda_j}
 =
 \widetilde\varphi(r_j\xi).
\]
Because $\widetilde\varphi=1$ on $\supp\varphi$ and $\widehat g_j$ is supported there, both cases yield \eqref{eq:exact-band-inversion}.
\end{proof}

Differentiation cannot recover the zero frequency.  The low block must therefore be retained as an independent coordinate.

\begin{definition}[Complete analysis and admissible range]\label{def:complete-analysis}
For $0\leq\eps\leq\eps_0$, define
\[
 \cD_{\omega,\eps}f
 :=
 \bigl(\Delta_{-1}f,(d_{j,\eps}^\omega f)_{j\geq0}\bigr),
\]
and let
\[
 \cA_{\omega,\eps}:=\Ran\cD_{\omega,\eps}.
\]
At this point $\cA_{\omega,\eps}$ is only a labelled realized range.  The inverse map is introduced in \cref{thm:analysis-reconstruction} after injectivity of $\cD_{\omega,\eps}$ has been proved.
\end{definition}

The realized-range formulation separates exact reconstruction from the stronger problem of synthesizing an arbitrary bounded tower.  The latter requires additional summability and is treated in \cref{app:free-synthesis}.

\begin{theorem}[Analysis--reconstruction bijection]\label{thm:analysis-reconstruction}
For every $0\leq\eps\leq\eps_0$, the analysis map
\[
 \cD_{\omega,\eps}:A_\omega\longrightarrow\cA_{\omega,\eps}
\]
is injective.  Hence the rule
\begin{equation}\label{eq:inverse-on-range}
 \cI_{\omega,\eps}(\cD_{\omega,\eps}f):=f
 \qquad(f\in A_\omega)
\end{equation}
defines a unique inverse on the realized range.  If
\[
 (u_{-1},z)=\cD_{\omega,\eps}f,
\]
then this inverse is represented by the explicit Littlewood--Paley series
\begin{equation}\label{eq:realized-reconstruction}
 \cI_{\omega,\eps}(u_{-1},z)
 =
 u_{-1}+\sum_{j=0}^\infty I_{j,\eps}^\omega z_j,
\end{equation}
where the series converges uniformly to $f$.  Consequently,
\begin{equation}\label{eq:analysis-reconstruction-identities}
 \cI_{\omega,\eps}\cD_{\omega,\eps}=\id_{A_\omega},
 \qquad
 \cD_{\omega,\eps}\cI_{\omega,\eps}=\id_{\cA_{\omega,\eps}}.
\end{equation}
No Dini hypothesis is used.
\end{theorem}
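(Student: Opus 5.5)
The plan is to prove the series representation \eqref{eq:realized-reconstruction} first, for every $f\in A_\omega$. Injectivity and both identities in \eqref{eq:analysis-reconstruction-identities} then follow formally.

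Fix $0\le\eps\le\eps_0$ and $f\in A_\omega$, and write $(u_{-1},z)=\cD_{\omega,\eps}f$, so that $u_{-1}=\Delta_{-1}f$ and $z_j=d_{j,\eps}^\omega f$. The first step is to observe that $d_{j,\eps}^\omega f=D_{j,\eps}^\omega g_j$ with $g_j:=\Delta_jf$, in the notation of \cref{thm:band-inversion}. Indeed, \cref{def:matched-derivatives} and the definition of $D_{j,\eps}^\omega$ agree term by term for $\eps>0$ and for $\eps=0$.

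The block $g_j$ is bounded and continuous by \cref{prop:dyadic-bounds}. Its Fourier transform $\varphi(r_j\xi)\widehat f(\xi)$ is a tempered distribution supported in $\{\xi:r_j\xi\in\supp\varphi\}$. \cref{thm:band-inversion} therefore gives $I_{j,\eps}^\omega z_j=\Delta_jf$ exactly for every $j\ge0$. The only care needed here is to read the multiplier identity in the distributional sense. Since $\widetilde\varphi(r_j\cdot)q_\eps(r_j\cdot)$ is smooth and compactly supported, and $\widetilde\varphi=1$ on $\supp\varphi$, the product of the two symbols acts as the identity on distributions with that support. The same identity also holds at the level of $L^1$ kernels convolved with bounded functions, by \cref{lem:inverse-multiplier}.

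Consequently, the $N$th partial sum of \eqref{eq:realized-reconstruction} equals $\Delta_{-1}f+\sum_{j=0}^N\Delta_jf$. This converges uniformly to $f$ by \eqref{eq:lp-uniform-reconstruction}, which establishes the representation. No Dini condition enters, because we never sum $\sum_j\lambda_j$. We only use the telescoping of the Littlewood--Paley partition of unity together with the approximate-identity bound $\omega(2^{-N})\to0$.

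For injectivity, suppose $\cD_{\omega,\eps}f=\cD_{\omega,\eps}g$. The series formula expresses $f$ and $g$ as the same limit of the same series, so $f=g$. Alternatively, one can apply the argument to $f-g$ by linearity. The inverse \eqref{eq:inverse-on-range} is therefore well defined on $\cA_{\omega,\eps}$, and it is given by the series. The identity $\cI\cD=\id_{A_\omega}$ holds by definition. For any element $a=\cD f$ of the realized range, we have $\cD\cI a=\cD f=a$.

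The main obstacle is not analytic depth but rigor in the band-inversion step. Specifically, one must justify that $\Delta_jf$ for a merely bounded $f$ satisfies the hypotheses of \cref{thm:band-inversion}, and that the multiplier calculus there is valid for bounded continuous functions, whose Fourier transforms are only distributions. I would handle this by working with convolution kernels. Let $K_j^{\eps}$ be the Schwartz kernel of $I_{j,\eps}^\omega D_{j,\eps}^\omega\Delta_j$; it equals the kernel of $\widetilde\varphi(r_jD)\varphi(r_jD)=\varphi(r_jD)$. Convolving with the bounded function $f$ then gives the identity pointwise via Fubini.
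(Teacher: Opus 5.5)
Your proof is correct and uses the same two ingredients as the paper: exact band inversion applied to $\Delta_j f$, giving $I_{j,\eps}^\omega z_j=\Delta_jf$, and the uniform Littlewood--Paley reconstruction. The only difference is the order, since the paper proves injectivity first by applying band inversion to $f-g$ and then derives the series, whereas you derive injectivity from the series; your kernel-level justification of $I_{j,\eps}^\omega D_{j,\eps}^\omega\Delta_j=\Delta_j$ through the pointwise identity $\widetilde\varphi\varphi=\varphi$ is a sound and slightly more careful version of the paper's symbol-multiplication step.
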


\begin{proof}
Suppose
\[
 \cD_{\omega,\eps}f=\cD_{\omega,\eps}g.
\]
The low-frequency coordinates satisfy
\[
 \Delta_{-1}(f-g)=0,
\]
and the high-frequency coordinates satisfy
\[
 d_{j,\eps}^\omega(f-g)=0
 \qquad(j\geq0).
\]
Applying the band inverse from \cref{thm:band-inversion} gives
\[
 \Delta_j(f-g)
 =I_{j,\eps}^\omega d_{j,\eps}^\omega(f-g)
 =0
 \qquad(j\geq0).
\]
The uniform Littlewood--Paley reconstruction \eqref{eq:lp-uniform-reconstruction} therefore yields
\[
 f-g
 =\Delta_{-1}(f-g)+\sum_{j\geq0}\Delta_j(f-g)
 =0.
\]
Thus $\cD_{\omega,\eps}$ is injective, and \eqref{eq:inverse-on-range} is well defined.

Now let $(u_{-1},z)=\cD_{\omega,\eps}f$.  Then
\[
 u_{-1}=\Delta_{-1}f,
 \qquad
 z_j=d_{j,\eps}^\omega f.
\]
By \cref{thm:band-inversion},
\[
 I_{j,\eps}^\omega z_j=\Delta_jf.
\]
Hence
\[
 u_{-1}+\sum_{j=0}^\infty I_{j,\eps}^\omega z_j
 =\Delta_{-1}f+\sum_{j=0}^\infty\Delta_jf
 =f
\]
uniformly.  This proves \eqref{eq:realized-reconstruction}.  The two identities in \eqref{eq:analysis-reconstruction-identities} now follow directly from the definition of the inverse on the range.
\end{proof}

Exact reconstruction transports the pointwise algebra without introducing another analytic estimate.  We state only the consequence needed to interpret the tower as a coordinate system.

\begin{corollary}[Transported algebra on the realized range]\label{cor:transported-algebra}
For $a,b\in\cA_{\omega,\eps}$, define
\[
 a\star_{\omega,\eps}b
 :=
 \cD_{\omega,\eps}
 \bigl(\cI_{\omega,\eps}a\,\cI_{\omega,\eps}b\bigr),
\]
and define
\[
 \norm a_{\omega,\eps}^{\mathrm{tr}}
 :=
 \norm{\cI_{\omega,\eps}a}_{C^\omega}.
\]
Then $(\cA_{\omega,\eps},\star_{\omega,\eps})$ is a commutative unital Banach algebra and
\[
 \cD_{\omega,\eps}:A_\omega\longrightarrow\cA_{\omega,\eps}
\]
is an isometric algebra isomorphism for the transported norm.
\end{corollary}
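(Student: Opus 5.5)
The corollary is a pure transport of structure along the bijection of \cref{thm:analysis-reconstruction}, so the plan is to verify each axiom by pulling back through $\cI_{\omega,\eps}$ and using \cref{prop:modulus-banach-algebra}. Since $\cD_{\omega,\eps}$ is linear, its range $\cA_{\omega,\eps}$ is a linear subspace of the product space. The inverse $\cI_{\omega,\eps}$ is linear on that subspace: if $a=\cD_{\omega,\eps}f$ and $b=\cD_{\omega,\eps}g$, then $\alpha a+\beta b=\cD_{\omega,\eps}(\alpha f+\beta g)$, so $\cI_{\omega,\eps}(\alpha a+\beta b)=\alpha f+\beta g$ by \eqref{eq:inverse-on-range}. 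Alternatively, linearity can be read off from the explicit series \eqref{eq:realized-reconstruction}. Thus $\cI_{\omega,\eps}$ and $\cD_{\omega,\eps}$ are mutually inverse linear bijections between $A_\omega$ and $\cA_{\omega,\eps}$, by \eqref{eq:analysis-reconstruction-identities}.

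Next I would check the algebraic structure. The product $a\star_{\omega,\eps}b$ is well defined because $\cI_{\omega,\eps}a\,\cI_{\omega,\eps}b\in A_\omega$ by \cref{prop:modulus-banach-algebra}. Applying $\cI_{\omega,\eps}$ gives
\[
\cI_{\omega,\eps}(a\star_{\omega,\eps}b)=\cI_{\omega,\eps}a\,\cI_{\omega,\eps}b,
\]
so $\cI_{\omega,\eps}$ is multiplicative. Equivalently, $\cD_{\omega,\eps}(fg)=\cD_{\omega,\eps}f\star_{\omega,\eps}\cD_{\omega,\eps}g$, which shows that $\cD_{\omega,\eps}$ is an algebra homomorphism. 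Associativity, commutativity and bilinearity of $\star_{\omega,\eps}$ then follow by applying the injective map $\cI_{\omega,\eps}$ to both sides of each identity and using the corresponding properties of the pointwise product. The unit is $\cD_{\omega,\eps}1=(\Delta_{-1}1,0,0,\ldots)=(1,0,0,\ldots)$, since $\chi(0)=1$ and $\varphi(0)=0$; only its existence is needed.

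For the normed structure, $\norm{\cdot}^{\mathrm{tr}}_{\omega,\eps}$ is a norm because it is the composition of the norm $\norm{\cdot}_{C^\omega}$ with an injective linear map. By definition, $\norm{\cD_{\omega,\eps}f}^{\mathrm{tr}}_{\omega,\eps}=\norm f_{C^\omega}$, so $\cD_{\omega,\eps}$ is isometric. Submultiplicativity is the pullback of \eqref{eq:modulus-product}:
\[
\norm{a\star_{\omega,\eps} b}^{\mathrm{tr}}_{\omega,\eps}=\norm{\cI_{\omega,\eps} a\,\cI_{\omega,\eps} b}_{C^\omega}\le\norm a^{\mathrm{tr}}_{\omega,\eps}\norm b^{\mathrm{tr}}_{\omega,\eps}.
\]
Completeness holds because a Cauchy sequence in $\cA_{\omega,\eps}$ maps to a Cauchy sequence in the Banach space $A_\omega$, whose limit $f$ yields the limit $\cD_{\omega,\eps}f$ by isometry.

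There is no genuine obstacle here, since every step is formal once \cref{thm:analysis-reconstruction} is available. The only point requiring care is that the linearity of $\cI_{\omega,\eps}$ and the fact that the range is a subspace are consequences of injectivity, so they should be stated explicitly rather than assumed. It should also be noted that the transported norm need not coincide with any intrinsic sequence norm on $\cA_{\omega,\eps}$; the corollary asserts nothing beyond the transported structure.
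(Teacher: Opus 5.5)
Your proposal is correct and takes the same route as the paper: transport every structure through the bijection of \cref{thm:analysis-reconstruction} and pull back the Banach-algebra properties of $A_\omega$ from \cref{prop:modulus-banach-algebra}. You also check explicitly two points the paper's brief proof leaves implicit, namely that $\cI_{\omega,\eps}$ is linear and that the unit is $\cD_{\omega,\eps}1=(1,0,0,\ldots)$.
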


\begin{proof}
Every algebraic identity is obtained by conjugating pointwise multiplication through the bijection in \cref{thm:analysis-reconstruction}.  For example,
\[
 (a\star_{\omega,\eps}b)\star_{\omega,\eps}c
 =
 \cD_{\omega,\eps}
 \bigl(\cI_{\omega,\eps}a\,\cI_{\omega,\eps}b\,\cI_{\omega,\eps}c\bigr),
\]
and the same expression equals $a\star_{\omega,\eps}(b\star_{\omega,\eps}c)$.  Completeness and submultiplicativity follow from the corresponding properties of $A_\omega$ under the transported norm.
\end{proof}

The ordinary derivative is not part of the scale coordinate.  It appears only when the weighted tower can be summed in the classical function space.

\begin{corollary}[Classical derivative as scalar collapse]\label{cor:classical-collapse}
Let
\[
 S_Nf:=\Delta_{-1}f+\sum_{j=0}^N\Delta_jf.
\]
Then
\begin{equation}\label{eq:weighted-collapse}
 \partial_xS_Nf
 =
 \partial_x\Delta_{-1}f
 +
 \sum_{j=0}^N\frac{\lambda_j}{r_j}d_j^\omega f.
\end{equation}
The following are equivalent:
\begin{enumerate}[label=(\roman*),leftmargin=2.4em]
\item $\partial_xS_Nf$ converges uniformly on $\R$;
\item $f\in C_b^1(\R)$ and $f'$ is bounded and uniformly continuous.
\end{enumerate}
In this case, the limit in \eqref{eq:weighted-collapse} is $f'$.
\end{corollary}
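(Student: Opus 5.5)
The identity \eqref{eq:weighted-collapse} is immediate from the definitions: $\Delta_{-1}f$ and each $\Delta_jf$ are smooth with bounded derivatives by \cref{prop:dyadic-bounds}, and $\frac{\lambda_j}{r_j}d_j^\omega f=\partial_x\Delta_jf$ by \cref{def:matched-derivatives}. So $\partial_xS_Nf$ is a finite sum of bounded smooth functions, and the content is the equivalence of (i) and (ii).

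For (i)$\Rightarrow$(ii), I take $g$ to be the uniform limit of $\partial_xS_Nf$; it is bounded and continuous. By \eqref{eq:lp-uniform-reconstruction}, $S_Nf\to f$ uniformly. The standard theorem on uniform convergence of derivatives then gives, from $S_Nf(x)-S_Nf(0)=\int_0^x\partial_xS_Nf$, that $f(x)-f(0)=\int_0^xg$. Hence $f\in C^1$ and $f'=g$ is bounded. For uniform continuity, I note that $S_Nf=\chi(2^{-N-1}D)f$, so $\partial_xS_Nf=\chi(2^{-N-1}D)$ applied to a function; more directly, each $\partial_xS_Nf$ is smooth with bounded second derivative (by \cref{prop:dyadic-bounds} with $k=2$, for finitely many blocks). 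Each $\partial_xS_Nf$ is therefore Lipschitz, hence uniformly continuous. A uniform limit of uniformly continuous functions is uniformly continuous, so $f'$ is uniformly continuous.

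For (ii)$\Rightarrow$(i), I use the fact that Fourier multipliers commute with $\partial_x$ on tempered distributions. This gives $\partial_xS_Nf=\chi(2^{-N-1}D)f'=\rho_N*f'$, where $\rho_N(x)=2^{N+1}\rho(2^{N+1}x)$, $\rho=\mathcal F^{-1}\chi$ is Schwartz, and $\int\rho=\chi(0)=1$. Then
\[
|\rho_N*f'(x)-f'(x)|\le\int|\rho(z)|\,|f'(x-2^{-N-1}z)-f'(x)|\,\dd z.
\]
Let $\omega_{f'}$ denote the modulus of continuity of $f'$. I split the integral at $|z|\le R$ and $|z|>R$: the first part is at most $\|\rho\|_{L^1}\,\omega_{f'}(2^{-N-1}R)$, and the second is at most $2\|f'\|_\infty\int_{|z|>R}|\rho|$. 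Choosing $R$ large and then $N$ large shows $\partial_xS_Nf\to f'$ uniformly. The same computation identifies the limit as $f'$ in case (i).

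The only delicate point is justifying $\partial_x\chi(2^{-N-1}D)f=\chi(2^{-N-1}D)f'$ for bounded $f$. This holds because convolution with a Schwartz kernel commutes with distributional differentiation, and $f'$ is the classical derivative when $f\in C^1_b$. I expect no real obstacle beyond this bookkeeping.
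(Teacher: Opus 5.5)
Your proof is correct and follows the paper's argument: the identity comes from $\partial_x\Delta_jf=(\lambda_j/r_j)d_j^\omega f$, (i)$\Rightarrow$(ii) from $S_Nf\to f$ plus the uniform-derivative theorem, and (ii)$\Rightarrow$(i) from commuting $\chi(2^{-N-1}D)$ with $\partial_x$ and the approximate-identity property. Your Lipschitz argument for uniform continuity of the limit fills in a step the paper only asserts. One small correction: the low block $\Delta_{-1}f$ is not covered by \cref{prop:dyadic-bounds} (which treats $j\geq0$); its bounded second derivative comes instead from the Schwartz decay of its fixed kernel.
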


\begin{proof}
Identity \eqref{eq:weighted-collapse} follows from
\[
 \partial_x\Delta_jf=\frac{\lambda_j}{r_j}d_j^\omega f.
\]
If $S_Nf\to f$ and $\partial_xS_Nf\to g$ uniformly, the uniform derivative theorem gives
\[
 f\in C^1(\R),
 \qquad
 f'=g.
\]
The uniform limit $g$ is bounded and uniformly continuous.  Conversely, if $f'\in C_{bu}(\R)$, the low-pass multipliers commute with $\partial_x$, and
\[
 \partial_xS_Nf=S_N(f')\longrightarrow f'
\]
uniformly by the approximate-identity property.
\end{proof}
 \section{Corona exactification of the scale derivative}\label{sec:corona}

The complete tower from \cref{sec:matched-reconstruction} retains the function, but each finite-frequency operator $d_j^\omega$ has a nonzero Leibniz defect.  The defect estimate below shows that this failure is smaller than the critical amplitude and therefore disappears at infinite scale.  The appropriate target is the quotient of bounded towers by towers converging to zero.

\begin{definition}[Scale corona algebra]\label{def:scale-corona}
Define
\[
 \cB:=\ell^\infty(\Nzero;C_b(\R))
\]
with coordinatewise multiplication, and define the closed ideal
\[
 \cB_0:=c_0(\Nzero;C_b(\R))
 =
 \left\{(u_j)_{j\geq0}:\norm{u_j}_\infty\to0\right\}.
\]
The \emph{scale corona algebra} is
\[
 \cQ:=\cB/\cB_0.
\]
For a bounded tower $u=(u_j)$,
\begin{equation}\label{eq:corona-norm}
 \norm{[u]}_\cQ
 =
 \limsup_{j\to\infty}\norm{u_j}_\infty.
\end{equation}
The algebra $A_\omega$ acts on $\cB$ and $\cQ$ through coordinatewise multiplication by the constant sequence
\[
 \iota(f):=[(f,f,\ldots)].
\]
\end{definition}

The dyadic bound \eqref{eq:dyadic-bounds} makes the raw derivative tower a bounded map into $\cB$.

\begin{definition}[Raw tower and corona derivative]\label{def:corona-derivative}
Define
\[
 d^\omega:A_\omega\longrightarrow\cB,
 \qquad
 d^\omega f:=(d_j^\omega f)_{j\geq0},
\]
and let $q:\cB\to\cQ$ be the quotient map.  The \emph{corona derivative} is
\[
 \delta_\omega:=q\circ d^\omega,
 \qquad
 \delta_\omega f=[(d_j^\omega f)_{j\geq0}].
\]
Its role is to retain precisely the nonvanishing critical class of the scale derivative.
\end{definition}

The first obstruction to exactness is the product rule.  Its exact kernel contains two increments and therefore gains one factor of $\lambda_j$.

\begin{lemma}[Leibniz-defect kernel]\label{lem:product-defect}
For $f,g\in A_\omega$, define
\[
 B_j^\omega(f,g)
 :=
 d_j^\omega(fg)-f\,d_j^\omega g-g\,d_j^\omega f.
\]
Then
\begin{equation}\label{eq:product-defect-kernel}
 B_j^\omega(f,g)(x)
 =
 \frac{r_j}{\lambda_j}
 \int_\R K_j'(y)
 \bigl(f(x-y)-f(x)\bigr)
 \bigl(g(x-y)-g(x)\bigr)
 \,\dd y,
\end{equation}
and
\begin{equation}\label{eq:product-defect-bound}
 \norm{B_j^\omega(f,g)}_\infty
 \leq
 C\lambda_j\norm f_{C^\omega}\norm g_{C^\omega}.
\end{equation}
\end{lemma}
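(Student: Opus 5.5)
We prove the kernel identity first, using the cancellation properties of $K_j'$, and then read the bound off it with the scale-matched increment estimate. Since $d_j^\omega u=(r_j/\lambda_j)\,\partial_x\Delta_ju=(r_j/\lambda_j)(K_j'*u)$, every term in $B_j^\omega(f,g)$ is a convolution against $K_j'$. We use two facts about this kernel. First, $\int K_j'=0$, which is immediate by integrating a derivative of a Schwartz function. Second, $\int K_j'(y)\,\dd y$ times a constant vanishes, so we may subtract the values $f(x)$ and $g(x)$ freely.

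We expand the product of increments
\[
 \bigl(f(x-y)-f(x)\bigr)\bigl(g(x-y)-g(x)\bigr)
 =fg(x-y)-f(x)g(x-y)-g(x)f(x-y)+f(x)g(x).
\]
Integrating against $K_j'(y)$ gives the following:
\begin{itemize}
\item the first term gives $K_j'*(fg)(x)$;
\item the second term gives $f(x)\,K_j'*g(x)$;
\item the third term gives $g(x)\,K_j'*f(x)$;
\item the last term is killed by $\int K_j'=0$.
\end{itemize}
Multiplying by $r_j/\lambda_j$ yields exactly \eqref{eq:product-defect-kernel}. All integrals converge absolutely because $K'$ is Schwartz and $f,g$ are bounded.

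For the bound, we rescale with $y=r_jz$, which gives $K_j'(y)\,\dd y=r_j^{-1}K'(z)\,\dd z$. Then
\[
 \abs{B_j^\omega(f,g)(x)}
 \le \frac{1}{\lambda_j}\int_\R \abs{K'(z)}\,\abs{f(x-r_jz)-f(x)}\,\abs{g(x-r_jz)-g(x)}\,\dd z .
\]
\cref{lem:scale-matched-increment} bounds each increment by $2(1+\abs z)\lambda_j[\,\cdot\,]_\omega$. The integrand is therefore at most $4\lambda_j^2(1+\abs z)^2\abs{K'(z)}[f]_\omega[g]_\omega$. The moment $\int\abs{K'(z)}(1+\abs z)^2\,\dd z$ is finite because $K'$ is Schwartz, and it depends only on the fixed Littlewood--Paley pair. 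This leaves $C\lambda_j[f]_\omega[g]_\omega\le C\lambda_j\norm f_{C^\omega}\norm g_{C^\omega}$, uniformly in $j$, which is \eqref{eq:product-defect-bound}.

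The only point needing care is the global increment bound for large $\abs z$, where $r_j\abs z>1$. This is already handled inside \cref{lem:scale-matched-increment} through its linear growth $(1+\abs z)$, so the rapid decay of $K'$ absorbs the quadratic weight. No case split is needed here.
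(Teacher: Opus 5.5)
Your proof is correct and follows the paper's argument: you obtain the kernel identity by expanding the product of increments and using $\int K_j'=0$, and you obtain the bound by rescaling $y=r_jz$ and using the Schwartz moment $\int\abs{K'(z)}(1+\abs z)^2\,\dd z$. The only difference is that you invoke \cref{lem:scale-matched-increment} directly, whereas the paper combines \cref{lem:global-increment} with \cref{lem:concave-scale}; your version gains slightly, since the constant is independent of $\omega$ and only the seminorms $[f]_\omega[g]_\omega$ appear.
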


\begin{proof}
Since $\int_\R K_j'=0$,
\[
 d_j^\omega h(x)
 =
 \frac{r_j}{\lambda_j}
 \int_\R K_j'(y)\bigl(h(x-y)-h(x)\bigr)\,\dd y.
\]
For $h=fg$, the increment identity
\begin{align*}
 f(x-y)g(x-y)-f(x)g(x)
 ={}&f(x)\bigl(g(x-y)-g(x)\bigr)\\
 &+g(x)\bigl(f(x-y)-f(x)\bigr)\\
 &+\bigl(f(x-y)-f(x)\bigr)
   \bigl(g(x-y)-g(x)\bigr)
\end{align*}
shows that subtracting the two Leibniz terms leaves \eqref{eq:product-defect-kernel}.

By \cref{lem:global-increment},
\[
 \abs{f(x-y)-f(x)}
 \leq
 C_\omega\norm f_{C^\omega}\omega(\abs y),
\]
and the same estimate holds for $g$.  Hence
\begin{align*}
 \norm{B_j^\omega(f,g)}_\infty
 &\leq
 C\frac{r_j}{\lambda_j}\norm f_{C^\omega}\norm g_{C^\omega}
 \int_\R\abs{K_j'(y)}\omega(\abs y)^2\,\dd y\\
 &=
 \frac{C}{\lambda_j}\norm f_{C^\omega}\norm g_{C^\omega}
 \int_\R\abs{K'(z)}\omega(r_j\abs z)^2\,\dd z.
\end{align*}
By \cref{lem:concave-scale},
\[
 \omega(r_j\abs z)^2
 \leq
 (1+\abs z)^2\lambda_j^2.
\]
Because $K'$ is Schwartz,
\[
 \int_\R\abs{K'(z)}(1+\abs z)^2\,\dd z<\infty.
\]
Substitution gives \eqref{eq:product-defect-bound}.
\end{proof}

The smooth chain rule has the same structure: the first-order Taylor term is exact and the quadratic remainder contributes two increments.

\begin{lemma}[Chain-rule defect]\label{lem:chain-defect}
Let $I\subset\R$ be open, let $F\in C^2(I)$, and let $f\in A_\omega$ satisfy $f(\R)\Subset I$.  Put
\[
 K_f:=\co(f(\R))\Subset I
\]
and define
\[
 E_{j,F}^\omega(f)
 :=
 d_j^\omega(F(f))-F'(f)d_j^\omega f.
\]
Then
\begin{align}
 E_{j,F}^\omega(f)(x)
 ={}&
 \frac{r_j}{\lambda_j}
 \int_\R K_j'(y)
 \Bigl[
 F(f(x-y))-F(f(x))
 \nonumber\\[-2mm]
 &\hspace{42mm}
 -F'(f(x))\bigl(f(x-y)-f(x)\bigr)
 \Bigr]\,\dd y,
 \label{eq:chain-defect-kernel}
\end{align}
and
\begin{equation}\label{eq:chain-defect-bound}
 \norm{E_{j,F}^\omega(f)}_\infty
 \leq
 C\lambda_j
 \norm{F''}_{L^\infty(K_f)}
 \norm f_{C^\omega}^2.
\end{equation}
\end{lemma}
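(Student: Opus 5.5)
The argument follows the proof of \cref{lem:product-defect}: write $d_j^\omega$ as a kernel acting on increments, then Taylor-expand $F$ to second order. Since $\int_\R K_j'=0$, for any $h\in A_\omega$
\[
 d_j^\omega h(x)=\frac{r_j}{\lambda_j}\int_\R K_j'(y)\bigl(h(x-y)-h(x)\bigr)\,\dd y.
\]
By \cref{prop:modulus-banach-algebra}, $F\circ f\in A_\omega$, so this representation applies to $h=F(f)$ and to $h=f$. Multiplying the second by the constant (in $y$) factor $F'(f(x))$ and subtracting gives \eqref{eq:chain-defect-kernel} directly. The integrals converge absolutely because $K'$ is Schwartz and the increments are bounded.

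For the bound, fix $x,y$ and put $a=f(x)$, $b=f(x-y)$. Both lie in $f(\R)$, so the segment between them lies in $K_f=\co(f(\R))$, which is a compact subset of $I$. Taylor's theorem with Lagrange remainder on this segment gives
\[
 \bigl|F(b)-F(a)-F'(a)(b-a)\bigr|\le\tfrac12\norm{F''}_{L^\infty(K_f)}\abs{b-a}^2.
\]
By \cref{lem:global-increment}, $\abs{b-a}\le C_\omega\norm f_{C^\omega}\omega(\abs y)$. So the bracket in \eqref{eq:chain-defect-kernel} is at most $C\norm{F''}_{L^\infty(K_f)}\norm f_{C^\omega}^2\omega(\abs y)^2$.

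It remains to insert this into the kernel integral and rescale with $y=r_jz$. Since $K_j'(y)=r_j^{-2}K'(y/r_j)$, the prefactor $r_j/\lambda_j$ combines with the scaling to give
\[
 \frac{1}{\lambda_j}\int_\R\abs{K'(z)}\,\omega(r_j\abs z)^2\,\dd z.
\]
\cref{lem:concave-scale} bounds $\omega(r_j\abs z)^2$ by $(1+\abs z)^2\lambda_j^2$, and $\int_\R\abs{K'}(1+\abs z)^2\,\dd z<\infty$. Together these yield \eqref{eq:chain-defect-bound}, with $C$ depending only on $\omega$ (through $C_\omega$) and the Littlewood--Paley pair.

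The only step needing care is applying Taylor's theorem in the range variable. This requires the whole segment between $f(x)$ and $f(x-y)$ to stay inside $I$, where $F''$ is controlled, which is why $K_f$ is defined as the convex hull. Without the hypothesis $f(\R)\Subset I$, $\norm{F''}_{L^\infty(K_f)}$ could be infinite. Everything else repeats the product-defect estimate verbatim.
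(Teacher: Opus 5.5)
Your proposal is correct and is essentially the paper's own proof: the increment representation of $d_j^\omega$ coming from $\int_\R K_j'=0$ gives \eqref{eq:chain-defect-kernel}, and the second-order Taylor bound on $K_f$ reduces \eqref{eq:chain-defect-bound} to the kernel integral $\lambda_j^{-1}\int_\R\abs{K'(z)}\,\omega(r_j\abs z)^2\,\dd z$, which is already handled in \cref{lem:product-defect} through \cref{lem:global-increment,lem:concave-scale}. One small precision: $K_f$ is only relatively compact in $I$, not necessarily compact, but this is harmless because $F''$ is bounded on its closure.
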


\begin{proof}
The increment representation for $d_j^\omega$ gives \eqref{eq:chain-defect-kernel}.  Taylor's theorem yields
\[
 \abs{F(a)-F(b)-F'(b)(a-b)}
 \leq
 \frac12\norm{F''}_{L^\infty(K_f)}\abs{a-b}^2
 \qquad(a,b\in K_f).
\]
Set $a=f(x-y)$ and $b=f(x)$.  The remaining kernel integral is the one estimated in the proof of \cref{lem:product-defect}; this gives \eqref{eq:chain-defect-bound}.
\end{proof}

The factors $\lambda_j$ in \eqref{eq:product-defect-bound} and \eqref{eq:chain-defect-bound} tend to zero.  Thus the quotient in \cref{def:scale-corona} turns the asymptotically exact finite-frequency rules into exact identities.

\begin{theorem}[Exact corona calculus]\label{thm:exact-corona-calculus}
The map
\[
 \delta_\omega:A_\omega\longrightarrow\cQ
\]
is continuous and satisfies
\begin{equation}\label{eq:corona-leibniz}
 \delta_\omega(fg)
 =
 \iota(f)\delta_\omega g+
 \iota(g)\delta_\omega f
 \qquad(f,g\in A_\omega).
\end{equation}
If $I\subset\R$ is open, $F\in C^2(I)$, and $f(\R)\Subset I$, then
\begin{equation}\label{eq:corona-chain}
 \delta_\omega(F(f))
 =
 \iota(F'(f))\delta_\omega f.
\end{equation}
\end{theorem}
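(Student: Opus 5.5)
The plan has three steps: continuity from the dyadic bounds, then each exact rule from the corresponding defect lemma, with the quotient absorbing the defect. No new analytic estimate should be needed.

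\emph{Continuity.} Linearity of $\delta_\omega$ is clear, since $d^\omega$ is linear and $q$ is linear. By \cref{prop:dyadic-bounds} with $k=1$,
\[
 \norm{d_j^\omega f}_\infty=\frac{r_j}{\lambda_j}\norm{\partial_x\Delta_jf}_\infty\le C_1\norm f_{C^\omega}
\]
uniformly in $j$. Hence $d^\omega:A_\omega\to\cB$ is bounded. The quotient map $q$ has norm at most one by \eqref{eq:corona-norm}, so $\delta_\omega$ is continuous with $\norm{\delta_\omega f}_\cQ\le C_1\norm f_{C^\omega}$.

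\emph{Leibniz rule.} First I check that the module action is well defined. For $f\in A_\omega\subset C_b(\R)$ and $u\in\cB$, the tower $(fu_j)_j$ lies in $\cB$. If $u\in\cB_0$, then $\norm{fu_j}_\infty\le\norm f_\infty\norm{u_j}_\infty\to0$, so $\cB_0$ is an ideal. Consequently $\iota(f)[u]=[(fu_j)_j]$ is well defined. In $\cB$ we have the exact identity
\[
 d^\omega(fg)-(f\,d_j^\omega g)_j-(g\,d_j^\omega f)_j=(B_j^\omega(f,g))_j .
\]
By \cref{lem:product-defect}, $\norm{B_j^\omega(f,g)}_\infty\le C\lambda_j\norm f_{C^\omega}\norm g_{C^\omega}$. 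Since $\lambda_j\to0$ by \cref{def:admissible-modulus}, the defect tower lies in $\cB_0$. Applying $q$ gives \eqref{eq:corona-leibniz}.

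\emph{Chain rule.} Take $F$, $I$ and $f$ as in the statement. By \cref{prop:modulus-banach-algebra}, $F\circ f\in A_\omega$, so $d^\omega(F(f))$ is defined. Also $F'(f)$ is bounded and continuous because $K_f$ is compact, so it acts on $\cB$. The difference $d^\omega(F(f))-(F'(f)d_j^\omega f)_j$ is exactly the tower $(E_{j,F}^\omega(f))_j$. By \cref{lem:chain-defect}, its norm at level $j$ is $O(\lambda_j)\to0$, so the tower lies in $\cB_0$. Applying $q$ yields \eqref{eq:corona-chain}.

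The main point needing care is the well-definedness and continuity of the $A_\omega$-action on the quotient. In particular, $\iota(F'(f))$ must make sense even though $F'\circ f$ is only $C^1\circ A_\omega$. This is harmless here, since the action only uses boundedness in $C_b(\R)$ and not membership in $A_\omega$.
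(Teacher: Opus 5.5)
Your proposal is correct and follows essentially the same route as the paper: continuity from the uniform dyadic bound with $k=1$, and both exact rules from the observation that the defect towers $(B_j^\omega(f,g))_j$ and $(E_{j,F}^\omega(f))_j$ are $O(\lambda_j)$ and therefore lie in $\cB_0$. Your closing caveat is unnecessary: $F'\in C^1(I)$ together with \cref{prop:modulus-banach-algebra} already gives $F'(f)\in A_\omega$, so $\iota(F'(f))$ is defined exactly as in the paper.
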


\begin{proof}
By \cref{prop:dyadic-bounds},
\[
 \norm{d_j^\omega f}_\infty
 =
 \frac{r_j}{\lambda_j}\norm{\partial_x\Delta_jf}_\infty
 \leq
 C\norm f_{C^\omega}.
\]
Equation \eqref{eq:corona-norm} therefore gives
\[
 \norm{\delta_\omega f}_\cQ
 \leq
 C\norm f_{C^\omega}.
\]
By \eqref{eq:product-defect-bound} and $\lambda_j\to0$,
\[
 (B_j^\omega(f,g))_{j\geq0}\in\cB_0.
\]
Applying the quotient map to the defining identity for $B_j^\omega(f,g)$ gives \eqref{eq:corona-leibniz}.  Likewise, \eqref{eq:chain-defect-bound} implies
\[
 (E_{j,F}^\omega(f))_{j\geq0}\in\cB_0,
\]
and its quotient class is exactly the difference between the two sides of \eqref{eq:corona-chain}.
\end{proof}

Exactness arises from annihilating all vanishing towers, not from a special choice of a downstream target.  The next theorem states the precise universal class: bounded symmetric $A_\omega$-module maps defined on the raw tower space and vanishing on $\cB_0$.

\begin{theorem}[Universal bounded exactification]\label{thm:universal-exactification}
Let $M$ be a Banach symmetric $A_\omega$-module, and let
\[
 T:\cB\longrightarrow M
\]
be a bounded $A_\omega$-module map satisfying
\[
 \cB_0\subset\Ker T.
\]
Then there is a unique bounded $A_\omega$-module map
\[
 \overline T:\cQ\longrightarrow M
\]
such that
\begin{equation}\label{eq:universal-factorization}
 T=\overline T\circ q.
\end{equation}
The induced map
\[
 D_T:=T\circ d^\omega
 =
 \overline T\circ\delta_\omega:A_\omega\longrightarrow M
\]
is a continuous derivation and satisfies the $C^2$ chain rule.  Conversely, every map obtained from $d^\omega$ by such a bounded module map factors uniquely through $\delta_\omega$.
\end{theorem}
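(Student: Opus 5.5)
The plan is to treat this as the first isomorphism theorem for bounded module maps, together with the exact corona calculus of \cref{thm:exact-corona-calculus}. First I check that $q:\cB\to\cQ$ is a surjective bounded $A_\omega$-module map for the action through $\iota$. The ideal $\cB_0$ is invariant under coordinatewise multiplication by the constant tower $(f,f,\ldots)$, since $\norm{fu_j}_\infty\le\norm f_\infty\norm{u_j}_\infty\to0$. Hence $\cQ$ inherits a symmetric module structure and $q(fu)=\iota(f)q(u)$.

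Given $T$ with $\cB_0\subset\Ker T$, I define $\overline T([u]):=T(u)$. This is well defined because two representatives differ by an element of $\cB_0$. Module linearity follows from that of $T$ and of $q$: $\overline T(\iota(f)[u])=T(fu)=fT(u)$. For boundedness I use the quotient norm. For any $v\in\cB_0$ we have $\norm{T u}=\norm{T(u+v)}\le\norm T\,\norm{u+v}_{\cB}$, and taking the infimum over $v$ gives $\norm{\overline T[u]}\le\norm T\,\norm{[u]}_{\cQ}$. Here the infimum over $\cB_0$ equals the $\limsup$ expression in \eqref{eq:corona-norm}; that is the standard identification, obtained by truncating finitely many coordinates. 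Uniqueness follows from surjectivity of $q$.

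For the induced map $D_T=T\circ d^\omega=\overline T\circ q\circ d^\omega=\overline T\circ\delta_\omega$, continuity is the composition of the bounded maps $\overline T$ and $\delta_\omega$; the latter is bounded by \cref{thm:exact-corona-calculus}. To get the derivation property I apply $\overline T$ to \eqref{eq:corona-leibniz} and use module linearity:
\[
 D_T(fg)=\overline T\bigl(\iota(f)\delta_\omega g+\iota(g)\delta_\omega f\bigr)=f\,D_Tg+g\,D_Tf.
\]
Symmetry of $M$ ensures that left and right actions agree, so this is a genuine derivation. The chain rule follows in the same way from \eqref{eq:corona-chain}: $D_T(F(f))=F'(f)\,D_Tf$, where $F'(f)\in A_\omega$ by \cref{prop:modulus-banach-algebra}. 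The converse statement is just the factorization \eqref{eq:universal-factorization} restated, with uniqueness as already shown.

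The only step needing care is the quotient-norm identification, used so that the bound for $\overline T$ is stated in the norm \eqref{eq:corona-norm}. I would prove it directly. Given $\eta>0$, choose $N$ with $\sup_{j\ge N}\norm{u_j}_\infty\le\limsup_j\norm{u_j}_\infty+\eta$, and subtract the finitely supported tower $(u_0,\ldots,u_{N-1},0,\ldots)\in\cB_0$. The reverse inequality $\norm{u+v}_{\cB}\ge\limsup_j\norm{u_j}_\infty$ holds for every $v\in\cB_0$. Everything else is formal.
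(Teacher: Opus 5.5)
Your proof is correct and follows essentially the paper's route. You get $\overline T$ from the quotient property, and the derivation and $C^2$ chain rules from the vanishing in $\cQ$ of the Leibniz and chain-rule defects. You invoke those defects through \cref{thm:exact-corona-calculus} and $\overline T$, whereas the paper applies $T$ directly to the defect towers of \cref{lem:product-defect,lem:chain-defect}; this is the same argument one step earlier. Your explicit checks that the quotient norm equals the $\limsup$ expression and that $F'(f)\in A_\omega$ (by \cref{prop:modulus-banach-algebra}) supply details the paper leaves implicit.
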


\begin{proof}
The subspace $\cB_0$ is a closed $A_\omega$-submodule of $\cB$.  Since $\cB_0\subset\Ker T$, the Banach-space quotient property gives a unique bounded linear map $\overline T$ satisfying \eqref{eq:universal-factorization}.  For $f\in A_\omega$ and $u\in\cB$,
\[
 \overline T\bigl(\iota(f)q(u)\bigr)
 =
 \overline T(q(fu))
 =
 T(fu)
 =
 fT(u)
 =
 f\overline T(q(u)),
\]
so $\overline T$ is an $A_\omega$-module map.

By \cref{lem:product-defect},
\[
 d^\omega(fg)-f\,d^\omega g-g\,d^\omega f
 =
 (B_j^\omega(f,g))_{j\geq0}
 \in\cB_0.
\]
Applying $T$ gives
\[
 D_T(fg)=fD_Tg+gD_Tf.
\]
Similarly, \cref{lem:chain-defect} gives
\[
 d^\omega(F(f))-F'(f)d^\omega f
 \in\cB_0,
\]
and application of $T$ gives
\[
 D_T(F(f))=F'(f)D_Tf.
\]
Continuity follows from the boundedness of $d^\omega$ and $T$.  The converse and uniqueness are exactly the quotient factorization already proved.
\end{proof}

The corona quotient forgets functions whose block amplitudes are asymptotically smaller than the selected modulus.  Exact band inversion supplies the reverse inequality needed to identify this kernel.

\begin{definition}[Little scale space]\label{def:little-scale-space}
Define
\[
 c_\Delta^\omega
 :=
 \left\{
 f\in A_\omega:
 \lambda_j^{-1}\norm{\Delta_jf}_\infty\longrightarrow0
 \right\}.
\]
For $\omega(r)=r^\alpha$, $0<\alpha<1$, this is the standard high-frequency description of the little H\"older--Zygmund space.
\end{definition}

\begin{theorem}[Kernel and norm of the corona derivative]\label{thm:corona-kernel}
For every $f\in A_\omega$ and every $j\geq0$,
\begin{equation}\label{eq:block-derivative-equivalence}
 C^{-1}\lambda_j^{-1}\norm{\Delta_jf}_\infty
 \leq
 \norm{d_j^\omega f}_\infty
 \leq
 C\lambda_j^{-1}\norm{\Delta_jf}_\infty.
\end{equation}
Consequently,
\begin{equation}\label{eq:corona-kernel}
 \Ker\delta_\omega=c_\Delta^\omega,
\end{equation}
and
\begin{equation}\label{eq:corona-norm-equivalence}
 \norm{\delta_\omega f}_\cQ
 \asymp
 \limsup_{j\to\infty}
 \lambda_j^{-1}\norm{\Delta_jf}_\infty.
\end{equation}
\end{theorem}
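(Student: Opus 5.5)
The plan is to prove the two-sided block estimate \eqref{eq:block-derivative-equivalence} first; the kernel identity and the norm equivalence then follow from the definition of the corona norm.

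\emph{Upper bound.} Since $\Delta_jf$ has Fourier support in the annulus $\{r_j\xi\in\supp\varphi\}$, write $\partial_x\Delta_jf=\partial_x\widetilde\Delta_j\Delta_jf$. The operator $r_j\partial_x\widetilde\Delta_j$ has symbol $ir_j\xi\,\widetilde\varphi(r_j\xi)$, a rescaled Schwartz multiplier, so its kernel has $L^1$ norm independent of $j$. This is the Bernstein estimate already used in \cref{prop:dyadic-bounds}, and it gives $\norm{d_j^\omega f}_\infty=\lambda_j^{-1}\norm{r_j\partial_x\Delta_jf}_\infty\le C\lambda_j^{-1}\norm{\Delta_jf}_\infty$.

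\emph{Lower bound.} Apply \cref{thm:band-inversion} with $\eps=0$ and $g_j=\Delta_jf$; this $g_j$ is bounded and continuous with the required Fourier support. It gives $\Delta_jf=I_{j,0}^\omega d_j^\omega f$. Then \cref{lem:inverse-multiplier} yields $\norm{\Delta_jf}_\infty\le C\lambda_j\norm{d_j^\omega f}_\infty$, which is the left inequality. The constants depend only on the Littlewood--Paley pair, since all multipliers are fixed rescaled symbols.

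\emph{Kernel and norm.} By \eqref{eq:corona-norm}, $\norm{\delta_\omega f}_\cQ=\limsup_j\norm{d_j^\omega f}_\infty$. Taking $\limsup_{j\to\infty}$ in \eqref{eq:block-derivative-equivalence} gives \eqref{eq:corona-norm-equivalence} with the same constant $C$. In particular $\delta_\omega f=0$ if and only if $\norm{d_j^\omega f}_\infty\to0$, that is, $(d_j^\omega f)\in\cB_0$. By the two-sided bound this holds if and only if $\lambda_j^{-1}\norm{\Delta_jf}_\infty\to0$, i.e.\ $f\in c_\Delta^\omega$, which proves \eqref{eq:corona-kernel}.

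The only point that needs care is the lower bound: it requires the exact band inversion, so that no lossy approximation enters. That inversion is already available, so I expect no genuine obstacle. One should still check that $\Delta_jf$ satisfies the support hypothesis in the tempered-distribution sense, which holds because $\widehat{\Delta_jf}=\varphi(r_j\cdot)\widehat f$.
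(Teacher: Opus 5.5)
Your proposal is correct and matches the paper's proof: Bernstein's inequality gives the upper bound, and exact band inversion at $\eps=0$ (\cref{thm:band-inversion}) combined with the uniform inverse-multiplier bound of \cref{lem:inverse-multiplier} gives the lower bound. The kernel identity and the norm equivalence then follow by taking limits and limsups through \eqref{eq:corona-norm}, exactly as in the paper.
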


\begin{proof}
Bernstein's inequality gives
\[
 \norm{d_j^\omega f}_\infty
 =
 \frac{r_j}{\lambda_j}\norm{\partial_x\Delta_jf}_\infty
 \leq
 C\lambda_j^{-1}\norm{\Delta_jf}_\infty.
\]
Conversely, \cref{thm:band-inversion} at $\eps=0$ gives
\[
 \Delta_jf=I_{j,0}^\omega d_j^\omega f.
\]
Using \eqref{eq:inverse-multiplier-bound},
\[
 \norm{\Delta_jf}_\infty
 \leq
 C\lambda_j\norm{d_j^\omega f}_\infty.
\]
These are the two inequalities in \eqref{eq:block-derivative-equivalence}.  Taking limits proves \eqref{eq:corona-kernel}; taking limsups and using \eqref{eq:corona-norm} proves \eqref{eq:corona-norm-equivalence}.
\end{proof}

The distinction between the complete tower, the corona class, and the scalar derivative is visible on an explicit critical H\"older function.

\begin{example}[Nonzero corona class without scalar collapse]\label{ex:lacunary-corona}
Fix $0<\alpha<1$, put $\omega(r)=r^\alpha$, and choose
\[
 \frac43<c_*<\frac32.
\]
For the cutoffs in \cref{def:lp-pair},
\[
 \varphi(c_*)=1,
 \qquad
 \varphi(2^kc_*)=0
 \quad(k\in\Z\setminus\{0\}).
\]
Define
\[
 f_{\alpha,c_*}(x)
 :=
 \sum_{j=0}^\infty2^{-\alpha j}\cos(c_*2^jx).
\]
The standard lacunary estimate gives $f_{\alpha,c_*}\in C_b^\alpha(\R)$, while the plateau property gives
\[
 \Delta_jf_{\alpha,c_*}
 =
 2^{-\alpha j}\cos(c_*2^jx).
\]
Hence
\[
 d_j^\alpha f_{\alpha,c_*}
 =
 -c_*\sin(c_*2^jx),
 \qquad
 \norm{d_j^\alpha f_{\alpha,c_*}}_\infty=c_*.
\]
Therefore
\[
 \delta_\alpha f_{\alpha,c_*}\neq0.
\]
On the other hand,
\[
 \norm{\partial_xS_Nf_{\alpha,c_*}
       -\partial_xS_{N-1}f_{\alpha,c_*}}_\infty
 =
 \norm{\partial_x\Delta_Nf_{\alpha,c_*}}_\infty
 =
 c_*2^{(1-\alpha)N}.
\]
Thus $(\partial_xS_Nf_{\alpha,c_*})_N$ is not Cauchy.  The critical scale differential is nonzero although the classical derivative collapse fails.
\end{example}
 \section{Higher multiplicative defects}\label{sec:higher-defects}

The first product defect in \cref{lem:product-defect} contains two increments.  Repeated inclusion--exclusion produces one increment for each argument, so the order-$n$ defect carries the scale factor $\lambda_j^{n-1}$.  Dividing by that factor reveals the full leading nonlinear symbol at critical scale.

\begin{definition}[Higher multiplicative defect]\label{def:higher-defect}
Let $D:A_\omega\to C_b(\R)$ be linear and satisfy $D(1)=0$.  For $n\geq1$, define
\begin{equation}\label{eq:higher-defect-definition}
 \Phi_D^{(n)}(f_1,\ldots,f_n)
 :=
 \sum_{S\subseteq[n]}
 (-1)^{n-\abs S}
 D\!\left(\prod_{i\in S}f_i\right)
 \prod_{i\notin S}f_i,
 \qquad
 [n]:=\{1,\ldots,n\}.
\end{equation}
The empty product is $1$.  For $D=d_j^\omega$, write
\[
 \Phi_{j,n}^\omega:=\Phi_{d_j^\omega}^{(n)}.
\]
\end{definition}

The inclusion--exclusion sum factorizes exactly at the kernel level.

\begin{proposition}[Exact higher-defect kernel]\label{prop:higher-defect-kernel}
For $f_1,\ldots,f_n\in A_\omega$,
\begin{equation}\label{eq:higher-defect-kernel}
 \Phi_{j,n}^\omega(f_1,\ldots,f_n)(x)
 =
 \frac{r_j}{\lambda_j}
 \int_\R K_j'(y)
 \prod_{a=1}^n\bigl(f_a(x-y)-f_a(x)\bigr)
 \,\dd y.
\end{equation}
Consequently,
\begin{equation}\label{eq:higher-defect-estimate}
 \norm{\Phi_{j,n}^\omega(f_1,\ldots,f_n)}_\infty
 \leq
 C_n\lambda_j^{n-1}
 \prod_{a=1}^n\norm{f_a}_{C^\omega}.
\end{equation}
\end{proposition}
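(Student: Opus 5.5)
The plan follows the proof of \cref{lem:product-defect}: write $d_j^\omega$ in increment form and expand each product of translated functions multiplicatively. Since $\int_\R K_j'=0$, every $h\in A_\omega$ satisfies
\[
 d_j^\omega h(x)=\frac{r_j}{\lambda_j}\int_\R K_j'(y)\bigl(h(x-y)-h(x)\bigr)\,\dd y .
\]
In fact we may equally use $\frac{r_j}{\lambda_j}\int K_j'(y)h(x-y)\,\dd y$ directly. Fix $x$ and $y$, and abbreviate $a_i:=f_i(x-y)$, $b_i:=f_i(x)$. Substituting into \eqref{eq:higher-defect-definition}, and using the convolution form for $d_j^\omega$ (the constant term $h(x)$ integrates to zero against $K_j'$), the integrand becomes
\[
 \sum_{S\subseteq[n]}(-1)^{n-\abs S}\prod_{i\in S}a_i\prod_{i\notin S}b_i=\prod_{i=1}^n(a_i-b_i).
\]
This is the binomial expansion of the product on the right. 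Integrating against $\frac{r_j}{\lambda_j}K_j'(y)$ gives \eqref{eq:higher-defect-kernel}.

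One bookkeeping point needs care. The factor $\prod_{i\notin S}f_i$ in \eqref{eq:higher-defect-definition} is evaluated at $x$ and sits outside $d_j^\omega$, so it can be moved inside the $y$-integral as the constant $\prod_{i\notin S}b_i$. This is exactly what produces the expansion above. The empty-set term is $D(1)\prod_i b_i$, and it vanishes because $\int K_j'=0$, consistent with the convolution form. All integrals converge absolutely because $K'$ is Schwartz and the $f_i$ are bounded.

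For the estimate \eqref{eq:higher-defect-estimate}, \cref{lem:global-increment} bounds each factor by $\abs{a_i-b_i}\le C_\omega\norm{f_i}_{C^\omega}\omega(\abs y)$. Substituting $y=r_jz$ turns $\frac{r_j}{\lambda_j}\int\abs{K_j'(y)}\omega(\abs y)^n\,\dd y$ into $\lambda_j^{-1}\int\abs{K'(z)}\omega(r_j\abs z)^n\,\dd z$. \Cref{lem:concave-scale} then gives $\omega(r_j\abs z)^n\le(1+\abs z)^n\lambda_j^n$, and the resulting integral $\int\abs{K'}(1+\abs z)^n$ is finite. Altogether this yields $C_n\lambda_j^{n-1}\prod_i\norm{f_i}_{C^\omega}$ with $C_n=C_\omega^n\int\abs{K'}(1+\abs z)^n\,\dd z$.

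The argument has no real obstacle. The only delicate step is the sign and placement bookkeeping in the inclusion--exclusion identity, together with recognizing that $D(1)=0$ handles the empty set.
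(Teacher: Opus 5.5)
Your proposal is correct and is essentially the paper's proof. Both integrate the pointwise inclusion--exclusion factorization $\sum_{S}(-1)^{n-\abs S}\prod_{i\in S}a_i\prod_{i\notin S}b_i=\prod_i(a_i-b_i)$ against $\frac{r_j}{\lambda_j}K_j'(y)$, with the constant term killed by $\int_\R K_j'=0$ (equivalently $d_j^\omega(1)=0$), and then estimate by \cref{lem:global-increment}, the rescaling $y=r_jz$, and \cref{lem:concave-scale}. Your use of the plain convolution form $\frac{r_j}{\lambda_j}\int_\R K_j'(y)h(x-y)\,\dd y$ in place of the increment form is only cosmetic, since the two agree exactly because $\int_\R K_j'=0$.
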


\begin{proof}
For fixed $x,y$, set
\[
 a_i:=f_i(x-y),
 \qquad
 b_i:=f_i(x).
\]
The elementary inclusion--exclusion identity is
\begin{equation}\label{eq:inclusion-exclusion-factorization}
 \sum_{S\subseteq[n]}
 (-1)^{n-\abs S}
 \left(\prod_{i\in S}a_i\right)
 \left(\prod_{i\notin S}b_i\right)
 =
 \prod_{i=1}^n(a_i-b_i).
\end{equation}
Insert the increment representation
\[
 d_j^\omega h(x)
 =
 \frac{r_j}{\lambda_j}
 \int_\R K_j'(y)\bigl(h(x-y)-h(x)\bigr)\,\dd y
\]
into \eqref{eq:higher-defect-definition}.  The terms involving $h(x)$ cancel because $d_j^\omega(1)=0$, and \eqref{eq:inclusion-exclusion-factorization} gives \eqref{eq:higher-defect-kernel}.

By \cref{lem:global-increment},
\[
 \prod_{a=1}^n\abs{f_a(x-y)-f_a(x)}
 \leq
 C\omega(\abs y)^n
 \prod_{a=1}^n\norm{f_a}_{C^\omega}.
\]
After the change of variables $y=r_jz$,
\begin{align*}
 \norm{\Phi_{j,n}^\omega(f_1,\ldots,f_n)}_\infty
 &\leq
 \frac{C}{\lambda_j}
 \prod_{a=1}^n\norm{f_a}_{C^\omega}
 \int_\R\abs{K'(z)}\omega(r_j\abs z)^n\,\dd z\\
 &\leq
 C_n\lambda_j^{n-1}
 \prod_{a=1}^n\norm{f_a}_{C^\omega},
\end{align*}
where the final step uses \cref{lem:concave-scale} and the Schwartz decay of $K'$.
\end{proof}

The estimate \eqref{eq:higher-defect-estimate} supplies the natural normalization at which the order-$n$ defect remains bounded.

\begin{definition}[Normalized higher symbol]\label{def:normalized-higher-symbol}
For $n\geq1$, define
\[
 K_{j,n}^\omega
 :=
 \lambda_j^{1-n}\Phi_{j,n}^\omega.
\]
By \eqref{eq:higher-defect-estimate}, the tower
\[
 \bigl(K_{j,n}^\omega(f_1,\ldots,f_n)\bigr)_{j\geq0}
\]
belongs to $\cB$.  Define
\[
 \kappa_n^\omega(f_1,\ldots,f_n)
 :=
 \left[
 \bigl(K_{j,n}^\omega(f_1,\ldots,f_n)\bigr)_{j\geq0}
 \right]
 \in\cQ.
\]
For $n=1$, one has $\kappa_1^\omega=\delta_\omega$.
\end{definition}

The next recursion shows that the remaining failure of the Leibniz rule for $K_{j,n}^\omega$ contains one additional increment and hence one additional factor $\lambda_j$.

\begin{lemma}[Defect recursion]\label{lem:defect-recursion}
For $n\geq1$ and $f_1,\ldots,f_{n-1},g,h\in A_\omega$,
\begin{align}
 &\Phi_{j,n}^\omega(f_1,\ldots,f_{n-1},gh)
 -g\Phi_{j,n}^\omega(f_1,\ldots,f_{n-1},h)
 \nonumber\\
 &\hspace{28mm}
 -h\Phi_{j,n}^\omega(f_1,\ldots,f_{n-1},g)
 \nonumber\\
 &\hspace{52mm}
 =
 \Phi_{j,n+1}^\omega(f_1,\ldots,f_{n-1},g,h).
 \label{eq:defect-recursion}
\end{align}
Equivalently,
\begin{align}
 &K_{j,n}^\omega(f_1,\ldots,f_{n-1},gh)
 -gK_{j,n}^\omega(f_1,\ldots,f_{n-1},h)
 \nonumber\\
 &\hspace{28mm}
 -hK_{j,n}^\omega(f_1,\ldots,f_{n-1},g)
 \nonumber\\
 &\hspace{52mm}
 =
 \lambda_jK_{j,n+1}^\omega(f_1,\ldots,f_{n-1},g,h).
 \label{eq:normalized-defect-recursion}
\end{align}
\end{lemma}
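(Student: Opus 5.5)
The plan is to prove \eqref{eq:defect-recursion} pointwise at the kernel level using \cref{prop:higher-defect-kernel}, and then obtain \eqref{eq:normalized-defect-recursion} by dividing by $\lambda_j^{n-1}$. Fix $x$ and $y$, and abbreviate increments by $\delta_y u:=u(x-y)-u(x)$. By \eqref{eq:higher-defect-kernel}, every $\Phi_{j,m}^\omega$ is the integral of $\frac{r_j}{\lambda_j}K_j'(y)$ against a product of increments of its arguments. The left side of \eqref{eq:defect-recursion} therefore equals $\frac{r_j}{\lambda_j}\int K_j'(y)\prod_{a<n}\delta_yf_a\cdot R(y)\,\dd y$, where
\[
 R(y):=\delta_y(gh)-g(x)\,\delta_yh-h(x)\,\delta_yg .
\]
The factors $g(x)$ and $h(x)$ can be pulled inside the $y$-integral because they do not depend on $y$. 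This is the only point where some care is needed: the multiplying functions $g,h$ are evaluated at the base point $x$, which matches the convention $\prod_{i\notin S}f_i$ evaluated at $x$ in \eqref{eq:higher-defect-definition}.

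The key algebraic step is the increment identity already used in \cref{lem:product-defect}, namely
\[
 \delta_y(gh)=g(x)\,\delta_yh+h(x)\,\delta_yg+\delta_yg\,\delta_yh .
\]
It gives $R(y)=\delta_yg\,\delta_yh$. Substituting this back produces exactly the kernel \eqref{eq:higher-defect-kernel} for $\Phi_{j,n+1}^\omega(f_1,\ldots,f_{n-1},g,h)$, which proves \eqref{eq:defect-recursion}.

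All integrals converge absolutely by \cref{lem:global-increment} and the Schwartz decay of $K'$, so splitting the integral is legitimate. For $n=1$ the product over $a<n$ is empty, and the statement reduces to \cref{lem:product-defect}.

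For the normalized form, multiply \eqref{eq:defect-recursion} by $\lambda_j^{1-n}$. By \cref{def:normalized-higher-symbol} the left side becomes the combination of the $K_{j,n}^\omega$ terms, and the right side becomes
\[
 \lambda_j^{1-n}\Phi_{j,n+1}^\omega=\lambda_j\,\lambda_j^{-n}\Phi_{j,n+1}^\omega=\lambda_jK_{j,n+1}^\omega .
\]
There is no real obstacle here. If one prefers to avoid kernels, an alternative is a purely combinatorial check from \eqref{eq:higher-defect-definition} by splitting the subsets $S\subseteq[n+1]$ according to whether they contain the last two indices. That route is more error-prone, so the kernel route is the one I would use.
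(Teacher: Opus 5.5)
Your proof is correct and follows the paper's argument: insert the kernel representation \eqref{eq:higher-defect-kernel}, reduce the bracket $\delta_y(gh)-g(x)\,\delta_yh-h(x)\,\delta_yg$ to $\delta_yg\,\delta_yh$, recognize the kernel of $\Phi_{j,n+1}^\omega$, and multiply by $\lambda_j^{1-n}$. The paper does the same with the notation $\nabla_y$ in place of your $\delta_y$.
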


\begin{proof}
Write
\[
 \nabla_yf(x):=f(x-y)-f(x).
\]
By \eqref{eq:higher-defect-kernel}, the left-hand side of \eqref{eq:defect-recursion} is
\[
 \frac{r_j}{\lambda_j}
 \int_\R K_j'(y)
 \prod_{a=1}^{n-1}\nabla_yf_a(x)
 \Bigl[
 \nabla_y(gh)(x)-g(x)\nabla_yh(x)-h(x)\nabla_yg(x)
 \Bigr]\,\dd y.
\]
The bracket equals
\[
 \nabla_yg(x)\nabla_yh(x).
\]
Substitution gives precisely the kernel for $\Phi_{j,n+1}^\omega$.  Multiplying by $\lambda_j^{1-n}$ gives \eqref{eq:normalized-defect-recursion}.
\end{proof}

After passage to $\cQ$, the right-hand side of \eqref{eq:normalized-defect-recursion} vanishes.  Thus every normalized symbol becomes a derivation in each argument.

\begin{theorem}[Higher corona symbols are multiderivations]\label{thm:higher-multiderivations}
For every $n\geq1$, the map
\[
 \kappa_n^\omega:A_\omega^n\longrightarrow\cQ
\]
is continuous, symmetric, and a derivation in each argument.  In particular,
\begin{align*}
 \kappa_n^\omega(f_1,\ldots,f_{n-1},gh)
 ={}&
 \iota(g)\kappa_n^\omega(f_1,\ldots,f_{n-1},h)\\
 &+
 \iota(h)\kappa_n^\omega(f_1,\ldots,f_{n-1},g).
\end{align*}
Consequently, $\kappa_n^\omega$ is a Hochschild $n$-cocycle \cite{Hochschild1945} with values in the symmetric $A_\omega$-module $\cQ$.
\end{theorem}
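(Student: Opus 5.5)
The plan is to read off every property of $\kappa_n^\omega$ from the exact kernel formula in \cref{prop:higher-defect-kernel} together with the recursion in \cref{lem:defect-recursion}. The quotient by $\cB_0$ then removes the single extra factor $\lambda_j$ that appears when the Leibniz rule is tested.

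\emph{Well-definedness and continuity.} By \eqref{eq:higher-defect-estimate}, the normalized tower $(K_{j,n}^\omega(f_1,\ldots,f_n))_j$ satisfies the uniform bound
\[
 \sup_j\|K_{j,n}^\omega(f_1,\ldots,f_n)\|_\infty\le C_n\prod_a\|f_a\|_{C^\omega}.
\]
Each $\Phi_{j,n}^\omega$ is multilinear, because it is a finite signed sum of compositions of the linear map $d_j^\omega$ with products. Hence $\kappa_n^\omega$ is a bounded multilinear map into $\cQ$, using $\|q\|\le1$ and \eqref{eq:corona-norm}. Boundedness of a multilinear map on Banach spaces is the same as joint continuity, so continuity follows.

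\emph{Symmetry.} The kernel representation \eqref{eq:higher-defect-kernel} contains the product $\prod_a(f_a(x-y)-f_a(x))$. This product is invariant under permutations of the arguments. So $\Phi_{j,n}^\omega$ is symmetric for each fixed $j$, and symmetry passes to the quotient.

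\emph{Derivation property.} By symmetry, it suffices to check the last argument. Equation \eqref{eq:normalized-defect-recursion} writes the Leibniz defect of $K_{j,n}^\omega$ in its last slot as
\[
 \lambda_jK_{j,n+1}^\omega(f_1,\ldots,f_{n-1},g,h).
\]
The estimate \eqref{eq:higher-defect-estimate} for $n+1$ gives
\[
 \|\lambda_jK_{j,n+1}^\omega(\ldots)\|_\infty\le C_{n+1}\lambda_j\prod\|\cdot\|_{C^\omega}\to0,
\]
so this defect tower lies in $\cB_0$. Applying $q$, and using that multiplication by $g$ on $\cB$ descends to multiplication by $\iota(g)$ on $\cQ$ (\cref{def:scale-corona}), yields the displayed identity. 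The case $n=1$ is just \cref{thm:exact-corona-calculus}.

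\emph{Cocycle property.} This part is purely algebraic. For a symmetric bimodule $M$ and an $n$-linear map $\kappa$ that is a derivation in each variable, the Hochschild coboundary is
\[
 (b\kappa)(f_0,\ldots,f_n)=f_0\kappa(f_1,\ldots,f_n)+\sum_{i=0}^{n-1}(-1)^{i+1}\kappa(\ldots,f_if_{i+1},\ldots)+(-1)^{n+1}\kappa(f_0,\ldots,f_{n-1})f_n.
\]
Expand each $\kappa(\ldots,f_if_{i+1},\ldots)$ with the derivation rule in slot $i$, producing the terms $f_i\kappa(\ldots,f_{i+1},\ldots)$ and $f_{i+1}\kappa(\ldots,f_i,\ldots)$. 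Because the module is symmetric and $A_\omega$ is commutative, these terms cancel telescopically against their neighbours and against the two boundary terms.

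The main obstacle I expect is bookkeeping rather than analysis: checking that the telescoping in the coboundary closes with the correct signs. The analytic content is entirely contained in \cref{prop:higher-defect-kernel} and \cref{lem:defect-recursion}.
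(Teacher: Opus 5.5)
Your proposal is correct and follows the paper's proof essentially step for step: continuity from \eqref{eq:higher-defect-estimate}, symmetry from the kernel \eqref{eq:higher-defect-kernel}, the Leibniz defect in the last slot given by the tower $\lambda_jK_{j,n+1}^\omega(\ldots)\in\cB_0$ via \eqref{eq:normalized-defect-recursion} with symmetry giving the other slots, and the cocycle identity because the Hochschild boundary of a multiderivation into a symmetric module is a sum of Leibniz defects. You spell out the telescoping that the paper only asserts; it does close. Writing $a_k:=f_k\,\kappa(f_0,\ldots,\widehat{f_k},\ldots,f_n)$, the coefficient of each $a_k$ in $b\kappa$ is $(-1)^k+(-1)^{k+1}=0$.
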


\begin{proof}
Continuity follows from \eqref{eq:higher-defect-estimate}, and symmetry follows from \eqref{eq:higher-defect-kernel}.  By \eqref{eq:normalized-defect-recursion}, the failure of the Leibniz rule in the last argument is represented by
\[
 \bigl(
 \lambda_jK_{j,n+1}^\omega(f_1,\ldots,f_{n-1},g,h)
 \bigr)_{j\geq0}.
\]
The tower $K_{j,n+1}^\omega(f_1,\ldots,g,h)$ is bounded by \eqref{eq:higher-defect-estimate}, while $\lambda_j\to0$.  Hence the displayed sequence lies in $\cB_0$ and has zero class in $\cQ$.  Symmetry gives the derivation property in every argument.

For a commutative algebra with symmetric coefficients, the Hochschild boundary of a multiderivation is the sum of its Leibniz defects in the individual arguments.  All these defects vanish, so
\[
 b\kappa_n^\omega=0.
\]
\end{proof}

The quotient symbols are the leading terms of exact finite-frequency identities.  We retain the two formulas needed to reconstruct products and smooth compositions without introducing the later sampling-jet and signature branches of the original development.

\begin{theorem}[Exact product and finite chain expansions]\label{thm:finite-expansions}
Let $N\geq1$ and $f_1,\ldots,f_N\in A_\omega$.  Then, for every $j\geq0$,
\begin{equation}\label{eq:exact-product-expansion}
 d_j^\omega\!\left(\prod_{i=1}^Nf_i\right)
 =
 \sum_{\varnothing\neq S\subseteq[N]}
 \lambda_j^{\abs S-1}
 K_{j,\abs S}^\omega((f_i)_{i\in S})
 \prod_{i\notin S}f_i.
\end{equation}

Let $M\geq2$, let $I\subset\R$ be open, let $F\in C^M(I)$, and let $f\in A_\omega$ satisfy $f(\R)\Subset I$.  With $K_f:=\co(f(\R))$, one has
\begin{equation}\label{eq:finite-chain-expansion}
 d_j^\omega(F(f))
 =
 \sum_{n=1}^{M-1}
 \frac{\lambda_j^{n-1}}{n!}
 F^{(n)}(f)K_{j,n}^\omega(f,\ldots,f)
 +R_{j,M}(F,f),
\end{equation}
where
\begin{equation}\label{eq:finite-chain-remainder}
 \norm{R_{j,M}(F,f)}_\infty
 \leq
 C_M\lambda_j^{M-1}
 \norm{F^{(M)}}_{L^\infty(K_f)}
 \norm f_{C^\omega}^{M}.
\end{equation}
\end{theorem}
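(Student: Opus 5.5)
Both identities will be proved pointwise at the level of the increment kernel used in \cref{prop:higher-defect-kernel}, namely
\[
 d_j^\omega h(x)=\frac{r_j}{\lambda_j}\int_\R K_j'(y)\,\nabla_yh(x)\,\dd y,
 \qquad \nabla_yh(x):=h(x-y)-h(x),
\]
so that every statement reduces to an algebraic identity for fixed $x,y$ followed by the scaling estimate already used for \eqref{eq:higher-defect-estimate}.

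For the product expansion \eqref{eq:exact-product-expansion}, fix $x,y$ and write $a_i=f_i(x-y)$, $b_i=f_i(x)$. Expanding $a_i=b_i+(a_i-b_i)$ gives
\[
 \prod_i a_i-\prod_i b_i=\sum_{\varnothing\ne S\subseteq[N]}\prod_{i\in S}(a_i-b_i)\prod_{i\notin S}b_i .
\]
Integrating against $(r_j/\lambda_j)K_j'(y)$ and pulling the factor $\prod_{i\notin S}f_i(x)$ outside the integral, each summand becomes, by \eqref{eq:higher-defect-kernel}, $\Phi_{j,\abs S}^\omega((f_i)_{i\in S})\prod_{i\notin S}f_i$. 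Since $\Phi_{j,n}^\omega=\lambda_j^{n-1}K_{j,n}^\omega$ by \cref{def:normalized-higher-symbol}, this is exactly \eqref{eq:exact-product-expansion}. No estimate is needed here; the only care is the bookkeeping of the empty set, which is excluded because it contributes $\prod b_i-\prod b_i=0$.

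For the chain expansion, apply Taylor's theorem with integral remainder at $b=f(x)$, $a=f(x-y)$, both in the convex set $K_f$:
\[
 F(a)-F(b)=\sum_{n=1}^{M-1}\frac{F^{(n)}(b)}{n!}(a-b)^n+\rho_M(a,b),
 \qquad \abs{\rho_M(a,b)}\le\frac{\norm{F^{(M)}}_{L^\infty(K_f)}}{M!}\abs{a-b}^M .
\]
Inserting this into the kernel representation of $d_j^\omega(F(f))$, the $n$-th term gives $\frac1{n!}F^{(n)}(f(x))\,\Phi_{j,n}^\omega(f,\ldots,f)(x)$ by \eqref{eq:higher-defect-kernel} with all arguments equal to $f$; this equals $\frac{\lambda_j^{n-1}}{n!}F^{(n)}(f)K_{j,n}^\omega(f,\ldots,f)$. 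The remainder is defined as
\[
 R_{j,M}(x)=\frac{r_j}{\lambda_j}\int K_j'(y)\,\rho_M(f(x-y),f(x))\,\dd y .
\]
Measurability and integrability of the integrand follow from the bound on $\rho_M$ combined with \cref{lem:global-increment}.

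The remainder estimate \eqref{eq:finite-chain-remainder} repeats the proof of \eqref{eq:higher-defect-estimate}. First, \cref{lem:global-increment} gives $\abs{f(x-y)-f(x)}^M\le C\omega(\abs y)^M\norm f_{C^\omega}^M$. Next, the change of variables $y=r_jz$ yields the factor $\lambda_j^{-1}\int\abs{K'(z)}\omega(r_j\abs z)^M\,\dd z$. Finally, \cref{lem:concave-scale} bounds this by $\lambda_j^{M-1}\int\abs{K'}(1+\abs z)^M\,\dd z$, which is finite by Schwartz decay. The only mildly delicate point, and the one I would check most carefully, is that $C^M$ regularity on the open interval $I$ suffices. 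The Taylor remainder needs $F^{(M)}$ bounded only on the segment between $a$ and $b$. That segment lies in $K_f$, which is compact in $I$, so $\norm{F^{(M)}}_{L^\infty(K_f)}$ is finite and the constant $C_M$ depends only on $M$, $\omega$ and the Littlewood--Paley pair.
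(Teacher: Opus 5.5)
Your proposal is correct and follows the paper's proof essentially step for step. For the product expansion, both integrate the increment identity against $(r_j/\lambda_j)K_j'$ and identify each summand through the higher-defect kernel. For the chain expansion, both Taylor-expand at $b=f(x)$ and bound the remainder via the global increment bound, the rescaling $y=r_jz$, and the concave scale inequality.
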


\begin{proof}
For fixed $x,y$,
\begin{equation}\label{eq:finite-product-increment}
 \prod_{i=1}^Nf_i(x-y)-\prod_{i=1}^Nf_i(x)
 =
 \sum_{\varnothing\neq S\subseteq[N]}
 \left(\prod_{i\in S}\nabla_yf_i(x)\right)
 \left(\prod_{i\notin S}f_i(x)\right).
\end{equation}
Multiply \eqref{eq:finite-product-increment} by $(r_j/\lambda_j)K_j'(y)$ and integrate.  By \eqref{eq:higher-defect-kernel}, the term indexed by $S$ becomes
\[
 \Phi_{j,\abs S}^\omega((f_i)_{i\in S})
 \prod_{i\notin S}f_i
 =
 \lambda_j^{\abs S-1}
 K_{j,\abs S}^\omega((f_i)_{i\in S})
 \prod_{i\notin S}f_i.
\]
Summation gives \eqref{eq:exact-product-expansion}.

For the chain expansion, Taylor's theorem gives, for $a,b\in K_f$,
\[
 F(a)-F(b)
 =
 \sum_{n=1}^{M-1}
 \frac{F^{(n)}(b)}{n!}(a-b)^n
 +\mathscr R_M(a,b),
\]
with
\[
 \abs{\mathscr R_M(a,b)}
 \leq
 \frac{\norm{F^{(M)}}_{L^\infty(K_f)}}{M!}\abs{a-b}^M.
\]
Set
\[
 a:=f(x-y),
 \qquad
 b:=f(x).
\]
Multiplication by $(r_j/\lambda_j)K_j'(y)$ and integration identify the $n$th term with
\[
 \frac{\lambda_j^{n-1}}{n!}
 F^{(n)}(f(x))K_{j,n}^\omega(f,\ldots,f)(x).
\]
For the remainder, \cref{lem:global-increment,lem:concave-scale} give
\begin{align*}
 \norm{R_{j,M}(F,f)}_\infty
 &\leq
 C_M\frac{r_j}{\lambda_j}
 \norm{F^{(M)}}_{L^\infty(K_f)}
 \norm f_{C^\omega}^M
 \int_\R\abs{K_j'(y)}\omega(\abs y)^M\,\dd y\\
 &\leq
 C_M\lambda_j^{M-1}
 \norm{F^{(M)}}_{L^\infty(K_f)}
 \norm f_{C^\omega}^M.
\end{align*}
This is \eqref{eq:finite-chain-remainder}.
\end{proof}
 \section{Periodic exact coordinates and amplitude reinsertion}\label{sec:periodic-interface}

The commutative construction has so far been formulated on $\R$, whereas Fourier normal ordering will be applied to periodic paths.  The bridge has two parts.  First, the compactly supported multipliers of Sections \ref{sec:modulus-lp}--\ref{sec:matched-reconstruction} periodize without changing their uniform $L^\infty$ bounds.  Second, in the H\"older gauge $\lambda_j=r_j^\alpha$, multiplying the normalized derivative coordinate by $\lambda_j$ recovers the unweighted quadrature channel $r_j\partial_x\Delta_jf$.

Let
\[
 \T:=\R/(2\pi\Z)
\]
and let $d_\T$ be its geodesic distance.  For $0<\alpha<1$, define
\[
 C^\alpha(\T)
 :=
 \left\{f\in C(\T):
 \norm f_{C^\alpha(\T)}
 :=
 \norm f_\infty+
 \sup_{x\neq y}
 \frac{\abs{f(x)-f(y)}}{d_\T(x,y)^\alpha}
 <\infty
 \right\}.
\]
We use the periodic Littlewood--Paley multipliers obtained by restricting the symbols in \cref{def:lp-pair} to integer frequencies:
\[
 \widehat{\Delta_{-1}^{\T}f}(k)=\chi(k)\widehat f(k),
 \qquad
 \widehat{\Delta_j^{\T}f}(k)=\varphi(r_jk)\widehat f(k).
\]

The following standard periodization statement supplies the operator bounds required by the inverse multipliers.

\begin{lemma}[Periodization of compactly supported multipliers]\label{lem:periodization}
Let $m\in C_c^\infty(\R)$ and let $K=\mathcal F^{-1}m\in L^1(\R)$.  The multiplier on $\T$ with symbol $(m(k))_{k\in\Z}$ has convolution kernel
\[
 K_{\mathrm{per}}(x)
 :=
 \sum_{\ell\in\Z}K(x+2\pi\ell)
\]
up to the fixed Fourier-normalization constant, and
\begin{equation}\label{eq:periodized-kernel-bound}
 \norm{K_{\mathrm{per}}}_{L^1(\T)}
 \leq
 C\norm K_{L^1(\R)}.
\end{equation}
Consequently, every uniformly $L^1$-bounded compactly supported real-line multiplier family induces a uniformly bounded family on $C(\T)$.
\end{lemma}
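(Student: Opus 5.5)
The plan is the standard Poisson-summation argument, carried out in three steps.

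\emph{Step 1: the kernel formula.} Since $m\in C_c^\infty(\R)$, the kernel $K=\mathcal F^{-1}m$ is a Schwartz function, so $\abs{K(x)}\le C_N(1+\abs x)^{-N}$. Hence the series $\sum_{\ell}K(x+2\pi\ell)$ converges absolutely and uniformly on compact sets, together with all its derivatives. Its sum $K_{\mathrm{per}}$ is therefore a smooth $2\pi$-periodic function.

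\emph{Step 2: the Fourier coefficients of $K_{\mathrm{per}}$.} We unfold the integral over $[0,2\pi]$ into an integral over $\R$. This interchange is justified by Fubini, because $\sum_\ell\int_0^{2\pi}\abs{K(x+2\pi\ell)}\,\dd x=\norm K_{L^1(\R)}<\infty$. It gives
\[
 \int_0^{2\pi}K_{\mathrm{per}}(x)e^{-ikx}\,\dd x=\int_\R K(x)e^{-ikx}\,\dd x=\widehat K(k)=m(k),
\]
up to the factor fixed by the normalization of the inverse transform. Consequently, for a trigonometric polynomial $f$, convolution on $\T$ with $K_{\mathrm{per}}$ multiplies the $k$th coefficient by $m(k)$, up to that constant. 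For general $f\in C(\T)$ both operators are bounded: the convolution by Young's inequality, and the multiplier because its coefficients $m(k)$ vanish for large $\abs k$, so it is in fact a finite-rank projection-type operator. They agree on trigonometric polynomials, which are dense in $C(\T)$, so they coincide. Alternatively, one can note that both sides only see the finitely many coefficients with $m(k)\neq0$.

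\emph{Step 3: the $L^1$ bound.} By the triangle inequality,
\[
 \norm{K_{\mathrm{per}}}_{L^1(\T)}\le\sum_\ell\int_0^{2\pi}\abs{K(x+2\pi\ell)}\,\dd x=\norm K_{L^1(\R)},
\]
which is \eqref{eq:periodized-kernel-bound} with $C$ equal to the normalization constant. Young's inequality on $\T$ then bounds the operator norm on $C(\T)$ by $C\norm{K}_{L^1(\R)}$. This bound is uniform over any family of multipliers whose real-line kernels are uniformly bounded in $L^1$. In particular it covers the dilated families $m(r_j\cdot)$: dilation preserves the $L^1$ norm of the kernel, and each dilate is still compactly supported, so Steps 1 and 2 apply to it.

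The only delicate point is Step 2, namely tracking the Fourier normalization constant so that "up to the constant" is consistent with the convention of \cref{def:lp-pair}. Everything else is absolute convergence.
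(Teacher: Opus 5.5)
Your proposal is correct and follows the same route as the paper: unfolding (Poisson periodization) identifies the Fourier coefficients of $K_{\mathrm{per}}$ with $m(k)$, Tonelli gives $\norm{K_{\mathrm{per}}}_{L^1(\T)}\le\norm K_{L^1(\R)}$, and Young's inequality on $\T$ yields the operator bound, which is uniform along families with uniformly $L^1$-bounded kernels. Your extra details (Schwartz decay of $K$, the density argument identifying the convolution with the multiplier on $C(\T)$, and the remark that dilation preserves the kernel's $L^1$ norm) only make explicit what the paper's three-line proof leaves implicit.
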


\begin{proof}
Poisson periodization identifies the Fourier coefficient of $K_{\mathrm{per}}$ at $k\in\Z$ with $m(k)$.  Tonelli's theorem gives
\[
 \int_0^{2\pi}
 \sum_{\ell\in\Z}\abs{K(x+2\pi\ell)}\,\dd x
 =
 \int_\R\abs{K(y)}\,\dd y.
\]
Young's inequality on $\T$ then gives the multiplier bound.
\end{proof}

We now specialize to the H\"older modulus
\[
 \omega_\alpha(r):=r^\alpha,
 \qquad
 \lambda_j:=r_j^\alpha=2^{-\alpha j}.
\]
The periodic scale derivative and its band operator are defined by the same Fourier symbols as on $\R$.

\begin{definition}[Periodic exact scale coordinate]\label{def:periodic-scale-coordinate}
For $f\in C^\alpha(\T)$ and $j\geq0$, define
\[
 d_j^{\alpha,\T}f
 :=
 \frac{r_j}{\lambda_j}\partial_x\Delta_j^{\T}f.
\]
Define the periodic band operator by
\begin{equation}\label{eq:periodic-band-inverse}
 \widehat{I_{j,0}^{\alpha,\T}z}(k)
 :=
 \lambda_j\widetilde\varphi(r_jk)\frac1{ir_jk}\widehat z(k),
 \qquad k\in\Z,
\end{equation}
where the symbol is set to zero at $k=0$; this value is immaterial because $\widetilde\varphi$ is supported away from zero.  The complete periodic analysis is
\[
 \cD_{\alpha,0}^{\T}f
 :=
 \bigl(\Delta_{-1}^{\T}f,(d_j^{\alpha,\T}f)_{j\geq0}\bigr),
\]
with labelled realized range
\[
 \cA_{\alpha,0}^{\T}:=\Ran\cD_{\alpha,0}^{\T}.
\]
At this point no inverse or transported norm is used; both are defined only after injectivity is proved in \cref{prop:periodic-reconstruction}.
\end{definition}

The periodic block bounds needed below are the torus counterpart of \cref{prop:dyadic-bounds}.  We state them explicitly so that the interface estimate does not rely on an unnamed periodic analogue.

\begin{lemma}[Periodic dyadic bounds]\label{lem:periodic-dyadic-bounds}
For every integer $q\geq0$, there is $C_{q,\alpha}>0$ such that
\begin{equation}\label{eq:periodic-dyadic-bounds}
 \norm{\partial_x^q\Delta_j^{\T}f}_\infty
 \leq
 C_{q,\alpha}r_j^{\alpha-q}\norm f_{C^\alpha(\T)}
 \qquad(f\in C^\alpha(\T),\ j\geq0).
\end{equation}
The low block satisfies
\[
 \norm{\Delta_{-1}^{\T}f}_{C^q}
 \leq
 C_q\norm f_\infty.
\]
\end{lemma}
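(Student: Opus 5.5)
The plan is to follow the proof of \cref{prop:dyadic-bounds}, replacing real-line kernels by periodized ones through \cref{lem:periodization}.

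\textbf{Step 1: kernel representation.} Since $\varphi$ is supported away from zero, the symbol $(\varphi(r_jk))_{k\in\Z}$ vanishes at $k=0$. So $\Delta_j^{\T}f=K_{j,\mathrm{per}}*f$, where $K_{j,\mathrm{per}}$ is the periodization of $K_j(x)=r_j^{-1}K(x/r_j)$ and has zero mean on $\T$. We may therefore write
\[
 \Delta_j^{\T}f(x)=\int_\R K_j(y)\bigl(f(x-y)-f(x)\bigr)\,\dd y,
\]
with $f$ regarded as a $2\pi$-periodic function on $\R$. Unfolding the periodization is justified by Tonelli, as in \cref{lem:periodization}.

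\textbf{Step 2: increment bound and the $q=0$ case.} For the periodic lift of $f$, the geodesic distance satisfies $d_\T(x,x-y)\le\abs y$, so
\[
 \abs{f(x-y)-f(x)}\le\norm f_{C^\alpha(\T)}\abs y^\alpha.
\]
This is the global increment bound with $\omega(r)=r^\alpha$, and no truncation at scale one is needed. Substituting $y=r_jz$ gives
\[
 \norm{\Delta_j^{\T}f}_\infty\le r_j^\alpha\norm f_{C^\alpha(\T)}\int_\R\abs{K(z)}\abs z^\alpha\,\dd z,
\]
and the integral is finite because $K$ is Schwartz. This settles $q=0$.

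\textbf{Step 3: derivatives.} Bernstein's inequality on the real line is not quoted directly. Instead, the multiplier $(ir_jk)^q\widetilde\varphi(r_jk)$ has the real-line kernel $r_j^{-1}L_q(\cdot/r_j)$, where $L_q=\mathcal F^{-1}\bigl((i\zeta)^q\widetilde\varphi\bigr)$. By \cref{lem:periodization}, its periodized $L^1$ norm is at most $C\norm{L_q}_{L^1}$, uniformly in $j$. Since $\widetilde\varphi=1$ on $\supp\varphi$, we have
\[
 r_j^q\partial_x^q\Delta_j^{\T}f=\bigl((ir_jD)^q\widetilde\varphi(r_jD)\bigr)\Delta_j^{\T}f .
\]
Young's inequality on $\T$ then gives $\norm{\partial_x^q\Delta_j^{\T}f}_\infty\le C_q r_j^{-q}\norm{\Delta_j^{\T}f}_\infty$. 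Combined with Step 2, this yields \eqref{eq:periodic-dyadic-bounds}.

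\textbf{Step 4: the low block.} The symbol $(ik)^q\chi(k)$ comes from a $C_c^\infty$ function, so its real-line kernel is in $L^1$. By \cref{lem:periodization}, $\norm{\partial_x^q\Delta_{-1}^{\T}f}_\infty\le C_q\norm f_\infty$ for each $q$, and summing over orders up to $q$ gives the $C^q$ bound.

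\textbf{Main obstacle.} The only delicate point is Step 1: the cancellation identity needs the zero mean of the periodized kernel and the unfolding to an integral over $\R$. Both follow from the vanishing of the symbol at $k=0$ and from Tonelli, because $K_j$ is integrable. The remaining steps are routine applications of the periodization lemma.
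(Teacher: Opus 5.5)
Your proof is correct and follows the paper's argument. Both proofs use the zero-mean annular kernel to write $\Delta_j^{\T}f$ as an integral of increments of the periodic lift, then apply the H\"older increment bound to get the factor $r_j^\alpha$, Bernstein's inequality for the derivatives, and a fixed bounded multiplier for the low block. The differences are cosmetic: you use the sharper bound $d_\T(x,x-y)\le\abs y$ in place of the paper's $(1+\abs z)^\alpha$, and you derive the torus Bernstein inequality and the low-block bound explicitly from \cref{lem:periodization} rather than quoting them.
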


\begin{proof}
Let $K=\mathcal F^{-1}\varphi$.  The periodic kernel of $\Delta_j^{\T}$ is the periodization of
\[
 K_j(x):=r_j^{-1}K(x/r_j).
\]
Since $\int_\R K=0$, its convolution with a periodic lift of $f$ can be written as
\[
 \Delta_j^{\T}f(x)
 =
 \int_\R K(z)\bigl(f(x-r_jz)-f(x)\bigr)\,\dd z,
\]
where $f$ is read periodically.  The geodesic H\"older bound and periodicity give
\[
 \abs{f(x-r_jz)-f(x)}
 \leq
 C\norm f_{C^\alpha(\T)}r_j^\alpha(1+\abs z)^\alpha.
\]
Therefore
\[
 \norm{\Delta_j^{\T}f}_\infty
 \leq
 Cr_j^\alpha\norm f_{C^\alpha(\T)}
 \int_\R\abs{K(z)}(1+\abs z)^\alpha\,\dd z.
\]
The Fourier support is contained in a fixed annulus of radius $r_j^{-1}$.  Bernstein's inequality on $\T$ gives
\[
 \norm{\partial_x^q\Delta_j^{\T}f}_\infty
 \leq
 C_qr_j^{-q}\norm{\Delta_j^{\T}f}_\infty,
\]
which proves \eqref{eq:periodic-dyadic-bounds}.  The low-frequency symbol has finite support on $\Z$, so every $C^q$ norm of $\Delta_{-1}^{\T}f$ is bounded by $C_q\norm f_\infty$.
\end{proof}

The periodic symbol product is the same as the real-line product in \cref{thm:band-inversion}.  The only additional analytic point is uniform boundedness, which follows from \cref{lem:periodization}.

\begin{proposition}[Periodic analysis--reconstruction]\label{prop:periodic-reconstruction}
For every $f\in C^\alpha(\T)$ and every $j\geq0$,
\begin{equation}\label{eq:periodic-band-identity}
 I_{j,0}^{\alpha,\T}d_j^{\alpha,\T}f
 =
 \Delta_j^{\T}f.
\end{equation}
Moreover,
\begin{equation}\label{eq:periodic-inverse-bound}
 \norm{I_{j,0}^{\alpha,\T}z}_\infty
 \leq
 C\lambda_j\norm z_\infty.
\end{equation}
The analysis map $\cD_{\alpha,0}^{\T}$ is injective.  Every $a=(u_{-1},z)\in\cA_{\alpha,0}^{\T}$ therefore has a unique representing function $f\in C^\alpha(\T)$, and that function is given by
\begin{equation}\label{eq:periodic-reconstruction-series}
 f
 =
 u_{-1}+\sum_{j=0}^\infty I_{j,0}^{\alpha,\T}z_j,
\end{equation}
where the series is the periodic Littlewood--Paley reconstruction of the representing function.
\end{proposition}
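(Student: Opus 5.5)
The plan is to transcribe the real-line argument of \cref{thm:band-inversion,thm:analysis-reconstruction} to the torus. The only extra analytic input is \cref{lem:periodization}, which gives uniform operator bounds, together with periodic uniform Littlewood--Paley convergence.

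\emph{Band identity \eqref{eq:periodic-band-identity}.} On Fourier coefficients at $k\in\Z$, the symbol of $d_j^{\alpha,\T}$ is $(r_j/\lambda_j)\,ik\,\varphi(r_jk)$. Multiplying by the symbol in \eqref{eq:periodic-band-inverse} gives
\[
 \lambda_j\widetilde\varphi(r_jk)\frac1{ir_jk}\cdot\frac{r_j}{\lambda_j}\,ik\,\varphi(r_jk)=\widetilde\varphi(r_jk)\varphi(r_jk)=\varphi(r_jk),
\]
for $k\neq0$, since $\widetilde\varphi=1$ on $\supp\varphi$. At $k=0$ both sides vanish because $\varphi(0)=0$. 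Two continuous periodic functions with equal Fourier coefficients coincide, so the identity holds.

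\emph{Bound \eqref{eq:periodic-inverse-bound}.} The real-line multiplier $\lambda_j\widetilde\varphi(r_j\xi)/(ir_j\xi)$ is the $\eps=0$ case of \cref{def:band-integrator}. Its kernel is $\lambda_jr_j^{-1}L_0(\cdot/r_j)$, which has $L^1(\R)$ norm $\lambda_j\norm{L_0}_{L^1}$ by \cref{lem:inverse-multiplier}. Its symbol is smooth and compactly supported away from zero, so \cref{lem:periodization} applies. The periodized kernel then has $L^1(\T)$ norm at most $C\lambda_j$, and Young's inequality on $\T$ gives the bound uniformly in $j$.

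\emph{Injectivity and reconstruction.} First I establish periodic uniform reconstruction, $\Delta_{-1}^\T f+\sum_{j\le N}\Delta_j^\T f\to f$ in $C(\T)$. The partial sum has symbol $\chi(2^{-N-1}k)$, whose kernel is the periodization of a real-line approximate identity with unit mass at scale $2^{-N}$. Writing the error as $\int \check\chi_N(y)\,(f(x-y)-f(x))\,\dd y$ with $f$ read periodically, the geodesic H\"older bound $\abs{f(x-y)-f(x)}\le \norm f_{C^\alpha(\T)}\abs y^\alpha$ bounds the error by $C2^{-\alpha N}\norm f_{C^\alpha(\T)}$. This is the same computation as in \cref{prop:dyadic-bounds}.

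Now suppose $\cD^\T_{\alpha,0}f=\cD^\T_{\alpha,0}g$. By linearity and \eqref{eq:periodic-band-identity}, $\Delta_j^\T(f-g)=0$ for all $j\ge0$, and $\Delta_{-1}^\T(f-g)=0$. The reconstruction just proved then forces $f=g$. For $a=(u_{-1},z)=\cD^\T_{\alpha,0}f$, \eqref{eq:periodic-band-identity} gives $I^{\alpha,\T}_{j,0}z_j=\Delta_j^\T f$, so the series \eqref{eq:periodic-reconstruction-series} is literally the Littlewood--Paley series of $f$ and converges uniformly to it.

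The only point needing care is the first step of the bound: checking that the scaled kernel family periodizes with uniformly bounded $L^1(\T)$ norms even when $r_j$ is comparable to $2\pi$. Since \eqref{eq:periodized-kernel-bound} holds for any $L^1$ kernel, no scale restriction is needed, and I expect no genuine obstacle.
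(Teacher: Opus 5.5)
Your proof is correct and follows the paper's argument essentially step for step. You use the symbol product on the band for the identity, transfer the uniform $L^1$ bound of the dilated inverse kernels to $\T$ via \cref{lem:periodization}, and get injectivity and the reconstruction series from the band identity. The one addition is that you prove the periodic uniform Littlewood--Paley convergence explicitly, using the telescoped symbol $\chi(2^{-N-1}k)$ and the geodesic H\"older bound; the paper invokes this without proof.
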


\begin{proof}
For every integer frequency $k$ in the support of $\varphi(r_j\cdot)$, the multiplier product is
\[
 \lambda_j\widetilde\varphi(r_jk)\frac1{ir_jk}
 \frac{ir_jk}{\lambda_j}
 =
 \widetilde\varphi(r_jk)
 =1.
\]
This proves \eqref{eq:periodic-band-identity}.  The symbol
\[
 \widetilde\varphi(\zeta)(i\zeta)^{-1}
\]
is smooth and compactly supported away from zero.  Its dilates have uniformly $L^1$-bounded real-line kernels, so \cref{lem:periodization} gives \eqref{eq:periodic-inverse-bound}.

Suppose
\[
 \cD_{\alpha,0}^{\T}f
 =
 \cD_{\alpha,0}^{\T}g.
\]
Then $\Delta_{-1}^{\T}(f-g)=0$ and $d_j^{\alpha,\T}(f-g)=0$ for every $j\geq0$.  Applying \eqref{eq:periodic-band-identity} gives
\[
 \Delta_j^{\T}(f-g)=0
 \qquad(j\geq0).
\]
The periodic Littlewood--Paley reconstruction then yields $f-g=0$, proving injectivity.

Finally, if $a=(u_{-1},z)=\cD_{\alpha,0}^{\T}f$, then \eqref{eq:periodic-band-identity} gives $I_{j,0}^{\alpha,\T}z_j=\Delta_j^{\T}f$.  Retaining the low block and summing the periodic Littlewood--Paley decomposition proves \eqref{eq:periodic-reconstruction-series}.  Injectivity makes the representing function unique.
\end{proof}

\begin{definition}[Periodic realized-range inverse and transported norm]\label{def:periodic-realized-inverse}
After \cref{prop:periodic-reconstruction}, define
\[
 \cI_{\alpha,0}^{\T}\bigl(\cD_{\alpha,0}^{\T}f\bigr):=f
 \qquad(f\in C^\alpha(\T)).
\]
This is well defined by injectivity and is represented by \eqref{eq:periodic-reconstruction-series}.  For $a\in\cA_{\alpha,0}^{\T}$ define
\[
 \norm a_{\alpha,0}^{\mathrm{tr}}
 :=
 \norm{\cI_{\alpha,0}^{\T}a}_{C^\alpha(\T)}.
\]
Thus $\cD_{\alpha,0}^{\T}$ and $\cI_{\alpha,0}^{\T}$ are inverse isometries between $C^\alpha(\T)$ and the realized range equipped with the transported norm.
\end{definition}

A rough lift needs a finite-dimensional path whose two channels have the same H\"older amplitude.  The exact coordinate supplies the derivative channel in normalized form; the factor $\lambda_j$ reintroduces the physical amplitude.

Let $E$ be a finite-dimensional real normed vector space.  Fix linear maps
\[
 A_j:\R^2\longrightarrow E,
 \qquad j\geq-1,
\]
satisfying
\begin{equation}\label{eq:representation-bound}
 M_A:=\sup_{j\geq-1}\norm{A_j}<\infty.
\end{equation}
The maps $A_j$ are the finite-dimensional representation data used to combine the value and quadrature channels.

\begin{definition}[Amplitude-reinsertion map]\label{def:amplitude-reinsertion}
For $a=(u_{-1},z)\in\cA_{\alpha,0}^{\T}$, define
\[
 Z_{-1}(a)
 :=
 \bigl(u_{-1},\partial_xu_{-1}\bigr),
\]
and, for $j\geq0$,
\begin{equation}\label{eq:reinserted-channel}
 Z_j(a)
 :=
 \bigl(I_{j,0}^{\alpha,\T}z_j,\lambda_jz_j\bigr).
\end{equation}
The \emph{amplitude-reinsertion map} associated with $A=(A_j)_{j\geq-1}$ is
\[
 \mathfrak R_A:\cA_{\alpha,0}^{\T}
 \longrightarrow
 \prod_{j\geq-1}C^\infty(\T;E),
 \qquad
 \mathfrak R_A(a)
 :=
 (A_jZ_j(a))_{j\geq-1}.
\]
The first component in \eqref{eq:reinserted-channel} is reconstructed from the exact derivative coordinate, while the second restores its H\"older amplitude.
\end{definition}

The target space makes the dyadic decay and derivative growth explicit.

\begin{definition}[Dyadic H\"older path space]\label{def:scale-path-space-interface}
For $0<\alpha<1$, let $\cS^\alpha(E)$ be the space of families
\[
 x=(x_{-1},x_0,x_1,\ldots),
 \qquad
 x_j\in C^\infty(\T;E),
\]
such that $x_{-1}$ has Fourier support in a fixed compact set, each $x_j$ for $j\geq0$ is supported in a fixed annulus of radius $2^j$, and
\begin{equation}\label{eq:scale-path-norm-interface}
 \norm x_{\cS^\alpha}
 :=
 \norm{x_{-1}}_{C^1}
 +
 \sup_{j\geq0}
 \left(
 2^{\alpha j}\norm{x_j}_\infty
 +
 2^{(\alpha-1)j}\norm{\partial_xx_j}_\infty
 \right)
 <\infty.
\end{equation}
This is the path space used in the rough-lift construction from \cref{sec:scale-paths} onward.
\end{definition}

The next theorem is the exact interface between the two halves of the paper.  It is not merely the observation that both use Littlewood--Paley blocks: it reconstructs the rough-path input from the information-preserving coordinate and proves the required norm bound.

\begin{theorem}[Amplitude-reinsertion interface]\label{thm:amplitude-reinsertion}
Let $0<\alpha<1$, let $A=(A_j)_{j\geq-1}$ satisfy \eqref{eq:representation-bound}, and let $a\in\cA_{\alpha,0}^{\T}$.  Then
\[
 \mathfrak R_A(a)\in\cS^\alpha(E)
\]
and
\begin{equation}\label{eq:reinsertion-bound}
 \norm{\mathfrak R_A(a)}_{\cS^\alpha}
 \leq
 C M_A\norm a_{\alpha,0}^{\mathrm{tr}}.
\end{equation}
If
\[
 a=\cD_{\alpha,0}^{\T}f
 \qquad(f\in C^\alpha(\T)),
\]
then the reinserted channels are exactly
\begin{equation}\label{eq:reinsertion-identity}
 Z_j(a)
 =
 \bigl(\Delta_j^{\T}f,r_j\partial_x\Delta_j^{\T}f\bigr)
 \qquad(j\geq0).
\end{equation}
Thus the finite-dimensional dyadic path used by Fourier normal ordering is a continuous realization of the exact scale coordinate.
\end{theorem}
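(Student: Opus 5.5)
The plan has three steps: prove the channel identity \eqref{eq:reinsertion-identity} first, then read the block bounds off the periodic dyadic estimates, and finally check the Fourier support conditions of $\cS^\alpha(E)$.

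\textbf{The identity.} Write $a=\cD_{\alpha,0}^{\T}f$; by \cref{def:periodic-realized-inverse} this $f=\cI_{\alpha,0}^{\T}a$ is unique, so $\norm a_{\alpha,0}^{\mathrm{tr}}=\norm f_{C^\alpha(\T)}$. Then $z_j=d_j^{\alpha,\T}f=(r_j/\lambda_j)\partial_x\Delta_j^{\T}f$. Hence $\lambda_jz_j=r_j\partial_x\Delta_j^{\T}f$ directly. By \eqref{eq:periodic-band-identity}, $I_{j,0}^{\alpha,\T}z_j=\Delta_j^{\T}f$. This gives \eqref{eq:reinsertion-identity}. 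Since every element of the realized range has this form, the identity is available for all $a$.

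\textbf{The norm bound.} I will estimate each block of $\mathfrak R_A(a)$ using \cref{lem:periodic-dyadic-bounds}. Since $\norm{A_jv}\le M_A\norm v$ for $v\in\R^2$ (with any fixed norm on $\R^2$), it suffices to bound the two scalar channels.

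For $j\ge0$, apply \eqref{eq:periodic-dyadic-bounds} with $q=0,1$ to the value channel:
\[
 2^{\alpha j}\norm{\Delta_j^{\T}f}_\infty\le C\norm f_{C^\alpha},
 \qquad
 2^{(\alpha-1)j}\norm{\partial_x\Delta_j^{\T}f}_\infty\le C\norm f_{C^\alpha}.
\]
The quadrature channel $r_j\partial_x\Delta_j^{\T}f$ is handled with $q=1,2$:
\[
 2^{\alpha j}r_j\norm{\partial_x\Delta_j^{\T}f}_\infty\le C\norm f_{C^\alpha},
 \qquad
 2^{(\alpha-1)j}r_j\norm{\partial_x^2\Delta_j^{\T}f}_\infty\le C\norm f_{C^\alpha},
\]
since $r_jr_j^{\alpha-2}=r_j^{\alpha-1}$.

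For $j=-1$, the channels are $u_{-1}=\Delta_{-1}^{\T}f$ and $\partial_xu_{-1}$. Their $C^1$ norms are controlled by $\norm{\Delta_{-1}^{\T}f}_{C^2}\le C\norm f_\infty$. Adding the bounds gives \eqref{eq:reinsertion-bound}.

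\textbf{Support conditions.} Each $A_jZ_j(a)$ is smooth, since it is a fixed linear combination of trigonometric polynomials. For $j\ge0$ its Fourier support lies in $\supp\varphi(r_j\cdot)$, a fixed annulus of radius $2^j$, because $A_j$ acts only on the channel index and $\partial_x$ does not enlarge Fourier supports. For $j=-1$ the support lies in the compact set $\supp\chi$. So $\mathfrak R_A(a)\in\cS^\alpha(E)$.

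I expect no real obstacle here. The only care point is that \eqref{eq:periodic-band-identity} and the bound $\norm a^{\mathrm{tr}}=\norm f_{C^\alpha}$ both require $a$ in the realized range, and that is exactly the hypothesis. Also, the $\cS^\alpha$ norm needs \eqref{eq:periodic-dyadic-bounds} only up to $q=2$.
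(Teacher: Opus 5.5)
Your proposal is correct and follows the paper's proof. You set $f=\cI_{\alpha,0}^{\T}a$, obtain the channel identity from \eqref{eq:periodic-band-identity} and $\lambda_jz_j=r_j\partial_x\Delta_j^{\T}f$, and bound the $\cS^\alpha$ norm through \eqref{eq:periodic-dyadic-bounds} with $q\le 2$ together with the low-block $C^q$ estimate. Your explicit check of the Fourier-support conditions is a minor detail that the paper leaves implicit.
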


\begin{proof}
Let
\[
 f:=\cI_{\alpha,0}^{\T}a.
\]
By definition of the transported norm,
\[
 \norm f_{C^\alpha(\T)}=\norm a_{\alpha,0}^{\mathrm{tr}}.
\]
The realized-range identity gives
\[
 z_j=d_j^{\alpha,\T}f
 =
 \frac{r_j}{\lambda_j}\partial_x\Delta_j^{\T}f.
\]
Using \eqref{eq:periodic-band-identity},
\[
 I_{j,0}^{\alpha,\T}z_j=\Delta_j^{\T}f,
\]
while multiplication by $\lambda_j$ gives
\[
 \lambda_jz_j=r_j\partial_x\Delta_j^{\T}f.
\]
These are the two components in \eqref{eq:reinsertion-identity}.

The estimate \eqref{eq:periodic-dyadic-bounds} with $q=0,1,2$ gives
\[
 \norm{\Delta_j^{\T}f}_\infty
 +
 r_j\norm{\partial_x\Delta_j^{\T}f}_\infty
 \leq
 C\lambda_j\norm f_{C^\alpha(\T)}.
\]
Differentiating both channels and using Bernstein gives
\begin{align*}
 \norm{\partial_x\Delta_j^{\T}f}_\infty
 &\leq
 Cr_j^{-1}\lambda_j\norm f_{C^\alpha(\T)},\\
 \norm{r_j\partial_x^2\Delta_j^{\T}f}_\infty
 &\leq
 Cr_j^{-1}\lambda_j\norm f_{C^\alpha(\T)}.
\end{align*}
Therefore
\[
 \norm{Z_j(a)}_\infty
 \leq
 C2^{-\alpha j}\norm f_{C^\alpha(\T)},
\]
and
\[
 \norm{\partial_xZ_j(a)}_\infty
 \leq
 C2^{(1-\alpha)j}\norm f_{C^\alpha(\T)}.
\]
Applying $A_j$ and using \eqref{eq:representation-bound},
\begin{align*}
 2^{\alpha j}\norm{A_jZ_j(a)}_\infty
 &\leq
 CM_A\norm f_{C^\alpha(\T)},\\
 2^{(\alpha-1)j}\norm{\partial_x(A_jZ_j(a))}_\infty
 &\leq
 CM_A\norm f_{C^\alpha(\T)}.
\end{align*}
The low block is smooth and obeys the same bound with its finite set of Fourier modes absorbed into $C$.  Taking the supremum in \eqref{eq:scale-path-norm-interface} proves \eqref{eq:reinsertion-bound}.
\end{proof}

For any two coordinate choices that synthesize the same first-level path $X$, the fixed path-level FNO regularization in \cref{sec:fno} produces the same lift.
 \section{Dyadic paths and rough-path metrics}\label{sec:scale-paths}

The interface theorem produces a family of smooth frequency blocks with uniform H\"older weights.  Fourier normal ordering acts on the synthesized path, so we now establish the Banach structure, cutoff convergence, and roughness depth of the abstract space $\cS^\alpha(E)$ introduced in \cref{def:scale-path-space-interface}.

For $x=(x_j)_{j\geq-1}\in\cS^\alpha(E)$, define the cutoff tower and cutoff path by
\[
 x^{\leq N}:=(x_{-1},x_0,\ldots,x_N,0,0,\ldots),
 \qquad
 X_N:=\sum_{-1\leq j\leq N}x_j.
\]
The limiting first-level path will be denoted by
\[
 X:=\sum_{j\geq-1}x_j.
\]

The norm in \eqref{eq:scale-path-norm-interface} controls both the amplitude and the spatial derivative of each block, and therefore controls increments by the minimum of these two estimates.

\begin{lemma}[Single-block increment estimate]\label{lem:single-block-increment}
For $x\in\cS^\alpha(E)$, $j\geq0$, and $h\in\R$,
\begin{equation}\label{eq:single-block-increment}
 \norm{x_j(\cdot+h)-x_j}_\infty
 \leq
 C\norm x_{\cS^\alpha}
 2^{-\alpha j}\min\{1,2^j\abs h\}.
\end{equation}
\end{lemma}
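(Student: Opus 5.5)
The estimate is the minimum of two elementary bounds, both read off directly from the norm \eqref{eq:scale-path-norm-interface}. I will treat the two regimes $2^j\abs h\geq1$ and $2^j\abs h<1$ separately and then combine them.

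\emph{Amplitude bound.} The triangle inequality gives
\[
 \norm{x_j(\cdot+h)-x_j}_\infty\leq2\norm{x_j}_\infty\leq2\cdot2^{-\alpha j}\norm x_{\cS^\alpha}.
\]
This settles the case $2^j\abs h\geq1$, where the minimum in \eqref{eq:single-block-increment} equals one.

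\emph{Derivative bound.} Each $x_j$ is smooth and periodic, so the mean-value inequality applied to the $E$-valued function along the segment from $y$ to $y+h$ gives
\[
 \norm{x_j(\cdot+h)-x_j}_\infty\leq\abs h\,\norm{\partial_xx_j}_\infty.
\]
The norm controls the derivative by $\norm{\partial_xx_j}_\infty\leq2^{(1-\alpha)j}\norm x_{\cS^\alpha}$. Combining the two displays yields the bound $2^{-\alpha j}\,2^j\abs h\,\norm x_{\cS^\alpha}$, which handles the case $2^j\abs h<1$.

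Taking the smaller of the two bounds gives \eqref{eq:single-block-increment} with $C=2$. The argument has no real obstacle. The only point to check is that the mean-value inequality is valid for vector-valued maps, which holds in the integral form $x_j(y+h)-x_j(y)=\int_0^1\partial_xx_j(y+\theta h)h\,\dd\theta$ for any norm on the finite-dimensional space $E$.
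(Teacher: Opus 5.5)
Your proof is correct and matches the paper's argument: the paper likewise bounds the increment by $2\norm{x_j}_\infty$ using the amplitude part of the norm and by $\abs h\,\norm{\partial_x x_j}_\infty$ via the mean-value theorem and the derivative part, then takes the smaller bound. Your constant $C=2$ and the integral form of the mean-value inequality for $E$-valued maps are both fine.
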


\begin{proof}
The uniform part of \eqref{eq:scale-path-norm-interface} gives
\[
 \norm{x_j(\cdot+h)-x_j}_\infty
 \leq
 2\norm{x_j}_\infty
 \leq
 2\norm x_{\cS^\alpha}2^{-\alpha j}.
\]
The mean-value theorem and the derivative part of the same norm give
\[
 \norm{x_j(\cdot+h)-x_j}_\infty
 \leq
 \abs h\norm{\partial_xx_j}_\infty
 \leq
 \norm x_{\cS^\alpha}2^{-\alpha j}2^j\abs h.
\]
Taking the smaller bound proves \eqref{eq:single-block-increment}.
\end{proof}

The dyadic sum of the minimum in \eqref{eq:single-block-increment} is the basic H\"older summation used throughout the rough-lift estimates.

\begin{lemma}[Dyadic summation]\label{lem:dyadic-summation}
For $0<\theta<1$ and $0<h\leq1$,
\begin{equation}\label{eq:dyadic-summation}
 \sum_{j\geq0}2^{-\theta j}\min\{1,2^jh\}
 \leq
 C_\theta h^\theta.
\end{equation}
At $\theta=1$, the left-hand side is bounded by
\[
 Ch\left(1+\log\frac1h\right).
\]
\end{lemma}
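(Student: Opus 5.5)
The plan is to split the sum at the dyadic index where the minimum switches branches.  Since $0<h\le1$, choose the integer $J\ge0$ with
\[
 2^{-J-1}<h\le2^{-J},
\]
so that $2^jh\le1$ exactly when $j\le J$ and $2^jh>1$ for $j>J$.  On the low-frequency range $j\le J$ the minimum equals $2^jh$; on the high-frequency range $j>J$ it equals one.  Each piece is then a geometric series and will be bounded separately.

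For $0<\theta<1$, the low range gives
\[
 h\sum_{j=0}^J2^{(1-\theta)j}\le\frac{h\,2^{(1-\theta)(J+1)}}{2^{1-\theta}-1}.
\]
Since $2^{J}\le h^{-1}$, this is at most $C_\theta h\cdot h^{-(1-\theta)}=C_\theta h^\theta$.  The high range gives
\[
 \sum_{j>J}2^{-\theta j}=\frac{2^{-\theta(J+1)}}{1-2^{-\theta}},
\]
and $2^{-(J+1)}<h$ bounds it by $C_\theta h^\theta$.  Adding the two estimates proves \eqref{eq:dyadic-summation}.  The constant blows up like $(1-\theta)^{-1}$ from the low range and like $\theta^{-1}$ from the high range, which is consistent with dependence on $\theta$ alone.

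At $\theta=1$ the low range no longer grows geometrically: each of its $J+1$ terms equals $h$.  Its sum is therefore $(J+1)h$, and $J\le\log_2(1/h)$ gives the bound $Ch(1+\log(1/h))$.  The high range is $\sum_{j>J}2^{-j}=2^{-J}<2h$.  Together these give the stated logarithmic bound.

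I do not expect a genuine obstacle; this is routine.  The only point needing care is the endpoint bookkeeping in the choice of $J$, so that both comparisons $2^{J}\le h^{-1}$ and $2^{-(J+1)}<h$ hold simultaneously.  The hypothesis $h\le1$ is what makes $J\ge0$.
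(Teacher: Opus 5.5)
Your proof is correct and is the paper's argument: split at the dyadic index $J$ with $2^Jh\simeq1$, use $2^jh$ for $j\le J$ and the bound $1$ for $j>J$, then sum the two geometric series (at $\theta=1$, the $J+1$ low terms give the logarithm). Your explicit choice $2^{-J-1}<h\le2^{-J}$ just makes the paper's $2^Jh\simeq1$ precise.
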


\begin{proof}
Choose $J\in\Nzero$ with $2^Jh\simeq1$.  For $j\leq J$, use $\min\{1,2^jh\}=2^jh$; for $j>J$, use the bound $1$.  Then
\[
 \sum_{j\geq0}2^{-\theta j}\min\{1,2^jh\}
 \leq
 h\sum_{j\leq J}2^{(1-\theta)j}
 +
 \sum_{j>J}2^{-\theta j}
 \leq
 C_\theta h^\theta.
\]
If $\theta=1$, the first sum has $J+1$ terms of size $h$.
\end{proof}

The next proposition verifies that the abstract input class is complete and that ultraviolet cutoffs are uniformly bounded operations.

\begin{proposition}[Completeness of the scale-path space]\label{prop:scale-path-complete}
For every finite-dimensional $E$ and $0<\alpha<1$, the normed space $\cS^\alpha(E)$ is Banach.  The cutoff maps
\[
 x\longmapsto x^{\leq N}
\]
have operator norm at most one.
\end{proposition}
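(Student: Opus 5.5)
The cutoff bound is immediate, so I will dispose of it first and spend the effort on completeness. By \eqref{eq:scale-path-norm-interface}, $\norm{x^{\le N}}_{\cS^\alpha}$ is the same expression as $\norm x_{\cS^\alpha}$, except that the supremum runs over $0\le j\le N$ instead of all $j\ge0$, with zero contributions beyond $N$. Hence $\norm{x^{\le N}}_{\cS^\alpha}\le\norm x_{\cS^\alpha}$. Linearity of $x\mapsto x^{\le N}$ is clear, and the spectral support conditions are trivially preserved, since the zero function has empty support. The operator norm is therefore at most one.

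For completeness, I take a Cauchy sequence $(x^{(n)})$ in $\cS^\alpha(E)$ and argue blockwise.

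\emph{Block limits.} For each fixed $j\ge0$, the norm controls $2^{\alpha j}\norm{x^{(n)}_j-x^{(m)}_j}_\infty$ and $2^{(\alpha-1)j}\norm{\partial_x(x^{(n)}_j-x^{(m)}_j)}_\infty$. So $(x^{(n)}_j)_n$ is Cauchy in $C^1(\T;E)$ and converges to some $x_j\in C^1$. Likewise $x^{(n)}_{-1}\to x_{-1}$ in $C^1$.

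\emph{Fourier support.} Uniform convergence on $\T$ implies convergence of every Fourier coefficient. Hence $\widehat{x_j}(k)=0$ for every integer $k$ outside the fixed annulus (respectively, the fixed compact set when $j=-1$). So each $x_j$ is a trigonometric polynomial with the required support; in particular $x_j\in C^\infty(\T;E)$.

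\emph{Norm convergence.} This is the step requiring care, because the norm involves a supremum over infinitely many $j$. Given $\varepsilon>0$, choose $n_0$ such that $\norm{x^{(n)}-x^{(m)}}_{\cS^\alpha}\le\varepsilon$ for all $n,m\ge n_0$. For each fixed $j$ I let $m\to\infty$ in the blockwise inequality
\[
 2^{\alpha j}\norm{x^{(n)}_j-x^{(m)}_j}_\infty+2^{(\alpha-1)j}\norm{\partial_x(x^{(n)}_j-x^{(m)}_j)}_\infty\le\varepsilon .
\]
The bound $\varepsilon$ is independent of $j$, so taking the supremum over $j$ afterwards gives $\sup_{j\ge0}(\cdots)\le\varepsilon$ for $x^{(n)}-x$. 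The same limiting argument applies to the low block in $C^1$. Thus $\norm{x^{(n)}-x}_{\cS^\alpha}\le 2\varepsilon$ for $n\ge n_0$. This shows both that $x\in\cS^\alpha(E)$, since $\norm x_{\cS^\alpha}\le\norm{x^{(n_0)}}_{\cS^\alpha}+2\varepsilon$, and that $x^{(n)}\to x$.

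The proof is the standard argument that a weighted $\ell^\infty$ sum of Banach spaces is Banach, combined with closedness of the spectral support condition under uniform limits. That closedness is the only non-formal point I need to verify.
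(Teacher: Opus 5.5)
Your proof is correct and matches the paper's argument step for step: blockwise convergence in $C^1$, closedness of the finite Fourier support under uniform limits (which gives smoothness), a limit passage in the weighted block quantities with a bound uniform in $j$, and the observation that truncation can only decrease the supremum. Your explicit $\varepsilon$-argument for the uniform-in-$j$ limit spells out what the paper compresses into the phrase ``passing to the limit.''
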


\begin{proof}
Let $(x^{(n)})_n$ be Cauchy in $\cS^\alpha(E)$.  For each fixed $j$, both $x_j^{(n)}$ and $\partial_xx_j^{(n)}$ converge uniformly.  Hence $x_j^{(n)}$ converges in $C^1$ to a limit $x_j$.  Fourier coefficients outside the prescribed compact support vanish for every $n$ and therefore for $x_j$; on $\T$, each such support contains finitely many modes, so $x_j$ is smooth.  Passing to the limit in
\[
 2^{\alpha j}\norm{x_j^{(n)}}_\infty
 +
 2^{(\alpha-1)j}\norm{\partial_xx_j^{(n)}}_\infty
\]
shows $x\in\cS^\alpha(E)$ and $x^{(n)}\to x$ in norm.  Removing high-index components can only decrease the supremum, proving the cutoff bound.
\end{proof}

The first-level path converges at every lower H\"older exponent.  The same exponent loss controls the cutoff tail in the stronger scale-path norm.

\begin{proposition}[First-level and scale-tower cutoff tails]\label{prop:scale-tail}
Let $0<\beta<\alpha<1$ and $x\in\cS^\alpha(E)$.  Then
\[
 X_N\longrightarrow X:=\sum_{j\geq-1}x_j
\]
in $C^\beta(\T;E)$, and
\begin{equation}\label{eq:first-level-tail}
 \norm{X-X_N}_{C^\beta}
 \leq
 C2^{-(\alpha-\beta)N}\norm x_{\cS^\alpha}.
\end{equation}
Moreover,
\begin{equation}\label{eq:scale-tower-tail}
 \norm{x-x^{\leq N}}_{\cS^\beta}
 \leq
 C2^{-(\alpha-\beta)N}\norm x_{\cS^\alpha}.
\end{equation}
\end{proposition}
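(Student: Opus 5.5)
The plan is to prove both estimates directly from the single-block bound of \cref{lem:single-block-increment}, summed over the tail indices $j>N$.

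\textbf{First-level tail.} Write $X-X_N=\sum_{j>N}x_j$. The sup-norm part is immediate:
\[
 \norm{X-X_N}_\infty\le\sum_{j>N}2^{-\alpha j}\norm x_{\cS^\alpha}\le C2^{-\alpha N}\norm x_{\cS^\alpha}.
\]
The same computation shows that the series converges uniformly, so $X$ is continuous.

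For the H\"older seminorm, fix $0<\abs h\le\pi$. By \cref{lem:single-block-increment},
\[
 \norm{(X-X_N)(\cdot+h)-(X-X_N)}_\infty\le C\norm x_{\cS^\alpha}\sum_{j>N}2^{-\alpha j}\min\{1,2^j\abs h\}.
\]
In each term write $2^{-\alpha j}=2^{-(\alpha-\beta)j}2^{-\beta j}$ and bound $2^{-(\alpha-\beta)j}\le2^{-(\alpha-\beta)N}$, which holds because $j>N$. The remaining sum is at most $\sum_{j\ge0}2^{-\beta j}\min\{1,2^j\abs h\}$, and \cref{lem:dyadic-summation} with $\theta=\beta<1$ bounds it by $C_\beta\abs h^\beta$. (For $\abs h\in[1,\pi]$, either use the trivial bound $\min\le1$ or rescale $h$; the geodesic distance on $\T$ is at most $\pi$, so this only changes constants.)

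Dividing by $\abs h^\beta$ and combining with the sup bound gives \eqref{eq:first-level-tail}. Since $2^{-(\alpha-\beta)N}\to0$, the partial sums $X_N$ are Cauchy in $C^\beta$, and their limit is the uniform limit $X$. This gives the convergence $X_N\to X$ in $C^\beta(\T;E)$.

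\textbf{Scale-tower tail.} The difference $x-x^{\le N}$ has vanishing components for $j\le N$, including the low block $j=-1$, so its $C^1$ low-block part is zero. For $j>N$ we estimate both parts of the $\cS^\beta$ norm:
\[
 2^{\beta j}\norm{x_j}_\infty=2^{-(\alpha-\beta)j}\,2^{\alpha j}\norm{x_j}_\infty\le2^{-(\alpha-\beta)N}\norm x_{\cS^\alpha},
\]
\[
 2^{(\beta-1)j}\norm{\partial_xx_j}_\infty=2^{-(\alpha-\beta)j}\,2^{(\alpha-1)j}\norm{\partial_xx_j}_\infty\le2^{-(\alpha-\beta)N}\norm x_{\cS^\alpha}.
\]
The Fourier support conditions on the blocks are inherited from $x$. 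Taking the supremum over $j$ gives \eqref{eq:scale-tower-tail}, with constant $C=1$.

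No step is a genuine obstacle. The only points needing care are the splitting $2^{-\alpha j}=2^{-(\alpha-\beta)j}2^{-\beta j}$ in the H\"older estimate, so that \cref{lem:dyadic-summation} applies at exponent $\beta<1$, and handling increments of size up to $\pi$ rather than $1$ on $\T$.
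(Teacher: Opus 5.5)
Your proof is correct and follows the paper's argument: you sum the single-block increment bound of \cref{lem:single-block-increment} over $j>N$ for the first-level tail, and you rescale the $\cS^\alpha$ weights by $2^{-(\alpha-\beta)j}$ for the scale-tower tail. The only difference is in the summation. The paper uses the termwise interpolation $\min\{1,2^j\abs h\}\le 2^{\beta j}\abs h^\beta$ followed by a geometric sum in $2^{-(\alpha-\beta)j}$. That bound holds for every $h$ at once, so it avoids both \cref{lem:dyadic-summation} and your separate treatment of $\abs h\ge1$.
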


\begin{proof}
For $j>N$, \cref{lem:single-block-increment} and
\[
 \min\{1,2^j\abs h\}
 \leq
 2^{\beta j}\abs h^\beta
\]
give
\[
 \norm{x_j(\cdot+h)-x_j}_\infty
 \leq
 C\norm x_{\cS^\alpha}2^{-(\alpha-\beta)j}\abs h^\beta.
\]
Summing over $j>N$ yields the H\"older seminorm in \eqref{eq:first-level-tail}; the uniform tail is bounded by $\sum_{j>N}2^{-\alpha j}$.  For \eqref{eq:scale-tower-tail}, each $j>N$ satisfies
\begin{align*}
 2^{\beta j}\norm{x_j}_\infty
 +2^{(\beta-1)j}\norm{\partial_xx_j}_\infty
 &\leq
 2^{-(\alpha-\beta)j}\norm x_{\cS^\alpha}.
\end{align*}
Taking the supremum proves the claim.
\end{proof}

A $\beta$-H\"older rough path requires tensor levels through degree $m=\lfloor1/\beta\rfloor$.  We choose the scale exponent $\alpha$ so that one fixed integer $m$ governs the construction.

\begin{definition}[Roughness depth]\label{def:roughness-depth}
For $0<\alpha<1$, let $m=m(\alpha)$ be the unique integer satisfying
\begin{equation}\label{eq:roughness-depth}
 \frac1{m+1}<\alpha\leq\frac1m.
\end{equation}
Fix
\begin{equation}\label{eq:beta-range}
 \frac1{m+1}<\beta<\alpha.
\end{equation}
Then
\[
 r\beta<1
 \quad(1\leq r\leq m),
 \qquad
 (m+1)\beta>1.
\]
The first inequalities describe the critical tensor levels; the last is the extension threshold.
\end{definition}

We now define the algebraic target and the two metrics whose different root conventions produce different stability exponents.

\begin{definition}[Truncated tensor group]\label{def:tensor-group}
Let
\[
 T^{(m)}(E):=\bigoplus_{r=0}^mE^{\otimes r}
\]
be the truncated tensor algebra with concatenation product.  Let
\[
 L^{(m)}(E):=\bigoplus_{r=1}^mL_r(E)
\]
be the free step-$m$ Lie algebra generated by $E$, realized inside $T^{(m)}(E)$, and define
\[
 G^{(m)}(E):=\exp L^{(m)}(E).
\]
Every tensor level is equipped with one fixed admissible norm.  Since $E$ is finite-dimensional, all such choices are equivalent.
\end{definition}

\begin{definition}[Multiplicative functional and rough-path metrics]\label{def:rough-metrics}
Put
\[
 \Delta_2:=\{(s,t):0\leq s\leq t\leq2\pi\}.
\]
A map
\[
 \mathbf X:\Delta_2\longrightarrow G^{(m)}(E)
\]
is \emph{multiplicative} if
\[
 \mathbf X_{s,t}=\mathbf X_{s,u}\mathbf X_{u,t}
 \qquad(0\leq s\leq u\leq t\leq2\pi).
\]
For two multiplicative functionals $\mathbf X,\mathbf Y$, define
\begin{equation}\label{eq:layerwise-metric}
 d_{\beta,m}^{\mathrm{lay}}(\mathbf X,\mathbf Y)
 :=
 \max_{1\leq r\leq m}
 \sup_{s<t}
 \frac{\norm{\pi_r(\mathbf X_{s,t}-\mathbf Y_{s,t})}}{\abs{t-s}^{r\beta}},
\end{equation}
and
\begin{equation}\label{eq:homogeneous-metric}
 d_{\beta,m}^{\mathrm{hom}}(\mathbf X,\mathbf Y)
 :=
 \max_{1\leq r\leq m}
 \sup_{s<t}
 \frac{\norm{\pi_r(\mathbf X_{s,t}-\mathbf Y_{s,t})}^{1/r}}{\abs{t-s}^{\beta}}.
\end{equation}
On bounded subsets, these coordinate seminorms are equivalent to the standard inhomogeneous and homogeneous rough-path metrics.  A group-valued multiplicative functional satisfying the graded H\"older bounds is weakly geometric.  It is strong geometric at exponent $\beta$ if it is a limit, in the $\beta$-H\"older rough-path topology, of signatures of bounded-variation paths.
\end{definition}

The FNO theorem used in the next section is stated for compactly supported paths on $\R$.  A fixed based localization converts the periodic input into that form without changing increments on the cut interval.

\begin{definition}[Based compact localization]\label{def:based-localization}
Fix $T_0:=2\pi$ and choose $\chi_0\in C_c^\infty(\R)$ with
\[
 \chi_0=1
 \quad\text{on a neighborhood of }[0,T_0].
\]
For a periodic path $Y:\T\to E$, let $\widetilde Y$ be its periodic extension to $\R$ and define
\[
 LY:=\chi_0\bigl(\widetilde Y-Y(0)\bigr).
\]
The map $L$ is invariant under addition of constants, sends constant paths to zero, and satisfies
\[
 LY(t)=Y(t)-Y(0)
 \qquad(0\leq t\leq T_0).
\]
\end{definition}

\begin{lemma}[Localization bound]\label{lem:localization-bound}
For every $0<\eta<1$,
\begin{equation}\label{eq:localization-bound}
 \norm{LY}_{C^\eta(\R)}
 \leq
 C_{\chi_0,\eta}\norm Y_{C^\eta(\T)}.
\end{equation}
If $Y$ is smooth and periodic, then $LY$ is smooth and compactly supported.
\end{lemma}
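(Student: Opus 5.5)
The plan is to write $LY$ as the product of the bounded smooth compactly supported function $\chi_0$ and the based periodic extension $g:=\widetilde Y-Y(0)$, and then use the elementary product estimate for H\"older seminorms on $\R$, in the same spirit as \cref{prop:modulus-banach-algebra}.

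\emph{Step 1: $g$ is controlled by $\norm Y_{C^\eta(\T)}$.} The sup bound is immediate: $\norm g_\infty\le2\norm Y_\infty$. For the seminorm, I would note that for real $s,t$ the geodesic distance satisfies $d_\T(s,t)\le\abs{s-t}$. Hence
\[
 \abs{g(s)-g(t)}=\abs{Y(s)-Y(t)}\le[Y]_\eta\,\abs{s-t}^\eta,
\]
and this holds for all $s,t$, not only for $\abs{s-t}\le1$. So $[g]_{\eta,\R}\le[Y]_\eta$ globally.

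\emph{Step 2: product estimate.} For arbitrary $s,t$, split
\[
 LY(s)-LY(t)=\chi_0(s)\bigl(g(s)-g(t)\bigr)+\bigl(\chi_0(s)-\chi_0(t)\bigr)g(t).
\]
The first term is bounded by $\norm{\chi_0}_\infty[Y]_\eta\abs{s-t}^\eta$. For the second term, use
\[
 \abs{\chi_0(s)-\chi_0(t)}\le\min\{2\norm{\chi_0}_\infty,\ \norm{\chi_0'}_\infty\abs{s-t}\}\le C_{\chi_0}\abs{s-t}^\eta,
\]
where the last step interpolates the two bounds using $\min\{1,u\}\le u^\eta$ for $0<\eta<1$. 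Combining this with $\norm g_\infty\le2\norm Y_\infty$ and $\norm{LY}_\infty\le2\norm{\chi_0}_\infty\norm Y_\infty$ gives \eqref{eq:localization-bound}, with a constant depending only on $\chi_0$ and $\eta$. The argument bounds the H\"older seminorm over all pairs, so it gives the estimate whether the $C^\eta(\R)$ seminorm is taken globally or only over $\abs{s-t}\le1$.

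\emph{Step 3: smooth input.} If $Y$ is smooth and periodic, then $\widetilde Y$ is smooth on $\R$, so $LY$ is a product of smooth functions and is supported in $\supp\chi_0$, which is compact.

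I expect no real obstacle. The one point needing care is Step 1: the periodic extension must be compared with the geodesic metric through $d_\T\le\abs{\cdot}$, which gives the H\"older bound on $\R$ with no loss even for separations larger than the period.
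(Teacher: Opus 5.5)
Your proof is correct and follows the same route as the paper's: first show the periodic extension is bounded from $C^\eta(\T)$ into H\"older functions on $\R$, then use that multiplication by the fixed smooth cutoff $\chi_0$ is bounded on $C^\eta(\R)$. Your observation $d_\T(s,t)\le\abs{s-t}$ gives the global seminorm bound directly, and it does the job of the paper's remark that only finitely many periods meet $\supp\chi_0$.
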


\begin{proof}
Periodic extension is bounded from $C^\eta(\T)$ to the local $C^\eta$ space on $\R$.  Multiplication by the fixed function $\chi_0$ is bounded on $C^\eta(\R)$, and only finitely many periods meet $\supp\chi_0$.  These facts give \eqref{eq:localization-bound}; smoothness and compact support are immediate for smooth $Y$.
\end{proof}
 \section{Fourier-normal-ordered rough enhancement}\label{sec:fno}

The first-level path $X$ produced in \cref{sec:scale-paths} is H\"older but its canonical iterated integrals need not be obtained as limits of the Fourier cutoffs $X_N$.  We therefore fix one Fourier-normal-ordering scheme and separate the imported existence theorem from the quantitative estimates proved for the dyadic scale-path class.

For a compact interval $I\subset\R$, a finite-dimensional normed space $F$, and $\theta>0$, let $C_2^\theta(I;F)$ be the space of continuous maps
\[
 U:\{(s,t)\in I^2:s\leq t\}\longrightarrow F
\]
with $U_{t,t}=0$ and
\[
 \norm U_{C_2^\theta(I;F)}
 :=
 \sup_{s<t}\frac{\norm{U_{s,t}}}{\abs{t-s}^\theta}<\infty.
\]

The analytic existence input is Unterberger's Fourier-normal-ordering theorem.  We use its rough lift and its lower-exponent approximation conclusion without reproving the tree estimates that establish existence.

\begin{theorem}[Fourier normal ordering, imported]\label{thm:unterberger}
Let $0<\gamma<1$ satisfy $1/\gamma\notin\N$, and put
\[
 m:=\lfloor1/\gamma\rfloor.
\]
For every compactly supported $\gamma$-H\"older path $Y:\R\to E$, the fixed Fourier-normal-ordering construction of \cite[Main Theorem and Sections~2--4]{UnterbergerFNO2010} produces a step-$m$ multiplicative group-like $\gamma$-H\"older functional
\[
 \cR_\gamma(Y)
 =
 \bigl(1,\cR_\gamma^1(Y),\ldots,\cR_\gamma^m(Y)\bigr)
\]
whose first level is
\[
 \pi_1\cR_\gamma(Y)_{s,t}=Y(t)-Y(s).
\]
If
\[
 \frac1{m+1}<\beta<\gamma,
\]
the approximation result in the same source implies that this step-$m$ lift is strong geometric in the $\beta$-H\"older rough-path topology.
\end{theorem}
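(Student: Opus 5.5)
This statement is an imported theorem, so the plan is to check carefully that Unterberger's construction delivers exactly the displayed properties in our normalization, rather than to rebuild the tree estimates. First I fix the input: a compactly supported $\gamma$-H\"older path $Y:\R\to E$ with $1/\gamma\notin\N$ and $m=\lfloor1/\gamma\rfloor$. I then identify it with the class treated in \cite[Main Theorem]{UnterbergerFNO2010}, which is formulated for compactly supported H\"older paths of non-reciprocal-integer exponent. The non-integrality hypothesis keeps us away from the logarithmic endpoint; compare the case $\theta=1$ in \cref{lem:dyadic-summation}, where the regularized tree integrals would pick up such factors. The map $\cR_\gamma$ is then defined by one fixed choice of everything in the scheme: the Fourier decomposition, the ordering of frequencies for each tree, and the regularization (the cut rule removing divergent frequency regions). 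Fixing these choices makes $\cR_\gamma$ a single deterministic map.

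Next I read off the algebraic conclusions. In the scheme, the first level is the untouched increment $Y(t)-Y(s)$. Multiplicativity comes from the source's Chen property, which holds because the regularized skeleton integrals are defined by a tree-by-tree procedure that is compatible with splitting the interval. Group-likeness comes from the shuffle property of the regularized iterated integrals, together with the standard equivalence between shuffle-compatible multiplicative functionals and $G^{(m)}(E)$-valued ones. The graded bounds $\norm{\pi_r\cR_\gamma(Y)_{s,t}}\le C\abs{t-s}^{r\gamma}$ are the H\"older estimates of the Main Theorem, restricted to degrees $r\le m$.

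For strong geometricity, I use the approximation statement in the source. The regularized lift is a limit of lifts of smooth paths obtained by truncating frequencies, and those lifts are signatures of suitably modified smooth paths. The construction of the lift therefore comes with explicit smooth approximants whose signatures converge in $\gamma'$-H\"older rough-path distance for every $\gamma'<\gamma$. Given $\beta$ with $1/(m+1)<\beta<\gamma$, I pick $\gamma'\in(\beta,\gamma)$ and pass to the $\beta$-H\"older topology, which is weaker. Here I use \cref{def:rough-metrics}, which says the coordinate metrics are equivalent to the standard ones on bounded sets. The lower bound $\beta>1/(m+1)$ ensures that step $m$ is still the correct truncation at exponent $\beta$.

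The main obstacle is this last step. Unterberger's approximants converge to the regularized lift, not to the naive smooth-cutoff lift, and convergence is established only at a strictly lower exponent. I would therefore cite the source's approximation result precisely, confirm that its approximating objects are genuine bounded-variation signatures rather than merely smooth multiplicative functionals, and, if necessary, close the gap with a Lyons--Victoir type interpolation between the $\gamma$-bound and convergence in the uniform topology.
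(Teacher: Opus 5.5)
Your plan matches the paper, which imports the statement from Unterberger's Main Theorem and approximation result. The first level, Chen property, group-likeness, graded $\gamma$-H\"older bounds and lower-exponent strong geometricity are all taken from the source rather than reproved. The obstacle you flag in the last step can be bypassed: group-likeness and the $\gamma$-H\"older bounds already make the lift weakly geometric, and the standard Friz--Victoir result then makes it a limit of geodesic (bounded-variation) signatures in every $\beta$-H\"older topology with $\beta<\gamma$ and $\lfloor1/\beta\rfloor=m$, so you need not know the form of Unterberger's approximants.
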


The mixed-input estimate in \cref{prop:mixed-fno} is obtained by assigning different inputs to the integration vertices in the vertexwise estimates of \cite{UnterbergerFNO2010}.  Fourier denominators are indexed by integration trees; the later Lie projection uses Hall trees and leaves those denominators unchanged.

Fix once and for all a real basis
\[
 (e_1,\ldots,e_{d_E}),
 \qquad d_E:=\dim E,
\]
of $E$.  It is used for all coordinate decorations in the FNO formulas.  Let
\[
 E_{\C}:=E\otimes_{\R}\C
\]
be the complexification, equipped with a fixed conjugation-invariant complex norm extending the norm of $E$.  We write
\[
 E_{\C}^{\otimes r}
 :=
 \underbrace{E_{\C}\otimes_{\C}\cdots\otimes_{\C}E_{\C}}_{r\text{ factors}}
 \cong
 (E^{\otimes_{\R}r})_{\C},
\]
and equip these tensor powers with fixed conjugation-invariant tensor norms.  The same real basis is used as a complex basis of $E_{\C}$.  The one-sided signed Fourier shells introduced below act in these complexified spaces.  The final mixed coefficients for real inputs are proved in \cref{prop:mixed-fno} to lie in the real subspaces $E^{\otimes r}$.

\begin{definition}[Integration and Hall trees]\label{def:tree-species}
A \emph{rooted integration tree} $T$ of degree $r$ has a vertex set $V(T)$ with $\abs{V(T)}=r$, one vertex for each input differential, and edges oriented toward the root.  Fix a bijection and a coordinate decoration
\[
 \nu_T:V(T)\longrightarrow\{1,\ldots,r\},
 \qquad
 \ell_T:V(T)\longrightarrow\{1,\ldots,d_E\}.
\]
For $v\in V(T)$, write $w\succeq_Tv$ when the oriented path from $w$ to the root passes through $v$, and define the partial output frequency
\begin{equation}\label{eq:partial-output}
 \Lambda_v^T(\xi)
 :=
 \sum_{w\succeq_Tv}\xi_{\nu_T(w)}.
\end{equation}
In particular,
\[
 \Lambda_{\operatorname{root}(T)}^T(\xi)
 =
 \xi_1+\cdots+\xi_r.
\]
The quantities in \eqref{eq:partial-output} index actual time primitives in the FNO skeleton formula.

A \emph{Hall bracket tree} is a rooted binary tree whose leaves label input slots and whose internal vertices label Lie brackets in a fixed Hall basis \cite{Reutenauer1993}.  Hall trees are introduced only after tensor coefficients have been projected to the free Lie algebra.  No Fourier denominator is attached merely because a Hall bracket vertex is present.
\end{definition}

For each homogeneous tensor level, the mixed coefficient must first be defined at the vertices of the regularized FNO formula.  The polarization identity is then a consequence of this multilinear construction, not its definition.

\begin{definition}[Vertex-assignment FNO coefficient]\label{def:polarized-fno}
For $1\leq r\leq m$, put
\[
 P_{\gamma,r}(Y):=\pi_r\cR_\gamma(Y).
\]
Fix one regularized degree-$r$ summand of the FNO formula, specified by a decorated integration tree, a normal-order region, a dyadic multi-index, a coordinate decoration, and, when present, an admissible cut.  Its diagonal Fourier integrand contains one input differential at each vertex $v\in V(T)$.  For compactly supported paths $Y_1,\ldots,Y_r$, replace the input differential at $v$ by the corresponding differential of $Y_{a(v)}$, where
\[
 a:V(T)\longrightarrow\{1,\ldots,r\}
\]
ranges over all bijections, and average over the $r!$ assignments.  Perform this replacement in every skeleton and cut-boundary summand of the fixed scheme, retaining the same joint regularization domain and dyadic multi-index.  The resulting formal series is denoted by
\[
 \cR_{\gamma,r}^{\mathrm{va}}(Y_1,\ldots,Y_r).
\]
Because the fixed shell decomposition is one-sided in frequency, its shellwise realization is a priori $E_{\C}^{\otimes r}$-valued, even for real inputs.  The superscript ``va'' records its vertex-assignment construction.  Convergence, real multilinearity, symmetry, diagonal recovery, and return to the real tensor space are proved in \cref{prop:mixed-fno}; after that proposition we write $\cR_{\gamma,r}:=\cR_{\gamma,r}^{\mathrm{va}}$.
\end{definition}

\begin{definition}[Signed FNO output shells]\label{def:fno-output-shells}
For $\sigma\in C_c^\infty(\R)$ and an $E_{\C}$-valued tempered distribution $f$, define the Fourier multiplier $\operatorname{Op}(\sigma)$ componentwise by
\[
 \widehat{\operatorname{Op}(\sigma)f}(\xi)
 :=
 \sigma(\xi)\widehat f(\xi).
\]
For bounded or H\"older functions this agrees with convolution by the inverse Fourier transform of $\sigma$.  The same notation is used componentwise for $E_{\C}^{\otimes r}$-valued distributions.

Fix nonnegative real-valued functions
\[
 \varphi_K,\psi_K\in C_c^\infty(\R),
 \qquad K\in\Z,
\]
forming the signed smooth dyadic output-shell data of the chosen FNO regularization, with
\begin{equation}\label{eq:fno-shell-square-partition}
 \varphi_K=\psi_K^2,
 \qquad
 \sum_{K\in\Z}\varphi_K(\xi)=1
 \quad(\xi\in\R).
\end{equation}
The families have uniformly finite overlap and are uniformly bounded in every order-zero symbol seminorm: for every $q\in\N_0$,
\begin{equation}\label{eq:fno-shell-symbol-bounds}
 \sup_{K\in\Z}\sup_{\xi\in\R}
 (1+\abs\xi)^q
 \left(
   \abs{\partial_\xi^q\varphi_K(\xi)}
   +
   \abs{\partial_\xi^q\psi_K(\xi)}
 \right)
 <\infty.
\end{equation}
The central functions $\varphi_0,\psi_0$ are supported in a fixed compact neighborhood of the origin and are even:
\[
 \varphi_0(-\xi)=\varphi_0(\xi),
 \qquad
 \psi_0(-\xi)=\psi_0(\xi).
\]
For $K\neq0$ one has
\[
 \supp\varphi_K\cup\supp\psi_K
 \subset
 \bigl\{\xi:\operatorname{sgn}(K)\xi>0,\ c_\varphi2^{\abs K}\leq\abs\xi\leq C_\varphi2^{\abs K}\bigr\}.
\]
The positive and negative shells are related by conjugate reflection: for every $K>0$,
\[
 \varphi_{-K}(\xi)=\varphi_K(-\xi),
 \qquad
 \psi_{-K}(\xi)=\psi_K(-\xi).
\]
These identities expose the real structure of the signed-shell scheme.  Individual one-sided projections remain complex, while the complete reconstruction returns to the real tensor space.
Such data are obtained from a fixed signed dyadic cover by normalizing a smooth square sum and reflecting the positive shells; in this paper they are part of the fixed FNO scheme.  For $K\neq0$ these one-sided symbols do not preserve real-valuedness separately.  Accordingly all individual signed-shell projections below are formed in $E_{\C}$ or $E_{\C}^{\otimes r}$; real-valuedness is recovered only after the complete shell reconstruction and real polarization.  We write
\[
 \varphi_K(D):=\operatorname{Op}(\varphi_K),
 \qquad
 \psi_K(D):=\operatorname{Op}(\psi_K).
\]
The notation $\sigma(D)$ is functional calculus for the already fixed differential operator $D=-i\partial_x$; no expression of the form $D(\sigma)$ is used.

For an admissible cut $c$, let $T_0(T,c)$ denote $T$ when $c=\varnothing$ and the root component of the multiple-cut decomposition when $c\neq\varnothing$.  Define the time-dependent output frequency by
\begin{equation}\label{eq:fno-output-frequency}
 \Lambda_{\mathrm{out}}^{T,c}(\xi)
 :=
 \sum_{v\in V(T_0(T,c))}\xi_{\nu_T(v)}.
\end{equation}
Thus the output shell of a cut term is the shell of the root-component output, not the full-tree total output including the basepoint pruned components.
\end{definition}

\begin{lemma}[Uniform input-shell multiplier bound]\label{lem:uniform-input-shell-multiplier}
For every $0<\gamma<1$ there is a constant $C_\gamma$, depending only on $\gamma$ and the fixed shell data, such that
\begin{equation}\label{eq:uniform-input-shell-multiplier}
 \norm{\psi_K(D)Y}_{L^\infty(\R;E_{\C})}
 \leq
 C_\gamma 2^{-\abs K\gamma}\norm Y_{C^\gamma(\R;E)}
 \qquad(K\in\Z)
\end{equation}
for every compactly supported $Y\in C^\gamma(\R;E)$, viewed through the canonical real inclusion $E\hookrightarrow E_{\C}$.
\end{lemma}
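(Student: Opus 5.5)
We split into the central shell $K=0$ and the signed shells $K\neq0$, and in each case write $\psi_K(D)$ as convolution with the kernel $\Psi_K:=\mathcal F^{-1}\psi_K$ (up to the fixed Fourier-normalization constant). For $K=0$ the bound is immediate. Indeed, $\psi_0\in C_c^\infty(\R)$ is fixed, so $\Psi_0$ is Schwartz, and Young's inequality gives
\[
 \norm{\psi_0(D)Y}_\infty\le\norm{\Psi_0}_{L^1}\norm Y_\infty\le C\norm Y_{C^\gamma(\R;E)}.
\]
Since $2^{-\abs K\gamma}=1$ at $K=0$, this is \eqref{eq:uniform-input-shell-multiplier}.

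For $K\neq0$ we argue as in \cref{prop:dyadic-bounds} and \cref{lem:periodic-dyadic-bounds}, using cancellation and rescaling. Put $\rho_K:=2^{-\abs K}$ and define the rescaled symbol
\[
 \sigma_K(\zeta):=\psi_K(\zeta/\rho_K),
\]
which is supported in the fixed one-sided annulus $\{c_\varphi\le\abs\zeta\le C_\varphi\}$. Then
\[
 \Psi_K(x)=\rho_K^{-1}\check\sigma_K(x/\rho_K),
 \qquad
 \check\sigma_K:=\mathcal F^{-1}\sigma_K.
\]
The first key step is to show that $\{\sigma_K\}_{K\ne0}$ is bounded in every $C^q$ norm, uniformly in $K$. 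On the support of $\sigma_K$ we have $\abs\xi\simeq2^{\abs K}$, so
\[
 \abs{\partial_\zeta^q\sigma_K(\zeta)}
 =\rho_K^{-q}\abs{(\partial_\xi^q\psi_K)(\zeta/\rho_K)}
 \lesssim\rho_K^{-q}(1+\abs{\zeta/\rho_K})^{-q}
 \lesssim1
\]
by the order-zero symbol bounds \eqref{eq:fno-shell-symbol-bounds}. Because the supports also lie in one fixed compact set, integrating by parts twice in the inverse Fourier integral gives
\[
 \abs{\check\sigma_K(z)}\le C(1+\abs z)^{-2}
\]
uniformly in $K$. Hence
\[
 \sup_{K\ne0}\int_\R\abs{\check\sigma_K(z)}(1+\abs z)^\gamma\,\dd z<\infty .
\]

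The second key step uses cancellation. Since $0\notin\supp\sigma_K$, we have $\int\check\sigma_K=\sigma_K(0)=0$. The kernel is complex-valued here, because the shells are one-sided; this is harmless, since $Y$ is real and the output is simply read in $E_{\C}$. For compactly supported $Y$, the change of variables $y=\rho_Kz$ gives
\[
 \psi_K(D)Y(x)=\int_\R\check\sigma_K(z)\bigl(Y(x-\rho_Kz)-Y(x)\bigr)\,\dd z .
\]
We then bound the increment by the global H\"older estimate
\[
 \abs{Y(x-\rho_Kz)-Y(x)}\le\norm Y_{C^\gamma}\min\{(\rho_K\abs z)^\gamma,2\}\le 2\norm Y_{C^\gamma}\rho_K^\gamma(1+\abs z)^\gamma .
\]
Integrating this against $\abs{\check\sigma_K}$ and using the uniform weighted $L^1$ bound from the first step yields $C_\gamma2^{-\abs K\gamma}\norm Y_{C^\gamma}$, which proves the lemma for $K\neq0$.

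The main obstacle is the first key step: confirming that the stated order-zero bounds \eqref{eq:fno-shell-symbol-bounds}, together with the annular localization, really give rescaled symbols bounded in $C^2$ uniformly in $K$. The bound displayed there handles this cleanly. We also need the shells with negative $K$ to be treated the same way; this follows either from the conjugate-reflection identity $\psi_{-K}(\xi)=\psi_K(-\xi)$ or directly from the symmetric form of the support condition, which involves only $\abs K$.
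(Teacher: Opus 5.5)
Your proposal is correct and follows essentially the same route as the paper. The central shell is handled by a fixed $L^1$ kernel, and for $K\neq0$ you rescale to a fixed annulus, use the order-zero symbol bounds to get uniform kernel decay, use $\psi_K(0)=0$ to insert the increment $Y(x-y)-Y(x)$, and conclude from the H\"older increment bound. The only difference is cosmetic: you change variables before estimating, whereas the paper records the equivalent moment bound $\int_\R |x|^\gamma |L_K(x)|\,dx\le C_\gamma 2^{-|K|\gamma}$ in the original variables.
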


\begin{proof}
The central multiplier $\psi_0(D)$ has a fixed $L^1$ kernel, so the assertion for $K=0$ follows from the uniform norm of $Y$.  Let $K\neq0$ and put $R_K:=2^{\abs K}$.  After the signed rescaling $\xi=\operatorname{sgn}(K)R_K\zeta$, the symbols $\psi_K$ are supported in one fixed annulus and have uniformly bounded derivatives by \eqref{eq:fno-shell-symbol-bounds}.  Their inverse Fourier kernels $L_K$ therefore satisfy
\[
 \norm{L_K}_{L^1}\leq C,
 \qquad
 \int_\R\abs x^\gamma\abs{L_K(x)}\,dx
 \leq
 C_\gamma R_K^{-\gamma}.
\]
Since $\psi_K(0)=0$, one has $\int L_K=0$, and hence
\[
 \psi_K(D)Y(x)
 =
 \int_\R L_K(y)\bigl(Y(x-y)-Y(x)\bigr)\,dy.
\]
The $C^\gamma$ increment bound gives \eqref{eq:uniform-input-shell-multiplier}.
\end{proof}

\begin{lemma}[Parameter-uniform two-time Besov--H\"older reconstruction]\label{lem:two-time-besov-reconstruction}
Let $I\subset\R$ be compact, let $F$ be a finite-dimensional real or complex normed space, and let $0<\alpha<1$.  In the complex case all norm estimates are understood after restriction of scalars to $\R$.  For each $K\in\Z$ and $s\in I$, let
\[
 H_{K,s}:\R\longrightarrow F
\]
be bounded and continuous, jointly continuous in $(s,t)\in I\times\R$, and suppose that the Fourier support of $t\mapsto H_{K,s}(t)$ is contained in $\supp\varphi_K$.  Assume
\begin{equation}\label{eq:two-time-besov-Malpha}
 M_\alpha
 :=
 \sup_{K\in\Z}
 2^{\abs K\alpha}
 \sup_{s\in I}
 \norm{H_{K,s}}_{L^\infty(\R;F)}
 <\infty.
\end{equation}
Define
\[
 U_K(s,t):=H_{K,s}(t)-H_{K,s}(s),
 \qquad s\leq t,
\]
and
\[
 U:=\sum_{K\in\Z}U_K,
 \qquad
 U^{\leq N}:=\sum_{\abs K\leq N}U_K.
\]
Then the series defining $U$ converges uniformly on the time simplex, $U\in C_2^\alpha(I;F)$, and
\begin{equation}\label{eq:two-time-besov-critical-bound}
 \norm U_{C_2^\alpha(I;F)}
 \leq
 C_\alpha M_\alpha.
\end{equation}
For every $0<\theta<\alpha$ and $N\geq0$,
\begin{equation}\label{eq:two-time-besov-tail-bound}
 \norm{U-U^{\leq N}}_{C_2^\theta(I;F)}
 \leq
 C_{\alpha,\theta}
 2^{-N(\alpha-\theta)}M_\alpha.
\end{equation}
The natural shell truncations converge in $C_2^\theta$ for every $0<\theta<\alpha$; at the endpoint $C_2^\alpha$ we use the reconstructed bound.
\end{lemma}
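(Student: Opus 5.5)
The plan is to prove the critical bound \eqref{eq:two-time-besov-critical-bound} by a two-regime splitting of the shell sum at the scale of $\abs{t-s}$, as in \cref{lem:dyadic-summation}, and then to obtain \eqref{eq:two-time-besov-tail-bound} from the same splitting with the exponent loss $\alpha-\theta$.

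Fix $s<t$ in $I$ and put $h:=t-s$. Two estimates on a single shell are needed.

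\emph{Amplitude bound.} Directly from \eqref{eq:two-time-besov-Malpha},
\[
 \norm{U_K(s,t)}\le 2\norm{H_{K,s}}_\infty\le 2M_\alpha2^{-\abs K\alpha}.
\]

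\emph{Derivative bound.} The function $t\mapsto H_{K,s}(t)$ is bounded and has Fourier support in $\supp\varphi_K$. For $K\ne0$ this support lies in $\{\abs\xi\le C_\varphi2^{\abs K}\}$; for $K=0$ it lies in a fixed compact set. Bernstein's inequality then gives
\[
 \norm{\partial_tH_{K,s}}_\infty\le C2^{\abs K}\norm{H_{K,s}}_\infty .
\]
I would prove this by writing $H_{K,s}=\operatorname{Op}(\widetilde\sigma_K)H_{K,s}$, where $\widetilde\sigma_K$ is a fixed bump equal to one on the support and rescaled to the shell, and differentiating its $L^1$ kernel. This works for complex-valued functions and is uniform in $s$. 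The mean value theorem then yields
\[
 \norm{U_K(s,t)}\le CM_\alpha2^{-\abs K\alpha}2^{\abs K}h .
\]

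Combining the two bounds,
\[
 \norm{U_K(s,t)}\le CM_\alpha2^{-\abs K\alpha}\min\{1,2^{\abs K}h\}.
\]
Summing over $K\in\Z$, where each value of $\abs K$ occurs at most twice, and applying \cref{lem:dyadic-summation} with $\theta=\alpha<1$ gives $\sum_K\norm{U_K(s,t)}\le C_\alpha M_\alpha h^\alpha$ for $h\le1$. For $h>1$, which can only occur when $\abs I>1$, the amplitude bound alone gives a uniform bound, and since $I$ is compact this is absorbed into the constant as $h^\alpha\ge1$. The series converges absolutely and uniformly on the simplex, because the $K$-th term is at most $2M_\alpha2^{-\abs K\alpha}$. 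Each $U_K$ is continuous by the joint continuity hypothesis, so $U$ is continuous and $U_{t,t}=0$. This proves \eqref{eq:two-time-besov-critical-bound}.

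For the tail estimate I use the interpolation
\[
 \min\{1,2^{\abs K}h\}\le(2^{\abs K}h)^\theta,
\]
which bounds each $\abs K>N$ term by $CM_\alpha2^{-\abs K(\alpha-\theta)}h^\theta$. Summing the geometric series over $\abs K>N$ gives \eqref{eq:two-time-besov-tail-bound}. The final sentence of the statement is then immediate.

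The only step needing real care is the uniform Bernstein bound. The shells $\varphi_K$ are one-sided and complex, so the rescaled kernels must be controlled in $L^1$ uniformly in $K$ and $s$. This follows from the symbol bounds \eqref{eq:fno-shell-symbol-bounds} and the fixed annulus after rescaling, exactly as in the proof of \cref{lem:uniform-input-shell-multiplier}, applied to a fattened cutoff equal to one on $\supp\varphi_K$.
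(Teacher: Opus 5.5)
Your proposal is correct and follows essentially the same route as the paper. Both arguments combine the amplitude and Bernstein bounds into $\norm{U_K(s,t)}\le CM_\alpha2^{-\abs K\alpha}\min\{1,2^{\abs K}h\}$, sum at the dyadic scale of $h$, absorb $h\ge1$ through uniform convergence, and obtain the tail from $\min\{1,x\}\le x^\theta$; the only difference is that the paper performs the split at $2^{-J-1}<h\le2^{-J}$ by hand, whereas you cite \cref{lem:dyadic-summation}.
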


\begin{proof}
The estimate \eqref{eq:two-time-besov-Malpha} and $\alpha>0$ give uniform convergence because
\[
 \sum_{K\in\Z}\sup_{s\in I}\norm{H_{K,s}}_\infty
 \leq
 M_\alpha\sum_{K\in\Z}2^{-\abs K\alpha}<\infty.
\]
In particular $U$ is continuous and vanishes on the diagonal.  The shell support and Bernstein's inequality give, uniformly in $s$,
\[
 \norm{\partial_tH_{K,s}}_\infty
 \leq
 C2^{\abs K}\norm{H_{K,s}}_\infty.
\]
Hence, with $h=\abs{t-s}$,
\begin{equation}\label{eq:two-time-shell-increment}
 \norm{U_K(s,t)}
 \leq
 C M_\alpha 2^{-\abs K\alpha}
 \min\{1,2^{\abs K}h\}.
\end{equation}
For $0<h\leq1$, choose $J\geq0$ with $2^{-J-1}<h\leq2^{-J}$.  Summing \eqref{eq:two-time-shell-increment} over $\abs K\leq J$ and $\abs K>J$ gives
\[
 \sum_{\abs K\leq J}\norm{U_K(s,t)}
 \leq
 C M_\alpha h\sum_{n=0}^J2^{n(1-\alpha)}
 \leq
 C M_\alpha h^\alpha,
\]
and
\[
 \sum_{\abs K>J}\norm{U_K(s,t)}
 \leq
 C M_\alpha\sum_{n>J}2^{-n\alpha}
 \leq
 C M_\alpha h^\alpha.
\]
For $h\geq1$, the same conclusion follows directly from uniform convergence after enlarging the constant.  This proves \eqref{eq:two-time-besov-critical-bound}.

If $0<\theta<\alpha$, then $\min\{1,x\}\leq x^\theta$ for $x\geq0$.  Therefore
\[
 \sum_{\abs K>N}\norm{U_K(s,t)}
 \leq
 C M_\alpha h^\theta
 \sum_{n>N}2^{-n(\alpha-\theta)}
 \leq
 C_{\alpha,\theta}
 2^{-N(\alpha-\theta)}M_\alpha h^\theta,
\]
which is \eqref{eq:two-time-besov-tail-bound}.
\end{proof}

The main internal estimate is multilinear rather than diagonal.  Its proof must retain the original joint dyadic summation for cut terms; independently summing the cut components would lose the constraints that make the FNO estimate convergent.

\begin{lemma}[Mixed regularized tree estimate]\label{lem:mixed-regularized-tree}
Fix a decorated integration tree $T$ of degree $r$, a normal-order region $\mathfrak n$, an admissible cut $c$ (with $c=\varnothing$ allowed), and a vertex-assignment bijection
\[
 a:V(T)\xrightarrow{\sim}\{1,\ldots,r\}.
\]
Refine $\mathfrak n$, if necessary, into finitely many maximum-pattern regions and keep the same notation for a refined region.  Then, for every $v\in V(T)$, there is a fixed descendant
\[
 w_{\max}^{\mathfrak n}(v)\succeq_Tv
\]
which carries maximal dyadic frequency in the subtree rooted at $v$ for every multi-index in that region.  This choice depends on $\mathfrak n$ but not on the individual multi-index $k$.

Let $Z_{T,\mathfrak n,c}^{\mathrm{reg}}$ be the common regularized dyadic domain of the fixed FNO tree or cut formula, and define
\begin{equation}\label{eq:fno-shell-domain}
 Z_{T,\mathfrak n,c}^{\mathrm{reg}}(K)
 :=
 \left\{
 k\in Z_{T,\mathfrak n,c}^{\mathrm{reg}}:
 \supp\varphi_K
 \cap
 \Lambda_{\mathrm{out}}^{T,c}
 \left(
   \prod_{v\in V(T)}\supp\varphi_{k_v}
 \right)
 \neq\varnothing
 \right\}.
\end{equation}
Denote the mixed dyadic summand obtained by placing $Y_{a(v)}$ at the vertex $v$ by
\[
 \cR_{T,\mathfrak n,c,k}^{a}(Y_1,\ldots,Y_r).
\]
All signed-shell summands, primitives, and increments in this lemma are taken in $E_{\C}^{\otimes r}$.  Individual one-sided shells are complex-valued, and real-valuedness is recovered after complete reconstruction and polarization.
For the uncut term write
\[
 \cR_{T,\mathfrak n,\varnothing,k}^{a}(Y_1,\ldots,Y_r)_{s,t}
 =G_{T,\mathfrak n,k}^{a}(t)-G_{T,\mathfrak n,k}^{a}(s),
\]
and for a cut term use the source boundary representation
\[
 \cR_{T,\mathfrak n,c,k}^{a}(Y_1,\ldots,Y_r)_{s,t}
 =A_{T,\mathfrak n,c,k,s}^{a}(t)-A_{T,\mathfrak n,c,k,s}^{a}(s).
\]
For compactly supported inputs, the regularized skeleton primitives are bounded on the full endpoint line; in cut terms only the basepoint is restricted to $I$.  Define the output-shell seminorm by
\[
 \norm{\cR_{T,\mathfrak n,c,k}^{a}}_{K,I}
 :=
 \begin{cases}
  \norm{\varphi_K(D)G_{T,\mathfrak n,k}^{a}}_{L^\infty(\R)},&c=\varnothing,\\[1mm]
  \displaystyle\sup_{s\in I}
  \norm{\varphi_K(D)A_{T,\mathfrak n,c,k,s}^{a}}_{L^\infty(\R)},&c\neq\varnothing.
 \end{cases}
\]
Then, for every compact interval $I\subset\R$,
\begin{equation}\label{eq:mixed-tree-shell-bound}
 \sup_{K\in\Z}
 2^{\abs K r\gamma}
 \sum_{k\in Z_{T,\mathfrak n,c}^{\mathrm{reg}}(K)}
 \norm{\cR_{T,\mathfrak n,c,k}^{a}(Y_1,\ldots,Y_r)}_{K,I}
 \leq
 C_{I,T,c,\gamma}
 \prod_{b=1}^r\norm{Y_b}_{C^\gamma(\R)}.
\end{equation}
Since $1\leq r\leq m=\lfloor1/\gamma\rfloor$ and $1/\gamma\notin\N$, one has $0<r\gamma<1$.  Estimate \eqref{eq:mixed-tree-shell-bound} implies that the following \emph{projected} shell primitives converge absolutely in their displayed uniform norms:
\begin{equation}\label{eq:projected-shell-primitive}
 H_{K,s}^{a}(t)
 :=
 \begin{cases}
 \displaystyle
 \sum_{k\in Z_{T,\mathfrak n,\varnothing}^{\mathrm{reg}}(K)}
 \varphi_K(D)G_{T,\mathfrak n,k}^{a}(t),&c=\varnothing,\\[4mm]
 \displaystyle
 \sum_{k\in Z_{T,\mathfrak n,c}^{\mathrm{reg}}(K)}
 \varphi_K(D)A_{T,\mathfrak n,c,k,s}^{a}(t),&c\neq\varnothing.
 \end{cases}
\end{equation}
In the uncut case the right side is independent of $s$.  Define the projected output-shell increment by
\begin{equation}\label{eq:projected-shell-increment}
 U_K^{a}(s,t)
 :=
 H_{K,s}^{a}(t)-H_{K,s}^{a}(s).
\end{equation}
Thus the primitive, rather than its based difference, has Fourier support in $\supp\varphi_K$.  The family satisfies the hypotheses of \cref{lem:two-time-besov-reconstruction} with $\alpha=r\gamma$.  Its reconstruction
\[
 U^{a}:=\sum_{K\in\Z}U_K^{a}
 \in C_2^{r\gamma}(I;E_{\C}^{\otimes r})
\]
satisfies
\begin{equation}\label{eq:mixed-tree-C2-bound}
 \norm {U^{a}}_{C_2^{r\gamma}(I)}
 \leq
 C_{I,T,c,\gamma}
 \prod_{b=1}^r\norm{Y_b}_{C^\gamma(\R)}.
\end{equation}
If $N\geq0$ and
\[
 U^{a,\leq N}:=\sum_{\abs K\leq N}U_K^{a},
\]
then, for every $0<\theta<r\gamma$,
\begin{equation}\label{eq:mixed-tree-lower-tail}
 \norm{U^{a}-U^{a,\leq N}}_{C_2^\theta(I)}
 \leq
 C_{I,T,c,\gamma,\theta}
 2^{-N(r\gamma-\theta)}
 \prod_{b=1}^r\norm{Y_b}_{C^\gamma(\R)}.
\end{equation}
The partition identity \eqref{eq:fno-shell-square-partition} shows, after the absolutely summable interchange justified in the proof, that the projected shell sum reconstructs every original FNO summand exactly through the partition of unity; a summand may meet finitely many neighboring shells, but their projected contributions add to that summand.  The restriction defining $Z_{T,\mathfrak n,c}^{\mathrm{reg}}(K)$ merely omits terms on which $\varphi_K(D)$ is zero.  The output-shell truncations converge in every $C_2^\theta$ with $0<\theta<r\gamma$; the endpoint $C_2^{r\gamma}$ is controlled through reconstruction.  All constants are independent of the assignment $a$.
\end{lemma}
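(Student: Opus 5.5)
The plan has three stages: reduce to the diagonal Unterberger estimates, build the shell primitives, and reconstruct via \cref{lem:two-time-besov-reconstruction}.

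\textbf{Step 1: existence of the maximal descendant.} On a refined maximum-pattern region, the ordering of the dyadic indices $k_w$ within each subtree is fixed. Hence the argmax over $\{w\succeq_T v\}$ is a single vertex, independent of $k$. Refining a normal-order region into finitely many such regions is harmless, since there are finitely many orderings of a finite vertex set.

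\textbf{Step 2: multilinear transfer of the tree bounds.} In Unterberger's vertexwise estimate, each vertex $v$ contributes an input factor $|\widehat{Y}(\xi_v)|$ localized to shell $k_v$. That factor is bounded by $C2^{-|k_v|\gamma}\norm Y_{C^\gamma}$, which is exactly \cref{lem:uniform-input-shell-multiplier}. Each internal primitive contributes a denominator $|\Lambda_v^T(\xi)|^{-1}$. On the regularized domain this is comparable to $2^{-|k_{w_{\max}(v)}|}$; the comparability is where normal ordering and regularization enter.

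These bounds are products over vertices and do not use that the inputs coincide. So placing $Y_{a(v)}$ at $v$ simply replaces $\prod_v\norm{Y}$ by $\prod_b\norm{Y_b}$, with the same combinatorial constants. The constants are therefore independent of $a$.

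Summing over $k\in Z^{\mathrm{reg}}(K)$ is the step I expect to be the main obstacle. The domain constraint forces the output frequency into shell $K$. The resulting geometric sums of the form $\sum 2^{-\gamma\sum|k_v|+\sum_v|k_{w_{\max}(v)}|}$ then close to $2^{-|K|r\gamma}$ because $r\gamma<1$; this is the standard power counting.

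For cut terms the summation must be done jointly over the full multi-index, keeping the cross-component constraints of the regularized domain, as the source does. The supremum over the basepoint $s\in I$ is handled by a uniform bound on the pruned components, which costs a constant depending on $I$. The shell seminorm is then controlled using $\norm{\varphi_K(D)}_{L^\infty\to L^\infty}\le C$ uniformly in $K$.

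\textbf{Step 3: reconstruction and tails.} From \eqref{eq:mixed-tree-shell-bound}, the series \eqref{eq:projected-shell-primitive} converges absolutely in $L^\infty$. Each term has Fourier support in $\supp\varphi_K$, and so does the limit. Joint continuity in $(s,t)$ follows from uniform convergence of continuous terms, so $M_{r\gamma}\le C\prod_b\norm{Y_b}$.

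Applying \cref{lem:two-time-besov-reconstruction} with $\alpha=r\gamma\in(0,1)$ gives \eqref{eq:mixed-tree-C2-bound} and \eqref{eq:mixed-tree-lower-tail}.

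Finally, absolute summability in $(K,k)$ lets us interchange the sums. Using $\sum_K\varphi_K=1$ and the fact that $\varphi_K(D)$ vanishes on summands whose output support misses the shell, $\sum_K U_K^a$ reproduces $\sum_k\cR^a_{T,\mathfrak n,c,k}$. Each $k$ meets only boundedly many shells, by the finite-overlap property.
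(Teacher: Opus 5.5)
Your proposal follows the paper's proof. The only input-dependent factor in Unterberger's vertexwise bound is the shell-projected input at each vertex, which \cref{lem:uniform-input-shell-multiplier} controls. The dyadic sums over the regularized domain, taken jointly for cut terms, are the source's power counting. The argument then closes with \cref{lem:two-time-besov-reconstruction}, the absolutely summable interchange and the partition of unity.

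One correction to your displayed weight: the input differential contributes $2^{\abs{k_v}}$ and each primitive contributes $2^{-\abs{k_{w_{\max}(v)}}}$, so the summand weight is $\prod_v 2^{\abs{k_v}(1-\gamma)-\abs{k_{w_{\max}(v)}}}$. The expression $2^{-\gamma\sum_v\abs{k_v}+\sum_v\abs{k_{w_{\max}(v)}}}$ as you wrote it diverges.
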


\begin{proof}
We record the dyadic weights and constraints from the source proof, because only the input attached to a vertex is changed here.  On the refined normal-order region, the regularization domain gives the partial-output comparison of \cite[Lemma~4.4]{UnterbergerFNO2010} with the fixed maximum-pattern descendant $w_{\max}^{\mathfrak n}(v)$.  With the notation of \cite[equations~(4.13)--(4.19)]{UnterbergerFNO2010}, the multiplier weight is
\[
 \Theta_{1,T}^{\mathfrak n}(k)
 =
 \prod_{v\in V(T)}
 2^{\abs{k_v}-\abs{k_{w_{\max}^{\mathfrak n}(v)}}}.
\]
All endpoint dependence of the Fourier integrands occurs through unit-modulus exponential factors.  Consequently the ensuing absolute-integral estimates are uniform on the full free-endpoint line; in a cut term the displayed constant is also uniform for the basepoint $s\in I$.  The uniform shell multiplier estimate \eqref{eq:uniform-input-shell-multiplier}, applied at the vertex $v$, gives
\begin{equation}\label{eq:mixed-vertex-block}
 \norm{\psi_{k_v}(D)
        Y_{a(v)}^{\ell_T(v)}}_\infty
 \leq
 C_\gamma 2^{-\abs{k_v}\gamma}
 \norm{Y_{a(v)}}_{C^\gamma}.
\end{equation}
Thus the path-dependent part factors as
\begin{equation}\label{eq:mixed-explicit-weight}
 \Theta_{1,T}^{\mathfrak n}(k)
 \prod_{v\in V(T)}
 \norm{\psi_{k_v}(D)
        Y_{a(v)}^{\ell_T(v)}}_\infty
 \leq
 C_{T,\gamma}
 \Omega_{T,\gamma}^{\mathfrak n}(k)
 \prod_{b=1}^r\norm{Y_b}_{C^\gamma},
\end{equation}
where
\[
 \Omega_{T,\gamma}^{\mathfrak n}(k)
 :=
 \prod_{v\in V(T)}
 2^{\abs{k_v}(1-\gamma)-\abs{k_{w_{\max}^{\mathfrak n}(v)}}}.
\]
Neither $\Theta_{1,T}^{\mathfrak n}$ nor $\Omega_{T,\gamma}^{\mathfrak n}$ depends on the identity of the path assigned to a vertex.

For $c=\varnothing$, the normal-order and output-shell constraints are exactly:
\begin{enumerate}[label=(U\arabic*),leftmargin=2.8em]
\item $k\in Z_{T,\mathfrak n}^{\mathrm{reg}}$, so each primitive output is comparable from below with the fixed maximal descendant frequency in its subtree;
\item the Minkowski sum of the vertex shells meets the shell of $\Lambda_{\mathrm{out}}^{T,\varnothing}=\Lambda_{\operatorname{root}(T)}^T$;
\item if $v_{\max}^{\mathfrak n}$ is the fixed maximal vertex of the region, then $\left|\abs{k_{v_{\max}^{\mathfrak n}}}-\abs K\right|\leq C_T$.  For noncentral shells this follows from $2^{\abs K}\asymp_{\varphi}\abs{\Lambda_{\mathrm{out}}^{T,\varnothing}(\xi)}\asymp_T\abs{\xi_{\nu_T(v_{\max}^{\mathfrak n})}}\asymp_{\varphi}2^{\abs{k_{v_{\max}^{\mathfrak n}}}}$; if a central shell occurs, compact support and the same partial-output comparison leave only finitely many indices.  No signed-index comparison is used; all dyadic weights below depend only on absolute indices;
\item all remaining indices satisfy the tree-order inequalities used in the reduction of \cite[equations~(4.19)--(4.21)]{UnterbergerFNO2010}.
\end{enumerate}
For each branch ending immediately below an uppermost node $n(w)$, use the convention of \cite[equations~(4.20)--(4.21)]{UnterbergerFNO2010} and write
\[
 \operatorname{Br}(w\twoheadrightarrow n(w))
 =
 (v_0=w,v_1,\ldots,v_{p_w-1}),
\]
where $v_{p_w-1}$ is joined directly to $n(w)$,
\[
 n(w)\notin\operatorname{Br}(w\twoheadrightarrow n(w)),
 \qquad
 p_w=\abs{\operatorname{Br}(w\twoheadrightarrow n(w))}.
\]
Thus the shared node index $k_{n(w)}$ is not summed inside any branch product; it is retained for the subsequent reduced-node summation and is handled exactly once.  The branch summation in the source proof gives
\[
 2^{-\abs{k_w}\gamma}
 \prod_{v\in\operatorname{Br}(w\twoheadrightarrow n(w))\setminus\{w\}}
 \sum_{\abs{k_v}\leq\abs{k_w}}
 2^{\abs{k_v}(1-\gamma)-\abs{k_w}}
 \leq
 C_{T,\gamma}2^{-\abs{k_w}\gamma p_w}.
\]
Summing the branches at an uppermost node and replacing them by one weighted vertex gives
\[
 \sum_{k\in Z_{T,\mathfrak n}^{\mathrm{reg}}(K)}
 \Omega_{T,\gamma}^{\mathfrak n}(k)
 \leq
 C_{T,\gamma}2^{-\abs K r\gamma};
\]
this is the reduced-tree induction in \cite[equations~(4.20)--(4.21)]{UnterbergerFNO2010}.  Combining it with \eqref{eq:mixed-explicit-weight} proves \eqref{eq:mixed-tree-shell-bound} in the uncut case.

Now let $c$ be nontrivial and let $T_0=T_0(T,c)$ be its root component.  The multiple-cut formula \cite[equation~(4.22)]{UnterbergerFNO2010} writes the boundary summand as one root-component skeleton increment times basepoint skeleton values of the pruned components.  All factors retain one common multi-index $k$ and one joint domain $Z_{T,\mathfrak n,c}^{\mathrm{reg}}(K)$.  That domain imposes:
\begin{enumerate}[label=(C\arabic*),leftmargin=2.8em]
\item the partial-output lower bound on every root-component primitive;
\item intersection of the root-component output $\Lambda_{\mathrm{out}}^{T,c}$ with the output shell $K$ as in \eqref{eq:fno-shell-domain};
\item $\abs{k_w}\leq\abs{k_{w_{\max}^{\mathfrak n,c}}}$ for every root-component vertex $w$, where $w_{\max}^{\mathfrak n,c}$ is the fixed maximal root-component vertex of the refined region;
\item $\abs{k_w}\leq\abs{k_v}$ whenever the root of a pruned component is attached above $w$.
\end{enumerate}
For each pruned component $L$, summing its internal indices with its root index fixed yields
\[
 C_{L,\gamma}2^{-\abs{k_{\rho_L}}\gamma\abs{V(L)}},
\]
by \cite[equations~(4.23)--(4.24)]{UnterbergerFNO2010}.  The remaining root-component and attachment indices are still summed jointly.  Use the source notation
\[
 R(w):=\{v:\ v\text{ is the root of a pruned component attached directly above }w\}.
\]
Split
\[
 R(w)=R(w)_{>}\sqcup R(w)_{<}
\]
according to the fixed normal order relative to $w_{\max}^{\mathfrak n,c}$, put
\[
 R_T^{>}:=\bigcup_wR(w)_{>},
 \qquad
 R_T^{<}:=\bigcup_wR(w)_{<},
\]
and let $L_v$ be the pruned component rooted at $v$.  Conditions (C1)--(C2), the root-component partial-output comparison, and shell localization give $\left|\abs{k_{w_{\max}^{\mathfrak n,c}}}-\abs K\right|\leq C_T$: for noncentral shells, $2^{\abs K}\asymp_{\varphi}\abs{\Lambda_{\mathrm{out}}^{T,c}(\xi)}\asymp_T\abs{\xi_{\nu_T(w_{\max}^{\mathfrak n,c})}}\asymp_{\varphi}2^{\abs{k_{w_{\max}^{\mathfrak n,c}}}}$, while central cases involve only a fixed finite set of indices.  Thus only magnitude scales are compared; no sign alignment is required, and the finite sign choices are absorbed into the constant.  After the preceding internal sums, the reduced dyadic weight is
\begin{equation}\label{eq:mixed-cut-reduced-weight}
 \Omega_{T,c,\gamma}^{\mathrm{red}}(K,k_{\mathrm{root}})
 :=
 \left(
  \prod_{w\in V(T_0)}
  2^{-\abs{k_w}\gamma
     \left(1+\sum_{v\in R(w)_{<}}\abs{V(L_v)}\right)}
 \right)
 2^{-\abs K\gamma\sum_{v\in R_T^{>}}\abs{V(L_v)}}.
\end{equation}
The final factor is the contribution of \cite[equation~(4.27)]{UnterbergerFNO2010}; the extra weights in the product over $w$ are those produced by \cite[equation~(4.28)]{UnterbergerFNO2010}.  The same reduced-tree summation as in the uncut case gives
\[
 \sum_{k_{\mathrm{root}}\ \mathrm{subject\ to}\ (\mathrm{C1})--(\mathrm{C4})}
 \Omega_{T,c,\gamma}^{\mathrm{red}}(K,k_{\mathrm{root}})
 \leq
 C_{T,c,\gamma}2^{-\abs K\gamma\abs{V(T)}},
\]
which is \cite[equation~(4.29)]{UnterbergerFNO2010}.  Replacing one common path by the assigned paths changes only the vertex estimate \eqref{eq:mixed-vertex-block}; it changes none of the dyadic weights or constraints.  Restoring the already summed pruned-component indices proves \eqref{eq:mixed-tree-shell-bound} for cut terms.

Estimate \eqref{eq:mixed-tree-shell-bound} proves absolute convergence of the projected sums in \eqref{eq:projected-shell-primitive}, uniformly in the displayed endpoint and basepoint variables.  Since every summand is smooth, the resulting family $H_{K,s}^{a}(t)$ is jointly continuous.  The same estimate gives
\[
 \sup_{K\in\Z}
 2^{\abs K r\gamma}
 \sup_{s\in I}
 \norm{H_{K,s}^{a}}_{L^\infty(\R;E_{\C}^{\otimes r})}
 \leq
 C
 \prod_{b=1}^r\norm{Y_b}_{C^\gamma}.
\]
Apply \cref{lem:two-time-besov-reconstruction} with $F=E_{\C}^{\otimes r}$ and $\alpha=r\gamma$ to obtain \eqref{eq:mixed-tree-C2-bound} and \eqref{eq:mixed-tree-lower-tail}.

The interchange of the shell sum and the internal dyadic sum is justified by the explicit absolutely summable estimate
\begin{equation}\label{eq:mixed-double-sum}
\begin{aligned}
&\sum_{K\in\Z}
 \sum_{k\in Z_{T,\mathfrak n,c}^{\mathrm{reg}}(K)}
 \begin{cases}
  \norm{\varphi_K(D)G_{T,\mathfrak n,k}^{a}}_{L^\infty(\R;E_{\C}^{\otimes r})},&c=\varnothing,\\[1mm]
  \displaystyle\sup_{s\in I}
  \norm{\varphi_K(D)A_{T,\mathfrak n,c,k,s}^{a}}_{L^\infty(\R;E_{\C}^{\otimes r})},&c\neq\varnothing
 \end{cases}\\
&\hspace{20mm}\leq
 C\sum_{K\in\Z}2^{-\abs K r\gamma}
 \prod_{b=1}^r\norm{Y_b}_{C^\gamma}
 <\infty.
\end{aligned}
\end{equation}
For a fixed internal multi-index $k$, the Fourier support of its root-component output is contained in the image of the product of the vertex supports under $\Lambda_{\mathrm{out}}^{T,c}$.  Hence the terms with $k\notin Z_{T,\mathfrak n,c}^{\mathrm{reg}}(K)$ have zero $K$-th projection by \eqref{eq:fno-shell-domain}.  Hence Tonelli's theorem for the absolutely summable projected family and \eqref{eq:fno-shell-square-partition} give, in distributions,
\begin{align*}
 \sum_{K\in\Z}H_{K,s}^{a}
 &=
 \sum_{K\in\Z}
 \sum_{k\in Z_{T,\mathfrak n,c}^{\mathrm{reg}}(K)}
 \varphi_K(D)
 \begin{cases}
  G_{T,\mathfrak n,k}^{a},&c=\varnothing,\\
  A_{T,\mathfrak n,c,k,s}^{a},&c\neq\varnothing
 \end{cases}\\
 &=
 \sum_{k\in Z_{T,\mathfrak n,c}^{\mathrm{reg}}}
 \left(\sum_{K\in\Z}\varphi_K(D)\right)
 \begin{cases}
  G_{T,\mathfrak n,k}^{a},&c=\varnothing,\\
  A_{T,\mathfrak n,c,k,s}^{a},&c\neq\varnothing
 \end{cases}\\
 &=
 \sum_{k\in Z_{T,\mathfrak n,c}^{\mathrm{reg}}}
 \begin{cases}
  G_{T,\mathfrak n,k}^{a},&c=\varnothing,\\
  A_{T,\mathfrak n,c,k,s}^{a},&c\neq\varnothing.
 \end{cases}
\end{align*}
Thus the projected shells reconstruct every original FNO summand exactly through the partition of unity, even though a summand may contribute to finitely many neighboring shells.  The reconstructed increment is the original joint FNO tree or cut series.  At $\theta=r\gamma$, the critical shell supremum is merely bounded and the truncation tail need not tend to zero.  The constants are assignment-independent because $a$ is bijective and the product of the vertex norms is always $\prod_{b=1}^r\norm{Y_b}_{C^\gamma}$.
\end{proof}

\begin{proposition}[Mixed-input continuity of the fixed FNO scheme]\label{prop:mixed-fno}
Fix the regularization domains used in \cref{thm:unterberger}.  Let $I\subset\R$ be compact and let $1\leq r\leq m$.  The shellwise vertex-assignment construction in \cref{def:polarized-fno} defines a continuous symmetric $r$-linear map
\[
 \cR_{\gamma,r}:(C_c^\gamma(\R;E))^r
 \longrightarrow C_2^{r\gamma}(I;E^{\otimes r})
\]
satisfying
\begin{equation}\label{eq:mixed-fno-estimate}
 \norm{\cR_{\gamma,r}(Y_1,\ldots,Y_r)}_{C_2^{r\gamma}(I;E^{\otimes r})}
 \leq
 C_{I,r,\gamma}
 \prod_{a=1}^r\norm{Y_a}_{C^\gamma(\R)}.
\end{equation}
Here $\cR_{\gamma,r}^{\leq N}$ denotes the finite sum, over all degree-$r$ FNO combinatorial data and vertex assignments, of the projected increments \eqref{eq:projected-shell-increment} with $\abs K\leq N$.  For every $0<\theta<r\gamma$, individual one-sided shell projections make this truncation naturally an element of
\[
 C_2^\theta(I;E_{\C}^{\otimes r}).
\]
The complete coefficient $\cR_{\gamma,r}(Y_1,\ldots,Y_r)$ is embedded from the real subspace $E^{\otimes r}$ into $E_{\C}^{\otimes r}$ when the difference below is formed.  In this complexified target, the output-shell truncations satisfy
\begin{equation}\label{eq:mixed-fno-lower-truncation}
 \norm{\cR_{\gamma,r}(Y_1,\ldots,Y_r)
       -\cR_{\gamma,r}^{\leq N}(Y_1,\ldots,Y_r)}_{C_2^\theta(I;E_{\C}^{\otimes r})}
 \leq
 C_{I,r,\gamma,\theta}
 2^{-N(r\gamma-\theta)}
 \prod_{a=1}^r\norm{Y_a}_{C^\gamma(\R)}.
\end{equation}
For general $C^\gamma$ inputs these truncations converge in each strict lower topology $C_2^\theta$, $0<\theta<r\gamma$.

Its diagonal is the original FNO coefficient:
\begin{equation}\label{eq:fno-diagonal-recovery}
 \cR_{\gamma,r}(Y,\ldots,Y)=P_{\gamma,r}(Y).
\end{equation}
Consequently $P_{\gamma,r}$ is a continuous homogeneous polynomial of degree $r$, and the signed polarization identity
\begin{equation}\label{eq:fno-polarization}
 \cR_{\gamma,r}(Y_1,\ldots,Y_r)
 =
 \frac1{2^rr!}
 \sum_{\epsilon\in\{-1,1\}^r}
 \left(\prod_{a=1}^r\epsilon_a\right)
 P_{\gamma,r}\!\left(\sum_{a=1}^r\epsilon_aY_a\right)
\end{equation}
holds.
\end{proposition}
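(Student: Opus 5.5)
We assemble the proposition from \cref{lem:mixed-regularized-tree} by finite summation over the combinatorial data. For fixed degree $r$, the fixed FNO scheme has finitely many decorated integration trees $T$, refined normal-order regions $\mathfrak n$, admissible cuts $c$ and coordinate decorations. We define
\[
 \cR_{\gamma,r}(Y_1,\ldots,Y_r):=\frac1{r!}\sum_{(T,\mathfrak n,c)}\sum_{a}U^{a}_{T,\mathfrak n,c},
\]
where $U^a_{T,\mathfrak n,c}$ is the shell reconstruction from the lemma, tensored with the basis decoration $e_{\ell_T(\nu_T^{-1}(1))}\otimes\cdots$. The truncation $\cR^{\leq N}_{\gamma,r}$ is the same finite sum of the $U^{a,\leq N}$. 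Then \eqref{eq:mixed-fno-estimate} and \eqref{eq:mixed-fno-lower-truncation} follow from \eqref{eq:mixed-tree-C2-bound} and \eqref{eq:mixed-tree-lower-tail} by the triangle inequality, at first with target $E_{\C}^{\otimes r}$. The constants are uniform because the lemma's constants are assignment-independent and the index set is finite.

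Next we check the algebraic properties. Each summand $\cR^a_{T,\mathfrak n,c,k}$ is real $r$-linear in $(Y_1,\ldots,Y_r)$, because every input enters exactly once through the linear operation $\psi_{k_v}(D)Y^{\ell_T(v)}_{a(v)}$ inside a fixed Fourier integral. Absolute convergence from \eqref{eq:mixed-double-sum} passes multilinearity to the sum. Joint continuity then follows from multilinearity and the bound \eqref{eq:mixed-fno-estimate}. Symmetry follows because averaging over all bijections $a$ is invariant under permuting the $Y_b$: precomposing $a$ with a permutation permutes the index set of assignments.

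For diagonal recovery \eqref{eq:fno-diagonal-recovery}, setting all $Y_b=Y$ makes every assignment give the same summand. Hence the average is the original summand. By the reconstruction statement at the end of \cref{lem:mixed-regularized-tree}, the projected shells sum exactly to the original joint tree or cut series. Summed over the combinatorial data, this gives $P_{\gamma,r}(Y)$ as defined in \cref{thm:unterberger}. Real-valuedness comes for free: $P_{\gamma,r}(Y)\in E^{\otimes r}$ because $\cR_\gamma$ is a real rough path. The polarization formula \eqref{eq:fno-polarization} is then the standard identity for a symmetric $r$-linear map, expanding $\cR_{\gamma,r}(\sum\epsilon_aY_a,\ldots)$ by multilinearity so that only terms using each index once survive the signed average. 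Its right-hand side is real, so $\cR_{\gamma,r}$ takes values in $E^{\otimes r}$.

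The main obstacle is justifying that summand-level linearity is legitimate, i.e. that the regularization domains $Z^{\mathrm{reg}}$ and cut decompositions do not depend on the inputs. This is true because they are defined purely through dyadic multi-indices and frequency supports. A second subtlety is that the real part only emerges after the full sum. I would handle this through polarization, as above, rather than by tracking conjugate-reflection pairings of the signed shells directly.
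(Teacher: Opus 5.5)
Your proposal is correct and follows essentially the same route as the paper. It sums the bounds of \cref{lem:mixed-regularized-tree} over the finitely many combinatorial data, passes real multilinearity to the limit through absolute (uniform) convergence, and symmetrizes by averaging over assignments. It then recovers the diagonal from the lemma's exact shell reconstruction and deduces the real target $E^{\otimes r}$ by polarization against the real diagonal $P_{\gamma,r}$. The one cosmetic difference is in diagonal recovery: the paper phrases it as shellwise agreement plus uniqueness of the continuous representative, which carries the same content as the reconstruction identity you cite.
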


\begin{proof}
Apply \cref{lem:mixed-regularized-tree} to every decorated integration tree, refined normal-order region, coordinate decoration, admissible cut, and vertex assignment occurring at degree $r$.  There are finitely many such combinatorial data at fixed degree.  The critical shell supremum bounds sum to \eqref{eq:mixed-fno-estimate}, and the lower-exponent tail bounds sum to \eqref{eq:mixed-fno-lower-truncation}.

For each fixed shell $K$, the projected primitive in \eqref{eq:projected-shell-primitive}, and hence the increment in \eqref{eq:projected-shell-increment}, is real $r$-linear in the assigned real inputs, with values a priori in $E_{\C}^{\otimes r}$.  Moreover,
\[
 \sum_{K\in\Z}\norm{U_K^{a}}_{L^\infty(\{(s,t)\in I^2:s\leq t\};E_{\C}^{\otimes r})}
 \leq
 C\sum_{K\in\Z}2^{-\abs K r\gamma}
 \prod_{a=1}^r\norm{Y_a}_{C^\gamma}<\infty.
\]
Hence the projected shell reconstruction converges in the uniform topology, which passes real multilinearity to the limit.  Averaging over the vertex bijections makes the resulting map symmetric.  Its continuity into the critical complexified space $C_2^{r\gamma}(I;E_{\C}^{\otimes r})$ follows directly from the shell supremum estimate, not from convergence of finite shell sums in that norm.

If $Y_1=\cdots=Y_r=Y$ is real, every output shell agrees with the corresponding output shell of the diagonal FNO coefficient.  Thus diagonal recovery holds shellwise, hence in distributions; both sides have continuous $C_2^{r\gamma}$ reconstructions, so uniqueness of the continuous representative yields \eqref{eq:fno-diagonal-recovery}.  The imported FNO lift is real for real $Y$, and therefore
\[
 P_{\gamma,r}(Y)\in C_2^{r\gamma}(I;E^{\otimes r}).
\]
The real polarization identity for the symmetric real $r$-linear map just constructed gives
\[
 \cR_{\gamma,r}(Y_1,\ldots,Y_r)
 =
 \frac1{2^rr!}
 \sum_{\epsilon\in\{-1,1\}^r}
 \left(\prod_{a=1}^r\epsilon_a\right)
 P_{\gamma,r}\!\left(\sum_{a=1}^r\epsilon_aY_a\right).
\]
Every term on the right belongs to the real subspace $E^{\otimes r}$.  Hence the complexified shell reconstruction takes values in $C_2^{r\gamma}(I;E^{\otimes r})$ for all real inputs, and the displayed identity is exactly \eqref{eq:fno-polarization}.  This proves both the stated real target and the polarization formula.
\end{proof}

The multilinear estimate yields local Lipschitz continuity of every homogeneous coefficient by a telescoping identity.

\begin{corollary}[Local Lipschitz continuity]\label{cor:fno-local-lipschitz}
Let $Y,Z\in C_c^\gamma(\R;E)$ satisfy
\[
 \norm Y_{C^\gamma}+\norm Z_{C^\gamma}\leq R.
\]
Then, for $1\leq r\leq m$ and compact $I\subset\R$,
\begin{equation}\label{eq:fno-local-lipschitz}
 \norm{\pi_r(\cR_\gamma(Y)-\cR_\gamma(Z))}_{C_2^{r\gamma}(I)}
 \leq
 C_{I,r,\gamma,R}\norm{Y-Z}_{C^\gamma}.
\end{equation}
\end{corollary}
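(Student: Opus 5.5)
The plan is to reduce \eqref{eq:fno-local-lipschitz} to the multilinear estimate \eqref{eq:mixed-fno-estimate} by a telescoping identity. By \cref{prop:mixed-fno}, $\pi_r\cR_\gamma(Y)=P_{\gamma,r}(Y)=\cR_{\gamma,r}(Y,\ldots,Y)$ and likewise for $Z$; here \eqref{eq:fno-diagonal-recovery} identifies the diagonal of the symmetric $r$-linear map with the $r$th level of the imported lift. Both sides therefore become values of one continuous real $r$-linear map on $(C_c^\gamma(\R;E))^r$, and no further property of the FNO scheme is needed.

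Next I will write the standard telescoping decomposition
\[
 \cR_{\gamma,r}(Y,\ldots,Y)-\cR_{\gamma,r}(Z,\ldots,Z)
 =\sum_{i=1}^r\cR_{\gamma,r}(\underbrace{Z,\ldots,Z}_{i-1},Y-Z,\underbrace{Y,\ldots,Y}_{r-i}).
\]
This follows from multilinearity alone: consecutive terms differ by replacing one slot $Y$ by $Z$. Since $Y-Z\in C_c^\gamma(\R;E)$, each summand is admissible input for \eqref{eq:mixed-fno-estimate}. That estimate bounds the $i$th summand in $C_2^{r\gamma}(I;E^{\otimes r})$ by
\[
 C_{I,r,\gamma}\norm Z_{C^\gamma}^{i-1}\norm{Y-Z}_{C^\gamma}\norm Y_{C^\gamma}^{r-i}\le C_{I,r,\gamma}R^{r-1}\norm{Y-Z}_{C^\gamma}.
\]
Summing over $i$ gives \eqref{eq:fno-local-lipschitz} with $C_{I,r,\gamma,R}=rR^{r-1}C_{I,r,\gamma}$. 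For $r=1$ the constant is independent of $R$, consistent with $\pi_1$ being the increment map.

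There is no real analytic obstacle, since all the work sits in \cref{prop:mixed-fno}. The one point needing care is that the norm in \eqref{eq:fno-local-lipschitz} is the critical $C_2^{r\gamma}$ norm with real target. So I must invoke the critical continuity statement \eqref{eq:mixed-fno-estimate}, which the proposition proves from the shell supremum, and not the lower-exponent truncation bound \eqref{eq:mixed-fno-lower-truncation}. I must also use the fact, proved via polarization, that each mixed term lies in the real subspace $E^{\otimes r}$, so the telescoped identity holds in $C_2^{r\gamma}(I;E^{\otimes r})$.
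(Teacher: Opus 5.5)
Your proposal is correct and matches the paper's proof: diagonal recovery \eqref{eq:fno-diagonal-recovery} identifies $\pi_r\cR_\gamma(Y)$ with $\cR_{\gamma,r}(Y,\ldots,Y)$, and the same telescoping sum is bounded termwise by \eqref{eq:mixed-fno-estimate}. Your explicit constant $rR^{r-1}C_{I,r,\gamma}$ and your remark that the critical estimate, not the truncation bound, must be used are both accurate.
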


\begin{proof}
By diagonalization,
\[
 P_{\gamma,r}(Y)=\cR_{\gamma,r}(Y,\ldots,Y).
\]
The difference of two diagonal values is
\begin{align*}
 P_{\gamma,r}(Y)-P_{\gamma,r}(Z)
 =
 \sum_{q=1}^r
 \cR_{\gamma,r}
 (Z,\ldots,Z,\underset{q\text{th slot}}{Y-Z},Y,\ldots,Y).
\end{align*}
Applying \eqref{eq:mixed-fno-estimate} to every term gives \eqref{eq:fno-local-lipschitz}.
\end{proof}

We now apply the fixed FNO scheme to the localized dyadic cutoffs.  The exponent $\gamma$ is chosen below $\alpha$ so that the scale tail converges in the norm used by the FNO theorem, and above $\beta$ so that the resulting lift has the required roughness depth.

\begin{definition}[FNO lift of a dyadic scale path]\label{def:fno-scale-lift}
Assume
\[
 \frac1{m+1}<\beta<\alpha\leq\frac1m,
\]
and choose
\begin{equation}\label{eq:gamma-choice}
 \beta<\gamma<\alpha,
 \qquad
 1/\gamma\notin\N.
\end{equation}
For $x\in\cS^\alpha(E)$, let $X_N$ and $X$ be the paths in \cref{prop:scale-tail}.  Define
\begin{equation}\label{eq:fno-cutoff-lifts}
 \mathbf X^N
 :=
 \Res_{[0,2\pi]}\cR_\gamma(LX_N),
 \qquad
 \mathbf X
 :=
 \Res_{[0,2\pi]}\cR_\gamma(LX).
\end{equation}
The first levels are $X_N(t)-X_N(s)$ and $X(t)-X(s)$ on the cut interval.
\end{definition}

The next theorem states both the ultraviolet rate and the distinction between the layerwise and homogeneous metrics.  The latter loss is not analytic: it is produced by the $r$th root in \eqref{eq:homogeneous-metric}.

\begin{theorem}[Quantitative rough enhancement]\label{thm:rough-enhancement}
Let $x\in\cS^\alpha(E)$ and assume \eqref{eq:gamma-choice}.  The functionals in \eqref{eq:fno-cutoff-lifts} are step-$m$ multiplicative group-like lifts.  On every ball
\[
 \norm x_{\cS^\alpha}\leq R,
\]
they satisfy
\begin{equation}\label{eq:layerwise-cutoff-rate}
 d_{\beta,m}^{\mathrm{lay}}(\mathbf X^N,\mathbf X)
 \leq
 C_R2^{-(\alpha-\gamma)N},
\end{equation}
\begin{equation}\label{eq:homogeneous-cutoff-rate}
 d_{\beta,m}^{\mathrm{hom}}(\mathbf X^N,\mathbf X)
 \leq
 C_R2^{-(\alpha-\gamma)N/m}.
\end{equation}
The limiting lift $\mathbf X$ is strong geometric at exponent $\beta$.

If $x,y$ belong to the same radius-$R$ ball and
\[
 \delta:=\norm{x-y}_{\cS^\alpha}\leq1,
\]
then
\begin{equation}\label{eq:layerwise-stability}
 d_{\beta,m}^{\mathrm{lay}}(\mathbf X[x],\mathbf X[y])
 \leq
 C_R\delta,
\end{equation}
and
\begin{equation}\label{eq:homogeneous-stability}
 d_{\beta,m}^{\mathrm{hom}}(\mathbf X[x],\mathbf X[y])
 \leq
 C_R\delta^{1/m}.
\end{equation}
Thus the lift map is locally Lipschitz for the layerwise metric and locally $1/m$-H\"older for the homogeneous metric.
\end{theorem}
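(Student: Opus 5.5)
The plan is to reduce every claim to the multilinear FNO estimate of \cref{prop:mixed-fno}, its telescoped form \cref{cor:fno-local-lipschitz}, and the first-level tail bound of \cref{prop:scale-tail}, together with the localization bound \cref{lem:localization-bound}.

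\emph{Lift structure and strong geometricity.} By \cref{prop:scale-tail}, $X$ and $X_N$ lie in $C^\gamma(\T;E)$ since $\gamma<\alpha$. By \cref{lem:localization-bound}, $LX$ and $LX_N$ are compactly supported $\gamma$-H\"older paths on $\R$, with $\norm{LX_N}_{C^\gamma}\le C\norm{x}_{\cS^\alpha}\le CR$ uniformly in $N$. Since $1/\gamma\notin\N$ and $1/(m+1)<\beta<\gamma<\alpha\le1/m$, we have $\lfloor1/\gamma\rfloor=m$. \Cref{thm:unterberger} therefore gives step-$m$ multiplicative group-like $\gamma$-H\"older functionals, and these are strong geometric at exponent $\beta$. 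Restricting to $[0,2\pi]$ preserves multiplicativity, group-likeness and strong geometricity. On $[0,2\pi]$ the first levels are the increments of $LX=X-X(0)$, which are the increments of $X$.

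\emph{Cutoff rates.} Fix $1\le r\le m$. Apply \cref{cor:fno-local-lipschitz} with $Y=LX$, $Z=LX_N$ and $I=[0,2\pi]$. Both inputs have $C^\gamma$ norm at most $CR$, so
\[
\norm{\pi_r(\mathbf X-\mathbf X^N)}_{C_2^{r\gamma}}\le C_R\norm{L(X-X_N)}_{C^\gamma}\le C_R\norm{X-X_N}_{C^\gamma(\T)}\le C_R2^{-(\alpha-\gamma)N},
\]
where the middle step uses \cref{lem:localization-bound} and the last uses \eqref{eq:first-level-tail} with $\beta$ replaced by $\gamma$. On the bounded interval, $|t-s|^{r\gamma}\le C|t-s|^{r\beta}$, so the $C_2^{r\gamma}$ bound passes to the $C_2^{r\beta}$ seminorm. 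Taking the maximum over $r$ gives \eqref{eq:layerwise-cutoff-rate}.

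For the homogeneous metric, the same estimate gives
\[
\norm{\pi_r(\mathbf X_{s,t}-\mathbf X^N_{s,t})}^{1/r}\le (C_R2^{-(\alpha-\gamma)N})^{1/r}|t-s|^{\gamma}.
\]
Since the base is at most $C_R$, and $1/r\ge 1/m$, the worst case over $r\le m$ is the exponent $1/m$ (after enlarging the constant). This gives \eqref{eq:homogeneous-cutoff-rate}.

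\emph{Stability.} The argument is the same, now with $Y=LX[x]$ and $Z=LX[y]$. The input difference is $L(X[x]-X[y])$, and $X[x]-X[y]=X[x-y]$ by linearity of synthesis. Summing \cref{lem:single-block-increment} and using \cref{lem:dyadic-summation} with $\theta=\gamma<\alpha$ gives $\norm{X[x-y]}_{C^\gamma}\le C\delta$. The layerwise bound \eqref{eq:layerwise-stability} follows as before. Taking $r$-th roots of $C_R\delta$ with $\delta\le1$ gives $\delta^{1/r}\le\delta^{1/m}$, which is \eqref{eq:homogeneous-stability}.

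The main obstacle is already absorbed into \cref{prop:mixed-fno}: it supplies the uniform multilinear bound in the critical space $C_2^{r\gamma}$ for the fixed scheme. Given that, the points needing care are checking that $m$ from the FNO theorem matches the roughness depth, and that the $C_2^{r\gamma}$ estimates dominate the $r\beta$-weighted metrics on the compact interval.
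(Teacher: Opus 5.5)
Your proposal is correct and follows the paper's proof essentially step for step. Both arguments bound the localized first-level tail in $C^\gamma$ (you cite \eqref{eq:first-level-tail} at exponent $\gamma$ directly, where the paper combines \eqref{eq:scale-tower-tail} with the synthesis estimate), then apply \cref{cor:fno-local-lipschitz} on a fixed $C^\gamma$-ball, compare $|t-s|^{r\gamma}\le C|t-s|^{r\beta}$, take $r$-th roots for the homogeneous metric, and import multiplicativity, group-likeness and strong geometricity at exponent $\beta$ from \cref{thm:unterberger}.
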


\begin{proof}
By \eqref{eq:scale-tower-tail} with exponent $\gamma$ and by the first-level synthesis estimate,
\[
 \norm{X-X_N}_{C^\gamma(\T)}
 \leq
 C2^{-(\alpha-\gamma)N}\norm x_{\cS^\alpha}.
\]
The localization bound \eqref{eq:localization-bound} gives
\begin{equation}\label{eq:localized-cutoff-tail}
 \norm{LX-LX_N}_{C^\gamma(\R)}
 \leq
 C2^{-(\alpha-\gamma)N}\norm x_{\cS^\alpha}.
\end{equation}
The localized paths remain in one bounded subset of $C_c^\gamma(\R;E)$.  Applying \eqref{eq:fno-local-lipschitz} to \eqref{eq:localized-cutoff-tail} gives, for $1\leq r\leq m$,
\[
 \norm{\pi_r(\mathbf X^N_{s,t}-\mathbf X_{s,t})}
 \leq
 C_R2^{-(\alpha-\gamma)N}\abs{t-s}^{r\gamma}.
\]
Since $0\leq t-s\leq2\pi$ and $\gamma>\beta$,
\[
 \abs{t-s}^{r\gamma}
 \leq
 (2\pi)^{r(\gamma-\beta)}\abs{t-s}^{r\beta}.
\]
Taking the maximum in \eqref{eq:layerwise-metric} proves \eqref{eq:layerwise-cutoff-rate}.

At level $r$, taking the $r$th root gives
\[
 \frac{\norm{\pi_r(\mathbf X^N_{s,t}-\mathbf X_{s,t})}^{1/r}}{\abs{t-s}^{\beta}}
 \leq
 C_R2^{-(\alpha-\gamma)N/r}.
\]
For sufficiently large $N$, the slowest exponent occurs at $r=m$; enlarging the constant for finitely many smaller cutoffs proves \eqref{eq:homogeneous-cutoff-rate}.

For two inputs $x,y$, the synthesis and localization maps satisfy
\[
 \norm{LX[x]-LX[y]}_{C^\gamma(\R)}
 \leq
 C\norm{x-y}_{\cS^\alpha}=C\delta.
\]
The same application of \cref{cor:fno-local-lipschitz} gives \eqref{eq:layerwise-stability}.  At level $r$, the homogeneous bound is $C_R\delta^{1/r}$; because $0\leq\delta\leq1$,
\[
 \max_{1\leq r\leq m}\delta^{1/r}=\delta^{1/m},
\]
which proves \eqref{eq:homogeneous-stability}.  Strong geometricity at the lower exponent $\beta$ is the approximation conclusion in \cref{thm:unterberger}.
\end{proof}

\begin{corollary}[Dependence only on the synthesized path]\label{cor:fno-coordinate-independence}
Suppose two dyadic coordinate systems synthesize the same first-level path $X$ and the same based localization $L$ and regularization $\cR_\gamma$ are used.  Then their limiting lifts coincide.
\end{corollary}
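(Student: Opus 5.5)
The corollary follows by unwinding \cref{def:fno-scale-lift}: the limiting lift is $\mathbf X=\Res_{[0,2\pi]}\cR_\gamma(LX)$. This is a composition of three maps. The first is the synthesized path $X=\sum_{j\geq-1}x_j$, the second is the fixed based localization $L$ of \cref{def:based-localization}, and the third is the fixed regularization $\cR_\gamma$ of \cref{thm:unterberger}. None of these steps refers to the individual blocks $x_j$ beyond their sum. So the plan is to check that, for two towers $x,x'$ with $\sum_jx_j=\sum_jx'_j=X$, the inputs fed into $\cR_\gamma$ agree as elements of $C_c^\gamma(\R;E)$. Then $\cR_\gamma(LX)$ is literally the same functional in both cases, and the restrictions to $[0,2\pi]$ coincide.

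The only point needing care is that the limiting lift is well defined as a function of $X$ alone, and not merely as a limit of the cutoff lifts $\mathbf X^N$, which do depend on the tower. First, by \cref{prop:scale-tail}, the series converges in $C^\gamma(\T)$ for either tower, so both towers give the same element $X\in C^\gamma(\T;E)$. Second, by \cref{lem:localization-bound}, $LX$ is a well-defined compactly supported $\gamma$-H\"older path that depends only on $X$ through $\widetilde X-X(0)$. Third, $\cR_\gamma$ is a map on paths: the FNO formulas of \cref{thm:unterberger} and the shell reconstruction of \cref{prop:mixed-fno} act on $LX$ through its Fourier transform, using the fixed shells $\psi_K(D)$. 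By \cref{prop:mixed-fno}, with diagonal recovery \eqref{eq:fno-diagonal-recovery}, each level $\pi_r\cR_\gamma(LX)=\cR_{\gamma,r}(LX,\ldots,LX)$ is determined by the single input $LX$.

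If one instead insists on reading $\mathbf X$ as the $N\to\infty$ limit of $\mathbf X^N$, \eqref{eq:layerwise-cutoff-rate} already identifies that limit with $\Res\cR_\gamma(LX)$ for each tower separately, so the two limits agree. I expect no real obstacle. The only subtle step is to confirm that no dependence on the dyadic decomposition enters through the choice of $\gamma$. This is handled because $\gamma$ is a fixed parameter of the scheme, chosen via \eqref{eq:gamma-choice} independently of the tower.
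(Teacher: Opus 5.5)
Your proposal is correct and is essentially the paper's own one-line argument: by \cref{def:fno-scale-lift}, both limiting lifts are by definition the path-level object $\Res_{[0,2\pi]}\cR_\gamma(LX)$, so they coincide. The extra checks you add, namely convergence of the synthesis and the cutoff-limit reading via \eqref{eq:layerwise-cutoff-rate}, are harmless but not needed, because the limiting lift is defined directly as a function of $X$.
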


\begin{proof}
Both constructions equal the path-level object
\[
 \Res_{[0,2\pi]}\cR_\gamma(LX).
\]
\end{proof}

The cutoff $\mathbf X^N$ is the selected FNO lift of $X_N$.  Strong geometricity follows from the bounded-variation approximation in \cref{thm:unterberger}.
 \section{Smooth cutoffs and typed Magnus--BCH reconstruction}\label{sec:typed-reconstruction}

The limiting object $\mathbf X$ is a rough path, so no classical derivative $G^{-1}\dot G$ is assigned to it.  The local-to-interval structure is instead derived at each smooth ultraviolet cutoff $N$.  Before differentiating the FNO series, we prove normal convergence after arbitrary endpoint differentiation.  This is the analytic step that legitimizes every one-time density used later in the section.

\subsection{Two Hopf structures and one group path}

FNO uses rooted integration trees to construct regularized primitives, but Chen multiplicativity is expressed through words.  These are different Hopf-algebra structures and their convolutions must not be identified.

\begin{definition}[Rooted-tree and shuffle convolutions]\label{def:two-convolutions}
Let $H_{\mathrm{rt}}(E)$ be the commutative Hopf algebra generated by decorated rooted integration trees \cite{ConnesKreimer1998,Foissy2002}.  Its product is disjoint union and its coproduct is
\[
 \Delta_{\mathrm{rt}}T
 =
 T\otimes1+1\otimes T
 +
 \sum_{c\in\operatorname{Adm}'(T)}P_c(T)\otimes R_c(T),
\]
where $c$ ranges over nontrivial admissible cuts.  For linear maps $f,g:H_{\mathrm{rt}}(E)\to A$ into a commutative algebra $A$, define
\[
 f\star_{\mathrm{rt}}g
 :=
 m_A(f\otimes g)\Delta_{\mathrm{rt}}.
\]
This convolution organizes rooted-tree cuts.

Let $H_{\mathrm{Sh}}(E)$ be the shuffle Hopf algebra on words in a fixed basis $(e_i)$ of $E$.  Its product is the shuffle product and its coproduct is deconcatenation:
\[
 \Delta_{\mathrm{Sh}}(e_{i_1}\cdots e_{i_r})
 =
 \sum_{q=0}^r
 (e_{i_1}\cdots e_{i_q})
 \otimes
 (e_{i_{q+1}}\cdots e_{i_r}).
\]
With antipode $S_{\mathrm{Sh}}$, define
\[
 f\star_{\mathrm{Sh}}g
 :=
 m_A(f\otimes g)\Delta_{\mathrm{Sh}}.
\]
This convolution composes endpoint word characters.  The operations $\star_{\mathrm{rt}}$ and $\star_{\mathrm{Sh}}$ have different domains and different coproducts.
\end{definition}

Unterberger writes endpoint words in the outer-to-inner integration order, whereas we use chronological tensor words.  Define the convention-change involution
\[
 \operatorname{rev}(e_{i_1}\cdots e_{i_r})
 :=e_{i_r}\cdots e_{i_1}.
\]
Word reversal is a coalgebra anti-automorphism for deconcatenation:
\[
 \Delta_{\mathrm{Sh}}\operatorname{rev}
 =
 (\operatorname{rev}\otimes\operatorname{rev})
 \tau\Delta_{\mathrm{Sh}}.
\]
Let $\bar S_{\mathrm{Sh}}$ denote the antipode acting on words written in the outer-to-inner convention.  Transporting the antipode through the convention change gives the independent definition
\begin{equation}\label{eq:barred-shuffle-antipode}
 \bar S_{\mathrm{Sh}}
 =
 \operatorname{rev}^{-1}S_{\mathrm{Sh}}\operatorname{rev}.
\end{equation}
In the shuffle Hopf algebra the antipode is signed word reversal, so \eqref{eq:barred-shuffle-antipode} is compatible with the standard explicit antipode formula.  Unterberger's endpoint formula is
\[
 \bar\chi_t^Y\star_{\mathrm{Sh}}
 (\bar\chi_s^Y\circ \bar S_{\mathrm{Sh}})
\]
in the outer-to-inner convention; see \cite[Definition~3.7, equations~(3.34)--(3.39)]{UnterbergerFNO2010}.  Define the chronological endpoint character by
\[
 \chi_t^Y:=\bar\chi_t^Y\circ\operatorname{rev}.
\]
For maps $f,g$ on the shuffle Hopf algebra, the anti-coalgebra identity implies
\[
 (f\star_{\mathrm{Sh}}g)\circ\operatorname{rev}
 =
 (g\circ\operatorname{rev})\star_{\mathrm{Sh}}(f\circ\operatorname{rev}).
\]
Using \eqref{eq:barred-shuffle-antipode}, the outer-to-inner endpoint formula therefore becomes
\begin{equation}\label{eq:shuffle-increment}
 [\cR_\gamma(Y)_{s,t}]_{i_1\cdots i_r}
 =
 \Bigl(
 (\chi_s^Y\circ S_{\mathrm{Sh}})\star_{\mathrm{Sh}}\chi_t^Y
 \Bigr)(e_{i_1}\cdots e_{i_r}).
\end{equation}
Under the chronological character--tensor correspondence, convolution represents concatenation in the displayed order.  Hence \eqref{eq:shuffle-increment} is the character of $G(s)^{-1}G(t)$, consistently with the interval convention used throughout the paper.  The tree and cut summands are assembled before this single multiplicative tensor path is formed; compare Chen's composition identity \cite{Chen1957}.  No second interval subtraction is introduced below.

\subsection{Termwise differentiation}

At a smooth cutoff, each input Fourier transform is Schwartz.  Primitive denominators still require a uniform infrared comparison with descendant frequencies; this comparison is supplied by the fixed FNO regularization domain.

\begin{lemma}[Normal convergence after endpoint differentiation]\label{lem:termwise-differentiation}
Let $Y\in C_c^\infty(\R;E)$.  Fix:
\[
 r\geq1,
 \qquad
 \text{a decorated integration tree }T\text{ of degree }r,
 \qquad
 \text{an admissible cut }c,
\]
\[
 a,b\in\Nzero,
 \qquad
 K\subset\R^2\text{ compact}.
\]
Let $Z_T^{\mathrm{reg}}$ be the dyadic multi-indices retained by the fixed FNO regularization domain, and let
\[
 \cR_{T,k}^{c}(Y)_{s,t}
\]
denote the corresponding skeleton or cut-boundary summand.  There exists
\[
 w_{T,c,a,b,Y,K}\in\ell^1(Z_T^{\mathrm{reg}})
\]
such that
\begin{equation}\label{eq:termwise-majorant}
 \sup_{(s,t)\in K}
 \abs{\partial_s^a\partial_t^b\cR_{T,k}^{c}(Y)_{s,t}}
 \leq
 w_{T,c,a,b,Y,K}(k)
 \qquad(k\in Z_T^{\mathrm{reg}}).
\end{equation}
Consequently,
\begin{equation}\label{eq:termwise-normal-convergence}
 \sum_{k\in Z_T^{\mathrm{reg}}}
 \sup_{(s,t)\in K}
 \abs{\partial_s^a\partial_t^b\cR_{T,k}^{c}(Y)_{s,t}}
 <\infty.
\end{equation}
Every fixed-degree coefficient of $\cR_\gamma(Y)_{s,t}$ is therefore $C^\infty$ in $(s,t)$, and every endpoint derivative is obtained by differentiating the regularized FNO series term by term.
\end{lemma}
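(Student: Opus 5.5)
The plan is to rerun the vertexwise dyadic estimate of \cref{lem:mixed-regularized-tree}, now using the Schwartz decay of the smooth input to pay for endpoint derivatives. The first step is to write each skeleton or cut-boundary summand $\cR_{T,k}^{c}(Y)_{s,t}$ as an oscillatory Fourier integral over $\xi\in\R^r$. Its integrand is the product of the shell-localized input transforms $\psi_{k_v}(\xi_{\nu_T(v)})^2\widehat{Y^{\ell_T(v)}}(\xi_{\nu_T(v)})$, the primitive denominators $\prod_v(i\Lambda_v^T(\xi))^{-1}$, and unit-modulus exponentials in $s$ and $t$ whose phases are linear forms in $\xi$. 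A derivative $\partial_s^a\partial_t^b$ falls only on these exponentials. It produces a polynomial factor bounded by $C(1+\abs\xi)^{a+b}$, with $C$ depending on the compact set $K$ only through the finitely many phases and not through $(s,t)$, since the exponentials have modulus one.

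Next I bound the integrand using the regularization domain. On $Z_T^{\mathrm{reg}}$ the partial-output comparison of \cite[Lemma~4.4]{UnterbergerFNO2010} gives
\[
 \abs{\Lambda_v^T(\xi)}\gtrsim 2^{\abs{k_{w_{\max}(v)}}}
\]
on the support, with at most finitely many central (infrared) shells. Hence each denominator is bounded by a fixed constant, since dyadic frequencies are bounded below outside the finite central set, where compact support handles them. The polynomial factor is at most $C\prod_v(1+2^{\abs{k_v}})^{a+b}$. For Schwartz $Y$ one has
\[
 \sup_{\abs\xi\simeq2^{\abs{k}}}\abs{\widehat Y(\xi)}\le C_M2^{-M\abs k}
\]
for every $M$, and the $\xi_v$-integral over a shell contributes volume $\lesssim2^{\abs{k_v}}$. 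For cut terms the same bound applies factorwise to the root-component increment and the basepoint-evaluated pruned primitives, all sharing the common index $k$.

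Combining these, I take
\[
 w(k):=C\prod_{v\in V(T)}2^{-(M-a-b-1)\abs{k_v}},
\]
choosing $M$ large (finitely many refined normal-order regions, so a max over regions is harmless). This $w$ lies in $\ell^1(\Z^r)\supseteq\ell^1(Z_T^{\mathrm{reg}})$, which proves \eqref{eq:termwise-majorant} and \eqref{eq:termwise-normal-convergence}. Smoothness and termwise differentiation then follow from the standard Weierstrass theorem, since derivatives of every order converge uniformly on compacts. At fixed degree only finitely many trees and cuts occur, and \eqref{eq:shuffle-increment} assembles the coefficients by finite products.

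The main obstacle I expect is the infrared side: the regularization domain must keep every primitive denominator away from zero uniformly in $k$, including in cut terms where only the root component is output-localized. I would handle this by quoting the same constraints (U1) and (C1)--(C4) used in \cref{lem:mixed-regularized-tree}. Any residual finitely many low-frequency indices would be treated as bounded integrals of Schwartz functions against compactly supported, smoothly regularized symbols.
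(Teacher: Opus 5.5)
\paragraph{Gap in the infrared step.} Your ultraviolet bookkeeping is sound. Schwartz decay makes a product of vertexwise weights summable over all of $\Z^r$, so you do not need the joint domain constraints for that part. The infrared step, however, has a genuine gap.

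You claim each denominator $1/\Lambda_v^T$ is bounded by a fixed constant. This is false on the central shells. The central shell function equals one at the origin and is supported in a neighborhood of it. On the regularization domain, \cite[Lemma~4.4]{UnterbergerFNO2010} only gives $\abs{\Lambda_v^T(\xi)}>\tfrac12\abs{\xi_{\nu_T(w_{\max}(v))}}$, and the maximal descendant frequency itself ranges down to $0$. So $1/\abs{\Lambda_v^T}$ blows up like $1/\abs\xi$ near the origin.

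Compact support does not help. Already in degree one, $\int_{\abs\xi\le1}\abs{\widehat Y(\xi)}\,\abs\xi^{-1}\,\dd\xi=\infty$ whenever $\widehat Y(0)\neq0$. Having only finitely many central indices does not rescue the argument either, because the bound you propose for each of them is infinite. Relatedly, the regularization domain does not keep denominators away from zero; it cannot do so near $\xi=0$. It only makes them comparable to the maximal descendant frequency, and the resulting symbols are homogeneous, not smooth, at the origin.

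\paragraph{The missing factor.} Your integrand omits a numerator. Each input differential contributes $i\xi_{\nu_T(v)}\widehat Y^{\ell_T(v)}(\xi_{\nu_T(v)})$, not just $\widehat Y^{\ell_T(v)}$. Each vertex carries exactly one differential and exactly one primitive, and this remains true in the root and pruned components of a cut term.

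Pair them. The normal-order relation $\abs{\xi_{\nu_T(v)}}\le C_T\abs{\xi_{\nu_T(w_{\max}(v))}}$, combined with the partial-output comparison, gives $\abs{\xi_{\nu_T(v)}/\Lambda_v^T(\xi)}\le 2C_T$ uniformly on each maximum-pattern region, including near the origin. This is the paper's infrared cancellation.

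With it, the differentiated integrand is bounded by $C(1+\abs\xi)^{a+b}\prod_v\abs{\widehat Y^{\ell_T(v)}(\xi_{\nu_T(v)})}$ on every dyadic box. Your shellwise Schwartz estimate then finishes the proof for skeleton and cut terms alike. The paper instead uses finite overlap of the dyadic boxes against a single $L^1(\R^r)$ majorant; either works at that point.
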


\begin{proof}
Write $\widehat Y^\ell$ for a coordinate Fourier transform.  Every input differential contributes a factor
\[
 i\xi_{\nu_T(v)}\widehat Y^{\ell_T(v)}(\xi_{\nu_T(v)}).
\]
Partition the fixed regularization domain into its finitely many normal-order maximum-pattern regions.  On one such region $\mathfrak n$, choose for every integration vertex $v$ a fixed descendant
\[
 w_{\max}^{\mathfrak n}(v)\succeq_Tv
\]
which carries maximal dyadic frequency in the subtree rooted at $v$ for every multi-index in that region.  On this region, the partial-output comparison in \cite[Lemma~4.4, equation (4.1)]{UnterbergerFNO2010} gives
\begin{equation}\label{eq:partial-output-comparison}
 \frac12
 \abs{\xi_{\nu_T(w_{\max}^{\mathfrak n}(v))}}
 <
 \abs{\Lambda_v^T(\xi)}
 \leq
 \abs{V(T)}
 \abs{\xi_{\nu_T(w_{\max}^{\mathfrak n}(v))}}.
\end{equation}
The normal-order relation also gives
\begin{equation}\label{eq:vertex-max-comparison}
 \abs{\xi_{\nu_T(v)}}
 \leq
 C_T\abs{\xi_{\nu_T(w_{\max}^{\mathfrak n}(v))}}.
\end{equation}
Combining \eqref{eq:partial-output-comparison} and \eqref{eq:vertex-max-comparison}, every primitive factor satisfies the infrared estimate
\begin{equation}\label{eq:infrared-cancellation}
 \left|
 \frac{i\xi_{\nu_T(v)}
       \widehat Y^{\ell_T(v)}(\xi_{\nu_T(v)})}
      {\Lambda_v^T(\xi)}
 \right|
 \leq
 C_T
 \abs{\widehat Y^{\ell_T(v)}(\xi_{\nu_T(v)})}.
\end{equation}
Thus no partial-output denominator produces an uncontrolled singularity at the origin.

Applying $\partial_s^a\partial_t^b$ inserts a polynomial
\[
 P_{a,b}\bigl((\xi_{\nu_T(v)})_v,(\Lambda_v^T(\xi))_v\bigr)
\]
of degree at most $a+b$ in input, total-output, and partial-output frequencies.  Near the origin, \eqref{eq:infrared-cancellation} controls every denominator.  At infinity, every $\widehat Y^\ell$ is Schwartz.  Choose an integer $M$ larger than
\[
 a+b+r+\abs{V(T)}.
\]
Then
\[
 \abs{\widehat Y^\ell(\xi)}
 \leq
 C_{Y,M}(1+\abs\xi)^{-M},
\]
and the endpoint-differentiated Fourier integrand has an $L^1$ majorant uniform on $K$.

Now restrict the integral to the dyadic box indexed by $k\in Z_T^{\mathrm{reg}}$.  Because the Littlewood--Paley partition has finite overlap, the integrals of the preceding $L^1$ majorant over these boxes form a summable sequence.  Define that sequence to be
\[
 w_{T,c,a,b,Y,K}(k).
\]
This proves \eqref{eq:termwise-majorant} and \eqref{eq:termwise-normal-convergence} for an uncut skeleton on one maximum-pattern region.

For a cut term, use the multiple-cut boundary expansion of \cite[equation~(4.22)]{UnterbergerFNO2010}.  Each product has the form
\begin{equation}\label{eq:cut-boundary-product}
 [\delta\cR\mathrm{SkI}_{T_0,k|_{V_0}}]_{s,t}
 \prod_{\ell=1}^q
 [\cR\mathrm{SkI}_{T_\ell,k|_{V_\ell}}]_s,
 \qquad
 V(T)=V_0\sqcup\cdots\sqcup V_q,
\end{equation}
where $T_0$ is the root component and $T_1,\ldots,T_q$ are the successive pruned components.  The restrictions $k|_{V_\ell}$ are not summed independently: they remain coupled by the single domain $Z_T^{\mathrm{reg}}$.  All $t$-derivatives act on the increment factor in \eqref{eq:cut-boundary-product}, whereas $s$-derivatives are distributed among that factor and the basepoint skeleton factors.  The Leibniz formula is therefore a finite sum of products
\[
 \partial_s^{a_0}\partial_t^b
 [\delta\cR\mathrm{SkI}_{T_0,k|_{V_0}}]_{s,t}
 \prod_{\ell=1}^q
 \partial_s^{a_\ell}
 [\cR\mathrm{SkI}_{T_\ell,k|_{V_\ell}}]_s,
 \qquad
 a_0+\cdots+a_q=a.
\]
For each derivative allocation, the partial-output comparison and the cancellation \eqref{eq:infrared-cancellation} bound the full product by one full-vertex Fourier majorant
\begin{equation}\label{eq:cut-joint-majorant}
 W_{\mathbf a,\mathbf b}(k)
 :=
 C\int_{\prod_{v\in V(T)}\supp\varphi_{k_v}}
 (1+\abs\xi)^L
 \prod_{v\in V(T)}
 \abs{\widehat Y^{\ell_T(v)}(\xi_{\nu_T(v)})}
 \,\dd\xi,
\end{equation}
where $L$ depends only on $a,b,T,c$ and $\abs\xi:=\sum_v\abs{\xi_{\nu_T(v)}}$.  Choose the Schwartz order of every coordinate of $Y$ larger than $L+r+1$.  Finite overlap of the dyadic boxes then gives the joint estimate
\begin{align}\label{eq:cut-joint-l1}
 \sum_{k\in Z_T^{\mathrm{reg}}}W_{\mathbf a,\mathbf b}(k)
 &\leq
 C\int_{\R^r}
 (1+\abs\xi)^L
 \prod_{v\in V(T)}
 \abs{\widehat Y^{\ell_T(v)}(\xi_{\nu_T(v)})}
 \,\dd\xi
 <\infty.
\end{align}
There are only finitely many Leibniz multi-indices and finitely many products in a fixed cut formula.  Summing \eqref{eq:cut-joint-l1} over them defines an $\ell^1$ majorant on one maximum-pattern region and proves \eqref{eq:termwise-majorant}--\eqref{eq:termwise-normal-convergence} there.  Each original normal-order domain was refined into only finitely many maximum-pattern regions, so the regional majorants may be added to obtain $w_{T,c,a,b,Y,K}\in\ell^1(Z_T^{\mathrm{reg}})$.  No product of independently summed $\ell^1$ sequences is used.

At fixed degree there are finitely many trees, cuts, normal-order regions, coordinate decorations, and word labels.  The Weierstrass test therefore permits arbitrary endpoint differentiation of their full sum.
\end{proof}

The preceding lemma gives one smooth group path at each cutoff.  The logarithm and Maurer--Cartan derivative are formed only after all tree and cut summations have been completed.

\begin{proposition}[One smooth cutoff group path]\label{prop:one-group-path}
Let $x\in\cS^\alpha(E)$ and let $\mathbf X^N$ be the cutoff lift in \eqref{eq:fno-cutoff-lifts}.  Define
\begin{equation}\label{eq:cutoff-group-path}
 G_N(t):=\mathbf X^N_{0,t}
 \qquad(0\leq t\leq2\pi).
\end{equation}
Then $G_N:[0,2\pi]\to G^{(m)}(E)$ is smooth and
\[
 \mathbf X^N_{s,t}=G_N(s)^{-1}G_N(t).
\]
The quantities
\[
 A_N:=G_N^{-1}\dot G_N,
 \qquad
 \Omega_N:=\log G_N
\]
are smooth Lie-valued paths.  Their relation is the ordinary finite-step Magnus identity.  No second forest-valued interval coefficient is compared with them.
\end{proposition}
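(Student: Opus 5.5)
The plan is to derive every assertion from the smoothness furnished by \cref{lem:termwise-differentiation} together with the multiplicativity of the FNO lift from \cref{thm:unterberger}.

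\textbf{Smoothness.} Since $x\in\cS^\alpha(E)$, the cutoff $X_N$ is a finite sum of trigonometric polynomials. By \cref{def:based-localization,lem:localization-bound}, $LX_N\in C_c^\infty(\R;E)$. Applying \cref{lem:termwise-differentiation} to $Y=LX_N$, with $K=[0,2\pi]^2$ and arbitrary $a,b$, shows that every degree-$r$ coefficient of $\cR_\gamma(LX_N)_{s,t}$ with $r\le m$ is $C^\infty$ in $(s,t)$. Restricting to $s=0$ then makes $G_N(t)=\mathbf X^N_{0,t}$ a smooth map into $T^{(m)}(E)$. It takes values in $G^{(m)}(E)$ because \cref{thm:unterberger} says the lift is group-like.

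\textbf{Multiplicativity.} Chen's identity gives $\mathbf X^N_{0,t}=\mathbf X^N_{0,s}\mathbf X^N_{s,t}$. The element $G_N(s)$ has scalar part one, so it is invertible in the truncated algebra, and hence $\mathbf X^N_{s,t}=G_N(s)^{-1}G_N(t)$. Equivalently, \eqref{eq:shuffle-increment} already presents the increment as the character of $G(s)^{-1}G(t)$ built from a single endpoint character. The Chen route is cleaner, so I will use it.

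\textbf{The Lie-valued paths.} The logarithm is a polynomial on the set $\{g:\pi_0g=1\}$ of the nilpotent truncated algebra, so it is smooth there. Since $G_N(t)\in G^{(m)}(E)=\exp L^{(m)}(E)$, the path $\Omega_N=\log G_N$ is smooth and $L^{(m)}(E)$-valued. For $A_N$, I use that the left-invariant derivative of a group-valued path lies in the tangent algebra at the identity. Concretely, differentiate $G_N(s)^{-1}G_N(t)$ in $t$ at $t=s$:
\[
 A_N(s)=\partial_t\big|_{t=s}\mathbf X^N_{s,t}.
\]
This is a derivative of group-like elements at the unit. Group-likeness means the element is a character on the shuffle algebra. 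Differentiating the character identity at the unit gives an infinitesimal character, i.e. a primitive element, and primitive elements of the truncated algebra are exactly $L^{(m)}(E)$. Smoothness of $A_N$ follows from that of $G_N$ and of inversion.

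\textbf{Magnus relation.} This is the standard finite-step identity
\[
 A_N=\frac{1-e^{-\ad_{\Omega_N}}}{\ad_{\Omega_N}}\dot\Omega_N .
\]
It is obtained by differentiating $\exp\Omega_N$ in the nilpotent algebra, where the series is finite, so no convergence issue arises.

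\textbf{Main obstacle.} I expect the delicate point to be justifying the $t$-smoothness of $G_N$ uniformly up to the endpoints, including cut terms, which carry basepoint factors. This is precisely what \cref{lem:termwise-differentiation} supplies: it gives joint $\ell^1$ majorants on the compact set, which legitimizes differentiating under the sums. The rest is finite-dimensional algebra.
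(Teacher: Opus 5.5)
Your proposal is correct and follows essentially the same route as the paper: smoothness of $G_N$ from the termwise-differentiation lemma applied to the smooth compactly supported path $LX_N$, multiplicativity and group-likeness from the imported FNO theorem, and the Magnus identity from differentiating $e^{\Omega_N}$ in the finite-step nilpotent group. Your infinitesimal-character argument for the Lie-valuedness of $A_N$ only spells out a point that the paper settles by observing that inversion, logarithm and differentiation are polynomial in the homogeneous coordinates.
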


\begin{proof}
Multiplicativity and group-likeness follow from \cref{thm:unterberger}.  By \cref{lem:termwise-differentiation}, every tensor coefficient of $G_N(t)=\mathbf X^N_{0,t}$ is smooth.  Since $G^{(m)}(E)$ is finite-step nilpotent, inversion, logarithm, multiplication, and differentiation are polynomial operations in the homogeneous coordinates.  Therefore $A_N$ and $\Omega_N$ are smooth and Lie-valued.

Differentiating
\[
 G_N=e^{\Omega_N}
\]
in the finite-dimensional nilpotent tensor group gives the Magnus relation.  Formula \eqref{eq:shuffle-increment} has already assembled the rooted-tree terms into the single path $G_N$, so no additional interval construction remains to be identified.
\end{proof}

\subsection{One-time densities and two-time coefficients}

Write the homogeneous decompositions
\[
 A_N=\sum_{r=1}^mA_{r,N},
 \qquad
 \Omega_N=\sum_{r=1}^m\Omega_{r,N}.
\]
The first degree is fixed by the first-level path and is not part of the counterterm construction:
\begin{equation}\label{eq:typed-degree-one}
 A_{1,N}=\dot X_N,
 \qquad
 \Omega_{1,N}=X_N-X_N(0),
 \qquad
 I_{1,N}(s,t)=X_N(t)-X_N(s).
\end{equation}
No symbols $\Gamma_{1,N}$ or $D_{1,N}^{\mathrm{prep}}$ are introduced.

The higher-degree objects have four distinct types.  The first two are local in one time variable, the third is a one-time primitive, and the fourth is a based two-time interval coefficient.

\begin{definition}[Typed smooth-cutoff objects]\label{def:typed-objects}
Let $2\leq r\leq m$.  Define the one-time counterterm density
\begin{equation}\label{eq:counterterm-density}
 \Gamma_{r,N}(t):=-A_{r,N}(t)\in L_r(E).
\end{equation}
Let $(c_q)_{q\geq0}$ be determined by
\begin{equation}\label{eq:bernoulli-coefficients}
 \frac{z}{1-e^{-z}}
 =
 \sum_{q\geq0}c_qz^q
 =
 1+\frac12z+\frac1{12}z^2-\frac1{720}z^4+\cdots.
\end{equation}
Define the prepared one-time density
\begin{equation}\label{eq:prepared-density}
 D_{r,N}^{\mathrm{prep}}
 :=
 \sum_{q=1}^{r-1}c_q
 \sum_{\substack{i_1+\cdots+i_q+p=r\\i_\nu\geq1,\ p\geq1}}
 \ad_{\Omega_{i_1,N}}\cdots
 \ad_{\Omega_{i_q,N}}A_{p,N}.
\end{equation}
Define the one-time primitive
\begin{equation}\label{eq:typed-primitive}
 P_{r,N}(t)
 :=
 \Omega_{r,N}(t)-\Omega_{r,N}(0).
\end{equation}
Finally, define the based two-time logarithmic coefficient
\begin{equation}\label{eq:renormalized-interval}
 I_{r,N}^{\mathrm{ren}}(s,t)
 :=
 \pi_r\log\bigl(G_N(s)^{-1}G_N(t)\bigr)
 =
 \pi_r\BCH(-\Omega_N(s),\Omega_N(t)).
\end{equation}
All four objects are constructed from the same smooth group path $G_N$.  None is assigned to the limiting rough path $\mathbf X$.
\end{definition}

The homogeneous Magnus formula identifies the difference between the prepared density and the counterterm as the time derivative of the logarithmic coordinate.

\begin{proposition}[Homogeneous Magnus recursion]\label{prop:homogeneous-magnus}
For $1\leq r\leq m$,
\begin{equation}\label{eq:homogeneous-magnus}
 \dot\Omega_{r,N}
 =
 \sum_{q=0}^{r-1}c_q
 \sum_{\substack{i_1+\cdots+i_q+p=r\\i_\nu\geq1,\ p\geq1}}
 \ad_{\Omega_{i_1,N}}\cdots
 \ad_{\Omega_{i_q,N}}A_{p,N}.
\end{equation}
\end{proposition}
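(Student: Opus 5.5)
The plan is to derive \eqref{eq:homogeneous-magnus} from the standard formula for the derivative of the exponential map in the finite-step nilpotent tensor algebra, and then to project onto degree $r$. Since $G_N=e^{\Omega_N}$ with $\Omega_N$ smooth and Lie-valued (\cref{prop:one-group-path}), I will first establish the left-trivialized derivative identity
\[
 A_N=G_N^{-1}\dot G_N
 =\frac{1-e^{-\ad_{\Omega_N}}}{\ad_{\Omega_N}}\dot\Omega_N
 =\sum_{n\geq0}\frac{(-1)^n}{(n+1)!}\ad_{\Omega_N}^n\dot\Omega_N .
\]
Every series here is finite in $T^{(m)}(E)$, because $\Omega_N$ has no degree-zero part. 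The identity follows from the usual parameter-integral argument, $\frac{\dd}{\dd t}e^{\Omega}=\int_0^1e^{\sigma\Omega}\dot\Omega e^{(1-\sigma)\Omega}\,\dd\sigma$, combined with $e^{-\sigma\Omega}Ye^{\sigma\Omega}=e^{-\sigma\ad_\Omega}Y$. All of these are polynomial identities in the truncated algebra, so smoothness is not an issue.

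Next I will invert the operator. The power series $(1-e^{-z})/z$ has constant term $1$, and its reciprocal is $z/(1-e^{-z})=\sum_q c_qz^q$, with the $c_q$ as in \eqref{eq:bernoulli-coefficients}. The operator $\ad_{\Omega_N}$ is nilpotent on $T^{(m)}(E)$, since it raises degree by at least one. The formal identity of power series therefore becomes an identity of operator polynomials, which gives
\[
 \dot\Omega_N=\sum_{q=0}^{m-1}c_q\,\ad_{\Omega_N}^qA_N .
\]

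Finally I will project this onto degree $r$. I expand $\Omega_N=\sum_i\Omega_{i,N}$ and $A_N=\sum_pA_{p,N}$ using multilinearity of $\ad$. Each term $\ad_{\Omega_{i_1,N}}\cdots\ad_{\Omega_{i_q,N}}A_{p,N}$ is homogeneous of degree $i_1+\cdots+i_q+p$. Keeping the terms of total degree $r$ automatically restricts to $q\leq r-1$, since each $i_\nu\geq1$ and $p\geq1$, and this yields \eqref{eq:homogeneous-magnus}. For $r=1$ the formula reduces to $\dot\Omega_{1,N}=A_{1,N}$, which is consistent with \eqref{eq:typed-degree-one}.

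The only point needing care is justifying the inversion step, namely that the composition of the two formal power series in the nilpotent operator $\ad_{\Omega_N}$ equals the identity. I will do this by noting that substituting a nilpotent operator into formal power series is a unital algebra homomorphism from $\R[[z]]$, truncated at order $m$, into the endomorphisms of $T^{(m)}(E)$.
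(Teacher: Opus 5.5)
Your proposal is correct and follows essentially the same route as the paper. The paper invokes the inverted left-trivialized derivative-of-the-exponential formula $\dot\Omega_N=\frac{\ad_{\Omega_N}}{1-e^{-\ad_{\Omega_N}}}A_N=\sum_{q\ge0}c_q\ad_{\Omega_N}^qA_N$ directly and then takes the degree-$r$ component. You additionally derive that formula from the parameter-integral identity and justify the inversion through nilpotency of $\ad_{\Omega_N}$ on $T^{(m)}(E)$, which fills in details the paper leaves implicit.
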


\begin{proof}
For the left Maurer--Cartan derivative $A_N=G_N^{-1}\dot G_N$, differentiation of $G_N=e^{\Omega_N}$ gives
\begin{equation}\label{eq:full-magnus}
 \dot\Omega_N
 =
 \frac{\ad_{\Omega_N}}{1-e^{-\ad_{\Omega_N}}}A_N
 =
 \sum_{q\geq0}c_q\ad_{\Omega_N}^qA_N.
\end{equation}
Taking the homogeneous degree-$r$ component of \eqref{eq:full-magnus} yields \eqref{eq:homogeneous-magnus}.
\end{proof}

\begin{theorem}[Typed local evolution]\label{thm:typed-local-evolution}
For every smooth cutoff $N$ and every $2\leq r\leq m$,
\begin{equation}\label{eq:typed-local-evolution}
 \dot P_{r,N}
 =
 D_{r,N}^{\mathrm{prep}}-\Gamma_{r,N}.
\end{equation}
Equivalently,
\begin{equation}\label{eq:typed-primitive-increment}
 P_{r,N}(t)-P_{r,N}(s)
 =
 \int_s^t
 \bigl(D_{r,N}^{\mathrm{prep}}(u)-\Gamma_{r,N}(u)\bigr)
 \,\dd u.
\end{equation}
\end{theorem}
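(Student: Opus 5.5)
The plan is to read \eqref{eq:typed-local-evolution} off the homogeneous Magnus recursion of \cref{prop:homogeneous-magnus}, after isolating its $q=0$ term. By \cref{prop:one-group-path}, $G_N$ is smooth and $\Omega_N=\log G_N$ is a smooth Lie-valued path. Hence each homogeneous component $\Omega_{r,N}$ is smooth, being a polynomial in finitely many smooth tensor coordinates.

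First step. From the definition \eqref{eq:typed-primitive}, $P_{r,N}$ differs from $\Omega_{r,N}$ by a constant, so $\dot P_{r,N}=\dot\Omega_{r,N}$.

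Second step. Apply \eqref{eq:homogeneous-magnus} at degree $r\ge2$ and split off the $q=0$ term.

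\begin{itemize}
\item When $q=0$, the constraint reads $p=r$ and there are no adjoint factors. Since $c_0=1$ by \eqref{eq:bernoulli-coefficients}, this term is exactly $A_{r,N}$, which equals $-\Gamma_{r,N}$ by \eqref{eq:counterterm-density}.
\item When $q\ge1$, the constraint $i_1+\cdots+i_q+p=r$ with all $i_\nu\ge1$ and $p\ge1$ forces $q\le r-1$. The remaining sum therefore runs over $1\le q\le r-1$ with exactly the same index set and coefficients as in \eqref{eq:prepared-density}, so it equals $D_{r,N}^{\mathrm{prep}}$.
\end{itemize}

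Adding the two pieces gives $\dot P_{r,N}=D_{r,N}^{\mathrm{prep}}-\Gamma_{r,N}$, which is \eqref{eq:typed-local-evolution}.

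Third step. Integrate \eqref{eq:typed-local-evolution} from $s$ to $t$ using the fundamental theorem of calculus. This is legitimate because all quantities are smooth on $[0,2\pi]$ by \cref{prop:one-group-path} and \cref{lem:termwise-differentiation}, and it yields \eqref{eq:typed-primitive-increment}.

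The only point needing care is bookkeeping. The homogeneous expansion of $\ad_{\Omega_N}^qA_N$ must produce precisely the ordered compositions $(i_1,\dots,i_q,p)$ of $r$. This holds because $\ad$ is bilinear and degree-additive in the graded tensor algebra, and truncation above degree $m\ge r$ does not affect degree-$r$ components. I expect no real obstacle here. The analytic content, namely smoothness of $G_N$ and the term-by-term differentiation, was already settled in \cref{lem:termwise-differentiation}, and the Magnus identity itself in \cref{prop:homogeneous-magnus}.
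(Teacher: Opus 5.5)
Your proposal is correct and follows the paper's proof essentially verbatim: isolate the $q=0$ term $c_0A_{r,N}=-\Gamma_{r,N}$ in the homogeneous Magnus recursion \eqref{eq:homogeneous-magnus}, identify the $q\geq1$ terms with $D_{r,N}^{\mathrm{prep}}$, use $\dot P_{r,N}=\dot\Omega_{r,N}$, and integrate from $s$ to $t$. Your extra remarks on the range $q\leq r-1$ and on smoothness are harmless additions to the same argument.
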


\begin{proof}
In \eqref{eq:homogeneous-magnus}, the term with $q=0$ is
\[
 c_0A_{r,N}=A_{r,N}=-\Gamma_{r,N}.
\]
The sum of all terms with $q\geq1$ is exactly the definition \eqref{eq:prepared-density}.  Therefore
\[
 \dot\Omega_{r,N}
 =
 D_{r,N}^{\mathrm{prep}}-\Gamma_{r,N}.
\]
Since $P_{r,N}=\Omega_{r,N}-\Omega_{r,N}(0)$, one has $\dot P_{r,N}=\dot\Omega_{r,N}$, proving \eqref{eq:typed-local-evolution}.  Integration from $s$ to $t$ gives \eqref{eq:typed-primitive-increment}.
\end{proof}

A primitive increment is not yet the logarithm of the based group increment.  Noncommutativity forces a lower-degree BCH endpoint polynomial.

\begin{definition}[BCH boundary package]\label{def:bch-package}
For $2\leq r\leq m$, define
\begin{equation}\label{eq:bch-package}
 B_{r,N}(s,t)
 :=
 \pi_r\BCH(-\Omega_N(s),\Omega_N(t))
 -
 \bigl(\Omega_{r,N}(t)-\Omega_{r,N}(s)\bigr).
\end{equation}
By homogeneity of the BCH series, $B_{r,N}$ is a universal Lie polynomial in
\[
 \Omega_{1,N}(s),\ldots,\Omega_{r-1,N}(s),
 \qquad
 \Omega_{1,N}(t),\ldots,\Omega_{r-1,N}(t).
\]
It contains no degree-$r$ unknown.
\end{definition}

\begin{theorem}[Exact local-to-interval reconstruction]\label{thm:bch-reconstruction}
For every smooth cutoff $N$ and every $2\leq r\leq m$,
\begin{equation}\label{eq:exact-bch-reconstruction}
 I_{r,N}^{\mathrm{ren}}(s,t)
 =
 P_{r,N}(t)-P_{r,N}(s)+B_{r,N}(s,t).
\end{equation}
Consequently,
\begin{equation}\label{eq:integrated-bch-reconstruction}
 I_{r,N}^{\mathrm{ren}}(s,t)
 =
 \int_s^t
 \bigl(D_{r,N}^{\mathrm{prep}}(u)-\Gamma_{r,N}(u)\bigr)
 \,\dd u
 +B_{r,N}(s,t).
\end{equation}
More generally, the same formulas hold in every degree $2\leq r\leq M$ for an arbitrary smooth based path in a step-$M$ nilpotent group.
\end{theorem}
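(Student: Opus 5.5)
The identity \eqref{eq:exact-bch-reconstruction} is essentially definitional once the BCH package is unwound, so the plan is to verify it algebraically and then substitute \cref{thm:typed-local-evolution}.

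First, since $G_N=e^{\Omega_N}$ in the finite-step nilpotent group (\cref{prop:one-group-path}), we have $G_N(s)^{-1}G_N(t)=e^{-\Omega_N(s)}e^{\Omega_N(t)}$. Its logarithm is therefore $\BCH(-\Omega_N(s),\Omega_N(t))$, computed as a finite polynomial in the truncated algebra. This is already recorded in \eqref{eq:renormalized-interval}. By \cref{def:bch-package},
\[
 I_{r,N}^{\mathrm{ren}}(s,t)=\bigl(\Omega_{r,N}(t)-\Omega_{r,N}(s)\bigr)+B_{r,N}(s,t),
\]
and $\Omega_{r,N}(t)-\Omega_{r,N}(s)=P_{r,N}(t)-P_{r,N}(s)$ by \eqref{eq:typed-primitive}. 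This gives \eqref{eq:exact-bch-reconstruction}.

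The one point needing justification is the structural claim in \cref{def:bch-package}: $B_{r,N}$ involves only degrees $<r$. To see this, expand $\BCH(u,v)=u+v+\tfrac12[u,v]+\cdots$. Every term beyond the linear ones is an iterated bracket of length at least two. Its degree-$r$ component therefore pairs homogeneous pieces of degrees summing to $r$, each of degree at most $r-1$. The linear part contributes exactly $-\Omega_{r,N}(s)+\Omega_{r,N}(t)$ in degree $r$, which is what is subtracted in \eqref{eq:bch-package}.

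Second, \eqref{eq:integrated-bch-reconstruction} follows by inserting \eqref{eq:typed-primitive-increment} from \cref{thm:typed-local-evolution} into the first identity. Smoothness of $\Omega_N$ from \cref{prop:one-group-path} (via \cref{lem:termwise-differentiation}) legitimizes the fundamental theorem of calculus on $[s,t]$.

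Finally, for the general statement, take a smooth based path $G$ in a step-$M$ nilpotent group and set $\Omega=\log G$ and $A=G^{-1}\dot G$. \cref{prop:homogeneous-magnus} holds verbatim, since it uses only $\dot\Omega=\sum_q c_q\ad_\Omega^qA$, the derivative of the exponential map, which is a finite sum in step $M$. The same definitions and the same two steps then apply in every degree $2\le r\le M$.

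There is no real obstacle. The only care needed is bookkeeping of the homogeneous degrees in BCH, and keeping the convention $G(s)^{-1}G(t)$, which matches \eqref{eq:shuffle-increment}, so that the signs in $\BCH(-\Omega_N(s),\Omega_N(t))$ are correct.
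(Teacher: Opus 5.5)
Your proposal is correct and follows the paper's proof: combine \eqref{eq:renormalized-interval} with \eqref{eq:bch-package}, identify $\Omega_{r,N}(t)-\Omega_{r,N}(s)$ with $P_{r,N}(t)-P_{r,N}(s)$ via \eqref{eq:typed-primitive}, and substitute \cref{thm:typed-local-evolution}; the step-$M$ case is handled by repeating the same definitions for $\Omega_H=\log H$ and $A_H=H^{-1}\dot H$. Your added degree count showing that $B_{r,N}$ involves only components of degree below $r$ is a valid justification of what the paper simply asserts in \cref{def:bch-package}.
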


\begin{proof}
By \eqref{eq:renormalized-interval} and \eqref{eq:bch-package},
\[
 I_{r,N}^{\mathrm{ren}}(s,t)
 =
 \Omega_{r,N}(t)-\Omega_{r,N}(s)+B_{r,N}(s,t).
\]
Using \eqref{eq:typed-primitive},
\[
 \Omega_{r,N}(t)-\Omega_{r,N}(s)
 =
 P_{r,N}(t)-P_{r,N}(s),
\]
which proves \eqref{eq:exact-bch-reconstruction}.  Substitute \eqref{eq:typed-primitive-increment} to obtain \eqref{eq:integrated-bch-reconstruction}.

For a smooth based path $H:[0,2\pi]\to G^{(M)}(E)$, define
\[
 A_H:=H^{-1}\dot H,
 \qquad
 \Omega_H:=\log H,
\]
and repeat \eqref{eq:counterterm-density}--\eqref{eq:bch-package} through degree $M$.  The proof uses only the homogeneous Magnus identity and the BCH identity in the finite-step group, so the same formulas follow in every degree $r\leq M$.
\end{proof}

The complete based density absorbs the derivative of the endpoint package.  This is the precise object whose integral equals the interval logarithm.

\begin{corollary}[Based prepared density]\label{cor:based-prepared-density}
For fixed $s$ and $2\leq r\leq m$, define
\begin{equation}\label{eq:based-prepared-density}
 D_{r,N}^{\mathrm{based,prep}}(s,u)
 :=
 D_{r,N}^{\mathrm{prep}}(u)+\partial_uB_{r,N}(s,u).
\end{equation}
Then
\begin{equation}\label{eq:based-density-integral}
 I_{r,N}^{\mathrm{ren}}(s,t)
 =
 \int_s^t
 \bigl(D_{r,N}^{\mathrm{based,prep}}(s,u)-\Gamma_{r,N}(u)\bigr)
 \,\dd u,
\end{equation}
and
\begin{equation}\label{eq:based-density-derivative}
 \partial_tI_{r,N}^{\mathrm{ren}}(s,t)
 =
 D_{r,N}^{\mathrm{based,prep}}(s,t)-\Gamma_{r,N}(t).
\end{equation}
\end{corollary}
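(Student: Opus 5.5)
The corollary follows from \cref{thm:bch-reconstruction} together with the fundamental theorem of calculus in the variable $u$, for fixed $s$. We start from \eqref{eq:integrated-bch-reconstruction},
\[
 I_{r,N}^{\mathrm{ren}}(s,t)
 =\int_s^t\bigl(D_{r,N}^{\mathrm{prep}}(u)-\Gamma_{r,N}(u)\bigr)\,\dd u+B_{r,N}(s,t).
\]
The only remaining task is to rewrite the boundary package $B_{r,N}(s,t)$ as an integral in its second variable from $s$ to $t$.

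For this we need two facts about $u\mapsto B_{r,N}(s,u)$.

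First, it is $C^1$ in $u$. By \cref{def:bch-package}, $B_{r,N}(s,u)$ is a universal Lie polynomial in $\Omega_{i,N}(s)$ and $\Omega_{i,N}(u)$ for $1\le i\le r-1$. By \cref{prop:one-group-path}, each $\Omega_{i,N}$ is smooth, so $B_{r,N}(s,\cdot)$ is smooth for fixed $s$.

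Second, it vanishes on the diagonal: $B_{r,N}(s,s)=0$. Indeed, $\BCH(-\Omega_N(s),\Omega_N(s))=0$, because $-\Omega_N(s)$ and $\Omega_N(s)$ commute and $\log(G_N(s)^{-1}G_N(s))=\log 1=0$. The primitive difference $\Omega_{r,N}(s)-\Omega_{r,N}(s)$ also vanishes, so \eqref{eq:bch-package} gives $B_{r,N}(s,s)=0$.

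The fundamental theorem of calculus then yields
\[
 B_{r,N}(s,t)=\int_s^t\partial_uB_{r,N}(s,u)\,\dd u .
\]
Adding this to the integral above and using the definition \eqref{eq:based-prepared-density} gives \eqref{eq:based-density-integral}.

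For \eqref{eq:based-density-derivative}, we differentiate \eqref{eq:based-density-integral} in $t$. The integrand is continuous in $u$: $D_{r,N}^{\mathrm{prep}}$ and $\Gamma_{r,N}$ are polynomial in the smooth paths $\Omega_N$ and $A_N$, and $\partial_uB_{r,N}(s,\cdot)$ is continuous. Differentiating under the upper limit therefore gives $D_{r,N}^{\mathrm{based,prep}}(s,t)-\Gamma_{r,N}(t)$.

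As a consistency check, one can instead differentiate \eqref{eq:exact-bch-reconstruction} directly and use \eqref{eq:typed-local-evolution}: this gives $\dot P_{r,N}(t)+\partial_tB_{r,N}(s,t)$, which is the same expression.

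The only point needing care is the diagonal vanishing $B_{r,N}(s,s)=0$. That identity is exactly what lets the endpoint polynomial be absorbed into the based density without leaving an $s$-dependent constant. Smoothness is already guaranteed by \cref{lem:termwise-differentiation}, so nothing analytic remains beyond it.
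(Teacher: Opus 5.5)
Your proof is correct and is essentially the paper's argument run in the opposite order: the paper differentiates \eqref{eq:integrated-bch-reconstruction} in $t$ first and then integrates from $s$ using $I_{r,N}^{\mathrm{ren}}(s,s)=0$, whereas you integrate $\partial_uB_{r,N}(s,u)$ first using $B_{r,N}(s,s)=0$ and then differentiate. The two diagonal-vanishing facts are equivalent by \eqref{eq:exact-bch-reconstruction}, so the arguments coincide.
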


\begin{proof}
Differentiate \eqref{eq:integrated-bch-reconstruction} in the upper endpoint:
\[
 \partial_tI_{r,N}^{\mathrm{ren}}(s,t)
 =
 D_{r,N}^{\mathrm{prep}}(t)-\Gamma_{r,N}(t)
 +\partial_tB_{r,N}(s,t).
\]
Using \eqref{eq:based-prepared-density} gives \eqref{eq:based-density-derivative}.  Since $I_{r,N}^{\mathrm{ren}}(s,s)=0$, integration from $s$ to $t$ gives \eqref{eq:based-density-integral}.
\end{proof}
 \section{Root resonance and endpoint gauges}\label{sec:resonance-gauge}

The local-to-interval identity in \cref{thm:bch-reconstruction} is independent of the explicit Fourier chart once the smooth group path is constructed.  The Fourier chart has one decisive feature: differentiation in the outer endpoint removes the primitive denominator associated with the total output.  We first prove this statement and then state its degree-two resonance test.  The final subsection describes how different lifts with the same first level are related.

\subsection{Absence of the total-root denominator}

Let $T$ be a decorated rooted integration tree and recall the partial outputs $\Lambda_v^T$ from \eqref{eq:partial-output}.  A skeleton chart is first written as an iterated primitive and is then differentiated in its outer endpoint to obtain a one-time density.  The root primitive is therefore treated differently from every proper subtree primitive.

\begin{definition}[Chart grammar of the fixed FNO scheme]\label{def:fno-chart-grammar}
Fix a decorated rooted integration tree $T$.  The class $\mathfrak C_{\mathrm{FNO}}(T)$ is the smallest class of labelled chart expressions generated by the following rules.
\begin{enumerate}[label=(G\arabic*),leftmargin=2.8em]
\item The uncut regularized skeleton of $T$ belongs to $\mathfrak C_{\mathrm{FNO}}(T)$; every vertex initially carries its singleton label in $V(T)$.
\item If an admissible cut occurs in the multiple-cut boundary formula \eqref{eq:cut-boundary-product}, replace the current component by its root component and its pruned components, and restrict the existing labels to those components.
\item If a connected regularized skeleton component appearing in that recursive formula has already been evaluated and is used as one effective input of a larger component, it may be contracted to one effective vertex.  The new vertex carries the union of the labels of the contracted component.  The contraction is only a bookkeeping representation of a product factor already present in the fixed formula: it introduces neither a new summand nor a new time primitive.  Every reciprocal primitive factor already present in the contracted component is retained in an inherited denominator list, with its original component-vertex record and represented support.  Only an explicit cancellation under rule~(G4) may remove such a factor.
\item Common algebraic factors may be cancelled, after which every remaining reciprocal primitive factor is retained explicitly in the denominator list.
\item The final density chart is obtained by differentiating the unique outer endpoint-dependent root component in its outer endpoint.
\end{enumerate}
The grammar contains only the operations present in the fixed skeleton and multiple-cut formulas.  It is not an independent enlargement of the FNO regularization scheme.
\end{definition}

\begin{lemma}[Coverage of the fixed FNO charts]\label{lem:fno-chart-coverage}
Every uncut skeleton-density summand and every cut-boundary density summand of the fixed FNO construction used in \cref{lem:termwise-differentiation} is represented by an element of $\mathfrak C_{\mathrm{FNO}}(T)$ for its original integration tree $T$.
\end{lemma}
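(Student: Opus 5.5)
The plan is to induct on the recursive structure of the fixed formula. Concretely, I will show that each summand used in \cref{lem:termwise-differentiation} arises from the generating rules (G1)--(G5) of \cref{def:fno-chart-grammar}, applied in the order in which that formula is itself assembled. There are two kinds of summands: uncut skeleton-density summands and cut-boundary density summands. Both are finite expressions indexed by the original decorated tree $T$, a normal-order region, a dyadic multi-index, and possibly an admissible cut. Coverage therefore reduces to exhibiting, for each such summand, a finite derivation in the grammar whose output equals the summand literally, including its denominator list.

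The uncut case is direct. Rule (G1) places the regularized skeleton of $T$ in $\mathfrak C_{\mathrm{FNO}}(T)$, with singleton labels at the vertices. Its primitive denominators are exactly the partial outputs $\Lambda_v^T$ from \eqref{eq:partial-output}. Applying (G5) differentiates the unique outer-endpoint-dependent factor, and this yields the density summand. If algebraic factors cancel in the source expression, for example the factor $i\xi_{\nu_T(v)}$ from an input differential against a matching factor, I record that cancellation through (G4). Every remaining reciprocal factor stays explicit.

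For a cut term I use the multiple-cut boundary expansion \eqref{eq:cut-boundary-product}. This writes the summand as a root-component increment $[\delta\cR\mathrm{SkI}_{T_0}]_{s,t}$ multiplied by basepoint skeleton values $[\cR\mathrm{SkI}_{T_\ell}]_s$ of the pruned components. The expansion in \cite[equation~(4.22)]{UnterbergerFNO2010} is itself recursive: a pruned component may in turn be expanded through further cuts. I therefore induct on the number of vertices, or equivalently on the depth of the cut recursion. Each split step is an instance of (G2), with labels restricted to the components. Whenever an already evaluated connected component enters a larger component as a product factor, I apply (G3) and contract it to a single effective vertex. That vertex carries the union of its labels and inherits the component's denominator list together with the vertex records. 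Because (G3) introduces no new summand and no new primitive, the resulting chart reproduces the source factor exactly. The induction hypothesis supplies charts for the smaller components. At the top level, only the root component $T_0$ depends on the outer endpoint $t$, since the pruned factors depend on $s$ alone. Rule (G5) therefore applies uniquely and produces the cut-boundary density. The $s$-derivatives distributed by Leibniz are not part of the density chart, which involves only outer-endpoint differentiation.

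I expect the main obstacle to be the bookkeeping in (G3): I must verify that contraction keeps every reciprocal factor with its represented support, so that the chart denominators match the source denominators one-to-one and no factor is silently lost or duplicated. My first step would be to check this on a single cut with one pruned leaf. After that, I would argue that the coupled dyadic domain $Z_T^{\mathrm{reg}}$ is a label-level restriction, unaffected by contraction, since the labels of a contracted vertex are exactly the union of the original labels. Finally, since only finitely many trees, cuts and regions occur at fixed degree, the coverage holds for all summands.
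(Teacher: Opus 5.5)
Your proposal is correct and follows essentially the same route as the paper: rule~(G1) for the uncut skeleton, rule~(G2) iterated along the recursive multiple-cut expansion with an induction (the paper inducts on the number of cut edges), rule~(G3) for already integrated components that enter as effective inputs, rule~(G4) for cancellations, and rule~(G5) for the outer-endpoint derivative. The (G3) bookkeeping you single out as the main obstacle is already built into the definition of that rule, so the paper treats it as a direct relabelling rather than as a separate verification.
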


\begin{proof}
The uncut summand is rule~(G1).  The boundary expansion \eqref{eq:cut-boundary-product}, which is the multiple-cut formula of \cite[equation~(4.22)]{UnterbergerFNO2010}, replaces an integration tree by one root component and finitely many pruned components; this is rule~(G2).  Iterating the same formula on a component gives all successive cut terms, so induction on the number of cut edges covers every boundary product used by the fixed construction.

Whenever a connected sub-skeleton has already been integrated and enters a remaining component as one coefficient, its diagrammatic replacement by an effective input is exactly rule~(G3); it is a relabelling of the same product term, records the represented original vertices, and does not add an integration variable.  Simplification of the resulting Fourier multiplier uses only cancellation of common algebraic factors, rule~(G4).  Finally, the one-time density is obtained by differentiating the outer increment factor, rule~(G5).  These are precisely the operations used in the summands denoted by $\cR_{T,k}^c$ in \cref{lem:termwise-differentiation}; no other chart-producing operation occurs there.
\end{proof}

\begin{lemma}[Primitive-denominator inheritance under cuts and contractions]\label{lem:primitive-denominator-inheritance}
Fix an original rooted integration tree $T$.  Every component integration tree $C$ occurring after cuts and contractions is equipped with a label map
\[
 \operatorname{lab}_C:V(C)\longrightarrow 2^{V(T)}\setminus\{\varnothing\}.
\]
The labels are pairwise disjoint.  An uncontracted original vertex $w$ has label $\operatorname{lab}_C(w)=\{w\}$, while an effective vertex created by contracting a connected subtree has as its label the union of the labels of all vertices in that subtree.  For $v\in V(C)$ define its represented original support by
\[
 \operatorname{supp}_T(C,v)
 :=
 \bigcup_{w\succeq_C v}\operatorname{lab}_C(w).
\]
Let $\mathfrak c\in\mathfrak C_{\mathrm{FNO}}(T)$.  By \cref{lem:fno-chart-coverage}, this includes every differentiated skeleton and cut-boundary chart used by the fixed FNO scheme.  Then:
\begin{enumerate}[label=(\roman*),leftmargin=2.4em]
\item Every denominator that survives is attached to an actual time primitive rooted at a vertex $v$ of an actual component integration tree $C$, and its frequency is the sum of the original input frequencies indexed by $\operatorname{supp}_T(C,v)$.
\item An admissible cut replaces a component by a root component and pruned components whose represented original vertex sets are pairwise disjoint subsets of the represented set of the original component.
\item Contracting an already formed connected component introduces no new primitive denominator.  Every reciprocal primitive factor already present in that component remains in an inherited denominator list, with the represented support determined by the label map.  Only an explicit algebraic cancellation may remove an inherited factor.  The effective input carries the union label and introduces no new time integration or primitive support.
\item Algebraic cancellation can only remove primitive factors.  The only primitive whose represented support can equal all inputs of an outer component is that component's root primitive, and differentiation in the outer endpoint cancels it.
\end{enumerate}
Consequently no cut, contraction, or cancellation can create a new surviving primitive whose represented support is larger than that of an existing integration component.  In particular, after outer-endpoint differentiation no surviving denominator has represented support $V(T)$.
\end{lemma}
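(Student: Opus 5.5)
The plan is a structural induction over the generating rules (G1)--(G5) of $\mathfrak C_{\mathrm{FNO}}(T)$; \cref{lem:fno-chart-coverage} reduces the lemma to this grammar. The invariant carried through the induction is the following. Every chart expression consists of finitely many components $C$, each with a label map $\operatorname{lab}_C$ into pairwise disjoint nonempty subsets of $V(T)$. Every reciprocal primitive factor in its denominator list is attached to a pair $(C,v)$ with $v\in V(C)$. Its frequency equals $\sum_{w\in\operatorname{supp}_T(C,v)}\xi_{\nu_T(w)}$. The base case (G1) holds by the definition \eqref{eq:partial-output}: with singleton labels, $\operatorname{supp}_T(T,v)=\{w:w\succeq_Tv\}$, so the denominator at $v$ is exactly $\Lambda_v^T(\xi)$. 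This gives (i) initially, and the invariant is the inductive form of (i).

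For (ii), I will check that a cut (G2) preserves the invariant. An admissible cut partitions $V(C)$ into the root component and the pruned subtrees. Restricting $\operatorname{lab}_C$ to each piece gives disjoint label families whose unions lie in $\bigcup_{w\in V(C)}\operatorname{lab}_C(w)$. Every primitive retained in the multiple-cut formula \eqref{eq:cut-boundary-product} is a skeleton primitive of one of these components rooted at some $v$. The descendants of $v$ inside that component are a subset of its descendants in $C$, and they coincide for pruned components and for root-component vertices not above a cut edge. So the frequency is again the labelled sum over $\operatorname{supp}_T(C',v)\subseteq\operatorname{supp}_T(C,v)$. For (iii), contraction (G3) replaces a connected subtree $S$ by an effective vertex $e$ with $\operatorname{lab}(e)=\bigcup_{w\in S}\operatorname{lab}(w)$. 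For any vertex $u$ of the new component, $\operatorname{supp}_T$ is unchanged, because descendants of $u$ in the contracted tree carry exactly the labels of its former descendants. Hence every surviving primitive keeps its frequency. The inherited factors of $S$ are kept verbatim with their old $(S,v)$ records, and (G3) adds no integration variable, so no new denominator appears. (G4) only deletes factors from the list, which proves the first clause of (iv).

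The last clause of (iv), and the final consequence, is the step I expect to be the main obstacle. I must show that the only primitive whose represented support is the full input set of an outer component $C$ is the root primitive of $C$. Within a fixed component, $\operatorname{supp}_T(C,v)=\bigcup_{w\in V(C)}\operatorname{lab}_C(w)$ forces every vertex of $C$ to be a descendant of $v$, so $v$ is the root; this uses that labels are nonempty and disjoint. The subtle part is inherited factors from contracted or pruned components. For these I will use (ii) and (iii): their supports are subsets of the labels of a proper subtree or a disjoint pruned piece. If such a factor sat inside a component with at least two vertices, or beside nonempty pruned pieces, its support would be a proper subset of $V(T)$. If it arose from a one-vertex component, its record would be the former root primitive of the contracted component. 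That primitive was an outer-endpoint primitive only if the component was the outer one, and in that case it is the same factor.

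Then (G5) differentiates the outer root component in its outer endpoint. This cancels the reciprocal factor $\Lambda_{\operatorname{root}}$ against the exponential's derivative, as in the proof of \cref{thm:band-inversion} style symbol cancellation. Finally, I will apply the invariant to the chart as a whole. Since pruned basepoint factors are disjoint from the outer root component, no surviving denominator can have support $V(T)$ unless it is the cancelled outer root primitive. This yields the concluding sentence of the lemma.
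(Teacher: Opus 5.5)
Your proposal is correct and takes essentially the same route as the paper. Both arguments induct over the grammar operations: labels are restricted under cuts, contraction takes union labels and copies the inherited factors verbatim, cancellation only deletes factors, and outer-endpoint differentiation cancels the outer root primitive. Your observation that $\operatorname{supp}_T(C,v)$ can equal the union of all labels of $C$ only when $v$ is the root (because labels are nonempty and disjoint) spells out a step the paper merely asserts. The one-vertex case you worry about for inherited factors does not arise: rule~(G3) contracts a component into a strictly larger one, so every inherited support is automatically a proper subset of $V(T)$.
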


\begin{proof}
Proceed by induction on the number of cut and contraction operations.  In the uncut skeleton, $C=T$, every label is a singleton, and the support of the primitive rooted at $v$ is exactly the original integration subtree below $v$.

For a cut, the multiple-cut boundary formula decomposes the current component into one root component and finitely many pruned components.  Restrict the old labels to their respective components.  Their represented vertex sets are pairwise disjoint and no denominator joins two components.

For a contraction, collapse a connected component $S$ to one effective vertex and assign it the union of the labels in $S$.  This changes only the diagrammatic presentation.  Each reciprocal primitive factor already present in $S$ is copied to the inherited denominator list together with the component and vertex at which it arose; its represented support is therefore unchanged.  Primitives above the effective vertex retain exactly the union of the original labels below them.  Since the contraction adds no integration variable, it creates no new denominator and cannot enlarge a primitive support beyond the support of the pre-existing outer component.  A factor disappears only when rule~(G4) records an explicit numerator--denominator cancellation.

Algebraic cancellation only deletes factors.  Finally, outer-endpoint differentiation multiplies by the output frequency of the root primitive of the unique time-dependent outer component and cancels that primitive.  The label bookkeeping therefore proves all four assertions and the final statement.
\end{proof}

\begin{proposition}[No total-root denominator in a differentiated chart]\label{prop:no-root-denominator}
Fix a chart $\mathfrak c\in\mathfrak C_{\mathrm{FNO}}(T)$ associated with an original rooted integration tree $T$, and let $c$ denote its admissible cut pattern; the uncut case is $c=\varnothing$.  By \cref{lem:fno-chart-coverage}, this covers every differentiated skeleton-density and cut-density chart of the fixed FNO construction.  Equip every actual component tree $C$ with the label map $\operatorname{lab}_C$ from \cref{lem:primitive-denominator-inheritance}.  For $(C,v)$ put
\begin{equation}\label{eq:labelled-component-output}
 \Lambda_v^{C,\operatorname{lab}}(\xi)
 :=
 \sum_{w\succeq_Cv}\ \sum_{u\in\operatorname{lab}_C(w)}
 \xi_{\nu_T(u)}
 =
 \sum_{u\in\operatorname{supp}_T(C,v)}\xi_{\nu_T(u)}.
\end{equation}
For an uncontracted component this reduces to the usual integration-subtree output.

Let $\mathscr P_{T,c}^{\mathrm{act}}$ be the finite set of primitive positions $(C,v)$ in the active component trees of the chart.  Let $\mathscr P_{T,c}^{\mathrm{inh}}$ be the finite disjoint union, with multiplicity, of the primitive positions stored in the inherited denominator lists created by rule~(G3).  Each inherited record retains the component tree and vertex at which the primitive was formed, hence also its label map and the frequency $\Lambda_v^{C,\operatorname{lab}}$.  Put
\[
 \mathscr P_{T,c}
 :=
 \mathscr P_{T,c}^{\mathrm{act}}
 \sqcup
 \mathscr P_{T,c}^{\mathrm{inh}}.
\]
There is a finite submultiset
\[
 \mathscr D_{T,c}\subseteq\mathscr P_{T,c}
\]
indexing precisely the primitive factors that remain in the denominator after outer-endpoint differentiation and explicit algebraic cancellation.  Equivalently,
\[
 \mathscr D_{T,c}
 =
 \mathscr D_{T,c}^{\mathrm{act}}
 \sqcup
 \bigsqcup_{C\ \mathrm{contracted}}
 \mathscr D_C^{\mathrm{inh}},
\]
where $\mathscr D_C^{\mathrm{inh}}$ denotes the inherited list stored when $C$ is contracted, and with any factors removed by rule~(G4) omitted from the right-hand side.  The actual chart multiplier can be written
\begin{equation}\label{eq:differentiated-chart-multiplier}
 M_{T,c}^{\mathrm{dens}}(\xi)
 =
 \frac{N_{T,c}(\xi)}
 {\displaystyle\prod_{(C,v)\in\mathscr D_{T,c}}
  i\Lambda_v^{C,\operatorname{lab}}(\xi)}.
\end{equation}
Here $N_{T,c}$ is the numerator that remains in the actual chart after all cancellations; it includes the fixed smooth regularization multipliers, coordinate coefficients, surviving input-differential factors, and the numerator parts of contracted component coefficients.  By definition it contains no reciprocal primitive factor: every surviving factor of the form $1/(i\Lambda)$, including every inherited factor, is listed in $\mathscr D_{T,c}$.  After contractions and cancellations, $N_{T,c}$ is treated as the actual numerator of the chart.

For $c=\varnothing$, the uncut root primitive is absent from $\mathscr D_{T,\varnothing}$.  For a nontrivial cut, the root primitive of the outer root component is likewise absent, and every surviving denominator has
\[
 \operatorname{supp}_T(C,v)\subsetneq V(T).
\]
Consequently the full-tree total output
\[
 \Lambda_{\operatorname{root}(T)}^T(\xi)
 =\sum_{u\in V(T)}\xi_{\nu_T(u)}
\]
is never a denominator in \eqref{eq:differentiated-chart-multiplier}.

This proposition concerns differentiated FNO charts; the component $\Gamma_{r,N}$ remains defined by $\Gamma_{r,N}=-A_{r,N}$.
\end{proposition}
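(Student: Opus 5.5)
The proof is a structural induction over the chart grammar $\mathfrak C_{\mathrm{FNO}}(T)$, carrying the label bookkeeping of \cref{lem:primitive-denominator-inheritance}. By \cref{lem:fno-chart-coverage}, every differentiated skeleton or cut chart is produced by finitely many applications of rules (G1)--(G5). So I will prove, along each generating sequence, an invariant for every intermediate (undifferentiated) chart.

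The invariant has three parts. First, the multiplier factors as a numerator $N$, free of reciprocal primitive factors, over a product of factors $i\Lambda_v^{C,\operatorname{lab}}(\xi)$ indexed by a multiset of positions in $\mathscr P^{\mathrm{act}}\sqcup\mathscr P^{\mathrm{inh}}$. Second, each such factor has frequency \eqref{eq:labelled-component-output}. Third, the supports $\operatorname{supp}_T(C,v)$ of the positions in distinct components are pairwise disjoint subsets of $V(T)$.

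The invariant is checked rule by rule. For (G1), the uncut skeleton is the iterated primitive $\prod_v (i\Lambda_v^T)^{-1}$ times input differentials and the smooth regularization symbols. The labels are singletons, so \eqref{eq:labelled-component-output} is the ordinary partial output \eqref{eq:partial-output}. For (G2), the multiple-cut formula \eqref{eq:cut-boundary-product} is a product of component skeletons. Restricting labels keeps every primitive's frequency, and part (ii) of \cref{lem:primitive-denominator-inheritance} gives disjointness. For (G3), contraction copies the component's reciprocal factors into the inherited list with their records. By part (iii) of that lemma no new denominator appears, and the effective vertex carries the union label, so the outer primitives' frequencies still equal \eqref{eq:labelled-component-output}. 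For (G4), cancellation only deletes numerator/denominator pairs. The set $\mathscr D_{T,c}$ is therefore defined as the surviving submultiset, and $N_{T,c}$ as everything else, which yields \eqref{eq:differentiated-chart-multiplier}.

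The remaining step, and the one needing care, is (G5) together with the support claim. Outer-endpoint differentiation of the time-dependent root component, of the form $e^{it\Lambda}/(i\Lambda)$ with $\Lambda$ its root output, multiplies by $i\Lambda_{\operatorname{root}}^{C_0,\operatorname{lab}}$ and so cancels exactly that root primitive. In the uncut case this is the factor $i\Lambda^T_{\operatorname{root}(T)}$.

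It remains to see that no other surviving position has support $V(T)$. An active non-root position $(C,v)$ has support $\operatorname{supp}_T(C,v)\subsetneq\operatorname{supp}_T(C,\operatorname{root}C)$, because the root label is nonempty and disjoint from the labels below $v$. An inherited position lies inside a contracted component $S$, whose support is a proper subset: a contracted subtree is one effective input of a larger component that contains at least its root label, which is again nonempty and disjoint. In a cut term every component, including the root component $T_0$, is a proper subset of $V(T)$, since $c\neq\varnothing$ gives at least one nonempty pruned component.

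The main obstacle is the contracted case. I must rule out a contraction that swallows the whole outer component, since then an inherited root primitive with full support would survive differentiation. I would handle this with rule (G3), which allows contraction only of components "used as one effective input of a larger component". The outer root component is never such an input, so its root primitive is always an active position and is the one cancelled in (G5). Finally, since $\Lambda^{C,\operatorname{lab}}_v$ is a sum over its support and every support is a proper subset, no denominator equals $\Lambda^T_{\operatorname{root}(T)}$ as a linear form.
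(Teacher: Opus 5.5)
Your proposal is correct and follows essentially the paper's proof. Outer-endpoint differentiation cancels the root primitive of the outer component, which, as you note, is never contracted under (G3). The label bookkeeping of \cref{lem:primitive-denominator-inheritance} then gives every remaining active or inherited denominator a represented support strictly inside $V(T)$, hence a linear form different from $\Lambda^T_{\operatorname{root}(T)}$. Organizing this as an induction over (G1)--(G5), rather than the paper's direct uncut/cut case split, changes nothing essential.

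One small slip is in your third invariant. Pairwise disjointness holds only among currently active components: an inherited record's support is nested inside the label of the effective vertex that replaced its component. That support is proper because the effective vertex sits in a strictly larger component, not necessarily because of that component's root label. Your final support argument does not need the stronger disjointness claim.
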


\begin{proof}
For the uncut skeleton, the Fourier formula before endpoint differentiation contains one denominator for every actual integration vertex.  Its outer primitive contributes
\[
 \frac1{i\Lambda_{\operatorname{root}(T)}^T(\xi)}.
\]
Differentiation in the outer endpoint multiplies by $i\Lambda_{\operatorname{root}(T)}^T(\xi)$ and cancels this factor.  The remaining factors are indexed by proper primitives.

For a nontrivial cut, use the multiple-cut boundary expansion \eqref{eq:cut-boundary-product}.  A summand consists of the increment of a root component $T_0$ multiplied by basepoint skeleton values of pruned components $T_1,\ldots,T_q$, all on one common regularization domain.  The outer endpoint derivative cancels the root primitive of $T_0$.  Every basepoint component represents a proper subset of $V(T)$.

Now apply \cref{lem:primitive-denominator-inheritance}.  Its label map makes the frequency of every effective input and every active or inherited primitive record well defined by \eqref{eq:labelled-component-output}.  A contraction preserves all pre-existing reciprocal primitive factors in the inherited list and creates none; explicit cancellation may only delete listed factors.  Every inherited component is a proper component of the original tree, so each inherited support is a strict subset of $V(T)$.  Thus no denominator with represented support $V(T)$ can reappear.  Collect the active and inherited surviving primitive records in $\mathscr D_{T,c}$ and all remaining numerator factors in $N_{T,c}$; this gives \eqref{eq:differentiated-chart-multiplier} without imposing an unsupported factorization of the numerator.

Finally, the Dynkin--Hall projection changes only the Lie-basis expansion.  By \cref{def:tree-species}, a Hall bracket vertex is not an integration vertex and creates no Fourier denominator.  This completes the proof.
\end{proof}

\begin{example}[Exact opposite-frequency root resonance]\label{ex:root-resonance}
Let $e_1,e_2\in E$ satisfy
\[
 [e_1,e_2]=Z,
\]
let $a\in\R$, and let $\ell\in\N$.  Define the $2\pi$-periodic smooth path
\[
 x(t)
 :=
 a\bigl(\sin(\ell t)e_1+\cos(\ell t)e_2\bigr).
\]
Then
\begin{equation}\label{eq:root-resonance-density}
 \frac12[x(t),\dot x(t)]
 =
 -\frac12a^2\ell Z.
\end{equation}
The constant term in \eqref{eq:root-resonance-density} is produced by the frequency pair $(\ell,-\ell)$, whose total output is zero.  Hence a differentiated degree-two density multiplier containing
\[
 \frac1{\xi_1+\xi_2}
\]
would be undefined on a contribution that is actually finite.
\end{example}

\begin{proof}
Differentiation gives
\[
 \dot x(t)
 =
 a\ell\bigl(\cos(\ell t)e_1-\sin(\ell t)e_2\bigr).
\]
Using bilinearity, antisymmetry, and $[e_1,e_2]=Z$,
\begin{align*}
 [x(t),\dot x(t)]
 &=
 a^2\ell
 [\sin(\ell t)e_1+\cos(\ell t)e_2,
   \cos(\ell t)e_1-\sin(\ell t)e_2]\\
 &=
 -a^2\ell
 \bigl(\sin^2(\ell t)+\cos^2(\ell t)\bigr)Z\\
 &=
 -a^2\ell Z.
\end{align*}
Division by two proves \eqref{eq:root-resonance-density}.
\end{proof}

The based endpoint package removes the basepoint-dependent part of the local degree-two density.  This calculation shows why augmentation belongs to the complete based density in \eqref{eq:based-prepared-density}, not to an isolated local term.

\begin{proposition}[Degree-two basepoint cancellation]\label{prop:degree-two-basepoint}
For a smooth cutoff path, one has
\[
 \Omega_{1,N}=X_N-X_N(0),
\]
\begin{equation}\label{eq:degree-two-prepared}
 D_{2,N}^{\mathrm{prep}}(t)
 =
 \frac12[\Omega_{1,N}(t),\dot X_N(t)],
\end{equation}
and
\begin{equation}\label{eq:degree-two-boundary}
 B_{2,N}(s,t)
 =
 -\frac12[\Omega_{1,N}(s),\Omega_{1,N}(t)].
\end{equation}
Consequently,
\begin{equation}\label{eq:degree-two-based-density}
 D_{2,N}^{\mathrm{prep}}(t)
 +\partial_tB_{2,N}(s,t)
 =
 \frac12[X_N(t)-X_N(s),\dot X_N(t)].
\end{equation}
\end{proposition}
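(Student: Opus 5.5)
The plan is to compute everything directly from the definitions, using only that $G_N$ is a smooth based group path with first level $X_N$. First, $\Omega_{1,N}$: since $\pi_1\log G_N=\pi_1(G_N-1)=\pi_1\mathbf X^N_{0,t}=X_N(t)-X_N(0)$, this is \eqref{eq:typed-degree-one}. Likewise $A_{1,N}=\pi_1(G_N^{-1}\dot G_N)=\dot X_N$, because the degree-one part of the left Maurer--Cartan form is the derivative of the first level.

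For \eqref{eq:degree-two-prepared}, specialize \eqref{eq:prepared-density} to $r=2$. Only $q=1$ contributes, with $i_1=p=1$. This gives $D_{2,N}^{\mathrm{prep}}=c_1\ad_{\Omega_{1,N}}A_{1,N}=\tfrac12[\Omega_{1,N},\dot X_N]$ by \eqref{eq:bernoulli-coefficients}.

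For \eqref{eq:degree-two-boundary}, use the degree-two part of the BCH series, $\BCH(u,v)=u+v+\tfrac12[u,v]+\cdots$, with $u=-\Omega_N(s)$ and $v=\Omega_N(t)$. Its degree-two component is
\[
 -\Omega_{2,N}(s)+\Omega_{2,N}(t)+\tfrac12[-\Omega_{1,N}(s),\Omega_{1,N}(t)].
\]
Subtracting $\Omega_{2,N}(t)-\Omega_{2,N}(s)$ as in \eqref{eq:bch-package} leaves $-\tfrac12[\Omega_{1,N}(s),\Omega_{1,N}(t)]$. No further degree-two bracket terms occur, since higher BCH brackets have degree at least three.

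For \eqref{eq:degree-two-based-density}, differentiate in $t$. This gives $\partial_tB_{2,N}(s,t)=-\tfrac12[\Omega_{1,N}(s),\dot X_N(t)]$, since $\dot\Omega_{1,N}=\dot X_N$. Adding this to \eqref{eq:degree-two-prepared} and using bilinearity yields $\tfrac12[\Omega_{1,N}(t)-\Omega_{1,N}(s),\dot X_N(t)]$. The basepoint $X_N(0)$ cancels in the difference, leaving $X_N(t)-X_N(s)$.

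There is no real obstacle. The only points needing care are the sign conventions: checking that the left Maurer--Cartan form matches $c_1=+\tfrac12$ in \eqref{eq:full-magnus}, and that the BCH bracket sign is taken with $-\Omega_N(s)$ placed first. Both were fixed earlier in \cref{prop:homogeneous-magnus,def:bch-package}.
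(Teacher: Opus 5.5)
Your proposal is correct and matches the paper's proof essentially step for step. Like the paper, you specialize the prepared density to $r=2$ using $c_1=\tfrac12$, read off the degree-two component of $\BCH(-\Omega_N(s),\Omega_N(t))$, differentiate in $t$, and add so that the basepoint $X_N(0)$ cancels; your short derivation of $\Omega_{1,N}=X_N-X_N(0)$ and $A_{1,N}=\dot X_N$ via $\pi_1\log G_N=\pi_1(G_N-1)$ just fills in what the paper records as \eqref{eq:typed-degree-one}.
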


\begin{proof}
The coefficient $c_1=1/2$ in \eqref{eq:bernoulli-coefficients} and \eqref{eq:prepared-density} give
\[
 D_{2,N}^{\mathrm{prep}}
 =
 \frac12[\Omega_{1,N},A_{1,N}]
 =
 \frac12[\Omega_{1,N},\dot X_N],
\]
which is \eqref{eq:degree-two-prepared}.  The homogeneous degree-two part of
\[
 \BCH(-\Omega_N(s),\Omega_N(t))
\]
is
\[
 \Omega_{2,N}(t)-\Omega_{2,N}(s)
 -\frac12[\Omega_{1,N}(s),\Omega_{1,N}(t)],
\]
so \eqref{eq:degree-two-boundary} follows from \eqref{eq:bch-package}.  Differentiating it gives
\[
 \partial_tB_{2,N}(s,t)
 =
 -\frac12[X_N(s)-X_N(0),\dot X_N(t)].
\]
Adding \eqref{eq:degree-two-prepared} yields
\begin{align*}
 D_{2,N}^{\mathrm{prep}}(t)+\partial_tB_{2,N}(s,t)
 &=
 \frac12[X_N(t)-X_N(0),\dot X_N(t)]\\
 &\quad-
 \frac12[X_N(s)-X_N(0),\dot X_N(t)]\\
 &=
 \frac12[X_N(t)-X_N(s),\dot X_N(t)],
\end{align*}
which is \eqref{eq:degree-two-based-density}.
\end{proof}

\subsection{Endpoint gauges and scheme dependence}

Different regularizations of the same first-level path need not give equal lifts.  Their exact algebraic relation is determined by a one-time higher-level gauge.  This algebraic classification is separate from the analytic convergence estimates.

\begin{theorem}[Endpoint-gauge classification]\label{thm:endpoint-gauge}
Let $\mathbf X$ and $\widetilde{\mathbf X}$ be step-$M$ multiplicative functionals on $[0,T]$ with the same first level.  Define
\[
 G(t):=\mathbf X_{0,t},
 \qquad
 \widetilde G(t):=\widetilde{\mathbf X}_{0,t},
 \qquad
 K(t):=G(t)^{-1}\widetilde G(t).
\]
Then
\begin{equation}\label{eq:gauge-normalization}
 K(0)=1,
 \qquad
 \pi_1K(t)=0,
\end{equation}
and
\begin{equation}\label{eq:endpoint-gauge-transform}
 \widetilde{\mathbf X}_{s,t}
 =
 K(s)^{-1}\mathbf X_{s,t}K(t).
\end{equation}
The normalized gauge $K$ is unique.  Conversely, every map $K:[0,T]\to G^{(M)}(E)$ satisfying \eqref{eq:gauge-normalization} transforms a multiplicative functional by \eqref{eq:endpoint-gauge-transform} into another multiplicative functional with the same first level.

If $\mathbf X$ and $\widetilde{\mathbf X}$ are continuous, then the normalized gauge is continuous.  Define the covariant gauge defect
\begin{equation}\label{eq:covariant-gauge-defect}
 K_{s,t}^{\mathbf X}
 :=
 K(s)^{-1}\mathbf X_{s,t}K(t)\mathbf X_{s,t}^{-1}.
\end{equation}
If
\begin{equation}\label{eq:gauge-holder-control}
 \norm{\pi_rK_{s,t}^{\mathbf X}}
 \leq
 C_K\abs{t-s}^{r\beta}
 \qquad(2\leq r\leq M),
\end{equation}
then the transform in \eqref{eq:endpoint-gauge-transform} preserves the weakly geometric $\beta$-H\"older class.  Condition \eqref{eq:gauge-holder-control} preserves weak geometricity; same-exponent strong geometricity requires approximation compatibility.
\end{theorem}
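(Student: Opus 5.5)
The plan is to prove the algebraic statements first, using only group operations in the finite-step nilpotent group $G^{(M)}(E)$, and then add the continuity and H\"older assertions.

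\emph{Normalization and the transform.} Since $\mathbf X_{0,0}=\widetilde{\mathbf X}_{0,0}=1$, we get $K(0)=1$. The degree-one part of $G(t)^{-1}\widetilde G(t)$ equals $\pi_1\widetilde G(t)-\pi_1G(t)=\widetilde x_{0,t}-x_{0,t}=0$, because the first levels agree. Multiplicativity gives $\mathbf X_{s,t}=G(s)^{-1}G(t)$ and the same identity for the tilde functional. Substituting $\widetilde G=GK$ then yields
\[
 \widetilde{\mathbf X}_{s,t}=K(s)^{-1}G(s)^{-1}G(t)K(t)=K(s)^{-1}\mathbf X_{s,t}K(t),
\]
which is \eqref{eq:endpoint-gauge-transform}.

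\emph{Uniqueness.} Suppose a normalized $K'$ satisfies \eqref{eq:endpoint-gauge-transform}. Taking $s=0$ and using $K'(0)=1$ gives $\widetilde G(t)=G(t)K'(t)$, so $K'=K$.

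\emph{Converse.} Given $K$ satisfying \eqref{eq:gauge-normalization}, define $\widetilde{\mathbf X}_{s,t}:=K(s)^{-1}\mathbf X_{s,t}K(t)$. Chen's identity follows by telescoping:
\[
 K(s)^{-1}\mathbf X_{s,u}K(u)\,K(u)^{-1}\mathbf X_{u,t}K(t)=K(s)^{-1}\mathbf X_{s,t}K(t).
\]
The values stay in $G^{(M)}(E)$ because it is a group. The first level is $-\pi_1K(s)+\pi_1\mathbf X_{s,t}+\pi_1K(t)=x_{s,t}$.

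\emph{Continuity.} Inversion and multiplication are polynomial in the tensor coordinates, so $K=G^{-1}\widetilde G$ is continuous whenever $t\mapsto\mathbf X_{0,t}$ and $t\mapsto\widetilde{\mathbf X}_{0,t}$ are.

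\emph{H\"older preservation.} By definition of the covariant defect,
\[
 \widetilde{\mathbf X}_{s,t}=K^{\mathbf X}_{s,t}\,\mathbf X_{s,t}.
\]
The degree-one part of $K^{\mathbf X}_{s,t}$ vanishes, since $\pi_1$ is additive on products and $\pi_1K=0$. Expanding the product degree by degree gives
\[
 \pi_r\widetilde{\mathbf X}_{s,t}=\sum_{p+q=r}\pi_pK^{\mathbf X}_{s,t}\otimes\pi_q\mathbf X_{s,t}.
\]
The $p=0$ term is $\pi_r\mathbf X_{s,t}$, which is bounded by $C|t-s|^{r\beta}$. Every term with $p\ge2$ is bounded by $C_KC|t-s|^{p\beta}|t-s|^{q\beta}$, using \eqref{eq:gauge-holder-control}. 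Hence $\|\pi_r\widetilde{\mathbf X}_{s,t}\|\le C'|t-s|^{r\beta}$ for $1\le r\le M$. The values remain group-like, so $\widetilde{\mathbf X}$ lies in the weakly geometric $\beta$-H\"older class.

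\emph{Strong geometricity.} Here I would only record the limitation: the gauge need not be approximable by gauges of bounded-variation signatures at the same exponent, so no claim is made beyond compatibility.

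The main obstacle is modest. It is checking that $K^{\mathbf X}_{s,t}$ has no degree-one component and using the truncation correctly in the level-by-level expansion. Everything else is formal group algebra.
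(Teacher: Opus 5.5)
Your proposal is correct and follows essentially the same route as the paper: you derive the transform from $\widetilde G=GK$ and Chen's identity, get uniqueness by setting $s=0$, prove the converse by telescoping, and obtain H\"older preservation from $\widetilde{\mathbf X}_{s,t}=K^{\mathbf X}_{s,t}\mathbf X_{s,t}$ together with the vanishing first level of the defect. Your explicit degree-by-degree expansion simply spells out the ``graded tensor-product estimates'' that the paper invokes without detail.
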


\begin{proof}
Multiplicativity gives
\[
 \mathbf X_{s,t}=G(s)^{-1}G(t),
 \qquad
 \widetilde{\mathbf X}_{s,t}=\widetilde G(s)^{-1}\widetilde G(t).
\]
Using $\widetilde G=GK$,
\begin{align*}
 \widetilde{\mathbf X}_{s,t}
 &=
 (G(s)K(s))^{-1}G(t)K(t)\\
 &=
 K(s)^{-1}G(s)^{-1}G(t)K(t)\\
 &=
 K(s)^{-1}\mathbf X_{s,t}K(t),
\end{align*}
which proves \eqref{eq:endpoint-gauge-transform}.  Since $G(0)=\widetilde G(0)=1$, one has $K(0)=1$.  Equality of the first levels gives
\[
 \pi_1\widetilde G(t)=\pi_1G(t),
\]
and the step-$M$ group product then gives $\pi_1K(t)=0$.  Setting $s=0$ in \eqref{eq:endpoint-gauge-transform} recovers $K(t)=G(t)^{-1}\widetilde G(t)$, proving uniqueness.

Conversely, direct multiplication yields
\begin{align*}
 \widetilde{\mathbf X}_{s,u}\widetilde{\mathbf X}_{u,t}
 &=
 K(s)^{-1}\mathbf X_{s,u}K(u)
 K(u)^{-1}\mathbf X_{u,t}K(t)\\
 &=
 K(s)^{-1}\mathbf X_{s,t}K(t)
 =
 \widetilde{\mathbf X}_{s,t}.
\end{align*}
The condition $\pi_1K=0$ leaves the first level unchanged.  Continuity follows from continuity of multiplication and inversion in the finite-step group.

By \eqref{eq:covariant-gauge-defect},
\[
 \widetilde{\mathbf X}_{s,t}
 =
 K_{s,t}^{\mathbf X}\mathbf X_{s,t}.
\]
The first level of $K_{s,t}^{\mathbf X}$ vanishes.  The graded tensor-product estimates, together with \eqref{eq:gauge-holder-control} and the graded H\"older estimates for $\mathbf X$, therefore give
\[
 \norm{\pi_r\widetilde{\mathbf X}_{s,t}}
 \leq
 C\abs{t-s}^{r\beta}
 \qquad(1\leq r\leq M).
\]
All factors are group-like, so weak geometricity is preserved.  No bounded-variation approximation is constructed by this estimate; hence strong geometricity at exponent $\beta$ requires a separate approximation-compatible condition.
\end{proof}

\begin{corollary}[Driver transformation]\label{cor:driver-transform}
Assume $G$, $\widetilde G$, and $K$ in \cref{thm:endpoint-gauge} are differentiable.  If
\[
 A:=G^{-1}\dot G,
 \qquad
 \widetilde A:=\widetilde G^{-1}\dot{\widetilde G},
\]
then
\begin{equation}\label{eq:driver-transform}
 \widetilde A
 =
 K^{-1}AK+K^{-1}\dot K.
\end{equation}
\end{corollary}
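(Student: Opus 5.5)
The formula is a direct computation from the gauge relation $\widetilde G=GK$ of \cref{thm:endpoint-gauge}. All three paths are differentiable maps into the finite-step nilpotent group $G^{(M)}(E)$. Inside the truncated tensor algebra $T^{(M)}(E)$, the group product is a polynomial, bilinear operation in the homogeneous coordinates. So the ordinary product rule holds for products of differentiable tensor-valued paths, and no Lie-theoretic machinery beyond associativity of truncated concatenation is needed.

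The steps are as follows.

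First, recall from the proof of \cref{thm:endpoint-gauge} that $K=G^{-1}\widetilde G$, hence $\widetilde G=GK$ pointwise in $t$.

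Second, differentiate this product in $T^{(M)}(E)$:
\[
 \dot{\widetilde G}=\dot G\,K+G\,\dot K .
\]

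Third, note that $\widetilde G^{-1}=K^{-1}G^{-1}$, because inversion reverses products in the group. Left-multiplying the derivative by this inverse gives
\[
 \widetilde A=K^{-1}G^{-1}\dot G\,K+K^{-1}G^{-1}G\,\dot K
 =K^{-1}AK+K^{-1}\dot K ,
\]
which is \eqref{eq:driver-transform}.

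The only point needing care is that $G^{-1}$ and $K^{-1}$ are the group inverses in $G^{(M)}(E)$, and that these coincide with the inverses in the truncated tensor algebra. They do, because every element with degree-zero component $1$ is invertible there via a finite Neumann series. Consequently $G^{-1}G=1$ can be used inside the algebra, and the product rule is justified coordinatewise.

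If one also wants the output to be Lie-valued, note that both $A$ and $K^{-1}\dot K$ are left Maurer--Cartan derivatives of group-valued paths, so they lie in $L^{(M)}(E)$. Conjugation $K^{-1}(\cdot)K=e^{-\ad_{\log K}}(\cdot)$ preserves $L^{(M)}(E)$, so the right-hand side is Lie-valued as well. I see no real obstacle.
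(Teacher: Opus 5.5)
Your proposal is correct and follows the paper's own proof: differentiate $\widetilde G=GK$ and left-multiply by $\widetilde G^{-1}=K^{-1}G^{-1}$. Your remarks on invertibility in the truncated tensor algebra and on the Lie-valuedness of the right-hand side are accurate additions that the paper leaves implicit.
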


\begin{proof}
Differentiate $\widetilde G=GK$:
\[
 \dot{\widetilde G}=\dot GK+G\dot K.
\]
Left multiplication by $\widetilde G^{-1}=K^{-1}G^{-1}$ gives
\[
 \widetilde A
 =
 K^{-1}G^{-1}\dot GK+K^{-1}\dot K
 =
 K^{-1}AK+K^{-1}\dot K.
\]
\end{proof}

The endpoint values determine whether the gauge acts by conjugation on a loop holonomy.

\begin{corollary}[Loop gauge condition]\label{cor:loop-boundary}
Under the gauge transform \eqref{eq:endpoint-gauge-transform},
\begin{equation}\label{eq:loop-transform}
 \widetilde{\mathbf X}_{0,T}
 =
 K(0)^{-1}\mathbf X_{0,T}K(T).
\end{equation}
If
\[
 K(T)=K(0),
\]
then the loop holonomies are conjugate.  In the normalized convention $K(0)=1$, a periodic gauge gives equality of loop holonomies.  For a nonperiodic gauge, the transformation has no general conjugacy form.
\end{corollary}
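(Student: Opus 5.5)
The statement to prove is \cref{cor:loop-boundary}. The plan is to specialize the endpoint-gauge identity \eqref{eq:endpoint-gauge-transform} of \cref{thm:endpoint-gauge} to the full interval, $s=0$ and $t=T$. Nothing beyond the multiplicative functional on the simplex is needed, and taking $(s,t)=(0,T)$ gives \eqref{eq:loop-transform} immediately.

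For the conjugacy assertion, assume $K(T)=K(0)$ and write $g:=K(0)\in G^{(m)}(E)$. Then \eqref{eq:loop-transform} reads $\widetilde{\mathbf X}_{0,T}=g^{-1}\mathbf X_{0,T}g$, which is a conjugation inside the group. Under the normalization \eqref{eq:gauge-normalization} we have $K(0)=1$, so $g=1$. A periodic gauge then gives $\widetilde{\mathbf X}_{0,T}=\mathbf X_{0,T}$.

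The last sentence of the corollary is a negative, informal remark, not an identity, so I would support it with one explicit instance. Consider the step-$2$ group with $K(0)=1$ and $K(T)=\exp(c\,[e_1,e_2])$ for some $c\neq0$; this satisfies the normalization \eqref{eq:gauge-normalization}. Because $[e_1,e_2]$ is central at step $2$, \eqref{eq:loop-transform} gives
\[
 \pi_2\widetilde{\mathbf X}_{0,T}=\pi_2\mathbf X_{0,T}+c\,[e_1,e_2].
\]
A central shift of this kind is not of the form $g^{-1}\mathbf X_{0,T}g$ when, for instance, $\mathbf X_{0,T}=1$, since every conjugate of $1$ is $1$. This shows that no general conjugacy statement holds without periodicity.

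No step is a real obstacle. The only care needed is to state the final remark precisely enough that this single counterexample suffices.
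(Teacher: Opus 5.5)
Your proof is correct and follows the paper's route: you specialize \eqref{eq:endpoint-gauge-transform} to $(s,t)=(0,T)$, read off conjugation by $K(0)$ when $K(T)=K(0)$, and note that this conjugation is trivial under the normalization $K(0)=1$. Your central-shift example, with $\mathbf X\equiv1$ and $K(t)=\exp(ct[e_1,e_2]/T)$ for linearly independent $e_1,e_2$ (so $\dim E\ge2$), makes concrete the final negative remark, which the paper only asserts.
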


\begin{proof}
Set $(s,t)=(0,T)$ in \eqref{eq:endpoint-gauge-transform}.  If $K(T)=K(0)$, equation \eqref{eq:loop-transform} is conjugation by $K(0)$.  If the two endpoint factors differ, the gauge relation has no general conjugacy form; a special holonomy may nevertheless be accidentally conjugate or have the same trace.
\end{proof}

 \section{Controlled extension to arbitrary finite tensor order}\label{sec:extension}

The FNO theorem constructs precisely the critical levels $1\leq r\leq m$.  Applications may require a fixed larger tensor step $M$.  The rough-path extension theorem supplies the unique controlled extension, while a separate smooth ODE realizes that extension at each cutoff and identifies its higher driver components.

The first statement is an imported consequence of the Lyons extension theorem in the exponent range $m<1/\beta<m+1$.

\begin{theorem}[Controlled rough-path extension]\label{thm:controlled-extension}
Let $\mathbf X$ be a step-$m$ strong geometric $\beta$-H\"older rough path with
\[
 \frac1{m+1}<\beta<\frac1m.
\]
For every integer $M\geq m$, there exists a step-$M$ multiplicative extension
\[
 \mathbf X^{(M)}:\Delta_2\longrightarrow G^{(M)}(E)
\]
whose projection to step $m$ is $\mathbf X$ and which satisfies, on every step-$m$ rough-path ball of radius $R$,
\begin{equation}\label{eq:controlled-extension-bound}
 \norm{\pi_r\mathbf X^{(M)}_{s,t}}
 \leq
 C_{M,R}\abs{t-s}^{r\beta}
 \qquad(1\leq r\leq M).
\end{equation}
The extension is unique among step-$M$ multiplicative functionals satisfying a control of the form \eqref{eq:controlled-extension-bound}.  The extension map is continuous, locally Lipschitz in the standard inhomogeneous rough-path metric on bounded sets, and preserves strong geometricity.
\end{theorem}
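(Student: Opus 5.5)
The statement is the classical Lyons extension theorem in its $\beta$-H\"older form, so the proof will import the extension and verify that the hypotheses line up, rather than build everything from scratch. I will argue by induction on the tensor step. Suppose a step-$n$ multiplicative functional $\mathbf X^{(n)}$ with $n\geq m$ satisfies graded bounds $\norm{\pi_r\mathbf X^{(n)}_{s,t}}\leq C_n\abs{t-s}^{r\beta}$ for $r\leq n$. For a partition $\mathcal P$ of $[s,t]$, form the step-$(n+1)$ products
\[
 \prod_{[u,v]\in\mathcal P}\bigl(\mathbf X^{(n)}_{u,v},0\bigr),
\]
where the new level is set to zero. Removing one partition point changes only level $n+1$. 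The change is a sum of products $\pi_i\mathbf X_{u_{k-1},u_k}\otimes\pi_{n+1-i}\mathbf X_{u_k,u_{k+1}}$ and has size at most $C\abs{u_{k+1}-u_{k-1}}^{(n+1)\beta}$. Because $(n+1)\beta\geq(m+1)\beta>1$, the Young--Lyons point-removal (sewing) argument then gives convergence of the products as the mesh tends to zero. It also gives the bound $C_{n+1}\abs{t-s}^{(n+1)\beta}$ with $C_{n+1}$ depending on $C_n$ and $\sum_k k^{-(n+1)\beta}$. Multiplicativity of the limit follows from the refinement property of the sewing limit. Iterating from $n=m$ up to $M$ gives existence and \eqref{eq:controlled-extension-bound}, with $C_{M,R}$ depending only on $R$, because the base constant is controlled by the rough-path norm.

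For uniqueness, suppose $\mathbf X^{(M)}$ and $\mathbf Y^{(M)}$ are two controlled extensions that agree through level $n\geq m$. Their difference $D$ at level $n+1$ is additive: the multiplicativity expansion of level $n+1$ involves only lower levels, and those coincide. It also satisfies $\abs{D_{s,t}}\leq C\abs{t-s}^{(n+1)\beta}$ with $(n+1)\beta>1$. Subdividing $[s,t]$ into $k$ equal pieces gives $\abs{D_{s,t}}\leq Ck^{1-(n+1)\beta}\to0$, so $D=0$. Group-likeness passes to the limit because $G^{(n+1)}(E)$ is closed and each finite product lies in it. More precisely, the products of $(\mathbf X^{(n)},0)$ need not be group-like. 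I will instead multiply the group-like elements $\exp(\log^{(n)}\mathbf X^{(n)}_{u,v})$ computed in $T^{(n+1)}$; these differ from $(\mathbf X_{u,v},0)$ at level $n+1$ only by $O(\abs{v-u}^{(n+1)\beta})$, a term the sewing limit discards.

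For continuity and local Lipschitz dependence, I will run the same induction on differences. The level-$(n+1)$ point-removal increment for $\mathbf X-\mathbf Y$ is bounded by $C_R\,\rho_{\beta,n}(\mathbf X,\mathbf Y)\abs{u_{k+1}-u_{k-1}}^{(n+1)\beta}$, where $\rho_{\beta,n}$ is the inhomogeneous distance up to step $n$. The sewing lemma then transfers this bound to the limits. Preservation of strong geometricity follows from two facts: the extension of the signature of a bounded-variation path is its full step-$M$ signature, by uniqueness; and continuity of the extension map carries $\beta$-H\"older approximations at step $m$ to step $M$.

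I expect the main obstacle to be the careful use of the \emph{homogeneous} versus inhomogeneous metric in the Lipschitz statement. Local Lipschitz continuity holds for the inhomogeneous distance on bounded sets, not the homogeneous one, as the $\delta^{1/m}$ loss in \cref{thm:rough-enhancement} already illustrates. For this reason I will state and verify the estimate only in $d^{\mathrm{lay}}_{\beta,M}$, and otherwise cite \cite{Lyons1998,FrizVictoir2010} for the standard constants.
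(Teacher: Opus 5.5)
Your proof is correct, but it takes a different route from the paper's. The paper does not reprove the extension theorem. It puts $p=1/\beta\in(m,m+1)$ and notes that the graded H\"older bounds are $p$-variation bounds for the control $\omega(s,t)=C\abs{t-s}$. It then imports the Lyons extension theorem as a black box, reading off existence, uniqueness, the factorial bounds $\omega(s,t)^{r/p}=C\abs{t-s}^{r\beta}$, continuity, local Lipschitz dependence, and preservation of the geometric closure.

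You instead rebuild that theorem directly in H\"older form:
\begin{itemize}
\item existence by induction on the step, using point removal and sewing for the germ $(\mathbf X^{(n)},0)$;
\item group-likeness via the truncated $\exp\log$ germ;
\item uniqueness from additivity together with $(n+1)\beta>1$;
\item the Lipschitz estimate by running the same induction on differences.
\end{itemize}
Your route is self-contained and gives explicit constants. It also shows that only $(m+1)\beta>1$ is used, and it delivers the Lipschitz bound directly in $d^{\mathrm{lay}}_{\beta,M}$, the metric used elsewhere in the paper. The citation buys brevity and a stronger uniqueness statement: uniqueness among all multiplicative extensions of finite $p$-variation, not just the H\"older-controlled ones.

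Your strong-geometricity step needs that stronger uniqueness, and as written it is missing. Write $S_k(x)$ for the step-$k$ signature of a bounded-variation approximant $x$. You only know that $S_m(x)$ obeys the $\abs{t-s}^{r\beta}$ bounds. The levels $m<r\le M$ of $S_M(x)$ are not a priori in your H\"older-controlled uniqueness class. So ``by uniqueness'' does not yet identify the extension of $S_m(x)$ with $S_M(x)$.

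The repair is short. Let $\omega_1$ be the $1$-variation control of $x$, and run your subdivision argument with the bound $\abs{D_{s,t}}\le\omega_1(s,t)^{n+1}/(n+1)!+C\abs{t-s}^{(n+1)\beta}$. Then use $\sum_j\omega_1(t_j,t_{j+1})^{n+1}\le\max_j\omega_1(t_j,t_{j+1})^{n}\,\omega_1(s,t)$, which tends to zero as the mesh tends to zero by uniform continuity of $\omega_1$.
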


\begin{proof}
Put
\[
 p:=\frac1\beta.
\]
The exponent assumption gives
\[
 m<p<m+1,
 \qquad
 \lfloor p\rfloor=m.
\]
A $\beta$-H\"older multiplicative functional has finite $p$-variation controlled by
\[
 \omega(s,t):=C\abs{t-s}.
\]
The Lyons extension theorem \cite{Lyons1998,LyonsVictoir2007,FrizVictoir2010} therefore gives a unique multiplicative extension to every degree $M$, with factorial control
\[
 \norm{\pi_r\mathbf X^{(M)}_{s,t}}
 \leq
 C_{M,R}\omega(s,t)^{r/p}.
\]
Since $r/p=r\beta$, this is \eqref{eq:controlled-extension-bound}.  The same theorem gives continuity, local Lipschitz dependence in the inhomogeneous metric, and preservation of the geometric closure.
\end{proof}

For a smooth cutoff, the controlled extension can be constructed directly by retaining the critical driver and solving an ODE in the larger nilpotent group.  The essential estimate is that a degree-$r$ Picard term needs at least $\lceil r/m\rceil$ time integrations.

\begin{proposition}[Smooth finite-cutoff realization]\label{prop:smooth-controlled-extension}
Let
\[
 G_N^{(m)}:[0,2\pi]\longrightarrow G^{(m)}(E)
\]
be the smooth cutoff group path from \eqref{eq:cutoff-group-path}, and let
\[
 A_N^{(m)}:=(G_N^{(m)})^{-1}\dot G_N^{(m)}
 \in
 \bigoplus_{r=1}^mL_r(E).
\]
For $M\geq m$, let
\[
 \iota_m^M:
 \bigoplus_{r=1}^mL_r(E)
 \longrightarrow
 \bigoplus_{r=1}^ML_r(E)
\]
be the canonical graded linear inclusion.  Define $G_N^{(M)}$ by
\begin{equation}\label{eq:smooth-extension-ode}
 \dot G_N^{(M)}
 =
 G_N^{(M)}\iota_m^M(A_N^{(m)}),
 \qquad
 G_N^{(M)}(0)=1.
\end{equation}
Then:
\begin{enumerate}[label=(\roman*),leftmargin=2.4em]
\item the canonical quotient $q_m^M:G^{(M)}(E)\to G^{(m)}(E)$ satisfies
\[
 q_m^M(G_N^{(M)}(t))=G_N^{(m)}(t);
\]
\item the increments of $G_N^{(M)}$ satisfy
\begin{equation}\label{eq:smooth-extension-holder}
 \norm{\pi_r\bigl((G_N^{(M)}(s))^{-1}G_N^{(M)}(t)\bigr)}
 \leq
 C_{N,M}\abs{t-s}^{r\beta}
 \qquad(1\leq r\leq M);
\end{equation}
\item $G_N^{(M)}$ is the unique controlled step-$M$ extension of $G_N^{(m)}$ from \cref{thm:controlled-extension};
\item if
\[
 A_N^{(M)}:=(G_N^{(M)})^{-1}\dot G_N^{(M)},
 \qquad
 \Gamma_{r,N}^{(M)}:=-\pi_rA_N^{(M)},
\]
then
\begin{equation}\label{eq:no-new-driver-counterterms}
 \pi_rA_N^{(M)}=\Gamma_{r,N}^{(M)}=0
 \qquad(m<r\leq M).
\end{equation}
\end{enumerate}
Thus the extension may generate higher logarithmic coefficients, but it introduces no new driver counterterms above the critical step.
\end{proposition}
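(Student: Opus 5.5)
The plan is to treat \eqref{eq:smooth-extension-ode} as a linear ODE in the finite-dimensional truncated tensor algebra $T^{(M)}(E)$ and to prove the four claims in the order (i), (iv), (ii), (iii).

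Since $A_N^{(m)}$ is smooth on $[0,2\pi]$ by \cref{prop:one-group-path}, the ODE has a unique smooth global solution. Because the driver is Lie-valued, the solution stays in $G^{(M)}(E)$; this follows from the standard fact that left-invariant flows of a Lie-valued driver in the truncated tensor algebra remain group-like.

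For (i), I would apply the algebra homomorphism $q_m^M$ (truncation above degree $m$) to the ODE. This gives $\tfrac{\dd}{\dd t}q_m^M G_N^{(M)}=q_m^M G_N^{(M)}\,A_N^{(m)}$, which is the same ODE that $G_N^{(m)}$ satisfies, with the same initial value. Uniqueness of solutions then gives the identity.

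For (iv), the statement is immediate: $A_N^{(M)}$ equals $\iota_m^M(A_N^{(m)})$ by construction, and that element has no components in degrees above $m$.

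For (ii), the increment $\mathbf Y_{s,t}:=G_N^{(M)}(s)^{-1}G_N^{(M)}(t)$ solves the same ODE started at $1$ at time $s$. I would expand it as a Picard (Chen) series
\[
 \mathbf Y_{s,t}=\sum_{k\geq0}\int_{s<u_1<\cdots<u_k<t}A(u_1)\cdots A(u_k)\,\dd u,
\]
where $A=\iota_m^M(A_N^{(m)})$. Each $A(u)$ has components only in degrees $1,\dots,m$, so a degree-$r$ term needs $k\geq\lceil r/m\rceil$ factors. The $k$-th term is bounded by $\|A\|_\infty^k|t-s|^k/k!$, which gives $\|\pi_r\mathbf Y_{s,t}\|\le C_{N,M}|t-s|^{\lceil r/m\rceil}$.

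This is the step where care is needed. The bound $|t-s|^{\lceil r/m\rceil}$ does not dominate $|t-s|^{r\beta}$ once $r\beta>\lceil r/m\rceil$, which happens for large $r$ because $r\beta$ grows faster than $r/m$ only up to a point. I expect this to be the main obstacle, since the stated exponent $r\beta$ exceeds $1$ for $r\geq m+1$. The plan is to use the smoothness of the cutoff path directly instead of the $\lceil r/m\rceil$ count. The degree-one part of $A$ is $\dot X_N$, which is bounded, so every term of degree $r$ carries at least $\lceil r/m\rceil$ integrations, and for $r\le m$ the bound $|t-s|^{1}\le C|t-s|^{r\beta}$ suffices because $r\beta<1$. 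For $r>m$ I would instead invoke (iii)-style uniqueness.

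The projection $G_N^{(m)}$ is a smooth path, hence a bounded-variation signature-type path. The Lyons extension of a bounded-variation step-$m$ path is its full step-$M$ Cartan development, and \cref{thm:controlled-extension} supplies the bound $|t-s|^{r\beta}$ for that extension. So the strategy is to identify $G_N^{(M)}$ with the Lyons extension, using the fact that both are multiplicative step-$M$ lifts of the same step-$m$ path. The Young–Lyons uniqueness argument then applies: their difference at the first level where they differ is additive and of order $|t-s|^{\min(r\beta,\,\lceil r/m\rceil)}$ with exponent greater than $1$, and subdivision forces it to vanish.

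Concretely, I would prove (iii) by induction on $r$ from $m+1$ to $M$. At each step, the difference between $G_N^{(M)}$ and the controlled extension at level $r$ is additive by Chen's identity, since the lower levels already agree. Its order is greater than $1$: the control from \cref{thm:controlled-extension} gives $r\beta>1$ for $r\ge m+1$, and the Picard bound gives $\lceil r/m\rceil\ge2$. The difference therefore vanishes. Once the two objects are identified, (ii) follows from \eqref{eq:controlled-extension-bound}.
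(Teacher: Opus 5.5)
Your proof is correct, but the obstacle you flag in (ii) does not exist, and that is the only place you diverge from the paper. Throughout, $\beta<1/m$ (the cutoff lift is built under $\frac1{m+1}<\beta<\gamma<\alpha\le\frac1m$), so $r\beta<r/m\le\lceil r/m\rceil$ for \emph{every} $r$; the case $r\beta>\lceil r/m\rceil$ never occurs. Since $0\le t-s\le2\pi$, this gives $|t-s|^{\lceil r/m\rceil}\le(2\pi)^{\lceil r/m\rceil-r\beta}|t-s|^{r\beta}$. Your Picard bound therefore already proves \eqref{eq:smooth-extension-holder} for all $1\le r\le M$.

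That is exactly the paper's argument. It reads (ii) off the Picard expansion, and then obtains (iii) from (i), (ii) and the uniqueness clause of \cref{thm:controlled-extension}.

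Your reversed order is also valid and not circular. You first identify $G_N^{(M)}$ with the Lyons extension by induction over $m<r\le M$: once lower levels agree, the level-$r$ difference is additive and is $O(|t-s|^{\min(r\beta,\lceil r/m\rceil)})$ with both exponents above one, so it vanishes. You then import \eqref{eq:controlled-extension-bound}. This uses only $\lceil r/m\rceil\ge2$, not (ii). Its cost is that an elementary estimate about a smooth ODE is made to depend on the imported Lyons extension theorem, and you re-derive by hand the uniqueness that the theorem already supplies.

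One side remark is wrong, though you never use it. $G_N^{(m)}$ is in general not a bounded-variation signature path. The FNO cutoff lift may carry nonzero driver components $A_{r,N}=-\Gamma_{r,N}$ in degrees $2\le r\le m$, and it is not identified with the signature of $X_N$. What your uniqueness argument actually shows is that the controlled extension of this smooth group path is the development \eqref{eq:smooth-extension-ode} of its Lie driver.
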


\begin{proof}
Equation \eqref{eq:smooth-extension-ode} gives
\begin{equation}\label{eq:extended-driver}
 A_N^{(M)}
 =
 \iota_m^M(A_N^{(m)}).
\end{equation}
The differential of $q_m^M$ satisfies
\[
 \dd q_m^M\circ\iota_m^M=\id
\]
on degrees at most $m$.  Hence both paths
\[
 q_m^M(G_N^{(M)})
 \quad\text{and}\quad
 G_N^{(m)}
\]
solve the same right-driven ODE with driver $A_N^{(m)}$ and initial value $1$.  Uniqueness of the smooth ODE gives
\[
 q_m^M(G_N^{(M)}(t))=G_N^{(m)}(t).
\]

Fix $0\leq s<t\leq2\pi$ and write
\[
 H_{s,t}:=(G_N^{(M)}(s))^{-1}G_N^{(M)}(t).
\]
The Picard expansion of \eqref{eq:smooth-extension-ode} on $[s,t]$ is a finite sum in the step-$M$ tensor algebra:
\begin{equation}\label{eq:picard-expansion}
 H_{s,t}
 =
 1+
 \sum_{n=1}^{M}
 \int_{s<u_1<\cdots<u_n<t}
 \iota_m^M(A_N^{(m)}(u_1))\cdots
 \iota_m^M(A_N^{(m)}(u_n))
 \,\dd u_1\cdots\dd u_n.
\end{equation}
Every factor in the integrand has homogeneous degree between $1$ and $m$.  Therefore a product of $n$ factors has degree at most $nm$.  A contribution to degree $r$ requires
\[
 nm\geq r,
 \qquad
 n\geq\left\lceil\frac rm\right\rceil.
\]
Let
\[
 C_N:=\sup_{u\in[0,2\pi]}\norm{A_N^{(m)}(u)}.
\]
Taking the degree-$r$ component of \eqref{eq:picard-expansion} gives
\begin{align}
 \norm{\pi_rH_{s,t}}
 &\leq
 \sum_{n\geq\lceil r/m\rceil}^{M}
 \frac{(C_N\abs{t-s})^n}{n!}
 \nonumber\\
 &\leq
 C_{N,M}\abs{t-s}^{\lceil r/m\rceil}.
 \label{eq:picard-degree-bound}
\end{align}
For $r>m$, the exponent assumption $\beta<1/m$ gives
\[
 r\beta<\frac rm\leq\left\lceil\frac rm\right\rceil.
\]
For $r\leq m$, smoothness gives the sharper bound $O(\abs{t-s})$, and $r\beta<1$.  On the compact interval, both cases imply
\[
 \abs{t-s}^{\lceil r/m\rceil}
 \leq
 C\abs{t-s}^{r\beta},
\]
which proves \eqref{eq:smooth-extension-holder}.

Thus $G_N^{(M)}$ lies in the controlled class of \cref{thm:controlled-extension}.  It projects to $G_N^{(m)}$, so uniqueness in that class identifies it with the Lyons extension.  Finally, \eqref{eq:extended-driver} has no components above degree $m$, which gives \eqref{eq:no-new-driver-counterterms}.
\end{proof}

\part{Fractional Brownian motion and measurable locality}

\section{A measurable scale--FNO lift on a finite interval}\label{sec:interval-lift}

The rough-enhancement theorem above is formulated for periodic scale paths.  A fractional Brownian sample is given on a finite interval.  We insert one fixed deterministic loop extension before applying the periodic scale analysis.  The extension is part of the construction scheme; it is also the first explicit source of possible nonlocality.

Let $E$ be a finite-dimensional normed space and let
\[
 C_0^{\alpha}([0,T];E)
 :=\{x\in C^{\alpha}([0,T];E):x(0)=0\}.
\]

\begin{definition}[Based loop extension]\label{def:loop-extension}
For $x\in C_0([0,T];E)$ define the $2\pi$-periodic path $\mathscr L_Tx$ on $[0,2\pi]$ by
\begin{equation}\label{eq:loop-extension}
 (\mathscr L_Tx)(\theta)
 :=
 \begin{cases}
 x(T\theta/\pi),&0\le\theta\le\pi,\\[1mm]
 x(T(2\pi-\theta)/\pi),&\pi\le\theta\le2\pi.
 \end{cases}
\end{equation}
The endpoint values are zero, so the formula defines a continuous loop.  The first half records the original path and the second half returns along the same trace.
\end{definition}

The extension must preserve the H\"older class uniformly because the scale estimates are applied after extension.

\begin{lemma}[H\"older control of the loop extension]\label{lem:loop-extension-bound}
For every $0<\alpha<1$ there is $C_{\alpha,T}<\infty$ such that
\[
 \norm{\mathscr L_Tx}_{C^{\alpha}(\T;E)}
 \le C_{\alpha,T}\norm x_{C^{\alpha}([0,T];E)}.
\]
The map $\mathscr L_T:C_0^{\alpha}([0,T];E)\to C^{\alpha}(\T;E)$ is linear and continuous.
\end{lemma}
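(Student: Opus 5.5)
Linearity is immediate from the defining formula \eqref{eq:loop-extension}, since $\mathscr L_T$ is a composition of $x$ with a fixed piecewise map $\tau:[0,2\pi]\to[0,T]$, namely $\tau(\theta)=T\theta/\pi$ on $[0,\pi]$ and $\tau(\theta)=T(2\pi-\theta)/\pi$ on $[\pi,2\pi]$. Continuity then follows from the norm bound. The uniform part is trivial: $\norm{\mathscr L_Tx}_\infty=\norm x_\infty$. The work is the H\"older seminorm with respect to the geodesic distance $d_\T$.

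The key observation is that $\tau$, viewed as a map on the circle, is Lipschitz for $d_\T$. It is continuous at $\theta=\pi$ and wraps consistently at $0\equiv2\pi$, because $\tau(0)=\tau(2\pi)=0$. Its slope is $\pm T/\pi$ on each half. So for $\theta,\theta'\in\T$ I will show
\[
 \abs{\tau(\theta)-\tau(\theta')}\le \frac T\pi\, d_\T(\theta,\theta').
\]
To do this, take the shorter arc joining $\theta$ and $\theta'$. It has length $d_\T(\theta,\theta')$, and $\tau$ is piecewise linear along it with slope at most $T/\pi$ in absolute value. The arc may cross $\pi$ or the basepoint $0$, but $\tau$ is continuous across both, so the triangle inequality along the arc gives the bound. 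Then
\[
 \abs{(\mathscr L_Tx)(\theta)-(\mathscr L_Tx)(\theta')}
 \le [x]_\alpha\abs{\tau(\theta)-\tau(\theta')}^\alpha
 \le (T/\pi)^\alpha[x]_\alpha\,d_\T(\theta,\theta')^\alpha,
\]
which yields the claim with $C_{\alpha,T}=\max\{1,(T/\pi)^\alpha\}$.

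The only point needing care is the wrap-around at $0\equiv2\pi$ and the fold at $\pi$. Both are handled by splitting the shorter arc at these points and using continuity there, which is guaranteed by $x(0)=0$. So no genuine obstacle is expected.
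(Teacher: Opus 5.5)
Your proof is correct, but it takes a slightly different route from the paper.

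\emph{The paper's route.} The paper estimates $\mathscr L_Tx$ directly. For two points in the same half it uses the affine time change. For points in different halves it passes through the common endpoint $\pi$ or the seam $0\equiv2\pi$, applies the H\"older bound of $x$ separately to each of the two pieces, and recombines them with the concavity inequality $a^\alpha+b^\alpha\le2^{1-\alpha}(a+b)^\alpha$. It then takes the shorter of the two arcs to recover $d_\T$.

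\emph{Your route.} You do the splitting at the level of the time map $\tau$, where the estimate is linear and therefore adds up without loss. You show $\tau$ is $(T/\pi)$-Lipschitz for $d_\T$; this is immediate, since $\tau(\theta)=(T/\pi)\,d_\T(\theta,0)$, so it is just the triangle inequality. You then invoke the H\"older bound of $x$ only once.

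\emph{What each buys.} Your route yields the sharper constant $\max\{1,(T/\pi)^\alpha\}$ with no $2^{1-\alpha}$ loss, and it exhibits the lemma as a composition of an $\alpha$-H\"older map with a Lipschitz map. The paper's argument only uses that $\mathscr L_Tx$ is a concatenation of two $\alpha$-H\"older pieces that agree at the junctions. It would therefore survive if the returning half were a different path, whereas yours relies on the retracing structure $\mathscr L_Tx=x\circ\tau$.

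\emph{A small correction.} The continuity you need at $0\equiv2\pi$ is that of $\tau$, which holds because $\tau(0)=\tau(2\pi)=0$. The basepoint condition $x(0)=0$ plays no role there: the loop closes for any $x$, since both ends equal $x(0)$.
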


\begin{proof}
On each half-circle, \eqref{eq:loop-extension} is the composition of $x$ with an affine map of Lipschitz constant $T/\pi$, so
\[
 \abs{(\mathscr L_Tx)(\theta)-(\mathscr L_Tx)(\phi)}
 \le [x]_{\alpha}(T/\pi)^{\alpha}\abs{\theta-\phi}^{\alpha}
\]
when $\theta,\phi$ lie in the same half.  If they lie in different halves, join both points to the nearest common endpoint $\pi$ or to the seam $0=2\pi$ and use
\[
 a^{\alpha}+b^{\alpha}\le2^{1-\alpha}(a+b)^{\alpha}.
\]
The geodesic distance on $\T$ is the minimum of the two resulting arc lengths.  This gives the stated seminorm bound; the uniform bound is immediate from $x(0)=0$.
\end{proof}

Let $\mathscr{RP}_{\beta,M}([0,T];E)$ denote the metric space of step-$M$ strong geometric $\beta$-H\"older rough paths on $[0,T]$, equipped with a standard homogeneous rough-path metric.  This is the target used for Borel measurability below.

\begin{definition}[Finite-interval scale--FNO lift]\label{def:interval-scale-fno}
Fix exponents
\[
 \frac1{m+1}<\beta<\gamma<\alpha\le\frac1m.
\]
Fix once and for all the based compact localization $\mathcal L$ from \cref{def:based-localization} and one Fourier-normal-ordering regularization $\mathcal R_\gamma$ from \cref{thm:unterberger}; these are part of the construction data and are suppressed from the notation below.  For $x\in C_0^{\alpha}([0,T];E)$, apply the periodic Littlewood--Paley analysis componentwise to $\mathscr L_Tx$.  Equivalently, apply the exact periodic coordinate of \cref{def:periodic-scale-coordinate} to each scalar component and use the value-channel projection in \cref{thm:amplitude-reinsertion}.  Denote the resulting dyadic scale path by
\[
 \mathfrak s_T(x)\in\cS^{\alpha}(E).
\]
Let $\mathbf Z(x)$ be its selected FNO lift from \cref{def:fno-scale-lift}.  Define
\[
 \mathfrak R_T^{\alpha,\beta}(x)_{s,t}
 :=
 \mathbf Z(x)_{\pi s/T,\,\pi t/T},
 \qquad 0\le s\le t\le T.
\]
Thus $\mathfrak R_T^{\alpha,\beta}(x)$ is the restriction of the loop lift to the first half-circle, pulled back to the original time interval.
\end{definition}

\begin{theorem}[Quantitative finite-interval enhancement]\label{thm:interval-enhancement}
The map in \cref{def:interval-scale-fno} has first level
\[
 \pi_1\mathfrak R_T^{\alpha,\beta}(x)_{s,t}=x(t)-x(s)
\]
and takes values in $\mathscr{RP}_{\beta,m}([0,T];E)$.  On every ball
\[
 \norm x_{C^{\alpha}}\le R
\]
its ultraviolet cutoffs satisfy
\[
 d_{\beta,m}^{\mathrm{lay}}
 \bigl(\mathfrak R_{T,N}^{\alpha,\beta}(x),
       \mathfrak R_T^{\alpha,\beta}(x)\bigr)
 \le C_{R,T}2^{-(\alpha-\gamma)N},
\]
\[
 d_{\beta,m}^{\mathrm{hom}}
 \bigl(\mathfrak R_{T,N}^{\alpha,\beta}(x),
       \mathfrak R_T^{\alpha,\beta}(x)\bigr)
 \le C_{R,T}2^{-(\alpha-\gamma)N/m}.
\]
The map $x\mapsto\mathfrak R_T^{\alpha,\beta}(x)$ is locally Lipschitz in the layerwise metric and locally $1/m$-H\"older in the homogeneous metric.  For every $M\ge m$, \cref{thm:controlled-extension} gives a unique step-$M$ extension in the standard controlled class.
\end{theorem}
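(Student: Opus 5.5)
The plan is to reduce everything to the periodic statement \cref{thm:rough-enhancement} by composing three bounded maps: the loop extension $\mathscr L_T$, the periodic analysis $\mathfrak s_T$, and the selected FNO lift, followed by the affine time change $s\mapsto \pi s/T$. First I verify the first-level identity. The synthesized path of $\mathfrak s_T(x)$ is $\sum_{j\ge-1}\Delta_j^{\T}\mathscr L_Tx=\mathscr L_Tx$ by periodic Littlewood--Paley reconstruction (\cref{prop:periodic-reconstruction}, value channel of \cref{thm:amplitude-reinsertion}). The localization $L$ preserves increments on $[0,2\pi]$ by \cref{def:based-localization}, and \cref{thm:unterberger} gives $\pi_1\mathbf Z(x)_{\theta,\phi}=\mathscr L_Tx(\phi)-\mathscr L_Tx(\theta)$. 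On the first half-circle this equals $x(T\phi/\pi)-x(T\theta/\pi)$, so the pullback has first level $x(t)-x(s)$. Multiplicativity and group-likeness pass unchanged to restrictions and reparametrizations. The time change multiplies every H\"older constant at level $r$ by $(\pi/T)^{r\beta}$, so the class of strong geometric $\beta$-H\"older rough paths is preserved: bounded-variation approximants reparametrize to bounded-variation approximants and their signatures are reparametrization invariant. Together with the strong geometricity in \cref{thm:rough-enhancement}, this places the map into $\mathscr{RP}_{\beta,m}([0,T];E)$.

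Next I need the key linear bound $\norm{\mathfrak s_T(x)}_{\cS^\alpha}\le C_{\alpha,T}\norm x_{C^\alpha}$. This follows from \cref{lem:loop-extension-bound} combined with \cref{lem:periodic-dyadic-bounds}, using $q=0,1$ for $j\ge0$ and the low-block $C^1$ bound for $j=-1$. The blocks have the required Fourier supports by construction. I also note that $\mathfrak s_T$ is linear. The ball $\norm x_{C^\alpha}\le R$ is therefore mapped into an $\cS^\alpha$ ball of radius $C_{\alpha,T}R$, and $\norm{\mathfrak s_T(x)-\mathfrak s_T(y)}_{\cS^\alpha}\le C\norm{x-y}_{C^\alpha}$.

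I then define the cutoff $\mathfrak R_{T,N}^{\alpha,\beta}(x)$ as the pullback of $\mathbf Z^N$ from \cref{def:fno-scale-lift} and apply \cref{thm:rough-enhancement}. That theorem gives the layerwise rate $C_R2^{-(\alpha-\gamma)N}$ and the homogeneous rate with exponent divided by $m$ on $[0,2\pi]$. Restricting to $[0,\pi]$ can only decrease the suprema. Pulling back multiplies level-$r$ quotients by $(\pi/T)^{r\beta}$ in the layerwise metric and by $(\pi/T)^{\beta}$ in the homogeneous metric; these are constants depending on $T$.

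Local stability goes the same way. For $\norm{x-y}_{C^\alpha}$ small enough that $\delta=\norm{\mathfrak s_T(x)-\mathfrak s_T(y)}_{\cS^\alpha}\le1$, \cref{thm:rough-enhancement} gives the bounds $C\delta$ and $C\delta^{1/m}$. For larger differences, uniform boundedness of both lifts on the ball, which follows from \cref{thm:rough-enhancement} applied with $y=0$ or from \cref{prop:mixed-fno}, absorbs the ratio into the constant.

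The extension clause is then \cref{thm:controlled-extension}, applied with $\beta<1/m$; this holds because $\beta<\alpha\le1/m$. The main obstacle is slight. The only genuinely new input is the linear $\cS^\alpha$ bound, and there one must check that the loop extension, which has a corner at $\theta=\pi$ and a seam at $0$, stays in $C^\alpha(\T)$. That is exactly \cref{lem:loop-extension-bound}. Everything else is bookkeeping of the time change constants.
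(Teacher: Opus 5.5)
Your proposal is correct and follows the paper's proof essentially step for step. Both arguments use the same ingredients in the same order: the linear bound $\norm{\mathfrak s_T(x)}_{\cS^\alpha}\le C_{\alpha,T}\norm x_{C^\alpha}$ from \cref{lem:loop-extension-bound} and the periodic dyadic estimates, identification of the synthesized path with $\mathscr L_Tx$, an application of \cref{thm:rough-enhancement}, restriction to $[0,\pi]$ with the affine time change, and the extension clause read off from \cref{thm:controlled-extension}. Your extra care about strong geometricity under reparametrization and about differences with $\delta>1$ is fine. For the uniform bound in that last case, rely on \cref{prop:mixed-fno}, since \cref{thm:rough-enhancement} with $y=0$ only applies when $\norm x_{\cS^\alpha}\le1$.
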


\begin{proof}
By \cref{lem:loop-extension-bound} and the periodic dyadic estimates,
\[
 \norm{\mathfrak s_T(x)}_{\cS^{\alpha}}
 \le C_{\alpha,T}\norm x_{C^{\alpha}}.
\]
The synthesized first-level path of $\mathfrak s_T(x)$ is $\mathscr L_Tx$.  Therefore \cref{thm:rough-enhancement} gives a strong geometric $\beta$-H\"older lift on the circle, with the stated cutoff and stability bounds.  Restriction to $[0,\pi]$ and the affine time change $t\mapsto\pi t/T$ preserve multiplicativity and multiply the H\"older constants by a fixed power of $\pi/T$.  On the first half-circle, \eqref{eq:loop-extension} gives
\[
 (\mathscr L_Tx)(\pi t/T)=x(t),
\]
which proves the first-level identity.  The controlled extension is exactly \cref{thm:controlled-extension}.
\end{proof}

The stochastic application requires a map on the canonical continuous-path sigma-algebra, not merely continuity for the stronger H\"older norm.  The ultraviolet construction supplies such a version.

\begin{proposition}[Borel realization]\label{prop:borel-realization}
Let
\[
 \Omega_{\alpha,T}(E)
 :=\{x\in C_0([0,T];E):[x]_{\alpha}<\infty\}.
\]
This is a Borel subset of $C_0([0,T];E)$.  The map $\mathfrak R_T^{\alpha,\beta}$ has a Borel realization on $\Omega_{\alpha,T}(E)$ with values in $\mathscr{RP}_{\beta,m}([0,T];E)$.
\end{proposition}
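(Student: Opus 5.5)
The plan is to prove Borel measurability of $\Omega_{\alpha,T}(E)$ directly, and then obtain $\mathfrak R_T^{\alpha,\beta}$ as a pointwise limit of continuous maps defined on the whole path space.

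For the set, write
\[
 [x]_\alpha=\sup_{s<t,\ s,t\in\mathbb Q\cap[0,T]}\frac{\|x(t)-x(s)\|}{|t-s|^\alpha}.
\]
This identity holds by continuity of $x$. Each quotient is a continuous function of $x$ in the uniform topology, so $x\mapsto[x]_\alpha\in[0,\infty]$ is lower semicontinuous, hence Borel. Therefore $\Omega_{\alpha,T}(E)=\bigcup_R\{[x]_\alpha\le R\}$ is an $F_\sigma$ set, and in particular Borel.

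For the map, I would use the ultraviolet cutoffs $\mathfrak R_{T,N}^{\alpha,\beta}(x)$, that is, the FNO lift of the localized cutoff path $L(\mathscr L_Tx)_N$, restricted and time-changed. The cutoff path $X_N$ has only finitely many Fourier modes, and each mode is a continuous linear functional of $x$ in the uniform topology: the loop extension is uniform-continuous, and the periodic Littlewood--Paley block $\Delta_j^{\T}$ with $j\le N$ is convolution with a trigonometric polynomial. So $x\mapsto X_N$ is continuous from $C_0$ uniform into a fixed finite-dimensional space of trigonometric polynomials, where all norms are equivalent, in particular into $C^\gamma$. Composing with $L$ and with \cref{cor:fno-local-lipschitz}, which gives local Lipschitz continuity of $\cR_\gamma$ in $C^\gamma$, shows that $x\mapsto\mathfrak R_{T,N}^{\alpha,\beta}(x)$ is continuous from $C_0([0,T];E)$ into the space of step-$m$ $\beta$-H\"older multiplicative functionals. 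Each cutoff lift is strong geometric, because it is the FNO lift of a smooth path and \cref{thm:unterberger} applies. Hence each cutoff map is Borel into $\mathscr{RP}_{\beta,m}$.

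On $\Omega_{\alpha,T}(E)$, with $R=\|x\|_{C^\alpha}$ finite, \cref{thm:interval-enhancement} gives $d^{\mathrm{hom}}_{\beta,m}(\mathfrak R_{T,N}^{\alpha,\beta}(x),\mathfrak R_T^{\alpha,\beta}(x))\le C_{R,T}2^{-(\alpha-\gamma)N/m}\to0$. So $\mathfrak R_T^{\alpha,\beta}$ is the pointwise limit on a Borel set of Borel maps into a metric space. A pointwise limit of Borel maps into a metric space is Borel, provided the target is the closure of a separable set or one argues via preimages of closed balls: $f^{-1}(F)=\bigcap_k\bigcup_n\bigcap_{N\ge n}f_N^{-1}(F_{1/k})$ for closed $F$ with open neighbourhoods $F_{1/k}$. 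That formula needs only metrizability, so the conclusion follows.

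The main obstacle I expect is the continuity of the cutoff map in the \emph{uniform} topology, as opposed to the H\"older topology. The key point is that truncation to finitely many modes converts uniform convergence of $x$ into convergence in every $C^k$ norm of $X_N$. One subtlety is whether the metric used on $\mathscr{RP}_{\beta,m}$ matches the layerwise/homogeneous coordinate metrics. On bounded sets these are equivalent by \cref{def:rough-metrics}, and continuity of $x\mapsto\mathfrak R_{T,N}$ is local, so it suffices to work on balls.
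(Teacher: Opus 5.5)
Your proposal is correct and follows the paper's proof: Borelness of $\Omega_{\alpha,T}(E)$ from the supremum over rational pairs, continuity (hence Borelness) of each finite-mode cutoff map $x\mapsto\mathfrak R_{T,N}^{\alpha,\beta}(x)$ in the uniform topology via \cref{cor:fno-local-lipschitz}, and Borelness of the pointwise limit supplied by \cref{thm:interval-enhancement}. The one difference is that your explicit closed-set preimage formula replaces the paper's appeal to the Polish target; it needs only metrizability.
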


\begin{proof}
The seminorm $[x]_{\alpha}$ is the supremum over rational pairs of the Borel functions
\[
 x\longmapsto
 \frac{\abs{x(t)-x(s)}}{\abs{t-s}^{\alpha}},
\]
so $\Omega_{\alpha,T}(E)$ is Borel.  For fixed $N$, the loop extension and the finite Littlewood--Paley cutoff are continuous from the uniform path topology into every fixed smooth H\"older norm, because only finitely many Fourier modes are retained.  The fixed FNO map is locally Lipschitz in the $C^{\gamma}$ norm by \cref{cor:fno-local-lipschitz}; equivalently, its smooth-cutoff coefficients are given by the normally convergent formulas of \cref{lem:termwise-differentiation}.  Hence each cutoff map
\[
 x\longmapsto\mathfrak R_{T,N}^{\alpha,\beta}(x)
\]
is Borel on $C_0([0,T];E)$.  By \cref{thm:interval-enhancement}, these maps converge in the rough-path metric at every $x\in\Omega_{\alpha,T}(E)$.  A pointwise limit of Borel maps into the Polish finite-step rough-path space is Borel.
\end{proof}

\begin{corollary}[A Borel rough lift for every fractional Brownian Hurst index]\label{cor:fbm-scale-fno}
Let $B^H$ be $d$-dimensional fractional Brownian motion on $[0,T]$ and let $0<\beta<H$.  There is a full-$\mu_H^{(d)}$ Borel set on which the fixed scale--FNO scheme produces a Borel step-$M$ strong geometric $\beta$-H\"older lift, where $M=\lfloor1/\beta\rfloor$.

If $H\le1/4$, the restriction of this lift to no positive-$\mu_H^{(d)}$ Borel subset can be local.
\end{corollary}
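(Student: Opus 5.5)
The first assertion follows by combining the fractional-Brownian sample-path regularity with \cref{prop:borel-realization}. I would choose an intermediate exponent $\alpha$ with $\beta<\alpha<H$. Kolmogorov's criterion gives $\mu_H^{(d)}(\Omega_{\alpha,T}(\R^d))=1$, and \cref{prop:borel-realization} states that $\Omega_{\alpha,T}(\R^d)$ is Borel.

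Set $m:=\lfloor1/\beta\rfloor$. To apply \cref{def:interval-scale-fno} I need
\[
 \frac1{m+1}<\beta<\gamma<\alpha\le\frac1m .
\]
If $\alpha>1/m$, I shrink $\alpha$ to a value in $(\beta,\min\{H,1/m\}]$; this is possible because $\beta<1/m$ whenever $1/\beta\notin\N$. The path space only grows when $\alpha$ decreases, so the full-measure property is preserved.

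At reciprocal exponents, where $\beta=1/m$ exactly, the depth $\lfloor1/\beta\rfloor=m$ coincides with the step delivered at a slightly smaller exponent. In that case I would first run the construction at some $\beta'<\beta$ with $\lfloor 1/\beta'\rfloor=m$. Then I would raise the regularity by using $\alpha$-H\"older input together with the controlled extension in \cref{thm:controlled-extension}; this is the route signalled in the introduction. I expect this endpoint bookkeeping to be the only delicate part of the first assertion.

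On the full-measure Borel set, \cref{prop:borel-realization,thm:interval-enhancement} then give a Borel step-$m$ strong geometric $\beta$-H\"older lift of $B^H$. If a larger step $M\ge m$ is wanted, I would compose with the controlled extension of \cref{thm:controlled-extension}. That extension is continuous, so it preserves Borel measurability, and it preserves strong geometricity.

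The second assertion follows directly from \cref{thm:phase-transition}(i). Suppose the restriction of the lift to some Borel set $B$ of positive $\mu_H^{(d)}$-measure were local in the wordwise sense. Fix a pair $a\ne b$, which exists since $d\ge2$. The $(a,b)$ word coordinate at level two is then a finite real Borel map on $B$. It is $\mathcal F^{\{a,b\}}_{s,t}\vert_B$-measurable, and multiplicativity of the lift yields Chen's identity \eqref{eq:intro-chen} for it.

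This coordinate is therefore a local level-two Chen coordinate on $B$ in the sense of \cref{def:positive-domain-area}. \Cref{thm:phase-transition}(i) forces $\mu_H^{(d)}(B)=0$, which contradicts the assumption that $B$ has positive measure. The only point to check here is that restricting to a Borel subset keeps the trace-measurability, and that follows immediately from the definition of the trace sigma-field.
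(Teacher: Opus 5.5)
Your non-reciprocal case and your locality argument match the paper's proof. The reciprocal case $\beta=1/m$, however, has a genuine gap.

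\textbf{Why the reciprocal case fails.} You run the construction at some $\beta'<\beta$ with depth $m$ and then propose to ``raise the regularity'' using \cref{thm:controlled-extension}. That cannot work, for two reasons.
\begin{enumerate}
\item A construction of depth $m$ requires $\alpha\le 1/m=\beta$, so every exponent it controls lies strictly below $\beta$.
\item Bounds $\norm{\pi_r\mathbf X_{s,t}}\le C\abs{t-s}^{r\beta'}$ with $\beta'<\beta$ do not imply the required bounds $C\abs{t-s}^{r\beta}$. On a compact interval only the reverse implication holds.
\end{enumerate}
The controlled extension theorem raises the tensor step at a fixed exponent. It never improves the H\"older exponent. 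So your route yields a step-$m$ lift that is only $\beta'$-H\"older, not $\beta$-H\"older.

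\textbf{The fix.} Put the intermediate exponent on the other side of $\beta$, as the paper does.
\begin{enumerate}
\item Since $\beta<H<1$, we have $m\ge2$. Choose $\alpha\in(\beta,\min\{H,1/(m-1)\})$, so the construction depth becomes $m-1$.
\item Pick $\beta<\beta_0<\gamma<\alpha$ with $1/\gamma\notin\N$.
\item Apply \cref{prop:borel-realization,thm:interval-enhancement} at exponent $\beta_0$. This gives a Borel step-$(m-1)$ strong geometric $\beta_0$-H\"older lift on the full-measure set $\Omega_{\alpha,T}(\R^d)$.
\item Because $1/m<\beta_0<1/(m-1)$, \cref{thm:controlled-extension} applies at exponent $\beta_0$. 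It extends the lift to step $m=\lfloor 1/\beta\rfloor$ via a continuous map, so Borel measurability and strong geometricity are preserved.
\item Finally, $\beta_0>\beta$ on the compact interval. Hence the $\beta_0$-bounds, and convergence in the $\beta_0$-topology, imply the corresponding $\beta$-bounds and $\beta$-convergence.
\end{enumerate}
The introduction's phrase ``intermediate exponent'' refers to an exponent above $\beta$, not below it.
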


\begin{proof}
Choose $\alpha\in(\beta,H)$ so close to $\beta$ that no reciprocal threshold $1/n$ lies strictly between them, and avoid the countable set of reciprocal endpoints.  Let $m$ be determined by $1/(m+1)<\alpha\le1/m$, and choose
\[
 \beta<\beta_0<\gamma<\alpha,
 \qquad
 \frac1{m+1}<\beta_0.
\]
Such a choice is always possible because the reciprocal thresholds are discrete away from zero; when $\beta=1/n$, take $\alpha>\beta$ and then $m=n-1$.  Fractional Brownian sample paths belong to $C_0^{\alpha}([0,T];\R^d)$ almost surely.  Apply \cref{prop:borel-realization,thm:interval-enhancement} at exponent $\beta_0$ and then \cref{thm:controlled-extension} to the step $M=\lfloor1/\beta\rfloor$.  The resulting $\beta_0$-H\"older bounds imply the weaker $\beta$-H\"older bounds on the compact interval.  This gives the full-measure Borel construction.

If its restriction to a positive-measure Borel set were local, its off-diagonal second level would be a positive-domain local Chen coordinate, contradicting \cref{thm:phase-transition}(i).
\end{proof}

The fixed loop extension and FNO regularization give a measurable lift for every $H>0$.  The theorem below shows that every lift is necessarily nonlocal on positive-measure domains when $H\leq1/4$.

\section{Locality on a Borel domain}

For $x\in\Omega_d$, $0\le s\le t\le T$, and $J\subset\{1,\dots,d\}$, define the increment restriction
\[
  r_{s,t}^J(x):=\bigl(x_u^a-x_s^a\bigr)_{u\in[s,t],\,a\in J}.
\]
It takes values in the corresponding continuous-path space with its Borel sigma-algebra.

\begin{definition}[Positive-domain local Chen coordinate]\label{def:positive-domain-area}
Let $B\subset\Omega_d$ be Borel. A \emph{positive-domain local level-two Chen coordinate} on $B$ is a family of finite Borel maps
\[
  \A^{ab}_{s,t}:B\longrightarrow\R,
  \qquad 0\le s\le t\le T,
\]
for at least one pair $a\ne b$, satisfying:
\begin{enumerate}[label=(\roman*)]
\item for every $s\le u\le t$ and every $x\in B$,
\begin{equation}\label{eq:chen-area}
  \A^{ab}_{s,t}(x)
  =\A^{ab}_{s,u}(x)+\A^{ab}_{u,t}(x)
   +x^a_{s,u}x^b_{u,t};
\end{equation}
\item $\A^{ab}_{s,t}$ is measurable with respect to
\[
  \sigma\bigl(r_{s,t}^{\{a,b\}}\vert_B\bigr).
\]
\end{enumerate}
\end{definition}

The second condition is the relative-Borel version of interval locality.

\begin{lemma}[Relative locality is fibrewise]\label{lem:fibrewise}
Let $r:B\to Y$ be a map into a measurable space and let $F:B\to\R$ be finite and $\sigma(r)$-measurable. Then
\[
  r(x)=r(y)\quad\Longrightarrow\quad F(x)=F(y).
\]
\end{lemma}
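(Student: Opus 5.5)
The plan is to use the Doob--Dynkin factorization lemma for real-valued functions measurable with respect to the sigma-field generated by a map. In the relative setting the sigma-field $\sigma(r)$ on $B$ is exactly $\{r^{-1}(A):A\text{ measurable in }Y\}$, which is the trace form used in \cref{def:positive-domain-area}. I will avoid any general factorization theorem, which would need a standard Borel target, and argue directly with level sets.

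Fix $x,y\in B$ with $r(x)=r(y)$ and suppose, for contradiction, that $F(x)\ne F(y)$. Because $F$ is finite, I can choose a rational $q$ strictly between the two values; say $F(x)<q<F(y)$. The set
\[
 S:=\{z\in B:F(z)<q\}
\]
belongs to $\sigma(r)$ by measurability of $F$. Hence $S=r^{-1}(A)$ for some measurable $A\subseteq Y$. Since $x\in S$, we have $r(x)\in A$. Then $r(y)=r(x)\in A$, so $y\in S$, which gives $F(y)<q$, a contradiction.

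For completeness I will verify the single structural fact used, namely that every set in $\sigma(r)$ has the form $r^{-1}(A)$. The collection $\{r^{-1}(A)\}$ is already a sigma-algebra, because preimages commute with complements and countable unions. It contains the generators, so it coincides with $\sigma(r)$.

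There is no real obstacle. The only point needing care is that finiteness of $F$ is used to separate distinct values by a rational threshold; with extended-real values one would treat $\pm\infty$ separately, which the hypothesis rules out.
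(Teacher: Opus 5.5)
Your proposal is correct and is essentially the paper's proof: separate $F(x)$ and $F(y)$ by a rational $q$, and use that $\{F<q\}\in\sigma(r)$ is a preimage $r^{-1}(A)$, hence saturated for the fibres of $r$. Your explicit check that $\sigma(r)=\{r^{-1}(A)\}$ is the detail the paper leaves implicit in the word ``saturated''.
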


\begin{proof}
If $F(x)<F(y)$, choose $q\in\mathbb Q$ strictly between them. The set $\{F<q\}$ belongs to $\sigma(r)$ and is therefore saturated under the equivalence relation $r(x)=r(y)$, a contradiction. Interchanging $x$ and $y$ gives equality.
\end{proof}

\section{Gaussian four corners and measurable rectangles}

\subsection{Finite-dimensional Gaussian resampling}

Let $(W,\Hh,\mu)$ be an abstract Wiener space, with the Cameron--Martin space continuously embedded in $W$.

\begin{lemma}[Cameron--Martin coordinate resampling]\label{lem:resampling}
Let $U\subset\Hh$ be finite-dimensional with orthonormal basis $(e_i)_{i=1}^m$. One can realise a $W$-valued variable $X\sim\mu$ as
\[
  X=R_U+\sum_{i=1}^m\xi_i e_i,
\]
where $\xi=(\xi_i)$ is standard Gaussian and independent of the Gaussian residual $R_U$. Replacing $\xi$ by an independent copy $\xi'$ while retaining $R_U$ produces another variable $X'\sim\mu$.
\end{lemma}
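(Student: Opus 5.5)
The plan is to build the decomposition from an explicit Gaussian construction that lives on a larger probability space, and then check that its law is $\mu$. Let $P_U:\Hh\to U$ be the orthogonal projection. For each $i$, the map $h\mapsto\langle h,e_i\rangle_\Hh$ extends from the Cameron--Martin space to a measurable linear functional $\widetilde{e}_i$ on $W$; this is the Paley--Wiener extension, an element of the first Wiener chaos. Under $\mu$, the family $(\widetilde e_i(X))_{i=1}^m$ is standard Gaussian, because the covariance of Paley--Wiener integrals is the Cameron--Martin inner product and the $e_i$ are orthonormal.

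Set $\xi_i:=\widetilde e_i(X)$ and $R_U:=X-\sum_i\xi_ie_i$. This gives the stated representation, and $R_U$ is a measurable $W$-valued map.

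For independence, note that the pair $(R_U,\xi)$ is jointly Gaussian: each continuous linear functional $\ell\in W^*$ applied to $R_U$ gives $\ell(X)-\sum_i\xi_i\ell(e_i)$, which lies in the first chaos. Hence it suffices to show that $\E[\ell(R_U)\xi_j]=0$ for all $\ell\in W^*$ and all $j$. Write $\E[\ell(X)\widetilde e_j(X)]=\langle j^*\ell,e_j\rangle_\Hh=\ell(e_j)$, where $j^*\ell\in\Hh$ is the Riesz representative of $\ell$. Then
\[
\E[\ell(R_U)\xi_j]=\ell(e_j)-\sum_i\ell(e_i)\delta_{ij}=0.
\]
Since $W$ is separable, $W^*$ separates points and generates the Borel $\sigma$-field, so $\sigma(R_U)$ is generated by the Gaussian family $\{\ell(R_U)\}$. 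Uncorrelated jointly Gaussian families are independent, so $R_U$ is independent of $\xi$.

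For the resampling statement, let $\xi'$ be a standard Gaussian vector independent of $(R_U,\xi)$, enlarging the probability space if necessary, and put $X':=R_U+\sum_i\xi'_ie_i$. Because $R_U$ is independent of both $\xi$ and $\xi'$, and $\xi\overset{d}{=}\xi'$, we get $(R_U,\xi')\overset{d}{=}(R_U,\xi)$. Applying the measurable map $(r,z)\mapsto r+\sum z_ie_i$ then gives $X'\overset{d}{=}X\sim\mu$.

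The only delicate step is measurability and the chaos property of the extensions $\widetilde e_i$. These are classical facts for abstract Wiener spaces: each $\widetilde e_i$ is an $L^2(\mu)$ limit of $\ell_n\in W^*$ with $j^*\ell_n\to e_i$ in $\Hh$. So I expect no real obstacle beyond fixing Borel versions of these limits. For the later four-corner argument it will also be convenient to record that, when $U=U_1\oplus U_2$ with orthogonal $U_1,U_2$, the same construction makes the coefficient blocks for $U_1$ and $U_2$ independent of each other. This lets the two blocks be resampled separately while the common residual is kept.
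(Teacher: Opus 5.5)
Your proposal is correct and follows the paper's proof essentially step by step: Paley--Wiener coordinates $\xi_i=I(e_i)$, residual $R_U=X-\sum_i\xi_ie_i$, vanishing cross-covariance $\ell(e_j)-\sum_i\ell(e_i)\delta_{ij}=0$, and joint Gaussianity yielding independence. The only difference is at the end: you obtain $X'\sim\mu$ from the equality in law $(R_U,\xi')\overset{d}{=}(R_U,\xi)$, whereas the paper matches continuous-linear covariances and uses that centred Gaussian measures are determined by them; both routes are valid.
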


\begin{proof}
Write $i:\Hh\hookrightarrow W$ for the Cameron--Martin embedding and identify $e_i$ with $i(e_i)$. Let $I(e_i)$ be the Paley--Wiener Gaussian functionals. They admit simultaneous measurable-linear versions on a common full-measure linear subspace. On that subspace set
\[
  \xi_i=I(e_i),
  \qquad
  R_U=X-\sum_i I(e_i)e_i.
\]
For every $\ell\in W^*$, the defining covariance identity of the Paley--Wiener map gives
\[
  \Cov\bigl(\ell(X),I(e_i)\bigr)=\ell(e_i),
\]
and hence
\[
  \Cov\bigl(\ell(R_U),\xi_i\bigr)
  =\ell(e_i)-\sum_j\ell(e_j)\delta_{ij}=0.
\]
Every finite family consisting of continuous linear images of $R_U$ and the coordinates of $\xi$ is jointly Gaussian. Vanishing cross-covariance therefore implies that the $W$-valued Gaussian variable $R_U$ and $\xi$ are independent. If $\xi'$ is an independent standard Gaussian vector, then for every $\ell_1,\ell_2\in W^*$ the covariance of
$R_U+\sum_i\xi_i'e_i$ equals that of $X$. Centred Gaussian measures on a separable Banach space are determined by their continuous-linear covariances, so the resampled variable has law $\mu$.
\end{proof}

\begin{lemma}[Gaussian four-corner lower bound]\label{lem:four-corner}
Let $(W_i,\Hh_i,\mu_i)$, $i=1,2$, be abstract Wiener spaces, let $(S,\nu)$ be a probability space, and let
\[
  B\subset W_1\times W_2\times S
\]
be Borel with
\[
  \beta:=(\mu_1\otimes\mu_2\otimes\nu)(B)>0.
\]
Fix finite-dimensional $U\subset\Hh_1$ and $V\subset\Hh_2$. Use \cref{lem:resampling} to construct $X,X'\in W_1$, $Y,Y'\in W_2$, and $Z\sim\nu$, retaining the same residual in each resampled pair. Then
\[
  \Pp\bigl((X^\varepsilon,Y^\delta,Z)\in B
  \text{ for all }\varepsilon,\delta\in\{0,1\}\bigr)
  \ge\beta^4.
\]
The bound is independent of $\dim U$ and $\dim V$.
\end{lemma}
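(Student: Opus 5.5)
The plan is to condition on the retained data and apply Cauchy--Schwarz twice, the same way one proves the box-norm (Gowers--Cauchy--Schwarz) inequality. Write $X=R_U+\xi\cdot e$, $X'=R_U+\xi'\cdot e$, $Y=R_V+\zeta\cdot f$ and $Y'=R_V+\zeta'\cdot f$, using \cref{lem:resampling}. Here $R_U,R_V,Z,\xi,\xi',\zeta,\zeta'$ are mutually independent, and the two factors $W_1,W_2$ are resampled independently of each other and of $Z$.

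Set $\mathcal G:=\sigma(R_U,R_V,Z)$. Conditionally on $\mathcal G$, the four corners are indexed by the independent pairs $(\xi,\xi')$ and $(\zeta,\zeta')$. Define the conditional indicator kernel
\[
 g(a,b):=\one_B\bigl(R_U+a\cdot e,\;R_V+b\cdot f,\;Z\bigr).
\]
Given $\mathcal G$, the event that all four corners lie in $B$ has conditional probability
\[
 \E\Bigl[\Bigl(\E_\zeta\bigl[\E_\xi g(\xi,\zeta)\,g(\xi',\zeta)\bigr]\Bigr)\Bigr]
 =\int\!\!\int\Bigl(\int g(a,b)g(a',b)\,\gamma(db)\Bigr)^{2}\gamma(da)\gamma(da').
\]
The right side equals the conditional expectation of $\bigl(\E_{\zeta}[g(\xi,\zeta)g(\xi',\zeta)]\bigr)^2$ over $(\xi,\xi')$.

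The first application of Jensen gives at least $\bigl(\E_{\xi,\xi',\zeta}g(\xi,\zeta)g(\xi',\zeta)\bigr)^2$. The inner quantity equals $\E_\zeta\bigl[(\E_\xi g(\xi,\zeta))^2\bigr]\ge(\E g)^2$ by a second application of Jensen. Hence the conditional probability is at least $p(\mathcal G)^4$, where $p:=\E[g(\xi,\zeta)\mid\mathcal G]$.

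Next take expectations over $\mathcal G$ and use Jensen once more, with $t\mapsto t^4$ convex. This gives a lower bound of $(\E p)^4$. Finally, $\E p=\Pp((X,Y,Z)\in B)=\beta$, because $(X,Y,Z)$ has law $\mu_1\otimes\mu_2\otimes\nu$ by \cref{lem:resampling} and by independence. The constant never involves $\dim U$ or $\dim V$, which gives the dimension-free statement.

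The main care point is the measure-theoretic setup, not the inequality itself. One must check three things. First, $g$ is jointly Borel in $(a,b,R_U,R_V,Z)$, which holds because $B$ is Borel and the map $(r,a)\mapsto r+a\cdot e$ is continuous. Second, the residuals and Gaussian coordinates are genuinely independent, so that Fubini expresses the conditional probabilities as the displayed integrals. Third, $(R_U,\xi)$ and $(R_U,\xi')$ each reproduce $\mu_1$. All three come directly from \cref{lem:resampling}.
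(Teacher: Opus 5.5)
Your proof is correct and is essentially the paper's argument. You condition on the residuals and $Z$, write the conditional four-corner probability as a box-norm integral, apply Cauchy--Schwarz (Jensen) twice to bound it below by $p^4$, and conclude with Jensen for $t\mapsto t^4$ and $\E p=\beta$; the paper only differs in squaring over the $V$-pair first instead of the $U$-pair. One slip to fix: as written, the left-hand side of your first display is the two-corner probability $\E[g(\xi,\zeta)g(\xi',\zeta)\mid\mathcal G]$ with no square, and it should read $\E_{\xi,\xi'}\bigl[(\E_\zeta[g(\xi,\zeta)g(\xi',\zeta)])^2\bigr]$, which is what your right-hand side and the next sentence correctly use.
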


\begin{proof}
Condition on both Gaussian residuals and on $Z$, and write
\[
  f(\xi,\eta)=\one_B\left(R_1+\sum_i\xi_i e_i,
  R_2+\sum_j\eta_j f_j,Z\right).
\]
The conditional four-corner probability is
\[
  \mathcal B(f):=
  \E f(\xi,\eta)f(\xi',\eta)f(\xi,\eta')f(\xi',\eta').
\]
Two applications of Cauchy--Schwarz give
\[
\begin{aligned}
  \mathcal B(f)
  &=\E_{\eta,\eta'}
    \left(\E_\xi f(\xi,\eta)f(\xi,\eta')\right)^2\\
  &\ge
    \left(\E_\xi\left(\E_\eta f(\xi,\eta)\right)^2\right)^2
  \ge \left(\E_{\xi,\eta}f\right)^4.
\end{aligned}
\]
Taking expectation over the residual variables and applying Jensen once more yields the lower bound $\beta^4$.
\end{proof}

\subsection{A positive-domain rectangle theorem}

For a finite-dimensional Hilbert pair $U,V$, the Hilbert--Schmidt norm of a bilinear form $b:U\times V\to\R$ is the Frobenius norm of its matrix in orthonormal bases.

\begin{lemma}[Moments of a bilinear Gaussian chaos]\label{lem:bilinear-moments}
Let $g\in\R^m$ and $q\in\R^n$ be independent standard Gaussian vectors, let $A$ be an $m\times n$ matrix, and put $Q=g^{\mathsf T}Aq$ and $\sigma=\norm{A}_{\mathrm F}$. Then
\[
  \E Q^2=\sigma^2,
  \qquad
  \E Q^4\le9\sigma^4,
  \qquad
  \E\abs Q\ge\frac{\sigma}{3}.
\]
\end{lemma}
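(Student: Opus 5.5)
We plan to reduce everything to singular values. Take a singular value decomposition $A=P\,\mathrm{diag}(s_1,\dots,s_k)\,R^{\mathsf T}$ with $P,R$ orthogonal and $k=\min\{m,n\}$. Rotation invariance of standard Gaussian vectors means that $\tilde g:=P^{\mathsf T}g$ and $\tilde q:=R^{\mathsf T}q$ are again independent standard Gaussians. Hence
\[
 Q=\sum_{i=1}^k s_i\tilde g_i\tilde q_i,
 \qquad
 \sum_i s_i^2=\sigma^2.
\]
The variables $\tilde g_i\tilde q_i$ are independent, centered, have variance one, and have fourth moment $\E\tilde g_i^4\,\E\tilde q_i^4=9$.

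The second moment follows immediately: the cross terms vanish by independence and centering, so $\E Q^2=\sum_i s_i^2=\sigma^2$. For the fourth moment we expand $\E Q^4$. The only index patterns whose expectations survive are those in which every index appears an even number of times. The result is
\[
 \E Q^4=9\sum_i s_i^4+3\sum_{i\ne j}s_i^2s_j^2 .
\]
Since $3\sum_{i\neq j}s_i^2s_j^2\le 9\sum_{i\ne j}s_i^2s_j^2$, this gives
\[
 \E Q^4\le 9\Bigl(\sum_i s_i^2\Bigr)^2=9\sigma^4 .
\]
There is also a shorter route: conditional on $q$, the variable $Q$ is centered Gaussian with variance $|A q|^2$, so $\E Q^4=3\,\E|Aq|^4$, and $|Aq|^2$ is a weighted chi-square variable whose second moment is $(\sum s_i^2)^2+2\sum s_i^4\le3\sigma^4$.

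The first absolute moment comes from Hölder interpolation. Writing $Q^2=|Q|^{2/3}|Q|^{4/3}$ and applying Hölder with exponents $3/2$ and $3$ gives
\[
 \E Q^2\le(\E|Q|)^{2/3}(\E Q^4)^{1/3}.
\]
Therefore $\sigma^2\le(\E|Q|)^{2/3}\,9^{1/3}\sigma^{4/3}$. Rearranging yields $(\E|Q|)^{2/3}\ge 9^{-1/3}\sigma^{2/3}$, that is, $\E|Q|\ge\sigma/3$.

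None of these steps is a genuine obstacle. The only place that needs care is the fourth-moment bookkeeping, namely checking that the pairing count gives the constant $3$ on off-diagonal terms and does not exceed $9$. The conditional-Gaussian computation above serves as an independent check on that count. The case $\sigma=0$ is trivial and can be set aside at the start.
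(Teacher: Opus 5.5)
Your proof is correct and follows the paper's argument. The paper computes $\E Q^4$ by conditioning on $g$, which is your ``shorter route'' with the roles of $g$ and $q$ swapped and traces of $AA^{\mathsf T}$ in place of singular values, and then uses the same H\"older interpolation between $L^1$, $L^2$ and $L^4$. Your singular-value expansion of the fourth moment is just an equivalent bookkeeping of that same computation.
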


\begin{proof}
The second moment is immediate. Conditional on $g$, the variable $Q$ is Gaussian with variance $\norm{A^{\mathsf T}g}^2$, so
\[
  \E Q^4=3\E\norm{A^{\mathsf T}g}^4
  =3\left((\operatorname{tr}AA^{\mathsf T})^2+2\operatorname{tr}(AA^{\mathsf T})^2\right)
  \le9\sigma^4.
\]
Interpolation between $L^1,L^2,L^4$ gives
\[
  \E Q^2\le (\E\abs Q)^{2/3}(\E Q^4)^{1/3},
\]
which implies the last bound.
\end{proof}

We use the following degree-two case of the Carbery--Wright inequality \cite{CarberyWright2001}: for a real polynomial $P$ of degree at most two under a Gaussian probability measure,
\begin{equation}\label{eq:CW}
  \Pp(\abs P\le t)
  \le C_{\mathrm{CW}}
  \left(\frac{t}{\E\abs P}\right)^{1/2}.
\end{equation}

\begin{theorem}[Measurable rectangles are uniformly Hilbert--Schmidt]\label{thm:rectangle-HS}
Let $B\subset W_1\times W_2\times S$ be Borel with product Gaussian probability $\beta>0$, and let $F:B\to\R$ be finite and Borel. Let $\mathcal P$ be any collection of finite-dimensional pairs $(U,V)$ with $U\subset\Hh_1$ and $V\subset\Hh_2$. Suppose that for every $(U,V)\in\mathcal P$ there is a deterministic bilinear form $b_{U,V}$ such that whenever
\[
  (x,y,z),\ (x+h,y,z),\ (x,y+k,z),\ (x+h,y+k,z)\in B,
\]
with $h\in U$, $k\in V$, one has
\begin{equation}\label{eq:rectangle-identity}
\begin{aligned}
  &F(x+h,y+k,z)-F(x+h,y,z)\\
  &\qquad-F(x,y+k,z)+F(x,y,z)=b_{U,V}(h,k).
\end{aligned}
\end{equation}
Then there is a constant $C_{B,F}<\infty$, independent of $U,V$, such that
\[
  \norm{b_{U,V}}_{\HS(U,V)}\le C_{B,F}.
\]
\end{theorem}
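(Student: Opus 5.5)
The plan is to combine the four-corner bound of \cref{lem:four-corner} with the bilinear moment bounds of \cref{lem:bilinear-moments} and the Carbery--Wright inequality \eqref{eq:CW}. The key is a bounded-value event whose probability does not depend on $U,V$.

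First, since $F$ is finite and Borel on $B$, we choose $L<\infty$ such that
\[
 B_L:=\{w\in B:\abs{F(w)}\le L\}
\]
has product probability at least $\beta/2$. This $L$ depends only on $B$ and $F$. We then apply \cref{lem:four-corner} to the Borel set $B_L$. Fix $(U,V)\in\mathcal P$ with orthonormal bases $(e_i)$ and $(f_j)$, and build the resampled pairs $X,X'$ and $Y,Y'$ with common residuals. With probability at least $(\beta/2)^4$, all four corners $(X^\varepsilon,Y^\delta,Z)$ lie in $B_L$.

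On this event we set $h:=X'-X=\sum_i(\xi'_i-\xi_i)e_i\in U$ and $k:=Y'-Y=\sum_j(\eta'_j-\eta_j)f_j\in V$. The four corners are $(x,y,z)$, $(x+h,y,z)$, $(x,y+k,z)$, $(x+h,y+k,z)$, with $x=X$, $y=Y$, so \eqref{eq:rectangle-identity} applies. It gives
\[
 \abs{b_{U,V}(h,k)}\le4L
\]
on an event of probability at least $(\beta/2)^4$.

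Next we identify the law of $b_{U,V}(h,k)$. We have $h=\sqrt2\,g$ in coordinates and $k=\sqrt2\,q$, with $g,q$ independent standard Gaussian vectors (independent of each other because the two factors are resampled independently). Hence
\[
 b_{U,V}(h,k)=2\,g^{\mathsf T}Aq,\qquad A_{ij}=b_{U,V}(e_i,f_j),
\]
and $\norm A_{\mathrm F}=\norm{b_{U,V}}_{\HS}=:\sigma$. This is a Gaussian polynomial of degree two in $(g,q)$. By \cref{lem:bilinear-moments}, $\E\abs{2g^{\mathsf T}Aq}\ge2\sigma/3$. By \eqref{eq:CW},
\[
 \Pp\bigl(\abs{b_{U,V}(h,k)}\le4L\bigr)\le C_{\mathrm{CW}}\left(\frac{6L}{\sigma}\right)^{1/2}.
\]
Comparing with the lower bound $(\beta/2)^4$ gives
\[
 \sigma\le 6L\,C_{\mathrm{CW}}^2(2/\beta)^8=:C_{B,F}.
\]
This constant is independent of $U,V$, because neither \cref{lem:four-corner} nor \eqref{eq:CW} depends on dimension.

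The main point needing care is the measurability bookkeeping. The four-corner event must be an event for the coupled variables $(X,X',Y,Y',Z)$. The rectangle identity must be applied pathwise on that event, using that $h$ and $k$ genuinely lie in $U$ and $V$. This holds because the resampling representation of \cref{lem:resampling} is valid on a full-measure set, which we restrict to. The degree-two Carbery--Wright step is then routine, with $t/\E\abs P$ as displayed.
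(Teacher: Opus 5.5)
Your proof is correct and follows the same route as the paper's: the four-corner lower bound, the identification $b_{U,V}(h,k)=2g^{\mathsf T}Aq$, the bilinear moment bound, and degree-two Carbery--Wright. The only difference is cosmetic: you get the bounded-value event by shrinking the domain to $B_L$ (mass at least $\beta/2$, yielding $(\beta/2)^4$), whereas the paper extends $F$ by zero and takes a union bound over the four identically distributed corners (yielding $\beta^4/2$); either way the constant is independent of $\dim U$ and $\dim V$.
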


\begin{proof}
Extend $F$ by zero to the ambient product and denote the extension by $\widetilde F$. Use \cref{lem:resampling} in $U,V$ and define the random rectangle
\[
\begin{aligned}
  \Rect_{U,V}:={}&\widetilde F(X',Y',Z)-\widetilde F(X',Y,Z)\\
  &-\widetilde F(X,Y',Z)+\widetilde F(X,Y,Z).
\end{aligned}
\]
Every corner has the same marginal law, so
\[
  \Pp(\abs{\Rect_{U,V}}>4M)
  \le4\Pp(\abs{\widetilde F}>M).
\]
Choose $M=M_{B,F}$ such that the right side is at most $\beta^4/2$. By \cref{lem:four-corner}, with probability at least $\beta^4/2$ all four corners lie in $B$ and $\abs{\Rect_{U,V}}\le4M$.

Let $A$ be the matrix of $b_{U,V}$ in orthonormal bases. On the four-corner event,
\[
  X'-X=\sqrt2\,g,
  \qquad
  Y'-Y=\sqrt2\,q,
  \qquad
  \Rect_{U,V}=2g^{\mathsf T}Aq.
\]
Thus, for $Q=g^{\mathsf T}Aq$,
\[
  \Pp(\abs Q\le2M)\ge\frac{\beta^4}{2}.
\]
If $\sigma=\norm A_{\mathrm F}>0$, \cref{lem:bilinear-moments} and \eqref{eq:CW} give
\[
  \frac{\beta^4}{2}
  \le C_{\mathrm{CW}}
       \left(\frac{6M}{\sigma}\right)^{1/2}.
\]
Hence $\sigma\le C M\beta^{-8}$, uniformly in the dimensions. The case $\sigma=0$ is trivial.
\end{proof}

\section{Locality and Chen force the Young rectangle}

For a scalar path $h$ acting in coordinate $a$, write $\tau_h^{(a)}x=x+he_a$.

\begin{lemma}[Separated-support rectangle]\label{lem:young-rectangle}
Let $B\subset\Omega_d$ carry a local level-two Chen coordinate and put
\[
  F(x)=\A^{12}_{0,T}(x).
\]
Let $h,k\in C^1([0,T])$ satisfy
\[
  h(0)=k(0)=0,
  \qquad
  \supp\dot h\cap\supp\dot k=\varnothing.
\]
Whenever all four paths
\[
  x,\quad \tau_h^{(1)}x,\quad \tau_k^{(2)}x,
  \quad \tau_h^{(1)}\tau_k^{(2)}x
\]
belong to $B$, one has
\begin{equation}\label{eq:young-rectangle}
\begin{aligned}
  &F(\tau_h^{(1)}\tau_k^{(2)}x)-F(\tau_h^{(1)}x)
   -F(\tau_k^{(2)}x)+F(x)\\
  &\hspace{42mm}=\int_0^T h(t)\,\dd k(t).
\end{aligned}
\end{equation}
The identity is simultaneous for finite-dimensional shift spaces whose derivative supports lie in two fixed disjoint compact sets.
\end{lemma}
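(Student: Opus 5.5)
The plan is to reduce the global rectangle to a finite sum over a partition of $[0,T]$ fine enough that each subinterval sees the derivative of at most one of the two shifts. Put $S_h:=\supp\dot h$ and $S_k:=\supp\dot k$. These are disjoint compact subsets of $[0,T]$, so their distance $\rho$ is positive (if either set is empty, both sides of the identity vanish trivially). Choose a partition $0=t_0<t_1<\cdots<t_n=T$ with mesh smaller than $\rho$, and write $I_j:=[t_{j-1},t_j]$. Then each $I_j$ meets at most one of $S_h,S_k$. Consequently on every $I_j$ either $\dot h\equiv0$, so $h$ is constant on $I_j$, or $\dot k\equiv0$, so $k$ is constant on $I_j$. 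The partition depends only on the two compact sets, which gives the simultaneity claim for all finite-dimensional shift spaces whose derivative supports lie in them.

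Next I would iterate Chen's identity \eqref{eq:chen-area} along this partition. For every $x\in B$ this gives
\[
 F(x)=\sum_{j=1}^n\A^{12}_{t_{j-1},t_j}(x)+\sum_{i<j}x^1_{t_{i-1},t_i}\,x^2_{t_{j-1},t_j}.
\]
I apply this decomposition at each of the four corners, all of which lie in $B$ by hypothesis, and take the mixed second difference of both sums.

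For the local terms, fix $j$ and suppose first that $h$ is constant on $I_j$. Then shifting by $he_1$ leaves the increment restriction $r^{\{1,2\}}_{t_{j-1},t_j}$ unchanged. Since both the shifted and the unshifted paths lie in $B$, \cref{lem:fibrewise} gives
\[
 \A^{12}_{I_j}(\tau^{(1)}_h y)=\A^{12}_{I_j}(y)
 \qquad (y=x \text{ and } y=\tau^{(2)}_k x).
\]
Hence the mixed difference of $\A^{12}_{I_j}$ vanishes. The case where $k$ is constant on $I_j$ is symmetric, with the roles of the two shifts exchanged.

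For the cross terms, which are bilinear in the first two components, the mixed difference of $x^1_{I_i}x^2_{I_j}$ is $h_{I_i}k_{I_j}$. Summing over $i<j$ and using $h(0)=0$ gives
\[
 \sum_{j}h(t_{j-1})\,k_{t_{j-1},t_j}.
\]
On each $I_j$, either $k$ is constant, so the term vanishes together with $\int_{I_j}h\,\dd k$, or $h\equiv h(t_{j-1})$ on $I_j$, so the term equals $\int_{I_j}h\,\dd k$ exactly. Summing over $j$ yields \eqref{eq:young-rectangle}.

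The main point needing care is the locality step. I must check that constancy of $h$ on the closed interval $I_j$ (which follows since $\dot h$ vanishes on $I_j$) makes the component-$1$ increment restriction literally unchanged, and that the fibrewise lemma is only invoked for pairs of paths that both belong to $B$. The four-corner hypothesis guarantees exactly this membership, so no measure-theoretic completion is required.
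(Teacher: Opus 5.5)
Your proposal is correct and matches the paper's proof step for step. Both arguments take a partition finer than the positive distance between the two compact derivative supports, iterate Chen's identity at all four corners, and cancel every local cell term by \cref{lem:fibrewise}, which is legitimate because all four corners lie in $B$. Both then evaluate the surviving cross terms as $\sum_j h(t_{j-1})\,k_{t_{j-1},t_j}=\int_0^T h\,\dd k$. Your explicit choice of mesh below that distance is simply a concrete way to realize the paper's ``sufficiently fine'' partition, and it yields the same simultaneity statement.
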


\begin{proof}
The two compact derivative supports have positive distance. Choose a finite partition
\[
  0=t_0<\cdots<t_N=T
\]
so fine that on each cell $I_r=[t_r,t_{r+1}]$ at least one of $h,k$ is constant. Iterating \eqref{eq:chen-area} gives
\[
  \A^{12}_{0,T}
  =\sum_r\A^{12}_{t_r,t_{r+1}}
   +\sum_{r<q}x^1_{t_r,t_{r+1}}x^2_{t_q,t_{q+1}}.
\]
Take the mixed four-corner difference. Every local interval term cancels by \cref{lem:fibrewise}: on each cell one of the two increment paths is unchanged. The remaining cross terms give
\[
  \sum_{r<q}\bigl(h(t_{r+1})-h(t_r)\bigr)
                 \bigl(k(t_{q+1})-k(t_q)\bigr)
  =\sum_q h(t_q)\bigl(k(t_{q+1})-k(t_q)\bigr).
\]
On every cell where $k$ varies, $h$ is constant, so the last sum is exactly $\int h\,\dd k$. One partition works for all linear combinations whose derivative supports remain in the same two compact sets.
\end{proof}

\section{Cameron--Martin packets and the negative theorem}

The argument is invariant under deterministic time rescaling. More precisely, if $a=T/(2\pi)$ and
\[
  (\mathsf S_a x)(u):=a^{-H}x(au),\qquad 0\le u\le2\pi,
\]
then $\mathsf S_a$ sends the $H$-fBm law on $[0,T]$ to the $H$-fBm law on $[0,2\pi]$. A second-level coordinate is transported by
\[
  (\mathsf S_a^{(2)}\A)_{u,v}(\mathsf S_a x)
  :=a^{-2H}\A_{au,av}(x).
\]
This preserves Borel measurability, relative locality, Chen's identity, and positivity of the measure of the domain. It is therefore enough to prove the negative result on $[0,2\pi]$.

Let $\Hh_H$ be the one-dimensional fractional Brownian Cameron--Martin path space on $[0,2\pi]$.

\begin{lemma}[Periodic-loop Cameron--Martin estimate]\label{lem:periodic-CM}
Let $0<H<1/2$. There is $C_H<\infty$ such that every smooth $2\pi$-periodic loop $h$ with $h(0)=h(2\pi)=0$ belongs to $\Hh_H$ and
\begin{equation}\label{eq:periodic-CM}
  \norm h_{\Hh_H}^2
  \le C_H\sum_{r\in\mathbb Z\setminus\{0\}}
  \abs r^{2H-1}\abs{\widehat{\dot h}(r)}^2,
\end{equation}
where $\widehat{\dot h}(r)$ is the coefficient of $e^{irt}$.
\end{lemma}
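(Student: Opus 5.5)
The plan is to transfer the estimate from fractional Brownian motion on the whole line, where the Cameron--Martin norm is an explicit homogeneous Sobolev norm, and then to control a zero extension of $h$ by the periodic Fourier sum. First I would use the spectral representation of two-sided fBm,
\[
 B^H(t)=c_H\int_\R\frac{e^{it\xi}-1}{i\xi}\abs\xi^{1/2-H}\,W(\dd\xi).
\]
From it, the Cameron--Martin space of the two-sided process consists of the paths $g(t)=\int_\R\frac{e^{it\xi}-1}{i\xi}\abs\xi^{1/2-H}\phi(\xi)\,\dd\xi$ with $\phi\in L^2$, normed by $\norm\phi_{L^2}$. For based $g$ with $\dot g\in L^2$ this is equivalent to
\[
 \norm g_{\Hh(\R)}^2=c_H'\int_\R\abs\xi^{2H-1}\abs{\widehat{\dot g}(\xi)}^2\,\dd\xi
 \asymp[g]_{\dot H^{s}(\R)}^2,\qquad s:=H+\tfrac12\in(\tfrac12,1).
\]
The restriction map $x\mapsto x\vert_{[0,2\pi]}$ pushes the two-sided law forward to the law on $[0,2\pi]$. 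The Cameron--Martin space of an image Gaussian measure is the image of the original one, with the quotient norm. Hence every based extension $g$ of $h$ gives $h\in\Hh_H$ and $\norm h_{\Hh_H}\le\norm g_{\Hh(\R)}$.

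Next I would take the extension $g:=h\one_{[0,2\pi]}$. Because $h(0)=h(2\pi)=0$, this extension is Lipschitz and compactly supported, and $\dot g=\dot h\one_{[0,2\pi]}$ has mean zero. I would bound $[g]_{\dot H^s(\R)}^2$ through the Gagliardo double integral $\iint\abs{g(x)-g(y)}^2\abs{x-y}^{-1-2s}\,\dd x\,\dd y$. Splitting the domain gives two kinds of contribution.

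\begin{enumerate}[label=(\roman*),leftmargin=2.4em]
\item \emph{Pairs with $x,y\in[0,2\pi]$.} These are dominated by the periodic Gagliardo seminorm on $\T$ with geodesic distance. By the standard Fourier identity on $\T$, that seminorm is $\asymp\sum_{r\ne0}\abs r^{2s}\abs{\widehat h(r)}^2$. Since $\widehat{\dot h}(r)=ir\widehat h(r)$, this equals $\sum_{r\ne0}\abs r^{2H-1}\abs{\widehat{\dot h}(r)}^2$. Pairs near the seam, where the Euclidean distance exceeds the geodesic one, are only improved.
\item \emph{Pairs with exactly one point in $[0,2\pi]$.} Integrating out the outside variable leaves $C\int_0^{2\pi}\abs{h(x)}^2\operatorname{dist}(x,\{0,2\pi\})^{-2s}\,\dd x$.
\end{enumerate}

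The main obstacle is term (ii). I would treat it with the fractional Hardy inequality for $s\in(1/2,1)$ on a half-interval around each endpoint, using the vanishing of $h$ at $0$ and $2\pi$. This bounds $\int_0^{\pi}\abs{h(x)}^2x^{-2s}\,\dd x$ by $C_s$ times the Gagliardo seminorm of $h$ on $[0,2\pi]$, and similarly at $2\pi$; that seminorm is already controlled in (i). The restriction $H<1/2$ enters here, since it keeps $s<1$. Because $s>1/2$, the condition $h(0)=0$ is a genuine trace condition and the Hardy bound is valid.

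If that route proves delicate, the fallback is a direct Fourier computation. Writing $\dot h=\sum a_re^{irt}$, one has $\widehat{\dot g}(\xi)=\sum_r a_r\kappa(\xi-r)$ with a sinc-type kernel $\abs{\kappa(\eta)}\lesssim\min(1,\abs\eta^{-1})$. One would then prove the weighted $\ell^2\to L^2$ bound by Schur's test, using the weights $\abs\xi^{2H-1}$ and $\abs r^{2H-1}$; these are comparable on unit cells away from the origin. Near $\xi=0$, the weight is integrable because $2H-1>-1$, and the mean-zero condition $\widehat{\dot g}(0)=0$ supplies the needed cancellation. Combining (i) and (ii) with the restriction contraction yields \eqref{eq:periodic-CM}, with $C_H$ depending only on $H$.
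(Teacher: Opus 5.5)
Your main route is correct, but it is not the paper's.

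\textbf{What the paper does.} It imports Picard's theorem that the fractional Brownian Cameron--Martin space on $[0,1]$ is equivalent to a periodic Sobolev model, with Riesz basis $t$, $n^{-H-1/2}(1-\cos 2\pi nt)$, $n^{-H-1/2}\sin 2\pi nt$. The condition $h(0)=h(1)=0$ removes the coefficient of $t$. After differentiation, the Riesz norm $\sum_n n^{2H+1}(|a_n|^2+|b_n|^2)$ is recognised as the weighted Fourier sum. Self-similarity then transfers the statement from $[0,1]$ to $[0,2\pi]$. This argument is short and gives a two-sided equivalence, but all the analysis is hidden in a deep external theorem.

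\textbf{What you do instead.} You prove only the one-sided bound the lemma needs, from more classical ingredients:
\begin{itemize}
\item the harmonizable description of the two-sided Cameron--Martin norm as $\int|\xi|^{2H-1}|\widehat{\dot g}(\xi)|^2\,d\xi$;
\item the restriction (quotient-norm) property of Cameron--Martin spaces;
\item extension by zero;
\item the Gagliardo form of $\dot H^{H+1/2}(\mathbb R)$;
\item the fractional Hardy inequality for $s=H+\tfrac12\in(\tfrac12,1)$, for instance Dyda's inequality on a bounded interval.
\end{itemize}
This makes visible where each hypothesis enters. The loop condition makes the zero extension continuous (a jump would already make the $\dot H^{s}$ seminorm infinite for $s\ge\tfrac12$), and it also feeds the Hardy step. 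The assumption $H<\tfrac12$ keeps $s<1$, which the Gagliardo characterisation requires. If the Hardy inequality you cite carries an extra $L^2$ term, that term is harmless. Indeed $h(0)=0$ gives $|\widehat h(0)|\le\sum_{r\ne0}|\widehat h(r)|$. Cauchy--Schwarz then bounds this by the right-hand side of the lemma, because $2s>1$.

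\textbf{The fallback does not work as written.} The kernel is $\kappa(\xi-r)=(1-e^{-2\pi i\xi})/(i(\xi-r))$, and its majorant satisfies $\sum_r\min(1,|\xi-r|^{-1})=\infty$. So Schur's test fails even without weights. The unweighted bound holds only through cancellation; it is Plancherel. The weighted version would require a weighted discrete Hilbert-transform estimate for the $A_2$ power weight $|\xi|^{2H-1}$. Your main argument does not need the fallback, so you can simply drop it.
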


\begin{proof}
Picard proves that the fractional Brownian Cameron--Martin space on $[0,1]$ is equivalent to the periodic Sobolev model and that
\[
  t,\qquad n^{-H-1/2}(1-\cos 2\pi nt),\qquad
  n^{-H-1/2}\sin 2\pi nt
\]
form a Riesz basis; see \cite[Theorems~6.9 and~6.12 in the published version, pp.~48--49]{Picard2011}. If $h(0)=h(1)=0$, the coefficient of $t$ is zero and $h$ has a unique expansion
\[
  h(t)=\sum_{n\ge1}\bigl(a_n(1-\cos 2\pi nt)+b_n\sin 2\pi nt\bigr)
\]
with
\[
  \norm h_{\Hh_H([0,1])}^2\asymp_H
  \sum_{n\ge1}n^{2H+1}(\abs{a_n}^2+\abs{b_n}^2).
\]
Differentiating this expansion shows that the right-hand side is, up to a fixed numerical factor, exactly
\[
  \sum_{r\in\mathbb Z\setminus\{0\}}\abs r^{2H-1}
       \abs{\widehat{\dot h}(r)}^2.
\]
Fractional Brownian self-similarity identifies the Cameron--Martin spaces on $[0,1]$ and on the fixed interval $[0,2\pi]$ under
$h(t)\mapsto (2\pi)^{-H}h(2\pi t)$. Rescaling changes only the constant and proves \eqref{eq:periodic-CM}.
\end{proof}

Choose non-negative, non-zero $m,n\in C^\infty(\mathbb T)$ with disjoint compact supports in $(0,2\pi)$ and Fourier series
\[
  m(t)=\sum_qc_qe^{iqt},
  \qquad
  n(t)=\sum_qd_qe^{iqt}.
\]
Then $c_0,d_0>0$. For an integer $M$, put $m_M(t)=m(Mt)$ and $n_M(t)=n(Mt)$, where the arguments are read modulo $2\pi$. Their supports are the disjoint inverse images of $\supp m$ and $\supp n$ under the same covering map. For $1\le k\le K$, define complex loops by
\[
  \dot h_k^+=m_Me^{ikt},\quad h_k^+(0)=0,
  \qquad
  \dot g_k^-=n_Me^{-ikt},\quad g_k^-(0)=0.
\]
If $M>K$, neither derivative has a zero Fourier mode, so both paths close at $2\pi$.

\begin{lemma}[Packet bounds and critical divergence]\label{lem:packets}
There are constants $C,c>0$, depending only on $H,m,n$, such that for every $K$ one can choose an integer $M\ge CK$ with $M>2K$ for which the following holds. Write
\[
  h_{k,1}=\Re h_k^+,\quad h_{k,2}=\Im h_k^+,
  \qquad
  g_{k,1}=\Re g_k^-,\quad g_{k,2}=\Im g_k^-.
\]
Define $U_K,V_K:\R^{2K}\to\Hh_H$ by
\[
  U_Ke_{k,a}=k^{1/2-H}h_{k,a},
  \qquad
  V_Ke_{k,a}=k^{1/2-H}g_{k,a}.
\]
Then
\[
  \norm{U_K}+\norm{V_K}\le C,
\]
and for the Young form $b(h,g)=\int_0^{2\pi}h\,\dd g$,
\begin{equation}\label{eq:packet-lower}
  \norm{U_K^*bV_K}_{\HS}^2
  \ge c\sum_{k=1}^K k^{-4H}.
\end{equation}
\end{lemma}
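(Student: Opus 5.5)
The plan is to compute everything on the Fourier side, using that $m_M$ and $n_M$ only have frequencies in $M\mathbb Z$. Then $\dot h_k^+=\sum_q c_q e^{i(qM+k)t}$ and $\dot g_k^-=\sum_q d_q e^{i(qM-k)t}$. The condition $M>2K$ makes the frequency sets $\{qM+k\}$ for $1\le k\le K$, $q\in\mathbb Z$, pairwise disjoint, and likewise for $\{qM-k\}$. This is also why none of them is zero, so all the loops close. Throughout I assume $H<1/2$, which is the regime where \cref{lem:periodic-CM} applies and where the lemma is used ($H\le1/4$).

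\emph{Step 1: operator bounds.} For real coefficients $\alpha_{k,a}$, the derivative of $\sum_{k,a}\alpha_{k,a}k^{1/2-H}h_{k,a}$ is a real combination of $\dot h_k^+$ and $\overline{\dot h_k^+}$. Its Fourier coefficient at $\pm(qM+k)$ is $c_q k^{1/2-H}$ (or its conjugate) times a complex number $\alpha_k^\pm$ with $\abs{\alpha_k^\pm}\le\abs{\alpha_{k,1}}+\abs{\alpha_{k,2}}$. I insert this into \eqref{eq:periodic-CM}.
\begin{itemize}
\item For $q=0$, the weight is $k^{2H-1}$, which cancels against $k^{1-2H}$ and leaves $\abs{c_0}^2\abs{\alpha_k^\pm}^2$.
\item For $q\ne0$, we have $\abs{qM+k}\ge\abs q M/2$. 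Since $2H-1<0$ and $k\le K<M$, this gives
\[
\abs{qM+k}^{2H-1}k^{1-2H}\le C\abs q^{2H-1}\le C.
\]
\end{itemize}
Hence $\norm{U_K\alpha}_{\Hh_H}^2\le C_H\bigl(\sum_q\abs{c_q}^2\bigr)\norm\alpha^2$, with a constant independent of $K$ and $M$. The same argument bounds $V_K$ using $(d_q)$.

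\emph{Step 2: the Young form is block diagonal.} I write $b(h,g)=2\pi\sum_{r}\widehat h(r)\widehat{\dot g}(-r)$. The constant term of $h$ drops out because $\dot g$ has no zero mode. For $r=qM+k$ we have $\widehat{h_k^+}(r)=c_q/(ir)$.
\begin{itemize}
\item Pairing $h_k^+$ with $g_l^-$ requires $pM-l=-(qM+k)$, which forces $k=l$ and $p=-q$. So
\[
b(h_k^+,g_k^-)=2\pi\sum_q \frac{c_qd_{-q}}{i(qM+k)}.
\]
\item Pairing $h_k^+$ with $\overline{g_l^-}$ requires $pM+l=-(qM+k)$, which is impossible because $2\le k+l\le 2K<M$.
\end{itemize}
The conjugate pairings behave the same way. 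Therefore the real $2K\times2K$ matrix of $U_K^*bV_K$ is block diagonal with $2\times2$ blocks indexed by $k$. Each block is determined by the single complex number $k^{1-2H}\,b(h_k^+,g_k^-)$, and its Frobenius norm squared is comparable to $k^{2-4H}\abs{b(h_k^+,g_k^-)}^2$. This is because the real bilinear form $(\Re,\Im)\times(\Re,\Im)$ built from a complex scalar $\beta$ has Frobenius norm $\asymp\abs\beta$.

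\emph{Step 3: the main term dominates (the expected obstacle).} The $q=0$ term equals $2\pi c_0d_0/(ik)$, and $c_0d_0>0$ since $m,n\ge0$ are nonzero. The remainder is bounded by
\[
\frac{2\pi}{M}\sum_{q\ne0}\frac{2\abs{c_q}\abs{d_{-q}}}{\abs q}\le\frac{C_{m,n}}{M},
\]
where the sum converges because $m,n$ are smooth. Choosing $M\ge C'K$ with $C'$ large (and $M>2K$) makes this remainder at most $\pi c_0d_0/k$ for every $k\le K$. Then $\abs{b(h_k^+,g_k^-)}\ge\pi c_0d_0/k$, each block contributes at least $c\,k^{2-4H}k^{-2}=c\,k^{-4H}$, and summing over $k$ gives \eqref{eq:packet-lower}.

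The delicate points are uniformity and signs. The only place $K$ enters the bound for $U_K,V_K$ is through $k\le M$, and the non-vanishing of the main term rests on $c_0d_0>0$. The disjointness of the supports of $m$ and $n$ is not needed here; it is only needed later, for \cref{lem:young-rectangle}.
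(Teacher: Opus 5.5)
Your proposal is correct and follows essentially the paper's proof. Both arguments insert the packet Fourier coefficients into the periodic Cameron--Martin estimate to bound $U_K,V_K$ uniformly, show that the $q=0$ term $2\pi c_0d_0/(ik)$ dominates $\int_0^{2\pi}h_k^+\,\dd g_k^-$ once $M\gtrsim K$, and convert this into the $2\times2$ Frobenius lower bound $k^{1-2H}$-rescaled to $ck^{-4H}$. The only difference is that you also verify exact block-diagonality of the form, whereas the paper just bounds the Hilbert--Schmidt norm below by its diagonal blocks, which already suffices.
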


\begin{proof}
We first work in the complexifications. For
\[
  h=\sum_{k=1}^K a_kk^{1/2-H}h_k^+,
\]
the Fourier frequencies of $\dot h_k^+$ are $k+qM$. Since $M>2K$, the pairs $(k,q)$ give distinct frequencies. By \cref{lem:periodic-CM},
\[
\begin{aligned}
  \norm h_{\Hh_H}^2
  &\le C_H\sum_{k,q}\abs{a_k}^2k^{1-2H}
                 \abs{c_q}^2\abs{k+qM}^{2H-1}.
\end{aligned}
\]
For $q=0$ the product of the two powers of $k$ is one. For $q\ne0$, $M>2K$ implies
\[
  \abs{k+qM}\ge \abs q\,k.
\]
Since $2H-1<0$,
\[
  k^{1-2H}\abs{k+qM}^{2H-1}\le\abs q^{2H-1}.
\]
The rapid decay of $(c_q)$ therefore gives
$\norm h_{\Hh_H}^2\le C\sum_k\abs{a_k}^2$. The $g$-family is identical, and passing to real and imaginary parts changes only the constant. Hence $U_K,V_K$ are uniformly bounded.

At equal packet index,
\begin{equation}\label{eq:Ck}
  C_k:=\int_0^{2\pi}h_k^+\,\dd g_k^-
  =2\pi\sum_{q\in\mathbb Z}
       \frac{c_qd_{-q}}{i(k+qM)}.
\end{equation}
The $q=0$ term has magnitude $2\pi c_0d_0/k$. For $q\ne0$ and $M>2K$,
\[
  \abs{k+qM}\ge \tfrac12\abs q M,
\]
so the remainder in \eqref{eq:Ck} is bounded by
\[
  \frac{C}{M}\sum_{q\ne0}\frac{\abs{c_qd_{-q}}}{\abs q}
  \le \frac{C_{m,n}}{M}.
\]
Choose the constant in $M\ge CK$ so large that
$\abs{C_k}\ge c/k$ for all $1\le k\le K$.

Let $\widetilde B_k$ be the real $2\times2$ matrix of $b$ on the unnormalised pair
$(h_{k,1},h_{k,2})\times(g_{k,1},g_{k,2})$. Complex bilinearity gives
\[
\begin{aligned}
  C_k={}&b(h_{k,1},g_{k,1})-b(h_{k,2},g_{k,2})\\
       &+i\bigl(b(h_{k,1},g_{k,2})+b(h_{k,2},g_{k,1})\bigr),
\end{aligned}
\]
and hence $\norm{\widetilde B_k}_{\mathrm F}^2\ge\abs{C_k}^2/2$. Normalising both inputs multiplies this block by $k^{1-2H}$, so its squared Frobenius norm is at least $ck^{-4H}$. The squared Hilbert--Schmidt norm of the full matrix is the sum of the squares of all its entries and is therefore bounded below by the sum of these diagonal block contributions, which proves \eqref{eq:packet-lower}.
\end{proof}

\begin{proof}[Proof of \cref{thm:phase-transition}(i)]
Assume $\beta=\mu_H^{(d)}(B)>0$.  If the given off-diagonal coordinate is indexed by $(a,b)$, relabel the independent scalar components and assume without loss of generality that $(a,b)=(1,2)$.  Put $F(x)=\A^{12}_{0,2\pi}(x)$.  Since the components are independent, the law has the product form $\mu_H^{(d)}=\mu_H^{\otimes d}$.  Split the first two scalar coordinates from the remaining ones:
\[
  \Omega_d=W_1\times W_2\times S,
  \qquad
  \mu_H^{(d)}=\mu_H\otimes\mu_H\otimes\nu.
\]
For each $K$, let $U=\Ran U_K$ and $V=\Ran V_K$ be the packet subspaces. Their derivative supports lie in two fixed disjoint compact sets, so \cref{lem:young-rectangle} gives the rectangle identity \eqref{eq:rectangle-identity} with
\[
  b_{U,V}(h,k)=\int_0^{2\pi}h\,\dd k.
\]
By \cref{thm:rectangle-HS}, $\norm{b_{U,V}}_{\HS}\le C_{B,F}$ uniformly in $K$. The Hilbert--Schmidt ideal property and the uniform Bessel bounds give
\[
  \norm{U_K^*bV_K}_{\HS}
  \le \norm{U_K}\norm{V_K}\norm{b_{U,V}}_{\HS}
  \le C C_{B,F}.
\]
But \cref{lem:packets} gives
\[
  \norm{U_K^*bV_K}_{\HS}^2
  \ge c\sum_{k=1}^Kk^{-4H}.
\]
The right side grows like $K^{1-4H}$ if $H<1/4$ and like $\log K$ if $H=1/4$. This contradiction proves $\mu_H^{(d)}(B)=0$.
\end{proof}

\section{Local additive rigidity above order one half}

For an interval $I=[s,t]$, let
\[
  \F_I:=\sigma\bigl(B_u^{H,a}-B_s^{H,a}:u\in[s,t],\ 1\le a\le d\bigr)
\]
completed under the fractional Brownian law.

Assume in this section that $0<H\le1/2$.  For $0<H<1/2$, we use the established Gaussian maximal-correlation estimate from \cite{ChevyrevFerrucci2026}: for two equal intervals of length $\tau$ separated by a gap $g>0$,
\begin{equation}\label{eq:max-correlation}
  \rho\le 1\wedge C_H\left(\frac{\tau}{g}\right)^{2-2H}.
\end{equation}
For $H=1/2$, disjoint Brownian increment sigma-fields are independent, so their maximal correlation is zero.  If $F,G$ are centred square-integrable functionals of two Gaussian subspaces with maximal correlation at most $\rho$, Gebelein's inequality \cite{Gebelein1941} gives
\begin{equation}\label{eq:gebelein}
  \abs{\E[FG]}\le\rho\norm F_2\norm G_2.
\end{equation}

\begin{theorem}[Local additive rigidity]\label{thm:additive-rigidity}
Assume $0<H\le1/2$, let $V$ be finite-dimensional, and let
\[
 (A_{s,t})_{0\le s\le t\le T,\ s,t\in\mathbb Q}
\]
be a $V$-valued family indexed by rational endpoints.  For every rational $s<t$, assume that the random variable $A_{s,t}$ is $\F_{[s,t]}$-measurable.  Assume moreover that there is a common event $\Omega_0$ of probability one on which:
\begin{enumerate}[label=(\roman*),leftmargin=2.4em]
\item $A_{s,t}=A_{s,u}+A_{u,t}$ for all rational $s\le u\le t$;
\item for some $\theta>1/2$,
\[
 R_\theta^{\mathbb Q}(A)
 :=
 \sup_{\substack{s<t\\s,t\in\mathbb Q}}
 \frac{\norm{A_{s,t}}}{\abs{t-s}^\theta}
 <\infty.
\]
\end{enumerate}
Then the rational restriction admits a unique continuous additive extension
\[
 \widetilde A:\{(s,t):0\le s\le t\le T\}\longrightarrow V.
\]
There is a deterministic $\theta$-H\"older path $a:[0,T]\to V$ and a common event of probability one on which
\begin{equation}\label{eq:additive-rigidity-extension}
 \widetilde A_{s,t}=a(t)-a(s)
 \qquad(0\le s\le t\le T).
\end{equation}
The conclusion concerns the unique continuous extension of the rational family; no value assigned to the original family at an irrational endpoint is used or determined unless continuity is assumed separately.
\end{theorem}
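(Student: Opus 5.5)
The plan has three steps: build the continuous extension, show that each rational increment $A_{s,t}$ is almost surely equal to a deterministic vector, and then assemble these constants into the path $a$.

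\emph{Extension.} On $\Omega_0$, additivity and the bound $\norm{A_{s,t}}\le R^{\mathbb Q}_\theta(A)\abs{t-s}^\theta$ show that $u\mapsto A_{0,u}$ is uniformly $\theta$-H\"older on rational points. It therefore has a unique continuous extension $\widetilde A_{0,\cdot}$ to $[0,T]$. Setting $\widetilde A_{s,t}:=\widetilde A_{0,t}-\widetilde A_{0,s}$ gives the unique continuous additive extension, and it inherits the H\"older bound with the same constant. Hence it suffices to prove that every rational $A_{s,t}$ is almost surely constant. We then put $a(t):=$ that constant for $t\in\mathbb Q$ and extend $a$ by continuity; $a$ is $\theta$-H\"older because the bound holds on a nonnull event. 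Intersecting countably many full events yields \eqref{eq:additive-rigidity-extension} simultaneously for all $s\le t$.

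\emph{Determinism of one increment, without moments.} Fix rational $s<t$, and reduce to scalar $A$ by applying a linear functional. For $n\ge1$ let $t_i=s+i(t-s)/n$, rational, and $Y_i:=A_{t_i,t_{i+1}}$. Each $Y_i$ is $\F_{[t_i,t_{i+1}]}$-measurable. Fix $L>0$, put $\epsilon_n:=L(t-s)^\theta n^{-\theta}$, and let $Z_i:=\psi_n(Y_i)$, where $\psi_n$ is the clamp of $\R$ to $[-\epsilon_n,\epsilon_n]$. The $Z_i$ are bounded, local, and $\norm{Z_i-\E Z_i}_2\le 2\epsilon_n$. On the event $\{R^{\mathbb Q}_\theta\le L\}\cap\Omega_0$, no clamping occurs, so additivity gives
\[
A_{s,t}=\sum_{i=0}^{n-1}Z_i .
\]
For $\abs{i-j}\ge2$ the two intervals have length $\tau=(t-s)/n$ and gap $(\abs{i-j}-1)\tau$. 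By \eqref{eq:max-correlation} and Gebelein's inequality \eqref{eq:gebelein},
\[
\abs{\Cov(Z_i,Z_j)}\le 4C_H\epsilon_n^2(\abs{i-j}-1)^{-(2-2H)} .
\]
When $H=1/2$ this covariance is zero by independence. For $\abs{i-j}\le1$ we use $\rho\le1$. Since $2-2H>1$ for $H<1/2$, the correlation weights are summable, and
\[
\Var\Bigl(\sum_iZ_i\Bigr)\le C\,n\,\epsilon_n^2=C L^2(t-s)^{2\theta}n^{1-2\theta}\longrightarrow0,
\]
because $\theta>1/2$.

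Thus $\sum_iZ_i-c_n\to0$ in probability, where $c_n:=\E\sum_iZ_i$. On the event $\{R^{\mathbb Q}_\theta\le L\}\cap\Omega_0$, which has positive probability for large $L$, the sum equals the fixed random variable $A_{s,t}$ for every $n$. Hence $A_{s,t}-c_n\to0$ in probability there. This forces $c_n$ to converge to some $c_L$, and $A_{s,t}=c_L$ almost surely on that event. Letting $L\uparrow\infty$ along integers, the events increase to a full-measure event, so the constants $c_L$ agree and $A_{s,t}$ is almost surely deterministic.

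\emph{Main obstacle.} The key difficulty is the absence of any integrability assumption on $A$. The clamping at the critical scale $\epsilon_n$ handles this: it keeps each summand local and bounded while making it agree with the original increment on the large event $\{R^{\mathbb Q}_\theta\le L\}$. The second delicate point is that the maximal-correlation decay exponent $2-2H>1$ makes the off-diagonal covariance sum of order $n$ rather than $n^2$, and only then does $\theta>1/2$ suffice. The last step, assembling the constants into $a$ and handling irrational endpoints through the continuous extension, is routine.
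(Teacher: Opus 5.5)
Your proposal is correct and follows essentially the same argument as the paper: reduce to scalars, clip each subinterval increment at the critical level $L\tau^\theta$, bound the variance by $Cn\epsilon_n^2$ using the maximal-correlation decay and Gebelein's inequality, use the event $\{R^{\mathbb Q}_\theta\le L\}$ to force a deterministic limit, and assemble the constants into $a$. The paper's extra moves are cosmetic and change nothing essential: it splits the sum by parity, and it upgrades to almost-sure convergence along $N=2^n$ via Borel--Cantelli, where your in-probability argument already suffices.
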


\begin{proof}
It is enough to prove the scalar statement after applying a basis of $V^*$. Fix rational $a<b$, write $L=b-a$, and for $N\ge2$ set
\[
 t_i=a+iL/N,\qquad I_i=[t_i,t_{i+1}],\qquad h_N=L/N.
\]
Let $\chi_M(x)=(-M)\vee(x\wedge M)$. For an integer $K\ge1$, put
\[
 M_N=Kh_N^\theta,
 \qquad
 Y_{N,i}^K=\chi_{M_N}(A_{t_i,t_{i+1}}).
\]
Split the sum by parity:
\[
 S_{N,p}^K=\sum_{i\equiv p\, (2)}Y_{N,i}^K,
 \qquad
 S_N^K=S_{N,0}^K+S_{N,1}^K.
\]
Same-parity intervals at index distance $2r$ have length $h_N$ and gap $(2r-1)h_N$.  If $0<H<1/2$, \eqref{eq:max-correlation} gives
\[
 \rho_r\le C_H(2r-1)^{-(2-2H)},
 \qquad
 \sum_{r\ge1}\rho_r<\infty,
\]
because $2-2H>1$.  If $H=1/2$, the corresponding Brownian increment sigma-fields are independent and we set $\rho_r=0$.  In both cases, Gebelein's inequality and $\abs{Y_{N,i}^K}\le M_N$ give
\[
 \abs{\Cov(Y_{N,i}^K,Y_{N,j}^K)}
 \le \rho_rM_N^2,
 \qquad
 \sum_{r\ge1}\rho_r<\infty.
\]
Consequently
\begin{equation}\label{eq:variance-rigidity}
 \Var(S_N^K)
 \le C_H N M_N^2
 =C_HK^2L^{2\theta}N^{1-2\theta}.
\end{equation}
Along $N=2^n$ the right-hand side is summable. Chebyshev and Borel--Cantelli, applied to the thresholds $1/m$, $m\in\mathbb N$, yield
\begin{equation}\label{eq:bc-rigidity}
 S_{2^n}^K-\E S_{2^n}^K\longrightarrow0
 \qquad\text{almost surely}.
\end{equation}

Let $E_K=\{R_\theta^{\mathbb Q}(A)\le K\}$. On $E_K\cap\Omega_0$ no clipping occurs, and rational additivity gives $S_N^K=A_{a,b}$. Whenever $\Pp(E_K)>0$, put $c_n^K=\E S_{2^n}^K$. On $E_K\cap\Omega_0$, \eqref{eq:bc-rigidity} gives $A_{a,b}-c_n^K\to0$ almost surely. The deterministic sequence $(c_n^K)$ is Cauchy: for $\varepsilon>0$ and large $n,m$, each of the sets
\[
 E_K\cap\Omega_0\cap\{\abs{A_{a,b}-c_n^K}\le\varepsilon\},
 \qquad
 E_K\cap\Omega_0\cap\{\abs{A_{a,b}-c_m^K}\le\varepsilon\}
\]
has probability arbitrarily close to $\Pp(E_K)$, so their intersection has positive probability and $\abs{c_n^K-c_m^K}\le2\varepsilon$. Thus $c_n^K\to c_{a,b}^K$ and $A_{a,b}=c_{a,b}^K$ almost surely on $E_K\cap\Omega_0$.

Repeat the argument for every integer $K$ with positive $\Pp(E_K)$. The events are nested; hence their deterministic constants agree on every non-null intersection. Since $E_K\uparrow\Omega$ up to a null set, $A_{a,b}$ is one deterministic constant $c_{a,b}$ almost surely. Taking a countable intersection over rational endpoints gives deterministic additive constants on all rational intervals. Define $a(q)=c_{0,q}$ for rational $q$. On any non-null $E_K$,
\[
 \abs{a(r)-a(q)}=\abs{c_{q,r}}\le K\abs{r-q}^\theta,
\]
so $a$ extends uniquely to a deterministic $\theta$-H\"older path on $[0,T]$.

Set
\[
 \widetilde A_{s,t}:=a(t)-a(s)
 \qquad(0\le s\le t\le T).
\]
On a common full-probability event this agrees with $A_{s,t}$ for every rational pair.  It is continuous and additive.  Any other continuous extension agreeing on rational pairs coincides with it by density of $\mathbb Q^2$ in the time simplex.  This proves uniqueness and \eqref{eq:additive-rigidity-extension}.
\end{proof}

\begin{corollary}[Continuous-process form of local additive rigidity]\label{cor:additive-rigidity-continuous}
Assume the hypotheses of \cref{thm:additive-rigidity}.  Suppose in addition that a two-parameter process $(A_{s,t})_{0\le s\le t\le T}$ is already defined at all real endpoints and has, on a common event of probability one, a continuous version whose restriction to rational endpoints is the family in that theorem.  Then there is a deterministic $\theta$-H\"older path $a$ such that
\[
 A_{s,t}=a(t)-a(s)
\]
almost surely simultaneously for all $0\le s\le t\le T$.
\end{corollary}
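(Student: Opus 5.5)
The plan is to apply \cref{thm:additive-rigidity} to the rational restriction of the given process, and then use continuity to pass from rational endpoints to all real endpoints. The hypotheses of the theorem hold by assumption, so it produces a deterministic $\theta$-H\"older path $a$ and a common full-probability event $\Omega_1$ on which
\[
 A_{s,t}=a(t)-a(s)\qquad(s,t\in\mathbb Q,\ s\le t).
\]
By hypothesis there is also an event $\Omega_2$ of probability one on which $(s,t)\mapsto A_{s,t}$ is continuous on the whole simplex and agrees at rational pairs with the family used in the theorem. I would set $\Omega_*:=\Omega_0\cap\Omega_1\cap\Omega_2$; this has probability one.

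Next fix $\omega\in\Omega_*$. The two maps $(s,t)\mapsto A_{s,t}(\omega)$ and $(s,t)\mapsto a(t)-a(s)$ are both continuous on $\{0\le s\le t\le T\}$, because $a$ is H\"older. They agree on the rational pairs. Rational pairs are dense in the simplex, including its diagonal and boundary: any real pair $s\le t$ is the limit of rational pairs $s_n\le t_n$, for instance rational $s_n\downarrow s$ and $t_n\downarrow t$ with $s_n\le t_n$, clipped at $T$ using rationals from below when needed. So the two maps coincide everywhere. Equivalently, $A(\omega)$ is a continuous extension of the rational family, and the uniqueness statement in \cref{thm:additive-rigidity} identifies it with $\widetilde A$.

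The argument has no genuine obstacle. The only point needing care is measure-theoretic: the exceptional null set must be chosen once and independently of $(s,t)$. This is achieved by fixing the single event $\Omega_*$ before taking any limits, so the conclusion holds simultaneously for all real endpoints rather than pointwise almost surely for each $(s,t)$.
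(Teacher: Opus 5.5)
Your proposal is correct and follows essentially the same route as the paper: apply \cref{thm:additive-rigidity} to get agreement with $a(t)-a(s)$ at rational pairs on a single full-probability event, then use continuity of both sides and density of rational pairs in the simplex to conclude simultaneously for all real endpoints. Your explicit choice of one exceptional null set before taking limits is exactly the point the paper's short proof relies on.
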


\begin{proof}
On a common full-probability event, the given continuous version and the extension $\widetilde A$ from \cref{thm:additive-rigidity} agree on the dense set of rational pairs.  They therefore agree on the entire time simplex, and \eqref{eq:additive-rigidity-extension} gives the conclusion.
\end{proof}

\begin{remark}
The exponent $1/2$ enters through the summability of $2^{n(1-2\theta)}$, so the argument applies precisely for $\theta>1/2$.
\end{remark}

\section{Full-measure relative domains and local versions}\label{sec:full-measure-extension}

A full-measure domain may depend on information outside a given time interval.  Therefore one cannot preserve interval locality by assigning arbitrary values on its null complement.  The correct operation is to extend each relatively local coordinate through the trace sigma-field.

\begin{lemma}[Extension from a full-measure relative domain]\label{lem:trace-extension}
Let $(\Omega,\mathcal A,\mu)$ be a probability space, let $\mathcal G\subseteq\mathcal A$ be a sigma-field, and let $B\in\mathcal A$ satisfy $\mu(B)=1$.  Let $V$ be finite-dimensional.  If
\[
 F:B\longrightarrow V
\]
is measurable with respect to the trace sigma-field
\[
 \mathcal G\vert_B:=\{B\cap A:A\in\mathcal G\},
\]
then there exists a $\mathcal G$-measurable map
\[
 \widetilde F:\Omega\longrightarrow V
\]
such that $\widetilde F\vert_B=F$.
\end{lemma}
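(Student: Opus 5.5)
The plan is to prove the statement by the standard approximation argument for measurable functions: first handle indicator functions, then simple functions, then pass to pointwise limits. Interestingly, the hypothesis $\mu(B)=1$ plays no role in the construction itself. Any trace-measurable map extends to a $\mathcal G$-measurable map; the full-measure assumption only makes the extension meaningful almost everywhere. Since $V$ is finite-dimensional, I will fix a basis and reduce to the scalar case $V=\R$. Measurability of a $V$-valued map is then equivalent to measurability of its coordinates, and the extension can be done coordinatewise.

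Step one treats indicators. If $F=\one_{B\cap A}$ with $A\in\mathcal G$, the extension $\widetilde F:=\one_A$ works. Here it is essential that every set in $\mathcal G\vert_B$ has the form $B\cap A$ with $A\in\mathcal G$; this holds by the definition of the trace sigma-field.

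Step two treats simple functions. A real simple $\mathcal G\vert_B$-measurable function is a finite combination $\sum_i c_i\one_{B\cap A_i}$, and it extends to $\sum_i c_i\one_{A_i}$.

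Step three passes to limits. For a general finite $F$, choose simple trace-measurable $F_n\to F$ pointwise on $B$, for instance by the usual dyadic truncation. Let $G_n$ be the $\mathcal G$-measurable extensions from step two. The set
\[
 C:=\{\omega:\ \lim_n G_n(\omega)\text{ exists in }\R\}
\]
belongs to $\mathcal G$, being defined by countably many $\limsup$/$\liminf$ conditions on $\mathcal G$-measurable functions, and it contains $B$. I will set
\[
 \widetilde F:=\one_C\lim_n G_n,
\]
which is zero off $C$. This map is $\mathcal G$-measurable and equals $F$ on $B$.

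The only point requiring care is this last step. The extensions $G_n$ need not converge outside $B$, which is why the convergence set $C$ is introduced and why one must check that it is $\mathcal G$-measurable rather than merely $\mathcal A$-measurable. As an alternative to the approximation scheme, one could apply the Doob–Dynkin factorization through the restriction map. I prefer the approximation route because it avoids any standard-Borel hypothesis on the target space of $r$.
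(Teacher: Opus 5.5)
Your proposal is correct and follows essentially the same route as the paper: reduce to scalar coordinates, extend the trace-measurable dyadic simple approximants level set by level set, and pass to a limit. The only difference is how the behaviour off $B$ is handled. The paper first composes with a Borel isomorphism $\psi:\R\to(0,1)$, so that the approximants are finite-valued and $\limsup_n\widetilde G_n$ is automatically finite, and then applies an extension of $\psi^{-1}$ to $[0,1]$. Your $\mathcal G$-measurable convergence set $C\supseteq B$ does the same job directly, and, as you note, neither argument uses $\mu(B)=1$.
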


\begin{proof}
After choosing coordinates it is enough to treat $V=\R$.  Let $\psi:\R\to(0,1)$ be a Borel isomorphism and put $G=\psi\circ F$.  For $n\ge1$, let
\[
 G_n:=2^{-n}\lfloor 2^nG\rfloor.
\]
Each level set of the finite-valued trace-measurable function $G_n$ has the form $B\cap A_{n,k}$ with $A_{n,k}\in\mathcal G$.  Since the level sets are disjoint on $B$, replacing the $A_{n,k}$ successively by
\[
 A_{n,k}\setminus\bigcup_{j<k}A_{n,j}
\]
produces disjoint representatives without changing their intersections with $B$.  Assigning the value $0$ on the remaining complement defines a $\mathcal G$-measurable simple extension $\widetilde G_n$ of $G_n$.  Set $\widetilde G=\limsup_n\widetilde G_n$.  Then $\widetilde G$ is $\mathcal G$-measurable and equals $G$ on $B$.  Extend $\psi^{-1}$ arbitrarily from $(0,1)$ to $[0,1]$ and put $\widetilde F=\psi^{-1}(\widetilde G)$.  This gives the required extension.  Apply the scalar construction to each coordinate of $V$.
\end{proof}

\begin{lemma}[Borel core of a completed full-measure event]\label{lem:borel-core}
Let $\Omega$ be a Polish space, let $\mu$ be a Borel probability measure on $\Omega$, and let $E$ belong to the $\mu$-completion of the Borel sigma-field.  If $\mu(E)=1$, then there is a Borel set
\[
 E_0\subseteq E,
 \qquad
 \mu(E_0)=1.
\]
\end{lemma}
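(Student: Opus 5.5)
The argument uses only inner regularity of the Borel probability measure $\mu$ on the Polish space $\Omega$. By definition of the completion, since $E$ lies in the $\mu$-completion of the Borel sigma-field, there are Borel sets $A\subseteq E\subseteq A'$ with $\mu(A'\setminus A)=0$. Hence $\mu(A)=\mu(E)=1$, and $E_0:=A$ already does the job. The whole proof is therefore an unwinding of the definition of the completed sigma-field.

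To keep the statement self-contained, I would phrase the completion as the family of sets $E$ for which there exist Borel $A\subseteq E$ and a Borel null set $N$ with $E\setminus A\subseteq N$. Then $\mu(A)\ge\mu(E)-\mu(N)=1$, so $A$ is Borel, contained in $E$, and of full measure.

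If one prefers to argue through the outer/inner measure description, the fallback is to use that every Borel probability measure on a Polish space is inner regular by compacts. Choose compact sets $K_n\subseteq E$ with $\mu(K_n)>1-1/n$, which is possible because the completed measure of $E$ equals the supremum of the measures of compact subsets. Then take $E_0:=\bigcup_nK_n$, a Borel ($\sigma$-compact) subset of $E$ of measure one.

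No step is a genuine obstacle. The only point needing care is to state which definition of completion is being used, so that the Borel inner set exists by definition rather than by a circular appeal to regularity.
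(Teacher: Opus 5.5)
Your proof is correct and is essentially the paper's argument: the paper takes a Borel null set $N\supseteq E^c$ and sets $E_0:=\Omega\setminus N$, which is your inner Borel set $A$ described through its complement. The opening reference to inner regularity and the compact-set fallback are unnecessary, since neither argument uses regularity or the Polish hypothesis, but they do no harm.
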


\begin{proof}
The complement $E^c$ is null in the completed sigma-field.  Hence there is a Borel set $N\supseteq E^c$ with $\mu(N)=0$.  Then $E_0:=\Omega\setminus N$ is Borel, has full measure, and is contained in $E$.
\end{proof}

\begin{corollary}[Countable rational local-coordinate extensions]\label{cor:rational-trace-extension}
Let $B\subseteq\Omega_d$ be Borel with $\mu_H^{(d)}(B)=1$.  Suppose that, for every rational $0\le s<t\le T$, a finite-dimensional coordinate $F_{s,t}:B\to V$ is measurable with respect to
\[
 \mathcal F_{[s,t]}\vert_B.
\]
Then the rational family admits extensions $\widetilde F_{s,t}$ for which each $\widetilde F_{s,t}$ is genuinely $\mathcal F_{[s,t]}$-measurable on $\Omega_d$.  Any countable family of algebraic identities and H\"older bounds that holds on $B$ holds almost surely for these extensions.  If the original family is continuous in $(s,t)$ on $B$, then every identity proved for all rational pairs holds, on a Borel subset $B_0\subseteq B$ of full measure, simultaneously for all real $0\le s\le t\le T$ after taking the original continuous versions.
\end{corollary}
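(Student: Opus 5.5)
The corollary has three parts: existence of genuinely local extensions, almost-sure transfer of countably many identities, and passage from rational to real endpoints. The first comes from \cref{lem:trace-extension}, the second from $\mu_H^{(d)}(B)=1$, and the third from continuity combined with \cref{lem:borel-core}.

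\emph{Extensions.} For each rational pair $s<t$ I apply \cref{lem:trace-extension} with $\Omega=\Omega_d$, $\mu=\mu_H^{(d)}$, $\mathcal G=\mathcal F_{[s,t]}$ and $F=F_{s,t}$. This produces $\widetilde F_{s,t}$, which is $\mathcal F_{[s,t]}$-measurable on all of $\Omega_d$ and satisfies $\widetilde F_{s,t}\vert_B=F_{s,t}$. If $\mathcal F_{[s,t]}$ denotes the completed sigma-field, the same lemma applies verbatim, since it works for any sub-sigma-field of $\mathcal A$. The rational pairs are countable, so only countably many choices are made.

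\emph{Identities and bounds.} Take any countable family of algebraic identities or H\"older bounds, each involving finitely many rational-indexed coordinates, that holds pointwise on $B$. The extensions coincide with the originals on $B$, so every relation in the family holds on $B$, which has full measure. Hence the whole family holds simultaneously on $B$, and therefore almost surely. A supremum bound such as $R^{\mathbb Q}_\theta$ is a countable supremum, so it is covered as well.

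\emph{Real endpoints.} Suppose the original family is continuous in $(s,t)$ on $B$. I want an identity known for all rational pairs, say Chen's identity or an additive representation $A_{s,t}=a(t)-a(s)$ for a continuous deterministic $a$, to hold for all real pairs. The identity holds for all rational pairs on an event $E$ of full measure. This event may lie only in the completed sigma-field, because it can come from an almost-sure statement about the extensions, as in \cref{thm:additive-rigidity}. \cref{lem:borel-core} gives a Borel set $E_0\subseteq E$ of full measure. Set $B_0:=B\cap E_0$, which is Borel, of full measure, and contained in $B$. On $B_0$ both sides of the identity are continuous in the time variables, since they are continuous polynomial expressions in the continuous coordinates and the path increments. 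The two sides agree on the dense set of rational tuples, so they agree at all real $0\le s\le u\le t\le T$.

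\emph{Main obstacle.} The delicate point is bookkeeping: locality has to be obtained for each extension separately, not by patching values on the null set $B^c$, because $B^c$ may depend on information outside $[s,t]$. \cref{lem:trace-extension} handles this. The real-endpoint identities are then stated for the original continuous versions on $B_0$, not for the extensions. Those extensions are defined only at rational pairs and are not required to be continuous.
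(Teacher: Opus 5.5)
Your proposal is correct and follows the paper's proof essentially step for step. The steps match: \cref{lem:trace-extension} on each rational interval, transfer of countably many identities because the extensions coincide with the originals on the full-measure set $B$, a Borel core $E_0$ of the completed full-measure event via \cref{lem:borel-core}, and finally $B_0:=B\cap E_0$ together with continuity and density of rational times. The one point to state explicitly is that an identity obtained almost surely for the extensions passes to the original continuous coordinates on $B_0$ because the two coincide on $B$; your closing remark already indicates this.
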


\begin{proof}
Apply \cref{lem:trace-extension} componentwise to the countable set of rational intervals.  Because every extension agrees with the original coordinate on $B$, every countable identity holding on $B$ holds on the full probability space almost surely.  Let $E$ be the intersection of the resulting countably many full-probability events.  It belongs to the completed Borel sigma-field and has full measure.  Since $\Omega_d$ is Polish and $\mu_H^{(d)}$ is a Borel probability measure, \cref{lem:borel-core} gives a Borel set $E_0\subseteq E$ of full measure.  Put $B_0:=B\cap E_0$.  Then $B_0$ is Borel and has full measure.  On $B_0$, density of rational pairs and continuity of the original family extend the identities to the whole time simplex.
\end{proof}

\section{Automatic integrability and classification in the rough positive regime}

We use the full-law locality notion of Chevyrev--Ferrucci \cite{ChevyrevFerrucci2026}: a local $\eta$-rough path lift is a $\G^m(\R^d)$-valued $\eta$-H\"older random path with first level $B^H$, whose word coordinate over $[s,t]$ is measurable with respect to the corresponding coordinate-increment sigma-algebra on that interval.  We also allow a Borel lift on a full-measure Borel domain $B$ provided its rational-interval word coordinates are measurable for the corresponding trace local sigma-fields.  The domain formulation is handled through \cref{lem:trace-extension,cor:rational-trace-extension}.

For a two-parameter map $Z$, write
\[
  (\delta Z)_{s,u,t}:=Z_{s,t}-Z_{s,u}-Z_{u,t}.
\]

\begin{lemma}[Deterministic Wiener corrections]\label{lem:wiener-correction}
Let $H\in(0,1/2]$ and let $\varphi_{s,t}=a(t)-a(s)$ be a deterministic $\vartheta$-H\"older path with $\vartheta>1/2-H$. Componentwise define
\[
  J^{\rightarrow}_{s,t}
  :=\int_s^t\varphi_{s,r}\,\dd B_r^H,
  \qquad
  J^{\leftarrow}_{s,t}
  :=\int_s^t\dd B_r^H\,\varphi_{r,t},
\]
as first-chaos Wiener integrals. Then they are local and satisfy
\[
  \delta J^{\rightarrow}_{s,u,t}=\varphi_{s,u}B^H_{u,t},
  \qquad
  \delta J^{\leftarrow}_{s,u,t}=B^H_{s,u}\varphi_{u,t}.
\]
For every $p<\infty$,
\[
  \norm{J^{\rightarrow}_{s,t}}_{L^p}
  +\norm{J^{\leftarrow}_{s,t}}_{L^p}
  \le C_{p,H,\vartheta}\norm a_{C^\vartheta}\abs{t-s}^{\vartheta+H}.
\]
For every $\rho<\vartheta+H$ the two processes admit $\rho$-H\"older two-parameter versions whose H\"older seminorms have moments of every finite order.
\end{lemma}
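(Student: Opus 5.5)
The plan is to first make sense of the Wiener integrals through the Cameron--Martin/reproducing-kernel isometry, then obtain the Chen-type identities from linearity, and finally get the moments from Gaussianity and the Hölder version from Kolmogorov. Work componentwise, so that $\varphi$ and $B^H$ are scalar and independent components only produce tensor indices.

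\textbf{Step 1: defining the integrals.} For a deterministic step function $f=\sum_i f_i\one_{(r_i,r_{i+1}]}$ on $[s,t]$, set
\[
 I(f):=\sum_i f_iB^H_{r_i,r_{i+1}}.
\]
This is a finite linear combination of increments on $[s,t]$, so it is $\F_{[s,t]}$-measurable. The integral $J^{\rightarrow}_{s,t}$ is the $L^2$ limit of $I(f_\pi)$, where $f_\pi$ is the Riemann step approximant of $r\mapsto\varphi_{s,r}$ along a partition $\pi$. The key bound is, for $H\le1/2$,
\[
 \E|I(f)|^2\lesssim\|f\|_{C^\vartheta([s,t])\text{-type}}^2
\]
uniformly in the partition, in the form of a Young-type estimate. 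For a $\vartheta$-Hölder integrand $g$ vanishing at $s$, compare two nested partitions. The difference is a sum of increments weighted by oscillations of $g$ of size $|\pi|^\vartheta$, and the covariance of fBm increments for $H<1/2$ is controlled by the explicit formula: the diagonal terms are of size $|\pi|^{2H}$ and the off-diagonal terms are negative and summable. This gives a Cauchy bound of order $|\pi|^{2(\vartheta+H)-1}\,|t-s|$, which tends to zero exactly because $\vartheta+H>1/2$.

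I expect Step 1 to be the main obstacle. I would try first a dyadic sewing-type argument in $L^2$: set $\Xi_{u,v}:=\varphi_{s,u}B^H_{u,v}$. Then
\[
 \delta\Xi_{u,w,v}=-\varphi_{u,w}B^H_{w,v},
\]
which has $L^2$ norm at most $|v-u|^{\vartheta+H}$. Since this exponent may be below $1$, one must exploit the near-orthogonality of fBm increments at distance (the negative correlations for $H<1/2$). The stochastic sewing lemma alone does not apply because fBm is not a martingale, so I would instead estimate $\E|\sum_i\delta\Xi_i|^2$ directly through the covariance sum, bounding the off-diagonal covariances by $|\pi|^{2}\,\mathrm{gap}^{2H-2}$. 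Passing to the limit along refining dyadic partitions yields both existence and the bound
\[
 \|J^{\rightarrow}_{s,t}\|_{L^2}\lesssim\|a\|_{C^\vartheta}|t-s|^{\vartheta+H}.
\]
Locality survives the limit because $L^2$ limits of $\F_{[s,t]}$-measurable variables remain measurable for the completed sigma-field.

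\textbf{Step 2: the algebraic identities.} By linearity of the Wiener integral in the integrand and additivity over adjacent intervals,
\[
 J^{\rightarrow}_{s,t}-J^{\rightarrow}_{u,t}=\int_u^t\bigl(\varphi_{s,r}-\varphi_{u,r}\bigr)\,\dd B^H_r+\int_s^u\varphi_{s,r}\,\dd B^H_r .
\]
The first integrand is the constant $\varphi_{s,u}$, so the first term is $\varphi_{s,u}B^H_{u,t}$, and the second term is $J^{\rightarrow}_{s,u}$. Rearranging gives $\delta J^{\rightarrow}_{s,u,t}=\varphi_{s,u}B^H_{u,t}$ almost surely. The computation for $J^{\leftarrow}$ is symmetric, using $\varphi_{r,t}-\varphi_{r,u}=\varphi_{u,t}$ on $[s,u]$.

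\textbf{Step 3: moments and continuous versions.} Because $J_{s,t}$ lies in the first Wiener chaos, all $L^p$ norms are comparable to the $L^2$ norm, which gives the stated $L^p$ bound. For the Hölder version, first restrict to dyadic rationals. Combining the identity from Step 2 with Hölder bounds for $B^H$ and $\varphi$, the two-parameter Kolmogorov criterion (or the Garsia--Rodemich--Rumsey lemma) applies to a process $Z$ whose defect $\delta Z$ is controlled pathwise, here by $\varphi_{s,u}B^H_{u,t}$, of order $\vartheta+H-\epsilon$. Taking $p$ large then yields $\rho$-Hölder versions with seminorms in every $L^p$, for every $\rho<\vartheta+H$.
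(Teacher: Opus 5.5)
Your proposal is correct and has the same overall structure as the paper's proof. Both define a first-chaos integral with an $L^2$ bound of order $|t-s|^{\vartheta+H}$, derive the Chen defects by splitting the integrand at $u$, get the $L^p$ bounds from hypercontractivity, and build the H\"older version with a two-parameter dyadic Kolmogorov argument driven by the increment and defect estimates.

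The difference is in the key analytic input. The paper imports it as the Cameron--Martin embedding $C_0^\vartheta([s,t])\hookrightarrow\mathcal H_{s,t}$, with norm $\lesssim\|f\|_{C^\vartheta}|t-s|^{\vartheta+H}$, from \cite[Lemma~3.6]{ChevyrevFerrucci2026}. It then handles the integrand of $J^{\leftarrow}$, which vanishes at the right endpoint, through the time-reversal isometry of $\mathcal H_{s,t}$. You instead prove the bound directly from the explicit fractional-Brownian covariance, by telescoping Riemann sums over dyadic refinements. This makes the argument self-contained and treats both integrals alike. For $J^{\leftarrow}$ the coarsest left-point sum is $\varphi_{s,t}B^H_{s,t}$, whose $L^2$ norm is already at most $[a]_\vartheta|t-s|^{\vartheta+H}$, so neither vanishing at $s$ nor reflection is needed. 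Your Step~1 treats only $J^{\rightarrow}$ explicitly, so add this line. The citation is shorter and keeps the estimate inside the Cameron--Martin framework the paper relies on elsewhere.

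Two points in Step~1 should be stated precisely.

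\textbf{Consecutive levels, not arbitrary nested partitions.} The covariance estimate must be applied to consecutive dyadic levels and then summed. If the finer mesh $h'$ is much smaller than the coarser mesh $h$, the diagonal sum alone can be of order $h^{2\vartheta}(h')^{2H-1}|t-s|$. This diverges as $h'\to0$ when $H<1/2$. The cancellation from negative neighbouring covariances, which keeps the true second moment small, is invisible to absolute-value bounds.

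On consecutive levels your sewing form gives the difference $\sum_i\varphi_{u_i,w_i}B^H_{w_i,v_i}$. This is a sum of $2^n$ increments over intervals of length $h/2$, at mutual distances $\gtrsim kh$. What you use is absolute summability $\sum_k k^{2H-2}<\infty$, not negativity (the covariances vanish at $H=1/2$). With $h=2^{-n}|t-s|$ this gives second moment $\lesssim[a]_\vartheta^2|t-s|\,h^{2(\vartheta+H)-1}$. The telescoping series therefore converges in $L^2$ to the stated bound exactly because $\vartheta+H>1/2$.

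\textbf{Independence of the partition sequence.} Step~2 splits at a point $u$ that is generally not a dyadic point of $[s,t]$, so you need the limit to be independent of the approximating partitions. This holds for the following reasons:
\begin{itemize}
\item $\E|I(f)|^2=\|f\|_{\mathcal H}^2$ on step functions, so your limit is the isometric image of an $\mathcal H_{s,t}$-limit.
\item That $\mathcal H_{s,t}$-limit is determined by the uniform limit $g$: its pairing with $\one_{[s,v]}$ equals $\int g\,\dd c_v$, where $c_v(r)=\E[B^H_rB^H_{s,v}]$ has bounded variation.
\item Indicators are total in $\mathcal H_{s,t}$.
\end{itemize}
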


\begin{proof}
For fixed $s<t$, the function $r\mapsto\varphi_{s,r}$ vanishes at $s$ and is $\vartheta$-H\"older. The embedding
\[
  C_0^\vartheta([s,t])\hookrightarrow\mathcal H_{s,t},
  \qquad
  \norm f_{\mathcal H_{s,t}}
  \le C_{H,\vartheta}\norm f_{C^\vartheta}\abs{t-s}^{\vartheta+H},
\]
is \cite[Lemma~3.6]{ChevyrevFerrucci2026}. The same bound holds for a function vanishing at the right endpoint. Indeed, the reflected fractional-Brownian increment field on $[s,t]$ has the same covariance, so $r\mapsto s+t-r$ induces an isometry of $\mathcal H_{s,t}$. The estimate therefore applies to $r\mapsto\varphi_{r,t}$. Hence both Wiener integrals are well defined in the local first Gaussian chaos; in particular they are measurable with respect to the interval increment sigma-algebra. Their $L^2$ estimates follow from the Wiener isometry and the displayed embedding, and all finite-$p$ estimates follow from first-chaos hypercontractivity.

Splitting either Wiener integral at $u$ and using
$\varphi_{s,r}=\varphi_{s,u}+\varphi_{u,r}$ for $r\ge u$ and
$\varphi_{r,t}=\varphi_{r,u}+\varphi_{u,t}$ for $r\le u$ proves the two Chen-defect identities. The defect terms satisfy the complementary moment bounds
\[
  \norm{\varphi_{s,u}B^H_{u,t}}_{L^p}
  +\norm{B^H_{s,u}\varphi_{u,t}}_{L^p}
  \lesssim_p |u-s|^{\vartheta}|t-u|^H.
\]
The standard dyadic Kolmogorov argument for two-parameter multiplicative functionals, applied to the increment and defect bounds, yields every exponent $\rho<\vartheta+H$ and all finite moments of the resulting H\"older seminorm. Starting with rational intervals and extending by limits through subintervals preserves interval-local measurability. This is exactly the estimate used in the proof of \cite[Theorem~3.7, equations~(40)--(41)]{ChevyrevFerrucci2026}.
\end{proof}

\begin{theorem}[Automatic square-integrability]\label{thm:auto-L2}
Let $H\in(1/4,1/2]$, let $m=\lfloor1/H\rfloor$, and let $0<\eta<H$ satisfy $1/\eta<m+1$.  Let $B\subseteq\Omega_d$ be Borel with $\mu_H^{(d)}(B)=1$, and let $\mathbf X$ be a Borel $\eta$-H\"older rough-path lift of the canonical first level on $B$.  Assume that every rational-interval word coordinate is measurable with respect to its trace local sigma-field.  Then, for every $1\le r\le m$ and every $0\le s\le t\le T$, the coordinate $\pi_r\mathbf X_{s,t}$ determines a unique $\mu_H^{(d)}$-almost-everywhere equivalence class and this class belongs to $L^2(\mu_H^{(d)})$.  The conclusion is coordinatewise; simultaneous group-like reassembly is established in \cref{lem:deterministic-correction-lie,cor:classification}.
\end{theorem}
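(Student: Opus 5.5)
The plan is an induction on the tensor level. At each level we subtract from $\pi_r\mathbf X$ the canonical Gaussian level together with explicit correction terms that already lie in $L^2$. The remainder is then local, additive and of order greater than $1/2$, so \cref{thm:additive-rigidity} makes it deterministic. Integrability of $\pi_r\mathbf X_{s,t}$ follows because it is a sum of $L^2$ pieces and a constant.

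\emph{Step 0: reduction to a full-law family.} Apply \cref{cor:rational-trace-extension} to all rational-interval word coordinates of $\mathbf X$. This gives genuinely $\mathcal F^{J(w)}_{[s,t]}$-measurable extensions on $\Omega_d$. Chen's identity, group-likeness and the graded $\eta$-H\"older bounds (with a random but a.s.\ finite constant) hold for all rational triples on a full-measure Borel set $B_0\subseteq B$. Uniqueness of the a.e.\ class of $\pi_r\mathbf X_{s,t}$ is immediate, since two versions agree on $B$. For irrational $s,t$ we use continuity of $\mathbf X$ on $B_0$ and approximate by rational endpoints; the $L^2$ bounds obtained below are uniform in the endpoints, so Fatou's lemma passes them to the limit.

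\emph{Level $r=1$.} The first level is $B^H_{s,t}$, which is Gaussian and hence in $L^2$.

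\emph{Level $r=2$.} Let $\overline{\mathbf B}^H$ be the canonical local lift, which exists for $H>1/4$ and has moments of every order. Put
\[
 A^{(2)}_{s,t}:=\pi_2\mathbf X_{s,t}-\pi_2\overline{\mathbf B}^H_{s,t}.
\]
Both terms satisfy the same level-two Chen identity with the same first level, so $A^{(2)}$ is additive on rational intervals. It is $\mathcal F_{[s,t]}$-measurable wordwise. It is $2\eta$-H\"older with an a.s.\ finite constant, because both lifts are $\eta$-H\"older on $B_0$. Since $1/\eta<m+1\le 4$, we have $\eta>1/4$, hence $2\eta>1/2$. \Cref{thm:additive-rigidity} then gives a deterministic $2\eta$-H\"older path $\varphi^{(2)}$ with $A^{(2)}_{s,t}=\varphi^{(2)}_{s,t}$ a.s. Consequently $\pi_2\mathbf X_{s,t}=\pi_2\overline{\mathbf B}^H_{s,t}+\varphi^{(2)}_{s,t}$ lies in $L^2$.

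\emph{Level $r=3$ (only when $m=3$, i.e.\ $1/4<H\le1/3$).} Define
\[
 A^{(3)}_{s,t}:=\pi_3\mathbf X_{s,t}-\pi_3\overline{\mathbf B}^H_{s,t}
 -J^{\rightarrow}_{s,t}-J^{\leftarrow}_{s,t},
\]
where the Wiener corrections are those of \cref{lem:wiener-correction} built from $\varphi^{(2)}$. Its hypothesis $2\eta>1/2-H$ holds because $2\eta>1/2$. We then check additivity of $A^{(3)}$. The Chen defect of the level-three difference equals
\[
 (\overline{\mathbf B}^{(2)}+\varphi^{(2)})_{s,u}B_{u,t}+B_{s,u}(\overline{\mathbf B}^{(2)}+\varphi^{(2)})_{u,t}
\]
minus its canonical counterpart, namely $\varphi^{(2)}_{s,u}B_{u,t}+B_{s,u}\varphi^{(2)}_{u,t}$. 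These two cross terms are exactly the defects of $J^{\rightarrow}$ and $J^{\leftarrow}$ by \cref{lem:wiener-correction}, so $A^{(3)}$ is additive. Wordwise locality holds because each Wiener correction is measurable with respect to the increments of its own components. For the regularity, the terms $\pi_3\mathbf X$ and $\pi_3\overline{\mathbf B}^H$ are $3\eta$-H\"older. The Wiener terms have versions that are $\rho$-H\"older for every $\rho<2\eta+H$; since $\eta<H$, we may choose $\rho\ge3\eta$. Hence $A^{(3)}$ is $3\eta$-H\"older on rational intervals a.s., and $3\eta>3/4>1/2$. \Cref{thm:additive-rigidity} makes $A^{(3)}$ deterministic, and therefore $\pi_3\mathbf X_{s,t}$ is a sum of $L^2$ terms.

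\emph{Main obstacle.} The delicate point is the regularity of the Wiener corrections. \Cref{thm:additive-rigidity} needs a single pathwise H\"older constant for the remainder on rational intervals. The Kolmogorov versions of $J^{\rightarrow},J^{\leftarrow}$ must therefore agree with the rationally indexed Wiener integrals almost surely, simultaneously for all rational pairs, and must have exponent at least $3\eta$. We handle this by using the Kolmogorov version from \cref{lem:wiener-correction} at an exponent $\rho\in[3\eta,2\eta+H)$, which is possible because $\eta<H$. Identification with the rational family then holds on a countable intersection of full-measure events.
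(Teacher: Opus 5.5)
Your proposal is correct and follows the paper's proof essentially step for step. You subtract the canonical lift at level two and apply local additive rigidity using $2\eta>1/2$. At level three you remove the two first-chaos Wiener corrections built from $\varphi^{(2)}$, using their Kolmogorov versions at an exponent $\rho\in[3\eta,2\eta+H)$ that agree with the Wiener integrals simultaneously on rational pairs, and then apply rigidity with $3\eta>1/2$. Your two deviations are cosmetic and both valid: you extend the coordinates of $\mathbf X$ themselves through the trace sigma-fields rather than the differences, and you pass to irrational endpoints by Fatou with endpoint-uniform $L^2$ bounds rather than by extending the decomposition identity through continuity of both lifts.
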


\begin{proof}
Let $\overline{\mathbf X}$ be the canonical full-law local lift and restrict it to $B$.  At level two set, on $B$,
\[
 D^{(2)}_{s,t}=\mathbf X^{(2)}_{s,t}-\overline{\mathbf X}^{(2)}_{s,t}.
\]
For every rational interval this is trace-local.  Apply \cref{cor:rational-trace-extension} componentwise to obtain genuinely local full-law versions of its rational coordinates.  The two lifts have the same first level, so subtraction of their Chen identities gives rational additivity almost surely.  The random $2\eta$-H\"older bound on $B$ also holds almost surely for the extended rational family.  Since $H>1/4$, $m\in\{2,3\}$, and $1/\eta<m+1$ implies $2\eta>1/2$, \cref{thm:additive-rigidity} gives a deterministic $2\eta$-H\"older path $\varphi^{(2)}$ such that
\begin{equation}\label{eq:level2-difference}
 D^{(2)}_{s,t}=\varphi^{(2)}_t-\varphi^{(2)}_s
\end{equation}
for all rational pairs on a full-measure subset of $B$.  Both original lifts are continuous on $B$, so \cref{cor:rational-trace-extension} extends \eqref{eq:level2-difference} simultaneously to all real pairs on a Borel subset of $B$ of full measure.  The canonical second level is square-integrable, hence so is $\mathbf X^{(2)}$.

If $H>1/3$, the required roughness depth is two and the proof is complete.  Suppose $1/4<H\le1/3$.  On $B$ define
\[
 D^{(3)}_{s,t}=\mathbf X^{(3)}_{s,t}-\overline{\mathbf X}^{(3)}_{s,t}.
\]
The level-three Chen identity and \eqref{eq:level2-difference} give, first on rational triples and then by continuity on $B$,
\begin{equation}\label{eq:D3-defect}
 \delta D^{(3)}_{s,u,t}
 =\varphi^{(2)}_{s,u}\otimes B^H_{u,t}
  +B^H_{s,u}\otimes\varphi^{(2)}_{u,t}.
\end{equation}
Here $2\eta>1/2>1/2-H$, so \cref{lem:wiener-correction} applies with $\vartheta=2\eta$.  Define the full-law local first-chaos process
\begin{equation}\label{eq:J-definition}
 J_{s,t}
 =\int_s^t\varphi^{(2)}_{s,r}\otimes\dd B_r^H
  +\int_s^t\dd B_r^H\otimes\varphi^{(2)}_{r,t}.
\end{equation}
It has the Chen defect in \eqref{eq:D3-defect}.  Thus $R^{(3)}:=D^{(3)}-J\vert_B$ is trace-local and additive on rational intervals.  Apply \cref{cor:rational-trace-extension} to its rational coordinates.  The lemma also gives a version of $J$ which is $\rho$-H\"older for every $\rho<2\eta+H$.  Since $H>\eta$, choose
\[
 3\eta<\rho<2\eta+H.
\]
The process $D^{(3)}$ is $3\eta$-H\"older, and a $\rho$-H\"older process on a compact interval is also $3\eta$-H\"older.  Hence the rational extension of $R^{(3)}$ is local, additive, and $3\eta$-H\"older almost surely.  Since $3\eta>1/2$, \cref{thm:additive-rigidity} gives a deterministic additive path $\varphi^{(3)}$.  Returning to the original continuous coordinates on $B$ yields, simultaneously for all $s\le t$ on a full-measure Borel subset,
\[
 \mathbf X^{(3)}_{s,t}
 =\overline{\mathbf X}^{(3)}_{s,t}+J_{s,t}
  +\varphi^{(3)}_t-\varphi^{(3)}_s.
\]
The terms on the right are respectively square-integrable, first-chaos Gaussian, and deterministic.  This proves square-integrability through level three and completes the proof.
\end{proof}

\begin{lemma}[Lie closure of the deterministic correction]\label{lem:deterministic-correction-lie}
Under the hypotheses of \cref{thm:auto-L2}, use the deterministic paths and the decompositions obtained in its proof.  Put
\[
 \varphi^{(2)}_{s,t}:=\varphi^{(2)}_t-\varphi^{(2)}_s.
\]
Then
\[
 \varphi^{(2)}_{s,t}\in L_2(\R^d)
\]
for every $s\le t$.  If $1/4<H\le1/3$, also put
\[
 \varphi^{(3)}_{s,t}:=\varphi^{(3)}_t-\varphi^{(3)}_s;
\]
then $\varphi^{(3)}_{s,t}\in L_3(\R^d)$.  Define
\[
 \boldsymbol\varphi_{s,t}
 :=
 \begin{cases}
  \exp_{\le2}(\varphi^{(2)}_{s,t}),&m=2,\\[1mm]
  \exp_{\le3}(\varphi^{(2)}_{s,t}+\varphi^{(3)}_{s,t}),&m=3.
 \end{cases}
\]
Then $\boldsymbol\varphi$ is a deterministic multiplicative group-like rough path above the zero first-level path.  It has the graded bounds
\[
 \norm{\pi_2\boldsymbol\varphi_{s,t}}
 \lesssim |t-s|^{2\eta},
 \qquad
 \norm{\pi_3\boldsymbol\varphi_{s,t}}
 \lesssim |t-s|^{3\eta}
 \quad(m=3).
\]
\end{lemma}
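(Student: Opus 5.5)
The plan is to exploit that both $\mathbf X$ and $\overline{\mathbf X}$ are group-like on a common full-measure Borel subset $B_0\subseteq B$, and that the decompositions from the proof of \cref{thm:auto-L2} hold there simultaneously for all times. First I will show $\varphi^{(2)}_{s,t}\in L_2(\R^d)$. For group-like tensors with the same first level $x_{s,t}$, the symmetric part of the level-two component is fixed: $\operatorname{Sym}\pi_2\mathbf X_{s,t}=\frac12x_{s,t}\otimes x_{s,t}$. The same identity holds for $\overline{\mathbf X}$. Subtracting gives $\operatorname{Sym}\varphi^{(2)}_{s,t}=0$. Since $\varphi^{(2)}_{s,t}$ is deterministic and this holds at any single sample of $B_0$, it is antisymmetric, and antisymmetric 2-tensors are exactly $L_2(\R^d)$.

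For $m=3$ I will pass to logarithms. Write $\log\mathbf X_{s,t}=x+\ell^{(2)}+\ell^{(3)}$ and $\log\overline{\mathbf X}_{s,t}=x+\bar\ell^{(2)}+\bar\ell^{(3)}$, both Lie-valued. Truncated at degree three,
\[
\pi_3\exp(x+\ell^{(2)}+\ell^{(3)})=\ell^{(3)}+\tfrac12(x\otimes\ell^{(2)}+\ell^{(2)}\otimes x)+\tfrac16x^{\otimes3}.
\]
From level two we have $\ell^{(2)}-\bar\ell^{(2)}=\varphi^{(2)}_{s,t}$. Subtracting the two level-three expressions and comparing with the decomposition of $\mathbf X^{(3)}-\overline{\mathbf X}^{(3)}$ from \cref{thm:auto-L2} gives
\[
\varphi^{(3)}_{s,t}=(\ell^{(3)}-\bar\ell^{(3)})+\tfrac12\bigl(x\otimes\varphi^{(2)}_{s,t}+\varphi^{(2)}_{s,t}\otimes x\bigr)-J_{s,t}.
\]
The first term lies in $L_3$. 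The remaining random part, $\frac12(x\otimes\varphi^{(2)}+\varphi^{(2)}\otimes x)-J_{s,t}$, is a first-chaos Gaussian random variable.

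The key step is to project onto the complement of $L_3$, using a fixed linear projection $P$ with $P(L_3)=0$. The left-hand side $P\varphi^{(3)}_{s,t}$ is deterministic, while the right-hand side is a centered first-chaos variable, since $J$ and $x$ are centered. Taking expectations over the full law gives $P\varphi^{(3)}_{s,t}=0$, so $\varphi^{(3)}_{s,t}\in L_3$. To make this expectation legitimate I will need that the identity holds almost surely with integrable terms. The $\ell^{(3)}$ terms are polynomial in the $L^2$ coordinates already known, so this should follow from \cref{thm:auto-L2}. I expect this bookkeeping, and keeping the $\ell^{(3)}$ contributions inside $L_3$, to be the main obstacle.

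Multiplicativity of $\boldsymbol\varphi$ comes next. Over the zero first level, $\exp_{\le3}$ of a Lie element of degrees two and three multiplies additively, because degree-$\ge4$ products vanish under truncation. Hence $\boldsymbol\varphi_{s,u}\boldsymbol\varphi_{u,t}=\exp(\varphi^{(2)}_{s,u}+\varphi^{(2)}_{u,t}+\varphi^{(3)}_{s,u}+\varphi^{(3)}_{u,t})=\boldsymbol\varphi_{s,t}$ by additivity of the increments. Group-likeness holds because the exponent is Lie. Finally, the graded bounds follow directly from the $2\eta$- and $3\eta$-Hölder regularity of $\varphi^{(2)}$ and $\varphi^{(3)}$ given by \cref{thm:additive-rigidity}, since $\pi_2\boldsymbol\varphi_{s,t}=\varphi^{(2)}_{s,t}$ and $\pi_3\boldsymbol\varphi_{s,t}=\varphi^{(3)}_{s,t}$.
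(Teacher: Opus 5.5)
Your proposal is correct. At level two it matches the paper, which phrases your antisymmetry argument as primitivity for the unshuffle coproduct $\Delta_{\mathrm{unsh}}$. At level three you take a genuinely different route.

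The paper's level-three argument is purely algebraic. It subtracts the degree-three group-like coproduct identities to find the cross terms $x_{s,t}\boxtimes\varphi^{(2)}_{s,t}+\varphi^{(2)}_{s,t}\boxtimes x_{s,t}$ in $\Delta_{\mathrm{unsh}}D^{(3)}_{s,t}$. It then computes $\Delta_{\mathrm{unsh}}J_{s,t}$ by commuting the coproduct with the first-chaos Wiener integrals and using $\varphi^{(2)}_{s,r}+\varphi^{(2)}_{r,t}=\varphi^{(2)}_{s,t}$. The same cross terms appear, so $\varphi^{(3)}_{s,t}=D^{(3)}_{s,t}-J_{s,t}$ is primitive.

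You instead pass to logarithms and isolate the random remainder $\tfrac12(x_{s,t}\otimes\varphi^{(2)}_{s,t}+\varphi^{(2)}_{s,t}\otimes x_{s,t})-J_{s,t}$. You then remove its non-Lie part by observing that a deterministic vector equal almost surely to a centred Gaussian must vanish. This uses nothing about $J$ beyond being centred first chaos, so it is shorter and avoids the coproduct computation for Wiener integrals.

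The integrability ``obstacle'' you anticipate does not arise. Once you apply $P$, the $\ell^{(3)}$ terms vanish identically, and only a Gaussian term remains, which is trivially integrable.

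In exchange, the paper's computation proves a stronger pathwise fact: $\tfrac12(x\otimes\varphi^{(2)}+\varphi^{(2)}\otimes x)-J$ is itself Lie-valued. So $P$ of it vanishes identically, not merely in mean. This is exactly what makes $\overline{\mathbf X}+\varphi^{(2)}+J+\varphi^{(3)}$ group-like for an arbitrary deterministic correction, as in the converse part of \cref{cor:classification}. Your argument establishes the lemma but not that converse ingredient.
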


\begin{proof}
Let $\Delta_{\mathrm{unsh}}$ be the unshuffle coproduct on the tensor algebra, determined by
\[
 \Delta_{\mathrm{unsh}}v=v\boxtimes1+1\boxtimes v,
 \qquad v\in\R^d,
\]
where $\boxtimes$ separates the two coproduct legs.  A homogeneous tensor is primitive for this coproduct if and only if it belongs to the corresponding homogeneous component of the free Lie algebra; see, for example, \cite[Chapter~1]{Reutenauer1993}.

Write $x_{s,t}=B^H_{s,t}$.  The degree-two group-like identities for $\mathbf X$ and $\overline{\mathbf X}$ are
\[
 \Delta_{\mathrm{unsh}}\mathbf X^{(2)}_{s,t}
 =\mathbf X^{(2)}_{s,t}\boxtimes1
  +1\boxtimes\mathbf X^{(2)}_{s,t}
  +x_{s,t}\boxtimes x_{s,t},
\]
and the same formula with $\mathbf X$ replaced by $\overline{\mathbf X}$.  Subtracting and using
$D^{(2)}_{s,t}=\varphi^{(2)}_{s,t}$ gives
\[
 \Delta_{\mathrm{unsh}}\varphi^{(2)}_{s,t}
 =\varphi^{(2)}_{s,t}\boxtimes1
  +1\boxtimes\varphi^{(2)}_{s,t}.
\]
Thus $\varphi^{(2)}_{s,t}\in L_2(\R^d)$; equivalently, its degree-two tensor is antisymmetric.  The identity holds first on a common full-measure subset of $B$; both sides are deterministic and continuous in $(s,t)$, so it holds for every pair.

Assume now $m=3$.  Subtracting the degree-three group-like identities gives
\begin{equation}\label{eq:D3-coproduct}
\begin{aligned}
 \Delta_{\mathrm{unsh}}D^{(3)}_{s,t}
 ={}&D^{(3)}_{s,t}\boxtimes1
   +1\boxtimes D^{(3)}_{s,t}\\
 &+x_{s,t}\boxtimes\varphi^{(2)}_{s,t}
   +\varphi^{(2)}_{s,t}\boxtimes x_{s,t}.
\end{aligned}
\end{equation}
We next compute the coproduct of the Wiener correction $J$ from \eqref{eq:J-definition}.  If $a\in L_2(\R^d)$ and $v\in\R^d$, multiplicativity of $\Delta_{\mathrm{unsh}}$ gives
\[
 \Delta_{\mathrm{unsh}}(a\otimes v)
 =(a\otimes v)\boxtimes1+1\boxtimes(a\otimes v)
  +a\boxtimes v+v\boxtimes a,
\]
and the same cross terms occur for $v\otimes a$.  The coproduct is finite-dimensional and therefore commutes with the first-chaos Wiener integrals.  Since
\[
 \varphi^{(2)}_{s,r}+\varphi^{(2)}_{r,t}
 =\varphi^{(2)}_{s,t},
\]
the two terms in \eqref{eq:J-definition} yield
\begin{equation}\label{eq:J-coproduct}
\begin{aligned}
 \Delta_{\mathrm{unsh}}J_{s,t}
 ={}&J_{s,t}\boxtimes1+1\boxtimes J_{s,t}\\
 &+x_{s,t}\boxtimes\varphi^{(2)}_{s,t}
   +\varphi^{(2)}_{s,t}\boxtimes x_{s,t}.
\end{aligned}
\end{equation}
Subtract \eqref{eq:J-coproduct} from \eqref{eq:D3-coproduct} and use
$D^{(3)}_{s,t}-J_{s,t}=\varphi^{(3)}_{s,t}$.  We obtain
\[
 \Delta_{\mathrm{unsh}}\varphi^{(3)}_{s,t}
 =\varphi^{(3)}_{s,t}\boxtimes1
  +1\boxtimes\varphi^{(3)}_{s,t},
\]
so $\varphi^{(3)}_{s,t}\in L_3(\R^d)$.  This is equivalent to all degree-three shuffle-annihilation identities; the mixed shuffle terms are exactly the two cross terms in \eqref{eq:J-coproduct}.

The increment $\varphi^{(2)}$ is additive, and $\varphi^{(3)}$ is additive when $m=3$.  In the step-three quotient every bracket between homogeneous degrees at least two has degree at least four and vanishes; the same statement is immediate at step two.  Hence the relevant truncated Baker--Campbell--Hausdorff product of correction increments is ordinary addition, and
\[
 \boldsymbol\varphi_{s,u}\boldsymbol\varphi_{u,t}
 =\boldsymbol\varphi_{s,t}.
\]
Primitivity gives group-likeness, and the H\"older bounds are those already obtained in the proof of \cref{thm:auto-L2}.  Thus $\boldsymbol\varphi$ is a deterministic rough path above zero.
\end{proof}

\begin{corollary}[Classification without a moment assumption]\label{cor:classification}
Under the assumptions of \cref{thm:auto-L2}, there is a Borel set $B_0\subseteq B$ of full measure on which the lift agrees, simultaneously at all times, with a full-law local lift satisfying the Chevyrev--Ferrucci classification.  In particular, on $B_0$,
\[
  \mathbf X^{(2)}_{s,t}
  =\overline{\mathbf X}^{(2)}_{s,t}+\varphi^{(2)}_{s,t}
\]
for all $s\le t$, with deterministic additive $\varphi^{(2)}$. If $1/4<H\le1/3$, then, simultaneously for all $s\le t$, componentwise
\[
\begin{aligned}
  \mathbf X^{\alpha\beta\gamma}_{s,t}
  ={}&\overline{\mathbf X}^{\alpha\beta\gamma}_{s,t}
  +\int_s^t\varphi^{\alpha\beta}_{s,u}\,\dd B_u^{H,\gamma}\\
  &+\int_s^t\varphi^{\beta\gamma}_{u,t}\,\dd B_u^{H,\alpha}
  +\varphi^{\alpha\beta\gamma}_{s,t},
\end{aligned}
\]
where $(0,\varphi^{(2)},\varphi^{(3)})$ is a deterministic rough path above zero. Conversely every such deterministic rough translation gives a full-law local lift.
\end{corollary}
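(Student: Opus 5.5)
The plan is to assemble the corollary from the decompositions already obtained in the proof of \cref{thm:auto-L2} and the Lie closure in \cref{lem:deterministic-correction-lie}. First, take the Borel full-measure set produced in that proof: the intersection of $B$ with the Borel cores from \cref{lem:borel-core} on which \eqref{eq:level2-difference} holds simultaneously for all real pairs, and, when $m=3$, the level-three identity $\mathbf X^{(3)}=\overline{\mathbf X}^{(3)}+J+\varphi^{(3)}_{s,t}$. Intersect further with the full-measure Borel event where the canonical lift $\overline{\mathbf X}$ and the chosen H\"older version of $J$ from \cref{lem:wiener-correction} are defined and continuous. Call the result $B_0$. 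On $B_0$ the displayed level-two formula is immediate. The level-three formula is the componentwise reading of \eqref{eq:J-definition}: the $(\alpha\beta\gamma)$ coefficient of $\int\varphi^{(2)}_{s,r}\otimes\dd B^H_r$ is $\int\varphi^{\alpha\beta}_{s,u}\,\dd B^{H,\gamma}_u$, and similarly for the left integral.

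Next, define the full-law object $\widehat{\mathbf X}:=\overline{\mathbf X}$ corrected by these same formulas on the whole space, namely $\widehat{\mathbf X}^{(2)}=\overline{\mathbf X}^{(2)}+\varphi^{(2)}$ and $\widehat{\mathbf X}^{(3)}=\overline{\mathbf X}^{(3)}+J+\varphi^{(3)}$. This agrees with $\mathbf X$ on $B_0$ at all times. That it is a full-law local lift is exactly the converse statement, so the remaining work is to prove the converse for an arbitrary deterministic rough path $(0,\varphi^{(2)},\varphi^{(3)})$ above zero with the graded bounds. This requires four checks.

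\emph{Locality.} $\overline{\mathbf X}$ is local, $J$ is local by \cref{lem:wiener-correction}, and the $\varphi$ terms are deterministic. Each word coordinate of the Wiener integrals involves only components appearing in that word, so the wordwise condition also holds.

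\emph{Chen identity.} At level two it follows from additivity of $\varphi^{(2)}$. At level three, the defect of $\overline{\mathbf X}^{(3)}+\varphi^{(3)}$ misses exactly the cross terms $\varphi^{(2)}_{s,u}\otimes B^H_{u,t}+B^H_{s,u}\otimes\varphi^{(2)}_{u,t}$, and \cref{lem:wiener-correction} supplies precisely these.

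\emph{Group-likeness.} Recompute the unshuffle coproduct as in \eqref{eq:J-coproduct}, now using primitivity of $\varphi^{(2)}$ and $\varphi^{(3)}$ as a hypothesis rather than a conclusion. This is where I expect the only genuine care to be needed: the argument has to run in reverse, and for $m=3$ one must confirm that the two cross terms exactly supply the mixed coproduct terms forced by $x\boxtimes\varphi^{(2)}$.

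\emph{H\"older regularity.} For each choice of $\eta$, $J$ is $\rho$-H\"older with $\rho>3\eta$, and $\varphi^{(r)}$ has order $r\eta$. Hence $\widehat{\mathbf X}$ is an $\eta$-H\"older rough path almost surely.

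Finally, the Chevyrev--Ferrucci classification is then satisfied by construction.
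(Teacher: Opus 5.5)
Your argument is correct. For the forward direction it matches the paper: both take the level-two and level-three decompositions from the proof of \cref{thm:auto-L2}, use \cref{lem:deterministic-correction-lie} to see that $(0,\varphi^{(2)},\varphi^{(3)})$ is a deterministic rough path above zero, and read the componentwise formula off \eqref{eq:J-definition}.

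The genuine difference is the converse. The paper does not verify it. It imports the rough-translation construction of Chevyrev--Ferrucci (their Theorem~3.7) and then notes that the resulting lift is square-integrable, being canonical plus first chaos plus deterministic, so that their classification theorem applies literally. You instead check locality, Chen, group-likeness and H\"older bounds by hand. This keeps the argument inside the paper's own lemmas.

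The group-likeness step you flagged does go through. The computation behind \eqref{eq:J-coproduct} uses only $\varphi^{(2)}_{s,t}\in L_2(\R^d)$ and $\varphi^{(2)}_{s,r}+\varphi^{(2)}_{r,t}=\varphi^{(2)}_{s,t}$. Its cross terms $x_{s,t}\boxtimes\varphi^{(2)}_{s,t}+\varphi^{(2)}_{s,t}\boxtimes x_{s,t}$ are exactly the extra terms demanded in $\Delta_{\mathrm{unsh}}\widehat{\mathbf X}^{(3)}$ once $\widehat{\mathbf X}^{(2)}=\overline{\mathbf X}^{(2)}+\varphi^{(2)}$, and $\varphi^{(3)}$ is primitive because $(0,\varphi^{(2)},\varphi^{(3)})$ is group-like above zero.

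Your H\"older step needs $\vartheta=2\eta>1/2-H$ for the Wiener integrals, and then $\rho\in(3\eta,2\eta+H)$, which is possible because $\eta<H$.

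The price of your route is bookkeeping that the paper delegates to the citation:
\begin{enumerate}
\item The Chen-defect and coproduct identities for $J$ hold almost surely for each fixed $(s,u,t)$. To get them simultaneously, pass through rational triples and the continuous version.
\item The event on which that version of $J$ is defined is a priori only in the completed $\sigma$-field, so forming $B_0$ requires \cref{lem:borel-core}.
\item The level-three part of $\widehat{\mathbf X}$ should be defined only when $m=3$.
\item If ``satisfying the Chevyrev--Ferrucci classification'' is meant as applicability of their theorem, record the immediate $L^2$ property of $\widehat{\mathbf X}$ rather than appealing to ``by construction''.
\end{enumerate}
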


\begin{proof}
The proof of \cref{thm:auto-L2} gives the level-two and, when $m=3$, level-three decompositions on a common completed full-measure event.  By \cref{lem:deterministic-correction-lie}, the deterministic correction is a rough path above zero.  The deterministic rough-translation construction, equivalently the converse construction in \cite[Theorem~3.7]{ChevyrevFerrucci2026}, therefore defines a full-law local lift $\widetilde{\mathbf X}$ with precisely those formulas.  It agrees with $\mathbf X$ on the rational time simplex on a completed full-measure event and hence, by continuity, on the full time simplex there.  Apply \cref{lem:borel-core} and intersect with $B$ to obtain a Borel set $B_0\subseteq B$ of full measure on which the equality holds simultaneously at all times.

The full-law lift $\widetilde{\mathbf X}$ is square-integrable by the canonical, first-chaos, and deterministic decomposition proved above.  The Chevyrev--Ferrucci classification theorem now applies.  Conversely, the same theorem constructs a full-law local lift from every deterministic rough path above zero of the displayed regularity.
\end{proof}

\begin{corollary}[Symmetry classification without $L^2$]\label{cor:symmetry}
Let $1/4<H\le1/2$, put $m=\lfloor1/H\rfloor$, and let $0<\eta<H$ satisfy $1/\eta<m+1$.  For every full-law local $\eta$-H\"older rough-path lift, the Chevyrev--Ferrucci stationarity, scaling, and coordinate-permutation classification holds without an assumed moment condition.  In particular, the canonical lift is the only full-law local lift having all three symmetries, except at $H=1/3$, where the one-parameter family in the cited corollary remains.
\end{corollary}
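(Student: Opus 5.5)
The plan is to reduce \cref{cor:symmetry} to \cref{cor:classification} and then import the Chevyrev--Ferrucci symmetry corollary, which assumes square-integrability. Let $\mathbf X$ be a full-law local $\eta$-H\"older lift. Viewing it as a Borel lift on a full-measure Borel domain is immediate: take $B$ to be a Borel core, via \cref{lem:borel-core}, of the almost-sure event on which $\mathbf X$ is multiplicative, group-like and H\"older. On $B$ the rational word coordinates are $\mathcal F_{[s,t]}^{J(w)}$-measurable, hence trace-local. Thus \cref{thm:auto-L2} applies and gives $\pi_r\mathbf X_{s,t}\in L^2$ for $1\le r\le m$.

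Next, \cref{cor:classification} shows that $\mathbf X$ agrees, simultaneously in time and almost surely, with a full-law local lift in the Chevyrev--Ferrucci classified form. By continuity and density of rational pairs, indistinguishable processes share their laws and their interval-local structure. So $\mathbf X$ itself is a square-integrable local lift in their sense, and the cited stationarity, scaling and permutation corollary applies verbatim.

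Concretely, $\mathbf X$ is the canonical lift translated by a deterministic rough path $\boldsymbol\varphi$ above zero, from \cref{lem:deterministic-correction-lie}. The symmetry hypotheses act on $\boldsymbol\varphi$ as follows. Stationarity makes $\varphi^{(r)}(t)=c_r t$ linear. Scaling forces $c_r t$ to scale like $t^{rH}$, which is possible only when $rH=1$. For $H\ne1/3$ with $r\in\{2,3\}$, the case $r=2$ would need $H=1/2$; there the antisymmetric level-two drift is ruled out by the permutation symmetry, in agreement with the cited result. At $H=1/3$ a one-parameter $L_3$ family remains. I would simply quote their corollary rather than redo this.

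The main obstacle is checking that the hypotheses match exactly. Their notion of local lift, and the symmetries, are stated for a process defined under the full law with moments. I must make sure that replacing $\mathbf X$ by an indistinguishable version on $B_0$ changes neither the symmetry properties, which are statements about laws and hence hold up to modification, nor locality. Locality survives because the agreeing version from \cref{cor:classification} is itself genuinely local.
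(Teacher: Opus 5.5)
Your proposal is correct and follows essentially the paper's route: apply \cref{thm:auto-L2} to get square-integrability, then invoke the Chevyrev--Ferrucci symmetry corollary \cite[Corollary 3.11]{ChevyrevFerrucci2026}. The paper simply takes $B=\Omega_d$; your choice of a Borel core of the good event is a harmless and slightly more careful variant. Your intermediate use of \cref{cor:classification} and your heuristic drift computation are consistent with the cited result but are not needed beyond quoting it.
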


\begin{proof}
Apply \cref{thm:auto-L2} with $B=\Omega_d$ and then the Chevyrev--Ferrucci symmetry corollary \cite[Corollary 3.11]{ChevyrevFerrucci2026}.
\end{proof}

\begin{proposition}[Finite-step Young extension and geometricity]\label{prop:young-finite-step}
Let $X:[0,T]\to E$ be $\eta$-H\"older with $\eta>1/2$, where $E$ is finite-dimensional, and let $M<\infty$.  The truncated Young signature is the unique multiplicative extension
\[
 \mathbf X^{(M)}=(1,\pi_1\mathbf X^{(M)},\ldots,\pi_M\mathbf X^{(M)})
\]
with first level $X_{s,t}$ and graded bounds
\begin{equation}\label{eq:young-graded-bounds}
 \norm{\pi_r\mathbf X^{(M)}_{s,t}}
 \le C_r|t-s|^{r\eta},
 \qquad 1\le r\le M.
\end{equation}
It is group-like, hence weakly geometric, at exponent $\eta$.  For every $\eta'<\eta$, it is strong geometric in the $\eta'$-H\"older rough-path topology.  If
\[
 X\in c^\eta([0,T];E)
 :=\overline{C^\infty([0,T];E)}^{\,C^\eta},
\]
then it is strong geometric at exponent $\eta$ itself.
\end{proposition}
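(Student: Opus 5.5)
Existence comes first, by iterated Young integration.  For $r\ge2$ I would define $\pi_r\mathbf X^{(M)}_{s,t}$ inductively as the Young integral $\int_s^t\pi_{r-1}\mathbf X^{(M)}_{s,u}\otimes\dd X_u$.  The integrand $u\mapsto\pi_{r-1}\mathbf X_{s,u}$ is $\eta$-H\"older, with seminorm bounded by $C|u-s|^{(r-2)\eta}$; this follows from Chen's identity at lower levels and the inductive graded bounds.  Since $2\eta>1$, the Young--Lo\`eve estimate applies.  It gives $\pi_r\mathbf X_{s,t}=\pi_{r-1}\mathbf X_{s,s}\otimes X_{s,t}+O(|t-s|^{r\eta})$, and the first term vanishes for $r\ge2$, so $\norm{\pi_r\mathbf X_{s,t}}\le C_r|t-s|^{r\eta}$.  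Chen's identity follows from additivity of the Young integral over $[s,u]\cup[u,t]$ together with the induction hypothesis.  For group-likeness I would use smooth (e.g.\ mollified or piecewise-linear) approximations $X^n\to X$ in $C^{\eta'}$ for any $1/2<\eta'<\eta$.  Young integration is continuous in this topology, so the signatures of $X^n$ converge levelwise in $C_2^{r\eta'}$.  Signatures of smooth paths lie in the closed set $G^{(M)}(E)$, so the limit is group-like.  The same convergence already gives strong geometricity at every $\eta'<\eta$.  In the last claim, $X\in c^\eta$ means the approximations converge in $C^\eta$ itself, so the same continuity gives convergence at exponent $\eta$.

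Uniqueness is by induction on $r$.  Let $\mathbf Y$ be another multiplicative extension with the same first level and the bounds \eqref{eq:young-graded-bounds}, and suppose $\pi_q\mathbf Y=\pi_q\mathbf X$ for all $q<r$.  Subtracting the degree-$r$ Chen identities makes the cross terms, which involve only lower levels, cancel.  Hence $D_{s,t}:=\pi_r(\mathbf Y_{s,t}-\mathbf X_{s,t})$ is additive and satisfies $\norm{D_{s,t}}\le C|t-s|^{r\eta}$ with $r\eta\ge2\eta>1$.  Subdividing $[s,t]$ into $n$ equal pieces gives $\norm{D_{s,t}}\le nC(|t-s|/n)^{r\eta}\to0$, so $D=0$.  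This is the subdivision argument quoted in the introduction.

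The step needing the most care is the continuity estimate for the iterated Young integrals in the rough-path metric.  I need the inductive integrand bound $[\pi_{r-1}\mathbf X_{s,\cdot}]_{\eta',[s,t]}\lesssim|t-s|^{(r-2)\eta'}$, and it must be uniform along the approximating sequence.  I would obtain it from the bilinear Young estimate applied to differences, in the same telescoping form as the proof of \cref{cor:fno-local-lipschitz}.  I would also use that smooth paths are bounded in $C^\eta$ and converge in $C^{\eta'}$ by interpolation.  For the endpoint claim, convergence in the $C^\eta$ norm itself is exactly what $c^\eta$ supplies.  For fractional Brownian samples with $1/2<\eta<H$ this holds automatically: choosing $\eta<\eta''<H$, the sample path lies in $C^{\eta''}\subset c^\eta$.
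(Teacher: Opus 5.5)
Your proposal is correct and follows the paper's proof. Existence and the graded bounds come from iterated Young integration; uniqueness comes from the subdivision argument applied to the additive difference $D^{(r)}$, using $r\eta>1$; strong geometricity comes from continuity of iterated Young integrals along smooth approximations in $C^{\eta'}$, or in $C^\eta$ itself when $X\in c^\eta$. The one small difference is that you get group-likeness from closedness of $G^{(M)}(E)$ under these approximations, whereas the paper cites the shuffle identities of the iterated Young integrals; either argument works.
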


\begin{proof}
Existence, group-likeness, and \eqref{eq:young-graded-bounds} follow from iterated Young integration and the shuffle identities.  For uniqueness, let $\mathbf X$ and $\widetilde{\mathbf X}$ be two multiplicative extensions satisfying these bounds and having the same first level.  Suppose by induction that their levels below $r\ge2$ agree.  Subtracting the level-$r$ Chen identities shows that
\[
 D^{(r)}_{s,t}
 :=\pi_r(\mathbf X_{s,t}-\widetilde{\mathbf X}_{s,t})
\]
is additive.  For the equal partition $s=t_0<\cdots<t_n=t$,
\[
 \norm{D^{(r)}_{s,t}}
 \le
 \sum_{j=0}^{n-1}\norm{D^{(r)}_{t_j,t_{j+1}}}
 \le
 C|t-s|^{r\eta}n^{1-r\eta}.
\]
Since $r\eta>1$, the right-hand side tends to zero.  Thus $D^{(r)}=0$, and induction through $r=M$ proves uniqueness.

Choose smooth paths $X^n\to X$ in $C^{\eta'}$ for every $\eta'<\eta$; such approximations exist by first approximating uniformly at a regularity exponent strictly between $\eta'$ and $\eta$.  Continuity of iterated Young integration sends their signatures to $\mathbf X^{(M)}$ in the $\eta'$-H\"older rough-path topology, proving lower-exponent strong geometricity.  If $X\in c^\eta$, the approximating sequence may be chosen in $C^\eta$, and the same continuity gives strong geometricity at the exact exponent.  Thus exact-$\eta$ strong geometricity is obtained precisely under the little-H\"older condition $X\in c^\eta$.
\end{proof}

\begin{proof}[Proof of \cref{thm:phase-transition}(ii)--(iii)]
If $1/4<H\le1/2$, apply \cref{thm:auto-L2} and then \cref{cor:classification}.  This proves item~(ii).

Assume $H>1/2$ and $1/2<\eta<H$.  Choose $\alpha$ with $\eta<\alpha<H$.  Almost every fractional-Brownian sample path belongs to $C^\alpha$, hence to the little space $c^\eta$ by smooth approximation in the lower $C^\eta$ norm.  Apply \cref{prop:young-finite-step} samplewise at every prescribed finite tensor step $M$.  The resulting extension is the truncated Young signature, is unique in the class satisfying the graded bounds, and is strong geometric in the $\eta$-H\"older topology.  Young estimates bound its level-$r$ coordinates by a constant multiple of $\norm{B^H}_{C^\eta}^r$, and the fractional-Brownian H\"older norm has moments of every finite order.  This proves item~(iii).
\end{proof}

\section{The Heisenberg equation}

Let
\[
  \mathbb H_3
  =\left\{
  \begin{pmatrix}
  1&a&c\\0&1&b\\0&0&1
  \end{pmatrix}:a,b,c\in\R
  \right\}.
\]
Its multiplication is
\[
  (a,b,c)(a',b',c')=(a+a',b+b',c+c'+ab').
\]

\begin{definition}[Local Heisenberg solution flow]\label{def:heisenberg-flow}
Let $B\subset\Omega_d$ be Borel. A \emph{local Heisenberg solution flow} on $B$ is a family of finite Borel maps
\[
  Y_{s,t}:B\to\mathbb H_3,
  \qquad
  Y_{s,t}(x)=
  \begin{pmatrix}
  1&x^1_{s,t}&C_{s,t}(x)\\
  0&1&x^2_{s,t}\\
  0&0&1
  \end{pmatrix},
\]
such that
\[
  Y_{s,t}=Y_{s,u}Y_{u,t}
\]
for all $s\le u\le t$, and $C_{s,t}$ is measurable with respect to $\sigma(r_{s,t}^{\{1,2\}}|_B)$.
It is $\eta$-H\"older if $\abs{C_{s,t}}\le R(x)\abs{t-s}^{2\eta}$ for an almost surely finite random $R$.
\end{definition}

\begin{lemma}[The central coordinate is a Chen area]\label{lem:heisenberg-chen}
A family $Y$ is multiplicative if and only if
\[
  C_{s,t}=C_{s,u}+C_{u,t}+x^1_{s,u}x^2_{u,t}.
\]
For a smooth path $x$, the unique classical solution of \eqref{eq:heisenberg-rde} has
\[
  C_{s,t}=\int_s^t x^1_{s,u}\,\dd x_u^2.
\]
\end{lemma}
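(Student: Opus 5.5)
The plan is a direct matrix computation in $\mathbb H_3$, followed by uniqueness for a linear ODE. For the first claim, I would use the multiplication law $(a,b,c)(a',b',c')=(a+a',b+b',c+c'+ab')$ recorded above. Here $Y_{s,t}$ corresponds to the triple $(x^1_{s,t},x^2_{s,t},C_{s,t})$, so the product $Y_{s,u}Y_{u,t}$ corresponds to
\[
 \bigl(x^1_{s,u}+x^1_{u,t},\;x^2_{s,u}+x^2_{u,t},\;C_{s,u}+C_{u,t}+x^1_{s,u}x^2_{u,t}\bigr).
\]
The first two entries equal $x^1_{s,t}$ and $x^2_{s,t}$ automatically, by additivity of increments. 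Comparing with $Y_{s,t}$ then shows that the matrix identity $Y_{s,t}=Y_{s,u}Y_{u,t}$ holds if and only if the central entries agree. That agreement is exactly the displayed Chen relation for $C$, and this settles both directions of the equivalence at once.

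For the smooth case I would first verify that the candidate solves the equation. Define $\Upsilon(t)$ as the matrix with entries $x^1_{s,t}$, $x^2_{s,t}$ and $C_{s,t}=\int_s^t x^1_{s,u}\,\dd x^2_u$, and compute the right-driven product $\Upsilon(t)\bigl(E_{12}\dot x^1_t+E_{23}\dot x^2_t\bigr)$. Right multiplication by $E_{12}$ moves the first column of $\Upsilon(t)$ into the second column, and right multiplication by $E_{23}$ moves the second column into the third. The product therefore has second column $(\dot x^1_t,0,0)^{\mathsf T}$ and third column $(x^1_{s,t}\dot x^2_t,\dot x^2_t,0)^{\mathsf T}$. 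This matches the entrywise derivative of $\Upsilon$, using $\partial_tC_{s,t}=x^1_{s,t}\dot x^2_t$. The initial condition $\Upsilon(s)=I$ holds because all increments vanish at $t=s$.

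Uniqueness follows because \eqref{eq:heisenberg-rde} is a linear ODE with continuous coefficients, so Picard--Lindelöf gives a unique classical solution, and hence the solution's $C_{s,t}$ is the displayed integral. I do not see a genuine obstacle in this statement. The only point needing care is the convention: the equation is right-driven, $\dd Y=Y\,\dd X$, so the Chen cross term must come out as $x^1_{s,u}x^2_{u,t}$ and not in the reversed order. The column computation above confirms this ordering.
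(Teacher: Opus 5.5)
Your proposal is correct and follows the paper's argument: the equivalence is read off the $(1,3)$ entry of the product in $\mathbb H_3$, and the smooth case reduces to the entrywise equations $\dd a=\dd x^1$, $\dd b=\dd x^2$, $\dd c=a\,\dd x^2$. The paper integrates these directly, whereas you verify the candidate and then invoke uniqueness for the linear matrix ODE. Your version has the small advantage of not presupposing that the solution stays in upper-triangular unipotent form.
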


\begin{proof}
The first assertion is the $(1,3)$ entry of matrix multiplication. For the second, write
\[
  Y_t=\begin{pmatrix}1&a_t&c_t\\0&1&b_t\\0&0&1\end{pmatrix}.
\]
The right-driven equation gives $\dd a=\dd x^1$, $\dd b=\dd x^2$, and $\dd c=a\,\dd x^2$, with zero initial increments.
\end{proof}

\begin{proof}[Proof of \cref{thm:heisenberg-main}]
If $H\le1/4$, \cref{lem:heisenberg-chen} turns the central coordinate of any positive-domain local flow into a coordinate forbidden by \cref{thm:phase-transition}(i).

Let $1/4<H\le1/2$.  Project the canonical local rough path through the algebra homomorphism sending the first two generators to $E_{12},E_{23}$.  This gives a Borel local multiplicative $\mathbb H_3$-valued flow solving \eqref{eq:heisenberg-rde} on a full-measure Borel domain.  Let $C$ be any local $\eta$-H\"older central coordinate on a full-measure Borel domain $B$, and let $\overline C$ be the canonical one, restricted to the intersection of their full-measure domains.  For each rational $s<t$, the difference
\[
 D_{s,t}:=C_{s,t}-\overline C_{s,t}
\]
is measurable with respect to $\mathcal F_{[s,t]}\vert_B$.  Apply \cref{cor:rational-trace-extension} to obtain genuinely $\mathcal F_{[s,t]}$-measurable extensions of the rational coordinates.  By \cref{lem:heisenberg-chen}, rational additivity holds on $B$ and hence almost surely for the extensions; the $2\eta$-H\"older bound likewise holds almost surely.  Since $2\eta>1/2$, \cref{thm:additive-rigidity} gives a deterministic $2\eta$-H\"older path $a$ and the identity
\[
 D_{s,t}=a(t)-a(s)
\]
for all rational pairs on a common full-probability event $E$ in the completed Borel sigma-field.  By \cref{lem:borel-core}, choose a Borel full-measure core $E_0\subseteq E$ and put $B_0:=B\cap E_0$.  The original candidate flow and the canonical flow are continuous in $(s,t)$ on $B_0$, so density of rational pairs yields the stated identity simultaneously for every $x\in B_0$ and every real $0\le s\le t\le T$.  This proves item~(ii).

Let $H>1/2$.  Young integration gives the displayed central coordinate and hence the local multiplicative flow.  For any other local $\eta$-H\"older flow, the difference $D=C-\overline C$ is additive and $2\eta$-H\"older.  For an equal partition $s=t_0<\cdots<t_n=t$,
\begin{align*}
 \abs{D_{s,t}}
 &\leq\sum_{j=0}^{n-1}\abs{D_{t_j,t_{j+1}}}\\
 &\leq R\,n\left(\frac{t-s}{n}\right)^{2\eta}
 =R\abs{t-s}^{2\eta}n^{1-2\eta}.
\end{align*}
Since $2\eta>1$, the right-hand side tends to zero.  Thus $D_{s,t}=0$, so no nontrivial deterministic central drift is possible and the Young flow is unique.  This proves item~(iii).
\end{proof}

\begin{corollary}[No universal local pathwise solver below one quarter]\label{cor:no-solver}
Let $H\le1/4$ and $d\ge2$. There is no Borel local pathwise solver, defined on a positive-probability Borel set, which assigns multiplicative solution flows to all constant-coefficient controlled equations and agrees with the first-level driver. It already fails for the single equation \eqref{eq:heisenberg-rde}.
\end{corollary}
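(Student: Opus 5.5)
The plan is a reduction to \cref{thm:heisenberg-main}(i), equivalently to \cref{thm:phase-transition}(i). Suppose, for contradiction, that a Borel local pathwise solver exists on a Borel set $B\subseteq\Omega_d$ with $\mu_H^{(d)}(B)>0$. By hypothesis it produces, for every constant-coefficient controlled equation, a multiplicative solution flow whose first-level data agree with the driver. Specialise it to the single equation \eqref{eq:heisenberg-rde}, driven by the first two components through $E_{12},E_{23}$. This yields a family $Y_{s,t}:B\to$ (upper triangular unipotent $3\times3$ matrices). Agreement with the first-level driver forces the superdiagonal entries to be $x^1_{s,t}$ and $x^2_{s,t}$, so $Y_{s,t}$ has exactly the form of \cref{def:heisenberg-flow}. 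The only free entry is the central coordinate $C_{s,t}$.

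Next I check the hypotheses of \cref{def:heisenberg-flow}. Finiteness and Borel measurability of $C_{s,t}$ follow because $C_{s,t}$ is a matrix entry of a Borel, finite-valued map. Locality of the solver means that the flow over $[s,t]$ is measurable from the driver's increments on $[s,t]$ in the components actually driving the equation. Here those components are $\{1,2\}$, so $C_{s,t}$ is $\sigma(r^{\{1,2\}}_{s,t}\vert_B)$-measurable. Multiplicativity $Y_{s,t}=Y_{s,u}Y_{u,t}$ holds by assumption. \Cref{lem:heisenberg-chen} then converts this into Chen's identity for $C$. Consequently $\A^{12}_{s,t}:=C_{s,t}$ is a positive-domain local level-two Chen coordinate on $B$ in the sense of \cref{def:positive-domain-area}. \Cref{thm:phase-transition}(i) gives $\mu_H^{(d)}(B)=0$, which is the desired contradiction. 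This is precisely \cref{thm:heisenberg-main}(i). No H\"older bound is needed, because part~(i) uses none.

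The only delicate step is interpretive: one has to make precise what ``local solver'' means for a general constant-coefficient equation, so that restricting to \eqref{eq:heisenberg-rde} gives locality in exactly the components $\{1,2\}$. I would fix the convention that locality is with respect to the increments of the driving components on the interval. If the solver is instead allowed to use all $d$ components, the reduction still needs locality in $\{1,2\}$. In that case I would first note that, for \eqref{eq:heisenberg-rde}, the vector fields vanish in components $\ge3$, so the solver's output depends only on the driving components. Alternatively, I would observe that the proof of \cref{thm:phase-transition}(i) only uses fibrewise invariance under shifts in coordinates $1,2$ on cells where one of them is constant (\cref{lem:young-rectangle}). That invariance persists if the sigma-field is enlarged to include the increments of components $3,\dots,d$ on $[s,t]$, because those components are never shifted and the decomposition $\Omega_d=W_1\times W_2\times S$ keeps them fixed in the residual.
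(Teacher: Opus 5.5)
Your reduction is correct and is the paper's own route: the corollary follows at once from \cref{thm:heisenberg-main}(i), whose proof likewise uses \cref{lem:heisenberg-chen} to turn the central entry of a positive-domain local Heisenberg flow into a local Chen coordinate excluded by \cref{thm:phase-transition}(i). Your extra remark is also sound: if locality is taken with respect to all $d$ components on $[s,t]$, the obstruction still holds. The reason is that the cancellation in \cref{lem:young-rectangle} only needs the increment restrictions to agree on each cell, and the components $3,\ldots,d$ are never shifted.
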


\section{Conclusion}\label{sec:conclusion}

The paper separates two questions that are often conflated below the Young threshold.  The deterministic question is whether a rough enhancement can be constructed with quantitative stability.  The scale-coordinate analysis answers this by the chain
\[
 \begin{gathered}
 \text{matched increments}
 \Longrightarrow
 \text{exact reconstruction}\\
 \Longrightarrow
 \text{dyadic H\"older path}
 \Longrightarrow
 \text{Fourier-normal-ordered lift}.
 \end{gathered}
\]
The stochastic question is whether an increment of that lift can be recovered from the driving increments on the same interval.  For fractional Brownian motion the answer changes exactly at one quarter:
\[
 \begin{array}{c|c}
 H>\frac12 & \text{Young-local and canonical},\\[1mm]
 \frac14<H\le\frac12 & \text{full-law local rough lifts exist and are classified},\\[1mm]
 0<H\le\frac14 & \text{every positive-domain measurable local lift is impossible.}
 \end{array}
\]

The negative proof does not depend on the chosen constructive scheme.  Positive Gaussian mass produces four-corner configurations; Borel finiteness and anti-concentration force a uniform Hilbert--Schmidt bound; locality and Chen identify the corresponding rectangle with the Young form; and oscillatory Cameron--Martin packets force the divergent series $\sum k^{-4H}$.  Hence below one quarter the use of global Fourier, Volterra, renormalization, or extension data is not merely a feature of known constructions.  Some nonlocal information is necessary.

The Heisenberg equation shows that the obstruction is already visible in a single linear noncommutative differential equation.  At and below one quarter no positive-probability Borel domain supports a finite local multiplicative solution flow.  In the rough regime $1/4<H\le1/2$, every full-law local H\"older flow differs from the canonical one only by a deterministic central drift.  In the Young regime $H>1/2$, even that drift vanishes and the local flow is unique.

The scale--FNO construction supplies an explicit measurable lift with quantitative ultraviolet control, while the locality theorem is scheme-independent.  Together they show that rough enhancement exists at every positive regularity, but below one quarter it necessarily uses information beyond the increments of the path on the interval itself.

\appendix

\section{Free synthesis beyond the realized range}\label{app:free-synthesis}

The exact reconstruction theorem uses only towers in $\cA_{\omega,\eps}=\Ran\cD_{\omega,\eps}$; on that range, the series is the original Littlewood--Paley reconstruction and no summability hypothesis is required.  Synthesizing an arbitrary bounded annular tower is a different problem.  This appendix states the two additional conditions used for ambient synthesis.

Fix $0\leq\eps\leq\eps_0$.  Let $Y_\infty$ be the space of towers $(u_{-1},z)$ such that
\[
 u_{-1}\in C_b(\R),
 \qquad
 \sup_{j\geq0}\norm{z_j}_\infty<\infty,
\]
and each $z_j$ has Fourier support in the enlarged $j$th annulus selected by $\widetilde\varphi$.

Absolute convergence in $C_b$ requires the Dini condition stated next; membership in $A_\omega$ requires the stronger dyadic Hardy control of \cref{prop:hardy-synthesis}.

\begin{proposition}[Dini synthesis in $C_b$]\label{prop:dini-synthesis}
Assume
\begin{equation}\label{eq:dini-condition}
 \sum_{j=0}^\infty\lambda_j<\infty.
\end{equation}
For every $(u_{-1},z)\in Y_\infty$, the series
\begin{equation}\label{eq:free-synthesis-series}
 \cI_{\omega,\eps}^{\mathrm{free}}(u_{-1},z)
 :=
 u_{-1}+\sum_{j=0}^\infty I_{j,\eps}^\omega z_j
\end{equation}
converges absolutely and uniformly, and
\begin{equation}\label{eq:dini-synthesis-bound}
 \norm{\cI_{\omega,\eps}^{\mathrm{free}}(u_{-1},z)}_\infty
 \leq
 \norm{u_{-1}}_\infty
 +
 C\left(\sum_{j=0}^\infty\lambda_j\right)
 \sup_j\norm{z_j}_\infty.
\end{equation}
Moreover,
\begin{equation}\label{eq:dini-epsilon-stability}
 \norm{\cI_{\omega,\eps}^{\mathrm{free}}(u_{-1},z)
       -\cI_{\omega,0}^{\mathrm{free}}(u_{-1},z)}_\infty
 \leq
 C\eps^2
 \left(\sum_{j=0}^\infty\lambda_j\right)
 \sup_j\norm{z_j}_\infty.
\end{equation}
\end{proposition}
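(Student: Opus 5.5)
The plan is to control each band term separately with the uniform inverse-multiplier estimates of \cref{lem:inverse-multiplier}, and then use the Dini condition \eqref{eq:dini-condition} to sum over $j$ by the Weierstrass test. No reconstruction identity is needed, since the towers in $Y_\infty$ are arbitrary and need not lie in the realized range.

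\emph{Boundedness of each term.} For $(u_{-1},z)\in Y_\infty$, each $z_j$ is bounded and continuous. The operator $I_{j,\eps}^\omega$ is convolution with the $L^1$ kernel $\lambda_jr_j^{-1}L_\eps(\cdot/r_j)$ from the proof of \cref{lem:inverse-multiplier}. Hence $I_{j,\eps}^\omega z_j$ is bounded and continuous, and \eqref{eq:inverse-multiplier-bound} gives
\[
 \norm{I_{j,\eps}^\omega z_j}_\infty\le C\lambda_j\sup_i\norm{z_i}_\infty .
\]
The constant $C$ is independent of $j$ and of $\eps\in[0,\eps_0]$.

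\emph{Summation.} By \eqref{eq:dini-condition} the majorants $C\lambda_j\sup_i\norm{z_i}_\infty$ are summable. The Weierstrass test then gives absolute and uniform convergence of \eqref{eq:free-synthesis-series} in $C_b(\R)$, which is complete. Applying the triangle inequality to the partial sums and passing to the limit yields \eqref{eq:dini-synthesis-bound}.

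\emph{Stability in $\eps$.} Since both series converge absolutely and uniformly, I subtract them termwise; the low block $u_{-1}$ cancels. Estimate \eqref{eq:inverse-collision-stability} bounds the $j$th difference by
\[
 \norm{(I_{j,\eps}^\omega-I_{j,0}^\omega)z_j}_\infty\le C\eps^2\lambda_j\norm{z_j}_\infty ,
\]
and summing over $j$ gives \eqref{eq:dini-epsilon-stability}.

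\emph{Main point to check.} The only delicate step is that the $\eps$-uniformity in \cref{lem:inverse-multiplier} holds on the whole enlarged support of $\widetilde\varphi$, not just on $\supp\varphi$. This is guaranteed by the choice of $\eps_0$ in \eqref{eq:sine-nonzero}, which keeps the sine nonzero there. Consequently the Fourier support assumption on $z_j$ is not needed for the estimates at all; it only serves to interpret the output as a synthesis from the annuli.
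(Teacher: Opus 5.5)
Your proposal is correct and follows the paper's proof essentially verbatim. You bound each band by \eqref{eq:inverse-multiplier-bound}, use \eqref{eq:dini-condition} to get absolute uniform convergence and \eqref{eq:dini-synthesis-bound}, and sum \eqref{eq:inverse-collision-stability} over $j$ to obtain \eqref{eq:dini-epsilon-stability}. Your side remark is also accurate: these band estimates hold for arbitrary bounded $z_j$, so the annular support condition in $Y_\infty$ plays no role in the inequalities themselves.
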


\begin{proof}
By \eqref{eq:inverse-multiplier-bound},
\[
 \sum_{j=0}^\infty\norm{I_{j,\eps}^\omega z_j}_\infty
 \leq
 C\left(\sup_j\norm{z_j}_\infty\right)
 \sum_{j=0}^\infty\lambda_j.
\]
The right-hand side is finite by \eqref{eq:dini-condition}; this proves absolute uniform convergence and \eqref{eq:dini-synthesis-bound}.  Summing \eqref{eq:inverse-collision-stability} over $j$ gives \eqref{eq:dini-epsilon-stability}.
\end{proof}

Membership in $A_\omega$ requires simultaneous control of the low-frequency increment sum and the high-frequency tail.  The following Hardy condition is exactly the estimate used in that split.

\begin{definition}[Dyadic Hardy regularity]\label{def:hardy-regularity}
The modulus sequence $(\lambda_j)$ satisfies the dyadic Hardy condition if there is $C_H>0$ such that
\begin{equation}\label{eq:hardy-condition}
 \sum_{j>J}\lambda_j
 +
 r_J\sum_{j=0}^Jr_j^{-1}\lambda_j
 \leq
 C_H\lambda_J
 \qquad(J\geq0).
\end{equation}
The first term controls the high-frequency tail; the second controls increments of the low-frequency sum.
\end{definition}

\begin{proposition}[Free synthesis in the modulus algebra]\label{prop:hardy-synthesis}
Assume \eqref{eq:hardy-condition}.  Let $u_{-1}\in A_\omega$ and let $(z_j)_{j\geq0}$ be a bounded annular tower.  Then the series
\[
 f:=u_{-1}+\sum_{j=0}^\infty I_{j,\eps}^\omega z_j
\]
converges uniformly and belongs to $A_\omega$.  Uniformly for $0\leq\eps\leq\eps_0$,
\begin{equation}\label{eq:hardy-synthesis-bound}
 \norm f_{C^\omega}
 \leq
 C\left(
 \norm{u_{-1}}_{C^\omega}
 +
 \sup_j\norm{z_j}_\infty
 \right).
\end{equation}
For every H\"older modulus $\omega(r)=r^\alpha$ with $0<\alpha<1$, condition \eqref{eq:hardy-condition} holds.
\end{proposition}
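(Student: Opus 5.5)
The plan is to follow the two-term split already encoded in \eqref{eq:hardy-condition}. Put $g_j:=I_{j,\eps}^\omega z_j$ and $Z:=\sup_j\norm{z_j}_\infty$. By \eqref{eq:inverse-multiplier-bound}, $\norm{g_j}_\infty\le C\lambda_jZ$, uniformly in $\eps\in[0,\eps_0]$. Each $g_j$ has Fourier support in the enlarged annulus $\{r_j\xi\in\supp\widetilde\varphi\}$, so Bernstein's inequality gives $\norm{g_j'}_\infty\le Cr_j^{-1}\lambda_jZ$. Taking $J=0$ in \eqref{eq:hardy-condition} shows $\sum_j\lambda_j\le C_H\lambda_0<\infty$. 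Hence the series converges absolutely and uniformly, with $\norm f_\infty\le\norm{u_{-1}}_\infty+CC_H\lambda_0Z$. The term $u_{-1}$ contributes its own $C^\omega$ norm by the triangle inequality, so only the seminorm $[\sum_jg_j]_\omega$ remains to be estimated.

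Fix $0<\abs{x-y}=h\le1$ and choose $J\ge0$ with $r_{J+1}<h\le r_J$. The low block $j\le J$ is handled by the mean-value theorem:
\[
 \sum_{j\le J}\abs{g_j(x)-g_j(y)}\le CZh\sum_{j\le J}r_j^{-1}\lambda_j .
\]
The high block $j>J$ is handled by the uniform bound, $\sum_{j>J}2\norm{g_j}_\infty\le CZ\sum_{j>J}\lambda_j$. Since $h\le r_J$, the Hardy condition bounds both pieces by $CC_HZ\lambda_J$. It remains to compare $\lambda_J$ with $\omega(h)$. Because $h>r_{J+1}=r_J/2$, monotonicity and concavity (as in \cref{lem:concave-scale}) give $\omega(h)\ge\omega(r_J/2)\ge\tfrac12\omega(r_J)=\tfrac12\lambda_J$. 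This yields $[\sum g_j]_\omega\le CZ$ and therefore \eqref{eq:hardy-synthesis-bound}, with constants independent of $\eps$, because the only $\eps$-dependence enters through \cref{lem:inverse-multiplier}.

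For the H\"older case $\lambda_j=2^{-\alpha j}$ I will verify the Hardy condition directly. The tail is geometric, $\sum_{j>J}2^{-\alpha j}=2^{-\alpha J}\,2^{-\alpha}/(1-2^{-\alpha})$. For the low-frequency sum, $r_J\sum_{j\le J}2^{(1-\alpha)j}\le 2^{-J}2^{(1-\alpha)J}/(1-2^{-(1-\alpha)})=C_\alpha2^{-\alpha J}$, which uses $\alpha<1$. Both terms are therefore bounded by $C_\alpha\lambda_J$.

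The main obstacle I expect is justifying the derivative bound on $g_j$ uniformly in $\eps$. The clean route is to write $g_j'$ as the multiplier $\lambda_j\widetilde\varphi(r_j\xi)q_\eps(r_j\xi)\,i\xi$ applied to $z_j$. After rescaling, this becomes $r_j^{-1}\lambda_j$ times a fixed smooth compactly supported symbol family $\widetilde\varphi(\zeta)q_\eps(\zeta)i\zeta$, which is bounded in $C^k$ uniformly for $\eps\le\eps_0$ by \eqref{eq:sine-nonzero}. Its kernel therefore has uniformly bounded $L^1$ norm, exactly as in the proof of \cref{lem:inverse-multiplier}. This route avoids any separate support argument for $z_j$.
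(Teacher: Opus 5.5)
Your proposal is correct and follows the paper's proof essentially step by step: the same bounds on $\norm{g_j}_\infty$ and $\norm{g_j'}_\infty$, uniform convergence from the $J=0$ case of \eqref{eq:hardy-condition}, the same low/high split at $r_{J+1}<h\le r_J$, and the same geometric sums in the H\"older case. The only cosmetic difference is that you compare $\lambda_J$ with $\omega(h)$ directly via $\omega(r_J/2)\ge\tfrac12\omega(r_J)$, obtaining the factor $2$, whereas the paper invokes \cref{lem:concave-scale} and obtains the factor $3$.
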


\begin{proof}
Set
\[
 g_j:=I_{j,\eps}^\omega z_j.
\]
The inverse-multiplier bound and frequency localization give
\begin{equation}\label{eq:synthesis-block-bounds}
 \norm{g_j}_\infty
 \leq
 C\lambda_j\norm{z_j}_\infty,
 \qquad
 \norm{g_j'}_\infty
 \leq
 Cr_j^{-1}\lambda_j\norm{z_j}_\infty.
\end{equation}
The first term in \eqref{eq:hardy-condition} with $J=0$ implies $\sum_j\lambda_j<\infty$, so the series converges uniformly.

Let $0<h\leq1$ and choose $J$ such that
\[
 r_{J+1}<h\leq r_J.
\]
For the low-frequency part, the mean-value theorem and \eqref{eq:synthesis-block-bounds} give
\begin{align*}
 \sum_{j\leq J}\norm{g_j(\cdot+h)-g_j}_\infty
 &\leq
 h\sum_{j\leq J}\norm{g_j'}_\infty\\
 &\leq
 Ch\left(\sum_{j\leq J}r_j^{-1}\lambda_j\right)
 \sup_j\norm{z_j}_\infty.
\end{align*}
For the high-frequency part,
\begin{align*}
 \sum_{j>J}\norm{g_j(\cdot+h)-g_j}_\infty
 &\leq
 2\sum_{j>J}\norm{g_j}_\infty\\
 &\leq
 C\left(\sum_{j>J}\lambda_j\right)
 \sup_j\norm{z_j}_\infty.
\end{align*}
Since $h\leq r_J$, condition \eqref{eq:hardy-condition} yields
\[
 \sum_{j\geq0}\norm{g_j(\cdot+h)-g_j}_\infty
 \leq
 C\lambda_J\sup_j\norm{z_j}_\infty.
\]
The choice $r_{J+1}<h$ gives $r_J<2h$.  Applying \cref{lem:concave-scale} with $r=h$ and $\abs z=r_J/h<2$,
\[
 \lambda_J=\omega(r_J)\leq3\omega(h).
\]
Hence
\[
 \left[\sum_{j\geq0}g_j\right]_\omega
 \leq
 C\sup_j\norm{z_j}_\infty.
\]
Adding the $A_\omega$ norm of $u_{-1}$ and the uniform estimate proves \eqref{eq:hardy-synthesis-bound}.

If $\lambda_j=2^{-\alpha j}$, both sums in \eqref{eq:hardy-condition} are geometric:
\[
 \sum_{j>J}2^{-\alpha j}
 \leq
 C_\alpha2^{-\alpha J},
\]
\[
 2^{-J}\sum_{j=0}^J2^{(1-\alpha)j}
 \leq
 C_\alpha2^{-\alpha J}.
\]
Thus the H\"older modulus satisfies the condition.
\end{proof}

The Dini and Hardy hypotheses are used only for extending synthesis from analyzed towers to arbitrary bounded annular towers.
 \section{Changes of frequency coordinates}\label{app:coordinate-change}

A Littlewood--Paley pair is a coordinate choice.  Exact reconstruction compares two such choices on their realized tower ranges, while finite annular overlap identifies the corresponding little scale spaces.  We state these two conclusions separately because the first is algebraic and the second is an analytic high-frequency estimate.

Let $\Delta=(\Delta_j)_{j\geq-1}$ and $\Gamma=(\Gamma_j)_{j\geq-1}$ be two smooth inhomogeneous Littlewood--Paley pairs of the form in \cref{def:lp-pair}.  Superscripts indicate the pair used in the analysis and reconstruction maps.

\begin{theorem}[Strict transition on realized tower ranges]\label{thm:coordinate-transition}
Choose $\eps_0$ below the inversion thresholds of both pairs.  For $0\leq\eps\leq\eps_0$, define
\[
 T_{\Gamma\leftarrow\Delta,\eps}
 :=
 \cD_{\omega,\eps}^{\Gamma}
 \cI_{\omega,\eps}^{\Delta}:
 \cA_{\omega,\eps}^{\Delta}
 \longrightarrow
 \cA_{\omega,\eps}^{\Gamma}.
\]
Then $T_{\Gamma\leftarrow\Delta,\eps}$ is a bijection with inverse $T_{\Delta\leftarrow\Gamma,\eps}$.  For a third pair $\Lambda$,
\begin{equation}\label{eq:coordinate-cocycle}
 T_{\Lambda\leftarrow\Gamma,\eps}
 T_{\Gamma\leftarrow\Delta,\eps}
 =
 T_{\Lambda\leftarrow\Delta,\eps}.
\end{equation}
The transition is isometric for the transported norms and intertwines the transported products:
\begin{equation}\label{eq:coordinate-product-intertwining}
 T_{\Gamma\leftarrow\Delta,\eps}
 (a\star_{\omega,\eps}^{\Delta}b)
 =
 T_{\Gamma\leftarrow\Delta,\eps}(a)
 \star_{\omega,\eps}^{\Gamma}
 T_{\Gamma\leftarrow\Delta,\eps}(b).
\end{equation}
\end{theorem}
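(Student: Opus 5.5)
The statement is purely algebraic once the realized-range bijections of \cref{thm:analysis-reconstruction} are available for each pair, so I would prove it by conjugation, with no new analytic estimate. First I would record that \cref{thm:analysis-reconstruction} holds for every admissible Littlewood--Paley pair of the form in \cref{def:lp-pair}. Its inputs are the dyadic bounds of \cref{prop:dyadic-bounds}, the band inversion of \cref{thm:band-inversion} and the uniform reconstruction \eqref{eq:lp-uniform-reconstruction}, and each depends only on the fixed pair through its constants. Choosing $\eps_0$ below both inversion thresholds \eqref{eq:sine-nonzero} makes all the maps $\cD^{\Delta}_{\omega,\eps},\cI^{\Delta}_{\omega,\eps},\cD^{\Gamma}_{\omega,\eps},\cI^{\Gamma}_{\omega,\eps}$ simultaneously defined for $0\le\eps\le\eps_0$. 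The identities
\[
 \cI^{P}_{\omega,\eps}\cD^{P}_{\omega,\eps}=\id_{A_\omega},
 \qquad
 \cD^{P}_{\omega,\eps}\cI^{P}_{\omega,\eps}=\id_{\cA^{P}_{\omega,\eps}}
\]
then hold for each pair $P$.

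Well-definedness of $T_{\Gamma\leftarrow\Delta,\eps}$ is immediate. The map $\cI^{\Delta}_{\omega,\eps}$ sends $\cA^{\Delta}_{\omega,\eps}$ into $A_\omega$, and $\cD^{\Gamma}_{\omega,\eps}$ sends $A_\omega$ onto $\cA^{\Gamma}_{\omega,\eps}$ by definition of the realized range. For bijectivity I would compute
\[
 T_{\Delta\leftarrow\Gamma,\eps}T_{\Gamma\leftarrow\Delta,\eps}
 =\cD^{\Delta}\cI^{\Gamma}\cD^{\Gamma}\cI^{\Delta}
 =\cD^{\Delta}\cI^{\Delta}
 =\id_{\cA^{\Delta}},
\]
and the symmetric computation gives the other composition. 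The cocycle identity \eqref{eq:coordinate-cocycle} follows the same way, by cancelling the middle factor $\cI^{\Gamma}\cD^{\Gamma}=\id_{A_\omega}$.

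Isometry uses the transported norm of \cref{cor:transported-algebra}. We have $\norm{T a}^{\mathrm{tr},\Gamma}=\norm{\cI^{\Gamma}\cD^{\Gamma}\cI^{\Delta}a}_{C^\omega}=\norm{\cI^{\Delta}a}_{C^\omega}=\norm a^{\mathrm{tr},\Delta}$. For \eqref{eq:coordinate-product-intertwining} I would expand the left side as
\[
 \cD^{\Gamma}\cI^{\Delta}\cD^{\Delta}\bigl(\cI^{\Delta}a\,\cI^{\Delta}b\bigr)
 =\cD^{\Gamma}\bigl(\cI^{\Delta}a\,\cI^{\Delta}b\bigr).
\]
Then I would rewrite each factor $\cI^{\Delta}a$ as $\cI^{\Gamma}(Ta)$, using $\cI^{\Gamma}\cD^{\Gamma}=\id$, which turns the right side into $Ta\star^{\Gamma}Tb$.

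The only point needing care is the first step: confirming that nothing in \cref{thm:analysis-reconstruction} depends on the particular pair beyond constants and the inversion threshold. In particular, injectivity must use only the pair's own band inversion and uniform reconstruction. I expect this to be routine, and I do not anticipate a genuine obstacle.
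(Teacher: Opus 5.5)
Your proposal is correct and follows the paper's proof essentially verbatim. Both arguments cancel the middle factor $\cI^{\Gamma}_{\omega,\eps}\cD^{\Gamma}_{\omega,\eps}=\id_{A_\omega}$ for bijectivity and the cocycle, read isometry off the definition of the transported norm, and reduce both sides of the product identity to $\cD^{\Gamma}_{\omega,\eps}(\cI^{\Delta}_{\omega,\eps}a\,\cI^{\Delta}_{\omega,\eps}b)$; your preliminary check that \cref{thm:analysis-reconstruction} holds for each pair once $\eps_0$ lies below both thresholds is exactly the hypothesis the paper uses tacitly.
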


\begin{proof}
The analysis--reconstruction identity for the $\Gamma$ pair gives
\[
 \cI_{\omega,\eps}^{\Gamma}
 \cD_{\omega,\eps}^{\Gamma}
 =
 \id_{A_\omega}.
\]
Therefore
\begin{align*}
 T_{\Delta\leftarrow\Gamma,\eps}
 T_{\Gamma\leftarrow\Delta,\eps}
 &=
 \cD_{\omega,\eps}^{\Delta}
 \cI_{\omega,\eps}^{\Gamma}
 \cD_{\omega,\eps}^{\Gamma}
 \cI_{\omega,\eps}^{\Delta}\\
 &=
 \cD_{\omega,\eps}^{\Delta}
 \cI_{\omega,\eps}^{\Delta}
 =
 \id_{\cA_{\omega,\eps}^{\Delta}}.
\end{align*}
The reverse identity and \eqref{eq:coordinate-cocycle} follow by the same cancellation.  Since the transported norm of a tower is the $A_\omega$ norm of its reconstructed function, the transition is isometric.

For \eqref{eq:coordinate-product-intertwining}, both sides equal
\[
 \cD_{\omega,\eps}^{\Gamma}
 \left(
 \cI_{\omega,\eps}^{\Delta}(a)
 \cI_{\omega,\eps}^{\Delta}(b)
 \right).
\]
\end{proof}

The individual representatives $(d_j^\omega f)_j$ depend on the frequency cutoffs.  Their vanishing criterion at critical scale does not.

\begin{proposition}[Independence of the little scale space]\label{prop:little-space-independence}
Define
\[
 c_{\Delta}^\omega
 :=
 \left\{f\in A_\omega:
 \lambda_j^{-1}\norm{\Delta_jf}_\infty\to0
 \right\},
\]
\[
 c_{\Gamma}^\omega
 :=
 \left\{f\in A_\omega:
 \lambda_j^{-1}\norm{\Gamma_jf}_\infty\to0
 \right\}.
\]
Then
\begin{equation}\label{eq:little-space-coordinate-independence}
 c_{\Delta}^\omega=c_{\Gamma}^\omega.
\end{equation}
Moreover,
\begin{equation}\label{eq:coordinate-limsup-equivalence}
 \limsup_{k\to\infty}
 \lambda_k^{-1}\norm{\Gamma_kf}_\infty
 \asymp
 \limsup_{j\to\infty}
 \lambda_j^{-1}\norm{\Delta_jf}_\infty.
\end{equation}
Consequently, the rule
\[
 \mathsf T_{\Gamma\leftarrow\Delta}
 (\delta_\omega^{\Delta}f)
 :=
 \delta_\omega^{\Gamma}f
\]
defines a bounded linear bijection between the realized images of the two corona derivatives.
\end{proposition}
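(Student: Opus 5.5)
The plan is to compare the two block families through finite annular overlap. Let $\widetilde\Delta_k$ be the enlarged cutoff attached to the $\Delta$ pair (symbol $\widetilde\varphi^{\Delta}(r_k\cdot)$, equal to one on $\supp\varphi^\Delta(r_k\cdot)$). Since the annular supports of $\varphi^\Gamma(r_k\cdot)$ and $\varphi^\Delta(r_j\cdot)$ are disjoint once $\abs{j-k}>N_0$, for a fixed $N_0$ depending only on the two pairs, I would write, for $k$ large enough that the low blocks $\Delta_{-1}$ do not meet the $k$th $\Gamma$ annulus,
\[
 \Gamma_kf=\sum_{\abs{j-k}\le N_0}\Gamma_k\Delta_jf .
\]
This uses the uniform Littlewood--Paley reconstruction \eqref{eq:lp-uniform-reconstruction} for the $\Delta$ pair, together with the fact that $\Gamma_k$ is bounded on $C_b$ and commutes with the partial sums. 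Each $\Gamma_k=\varphi^\Gamma(r_kD)$ has a kernel whose $L^1$ norm is independent of $k$ (dilation of a fixed Schwartz kernel). This gives
\[
 \norm{\Gamma_kf}_\infty\le C\sum_{\abs{j-k}\le N_0}\norm{\Delta_jf}_\infty .
\]

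Next I would divide by $\lambda_k$ and compare $\lambda_k$ with $\lambda_j$ for $\abs{j-k}\le N_0$. Monotonicity together with \cref{lem:concave-scale} gives $\lambda_j\le\lambda_k$ when $j\ge k$, and $\lambda_j=\omega(2^{k-j}r_k)\le(1+2^{N_0})\lambda_k$ when $j<k$. Hence
\[
 \lambda_k^{-1}\norm{\Gamma_kf}_\infty\le C'\max_{\abs{j-k}\le N_0}\lambda_j^{-1}\norm{\Delta_jf}_\infty .
\]
Taking $\limsup_{k\to\infty}$ bounds the $\Gamma$ side by $C'$ times the $\Delta$ side. Exchanging the roles of the two pairs gives the reverse bound. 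Together these prove \eqref{eq:coordinate-limsup-equivalence}, and the case where either limsup vanishes gives \eqref{eq:little-space-coordinate-independence}.

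For the final assertion, I would check that the map $\mathsf T_{\Gamma\leftarrow\Delta}$ is well defined. By linearity, this reduces to showing that $\delta^\Delta_\omega f=\delta^\Delta_\omega g$ implies $\delta^\Gamma_\omega f=\delta^\Gamma_\omega g$, i.e. that $\Ker\delta^\Delta_\omega=\Ker\delta^\Gamma_\omega$. This follows from \cref{thm:corona-kernel}, applied to each pair, combined with \eqref{eq:little-space-coordinate-independence}. Boundedness comes from \cref{thm:corona-kernel}'s norm equivalence \eqref{eq:corona-norm-equivalence} for each pair, chained through \eqref{eq:coordinate-limsup-equivalence}: $\norm{\delta^\Gamma_\omega f}_\cQ\lesssim\norm{\delta^\Delta_\omega f}_\cQ$. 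Applying the same argument symmetrically gives the inverse map, so $\mathsf T_{\Gamma\leftarrow\Delta}$ is a bounded bijection between the realized images.

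I expect the only delicate point to be justifying the finite-overlap identity. The main care needed is the low-frequency block: its compact support meets at most finitely many $\Gamma$ annuli, and these affect only finitely many $k$, which is irrelevant for a limsup.
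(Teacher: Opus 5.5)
Your proposal is correct and follows essentially the same route as the paper. The paper's proof also uses the finite-overlap identity $\Gamma_kf=\sum_{\abs{j-k}\le M}\Gamma_k\Delta_jf$ for large $k$, uniform boundedness of these multipliers on $C_b$, concavity-based comparability of $\lambda_j$ and $\lambda_k$ for $\abs{j-k}\le M$, a limsup combined with symmetry in the two pairs, and \cref{thm:corona-kernel} applied to each pair to show the induced map is well defined, bounded and bijective.
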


\begin{proof}
Finite annular overlap gives an integer $M\geq1$ such that, for all sufficiently large $k$,
\begin{equation}\label{eq:finite-annular-overlap}
 \Gamma_kf
 =
 \sum_{\abs{j-k}\leq M}\Gamma_k\Delta_jf.
\end{equation}
The multipliers $\Gamma_k\Delta_j$ are uniformly bounded on $C_b(\R)$.  Concavity of $\omega$ gives a constant $C_M$ such that
\begin{equation}\label{eq:finite-scale-comparability}
 C_M^{-1}\lambda_k
 \leq
 \lambda_j
 \leq
 C_M\lambda_k
 \qquad(\abs{j-k}\leq M).
\end{equation}
Using \eqref{eq:finite-annular-overlap} and \eqref{eq:finite-scale-comparability},
\[
 \lambda_k^{-1}\norm{\Gamma_kf}_\infty
 \leq
 C_M
 \sum_{\abs{j-k}\leq M}
 \lambda_j^{-1}\norm{\Delta_jf}_\infty.
\]
Taking the limsup gives
\[
 \limsup_{k\to\infty}
 \lambda_k^{-1}\norm{\Gamma_kf}_\infty
 \leq
 C
 \limsup_{j\to\infty}
 \lambda_j^{-1}\norm{\Delta_jf}_\infty.
\]
Interchanging $\Delta$ and $\Gamma$ gives the reverse inequality and proves \eqref{eq:coordinate-limsup-equivalence}.  Equation \eqref{eq:little-space-coordinate-independence} follows by taking limits.

By \cref{thm:corona-kernel}, the common little space is the kernel of both realized corona derivatives.  The quotient norm is equivalent to the corresponding limsup, so the displayed rule is well defined, bounded, bijective, and has bounded inverse.
\end{proof}

Thus the realized corona classes carry the same critical information, while their bounded representatives depend on the chosen frequency coordinates.
 \section{Low-degree Magnus and BCH formulas}\label{app:low-degree}

The general reconstruction theorem is homogeneous and valid at every finite step.  The formulas through degree four are listed here to make the first nonlinear endpoint corrections explicit.  If the critical depth is below four, apply \cref{prop:smooth-controlled-extension} with $M=4$ before taking the following components.

Fix one smooth based step-four path and suppress the cutoff index.  Write
\[
 A=A_1-\Gamma_2-\Gamma_3-\Gamma_4+O(5),
 \qquad
 \Omega=\Omega_1+\Omega_2+\Omega_3+\Omega_4+O(5).
\]

The homogeneous Magnus recursion \eqref{eq:homogeneous-magnus} gives the local prepared densities.

\begin{proposition}[Prepared densities through degree four]\label{prop:low-degree-magnus}
One has
\begin{equation}\label{eq:dprep2}
 D_2^{\mathrm{prep}}
 =
 \frac12[\Omega_1,A_1],
\end{equation}
\begin{equation}\label{eq:dprep3}
 D_3^{\mathrm{prep}}
 =
 -\frac12[\Omega_1,\Gamma_2]
 +\frac12[\Omega_2,A_1]
 +\frac1{12}[\Omega_1,[\Omega_1,A_1]],
\end{equation}
and
\begin{align}
 D_4^{\mathrm{prep}}
 ={}&
 -\frac12[\Omega_1,\Gamma_3]
 -\frac12[\Omega_2,\Gamma_2]
 +\frac12[\Omega_3,A_1]
 \nonumber\\
 &-
 \frac1{12}[\Omega_1,[\Omega_1,\Gamma_2]]
 +\frac1{12}[\Omega_1,[\Omega_2,A_1]]
 +\frac1{12}[\Omega_2,[\Omega_1,A_1]].
 \label{eq:dprep4}
\end{align}
Consequently,
\[
 \dot\Omega_r=D_r^{\mathrm{prep}}-\Gamma_r
 \qquad(r=2,3,4).
\]
\end{proposition}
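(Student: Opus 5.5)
The plan is to derive all three formulas by taking homogeneous components of the Magnus recursion \eqref{eq:homogeneous-magnus} from \cref{prop:homogeneous-magnus}, with the decomposition $A_1=A_1$, $A_r=-\Gamma_r$ for $r\ge2$ (definition \eqref{eq:counterterm-density}) and the Bernoulli coefficients $c_1=1/2$, $c_2=1/12$, $c_3=0$ from \eqref{eq:bernoulli-coefficients}. By \cref{def:typed-objects}, the prepared density $D_r^{\mathrm{prep}}$ is the sum of the $q\ge1$ terms. The identity $\dot\Omega_r=D_r^{\mathrm{prep}}-\Gamma_r$ is then exactly \cref{thm:typed-local-evolution}, which is valid at step four for any smooth based path by the final assertion of \cref{thm:bch-reconstruction}. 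If the critical depth is below four, the path is first extended by \cref{prop:smooth-controlled-extension}.

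For each $r\in\{2,3,4\}$, I will list the compositions $i_1+\cdots+i_q+p=r$ with $q\ge1$ and all parts $\ge1$.

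At $r=2$ the only composition is $q=1$, $(i_1,p)=(1,1)$, which gives $\tfrac12[\Omega_1,A_1]$.

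At $r=3$ there are two cases:
\begin{itemize}
\item $q=1$: the pairs $(i_1,p)\in\{(1,2),(2,1)\}$ give $\tfrac12[\Omega_1,A_2]+\tfrac12[\Omega_2,A_1]$, and $A_2=-\Gamma_2$ produces the sign in \eqref{eq:dprep3};
\item $q=2$: the triple $(1,1,1)$ gives $\tfrac1{12}[\Omega_1,[\Omega_1,A_1]]$.
\end{itemize}

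At $r=4$ there are three cases:
\begin{itemize}
\item $q=1$: the pairs $(1,3),(2,2),(3,1)$ give the first three terms of \eqref{eq:dprep4};
\item $q=2$: the triples $(1,1,2),(1,2,1),(2,1,1)$ give the three $\tfrac1{12}$ terms, in the order $\ad_{\Omega_{i_1}}\ad_{\Omega_{i_2}}A_p$;
\item $q=3$: the only composition is $(1,1,1,1)$, whose coefficient is $c_3=0$, so it drops out.
\end{itemize}

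The only point that needs care is the bookkeeping: the ordered nesting, the signs coming from $\Gamma$, and the vanishing of $c_3$. I will check $c_3=0$ by noting that $z/(1-e^{-z})-z/2=\tfrac z2\coth(z/2)$ is even. There is no analytic obstacle, since everything is a finite polynomial identity in the nilpotent Lie algebra.
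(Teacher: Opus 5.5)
Your proposal is correct and follows essentially the same route as the paper. Both take the homogeneous components of the Magnus series with $c_1=1/2$, $c_2=1/12$, $c_3=0$, enumerate the splits in degrees two through four, substitute $A_r=-\Gamma_r$, and read off $\dot\Omega_r=D_r^{\mathrm{prep}}-\Gamma_r$ from the linear $q=0$ term; your evenness argument for $c_3=0$ via $\tfrac z2\coth(z/2)$ justifies a step the paper simply asserts.
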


\begin{proof}
Up to degree four, \eqref{eq:full-magnus} is
\[
 \dot\Omega
 =
 A+\frac12[\Omega,A]
 +\frac1{12}[\Omega,[\Omega,A]]+O(5),
\]
because the cubic Bernoulli coefficient is zero.  In degree two, the only nonlinear split is $(1,1)$, giving \eqref{eq:dprep2}.  In degree three, the first commutator has splits $(1,2)$ and $(2,1)$, while the double commutator has split $(1,1,1)$.  Substitution of $A_2=-\Gamma_2$ gives \eqref{eq:dprep3}.

In degree four, the first commutator has splits
\[
 (1,3),
 \qquad
 (2,2),
 \qquad
 (3,1),
\]
and the double commutator has splits
\[
 (1,1,2),
 \qquad
 (1,2,1),
 \qquad
 (2,1,1).
\]
Substituting $A_2=-\Gamma_2$ and $A_3=-\Gamma_3$ gives \eqref{eq:dprep4}.  The degree-$r$ linear term is $A_r=-\Gamma_r$, proving the final identity.
\end{proof}

For endpoint formulas, write
\[
 S_r:=\Omega_r(s),
 \qquad
 T_r:=\Omega_r(t),
 \qquad
 \Delta\Omega_r:=T_r-S_r.
\]

\begin{proposition}[Based logarithmic coefficients through degree four]\label{prop:low-degree-bch}
The first four homogeneous components of
\[
 \BCH(-\Omega(s),\Omega(t))
\]
are
\begin{equation}\label{eq:I1}
 I_1(s,t)=\Delta\Omega_1,
\end{equation}
\begin{equation}\label{eq:I2}
 I_2(s,t)
 =
 \Delta\Omega_2-\frac12[S_1,T_1],
\end{equation}
\begin{align}
 I_3(s,t)
 ={}&
 \Delta\Omega_3
 -\frac12[S_1,T_2]
 -\frac12[S_2,T_1]
 \nonumber\\
 &+
 \frac1{12}[S_1,[S_1,T_1]]
 +\frac1{12}[T_1,[S_1,T_1]],
 \label{eq:I3}
\end{align}
and
\begin{align}
 I_4(s,t)
 ={}&
 \Delta\Omega_4
 -\frac12\bigl([S_1,T_3]+[S_2,T_2]+[S_3,T_1]\bigr)
 \nonumber\\
 &+
 \frac1{12}\bigl(
 [S_1,[S_1,T_2]]
 +[S_1,[S_2,T_1]]
 +[S_2,[S_1,T_1]]
 \bigr)
 \nonumber\\
 &+
 \frac1{12}\bigl(
 [T_1,[S_2,T_1]]
 +[T_1,[S_1,T_2]]
 +[T_2,[S_1,T_1]]
 \bigr)
 \nonumber\\
 &-
 \frac1{24}[T_1,[S_1,[S_1,T_1]]].
 \label{eq:I4}
\end{align}
In each degree $r$, the terms following $\Delta\Omega_r$ are the boundary package $B_r(s,t)$ from \eqref{eq:bch-package}.
\end{proposition}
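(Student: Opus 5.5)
The plan is to expand $\log\bigl(e^{-\Omega(s)}e^{\Omega(t)}\bigr)$ with the Baker--Campbell--Hausdorff series in the step-four truncated free Lie algebra, and then sort the result by homogeneous degree. I take $X=-\Omega(s)=-(S_1+S_2+S_3+S_4)$ and $Y=\Omega(t)=T_1+\cdots+T_4$. The BCH series through bracket length four is
\[
 \BCH(X,Y)=X+Y+\tfrac12[X,Y]+\tfrac1{12}[X,[X,Y]]-\tfrac1{12}[Y,[X,Y]]-\tfrac1{24}[Y,[X,[X,Y]]]+\cdots.
\]
Every omitted term has at least five letters. Since each letter has degree at least one, those terms vanish in the step-four quotient, so the truncation is exact for the coefficients $I_1,\dots,I_4$.

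Next I substitute the graded decompositions and collect terms by total degree. The linear part $X+Y$ contributes $\Delta\Omega_r$ in degree $r$. In $\tfrac12[X,Y]=-\tfrac12[S,T]$, the degree-$r$ component is $-\tfrac12\sum_{i+j=r}[S_i,T_j]$, which yields the terms in \eqref{eq:I2}, \eqref{eq:I3} and \eqref{eq:I4}. For the cubic terms I use the sign conventions
\[
 \tfrac1{12}[X,[X,Y]]=\tfrac1{12}[S,[S,T]],\qquad -\tfrac1{12}[Y,[X,Y]]=\tfrac1{12}[T,[S,T]].
\]
Their degree-three parts give the two $\tfrac1{12}$ terms in \eqref{eq:I3}. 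Their degree-four parts are given by the splits $(1,1,2)$, $(1,2,1)$ and $(2,1,1)$, and these produce the six $\tfrac1{12}$ terms in \eqref{eq:I4}. The quartic term evaluates as
\[
 -\tfrac1{24}[Y,[X,[X,Y]]]=-\tfrac1{24}[T_1,[S_1,[S_1,T_1]]],
\]
because $X$ carries a minus sign and appears twice, so the two signs cancel. Only the degree-one letters contribute at degree four.

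The main obstacle is getting the signs and the coefficient conventions exactly right, in particular confirming that the length-four BCH coefficient is $-\tfrac1{24}[Y,[X,[X,Y]]]$. The other length-four words are $[X,[Y,[X,Y]]]$ and $[X,[X,[X,Y]]]$. In the standard Dynkin normal form these have zero coefficient, or can be rewritten in terms of the displayed word using the Jacobi identity. To settle this I would check the formula against the known expansion $\log(e^Xe^Y)$ in the free associative algebra at degree four with $X,Y$ each of degree one.

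Finally, the identification of the tail with $B_r(s,t)$ is immediate from \eqref{eq:bch-package}, since $I_r=\pi_r\BCH(-\Omega(s),\Omega(t))$ and we subtract $\Delta\Omega_r$. As a consistency check, \eqref{eq:I2} should agree with \cref{prop:degree-two-basepoint}.
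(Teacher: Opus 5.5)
Your proposal is correct and follows the paper's own proof: substitute $X=-\Omega(s)$ and $Y=\Omega(t)$ into the BCH series through bracket length four, note that longer brackets vanish in the step-four quotient, and collect homogeneous degrees (your $-\frac1{12}[Y,[X,Y]]$ is the paper's $\frac1{12}[Y,[Y,X]]$), and your sign bookkeeping reproduces \eqref{eq:I2}--\eqref{eq:I4}. The quartic coefficient you flag is the standard one the paper also uses, and the other length-four words cause no ambiguity: $[X,[Y,[X,Y]]]=[Y,[X,[X,Y]]]$ by Jacobi, while the coefficients of $[X,[X,[X,Y]]]$ and $[Y,[Y,[Y,X]]]$ vanish because the cubic Bernoulli coefficient is zero.
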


\begin{proof}
Use the BCH expansion through degree four:
\begin{align*}
 \BCH(X,Y)
 ={}&X+Y+\frac12[X,Y]
 +\frac1{12}[X,[X,Y]]
 +\frac1{12}[Y,[Y,X]]\\
 &-\frac1{24}[Y,[X,[X,Y]]]+O(5).
\end{align*}
Substitute
\[
 X=-\sum_{r=1}^4S_r,
 \qquad
 Y=\sum_{r=1}^4T_r,
\]
and collect homogeneous degrees.  Degree one gives \eqref{eq:I1}; the degree-two, degree-three, and degree-four collections give \eqref{eq:I2}, \eqref{eq:I3}, and \eqref{eq:I4}, respectively.
\end{proof}

Combining \cref{prop:low-degree-magnus,prop:low-degree-bch} with \cref{thm:bch-reconstruction} yields, for $r=2,3,4$,
\[
 P_r(t)-P_r(s)
 =
 \int_s^t(D_r^{\mathrm{prep}}-\Gamma_r)(u)\,\dd u,
\]
\[
 I_r^{\mathrm{ren}}(s,t)
 =
 P_r(t)-P_r(s)+B_r(s,t).
\]

\section*{Acknowledgments}

The ideas leading to this work originated in August 2025.  Zongjian Han and Yuanhe Luo are currently working intensively on a fully human-written version of the manuscript.  In view of the recent frequency of information-leak incidents, we felt it necessary to first prepare the main body of this preliminary version with the assistance of AI and to make it publicly available only after careful review by the author.  We will subsequently update the manuscript with the fully human-written version, which will serve as the final version submitted for publication.


\begin{thebibliography}{99}

\bibitem{BahouriCheminDanchin2011}
H.~Bahouri, J.-Y.~Chemin, and R.~Danchin,
\emph{Fourier Analysis and Nonlinear Partial Differential Equations},
Grundlehren der mathematischen Wissenschaften 343, Springer, 2011.

\bibitem{CarberyWright2001}
A.~Carbery and J.~Wright,
\emph{Distributional and $L^q$ norm inequalities for polynomials over convex bodies in $\R^n$},
Math. Res. Lett. \textbf{8} (2001), no.~3, 233--248.

\bibitem{Chen1957}
K.-T.~Chen,
\emph{Integration of paths, geometric invariants and a generalized Baker--Hausdorff formula},
Ann. of Math. \textbf{65} (1957), 163--178.

\bibitem{ChevyrevFerrucci2026}
I.~Chevyrev and E.~Ferrucci,
\emph{Locality of rough path lifts},
arXiv:2606.21049v2, 2026.

\bibitem{ConnesKreimer1998}
A.~Connes and D.~Kreimer,
\emph{Hopf algebras, renormalization and noncommutative geometry},
Comm. Math. Phys. \textbf{199} (1998), 203--242.

\bibitem{CoutinQian2002}
L.~Coutin and Z.~Qian,
\emph{Stochastic analysis, rough path analysis and fractional Brownian motions},
Probab. Theory Related Fields \textbf{122} (2002), 108--140.

\bibitem{Foissy2002}
L.~Foissy,
\emph{Les alg\`ebres de Hopf des arbres enracin\'es d\'ecor\'es, I},
Bull. Sci. Math. \textbf{126} (2002), 193--239.

\bibitem{FrizVictoir2010}
P.~K.~Friz and N.~Victoir,
\emph{Multidimensional Stochastic Processes as Rough Paths},
Cambridge Studies in Advanced Mathematics 120, Cambridge University Press, 2010.

\bibitem{Gebelein1941}
H.~Gebelein,
\emph{Das statistische Problem der Korrelation als Variations- und Eigenwertproblem und sein Zusammenhang mit der Ausgleichsrechnung},
Z. Angew. Math. Mech. \textbf{21} (1941), 364--379.

\bibitem{Hochschild1945}
G.~Hochschild,
\emph{On the cohomology groups of an associative algebra},
Ann. of Math. \textbf{46} (1945), 58--67.

\bibitem{Lyons1998}
T.~J.~Lyons,
\emph{Differential equations driven by rough signals},
Rev. Mat. Iberoam. \textbf{14} (1998), no.~2, 215--310.

\bibitem{LyonsVictoir2007}
T.~Lyons and N.~Victoir,
\emph{An extension theorem to rough paths},
Ann. Inst. H. Poincar\'e Anal. Non Lin\'eaire \textbf{24} (2007), 835--847.

\bibitem{Picard2011}
J.~Picard,
\emph{Representation formulae for the fractional Brownian motion},
S\'eminaire de Probabilit\'es XLIII, Lecture Notes in Mathematics 2006,
Springer, 2011, 3--70; arXiv:0912.3168.

\bibitem{Reutenauer1993}
C.~Reutenauer,
\emph{Free Lie Algebras},
London Mathematical Society Monographs, New Series 7, Oxford University Press, 1993.

\bibitem{Triebel1983}
H.~Triebel,
\emph{Theory of Function Spaces},
Birkh\"auser, 1983.

\bibitem{UnterbergerFNO2010}
J.~Unterberger,
\emph{H\"older-continuous rough paths by Fourier normal ordering},
Comm. Math. Phys. \textbf{298} (2010), 1--36.

\bibitem{UnterbergerFBM2010}
J.~Unterberger,
\emph{A rough path over multidimensional fractional Brownian motion with arbitrary Hurst index by Fourier normal ordering},
Stochastic Process. Appl. \textbf{120} (2010), no.~8, 1444--1472.

\bibitem{Weaver2000}
N.~Weaver,
\emph{Lipschitz Algebras},
World Scientific, 1999.

\bibitem{Young1936}
L.~C.~Young,
\emph{An inequality of the H\"older type, connected with Stieltjes integration},
Acta Math. \textbf{67} (1936), 251--282.

\end{thebibliography}
\end{document}